\documentclass[a4paper,12pt]{amsart}

\usepackage[english]{babel}
\usepackage[colorlinks,plainpages]{hyperref}

\usepackage{amssymb,amsfonts,amsxtra,amsmath}
\usepackage{mathabx}

\usepackage{dsfont}
\renewcommand{\mathbb}{\mathds}
\usepackage{mathrsfs}

\usepackage{csquotes}
\usepackage{stmaryrd}
\usepackage{paralist}
\usepackage{url,breakurl}

\usepackage[all,cmtip]{xy}

\usepackage{tikz}
\usetikzlibrary{calc,decorations.markings}

\usepackage[backend=biber,style=alphabetic,doi=false,isbn=false,url=false]{biblatex}
\bibliography{confpoly}


\theoremstyle{plain}
\newtheorem{thm}{Theorem}[section]
\newtheorem*{mainthm}{Main Theorem}
\newtheorem{prp}[thm]{Proposition}
\newtheorem{cor}[thm]{Corollary}
\newtheorem{lem}[thm]{Lemma}
\newtheorem{cnj}[thm]{Conjecture}
\theoremstyle{definition}
\newtheorem{dfn}[thm]{Definition}

\theoremstyle{remark}

\newenvironment{rmk}
{\pushQED{\qed}\rmkx}
{\popQED\endrmkx}

\newenvironment{exa}
{\pushQED{\qed}\exax}
{\popQED\endexax}
  

\newcommand{\ol}[1]{{\overline{#1}}}

\newcommand{\abs}[1]{{\left\vert#1\right\vert}}

\newcommand{\ideal}[1]{{\left\langle#1\right\rangle}}
\newcommand{\set}[1]{{\left\{#1\right\}}}

\newcommand{\into}{\hookrightarrow}
\newcommand{\onto}{\twoheadrightarrow}

\newcommand{\xmid}{\;\middle|\;}

\newcommand{\red}{\mathrm{red}}


\renewcommand{\AA}{\mathbb{A}}
\newcommand{\KK}{\mathbb{K}}
\newcommand{\FF}{\mathbb{F}}

\newcommand{\PP}{\mathbb{P}}
\newcommand{\QQ}{\mathbb{Q}}

\newcommand{\TT}{\mathbb{T}}
\newcommand{\NN}{\mathbb{N}}
\newcommand{\ZZ}{\mathbb{Z}}


\newcommand{\B}{\mathcal{B}}
\newcommand{\C}{\mathcal{C}}
\newcommand{\T}{\mathcal{T}}
\renewcommand{\H}{\mathcal{H}}
\newcommand{\I}{\mathcal{I}}
\renewcommand{\L}{\mathcal{L}}
\newcommand{\M}{\mathsf{M}}

\renewcommand{\P}{\mathcal{P}}
\newcommand{\U}{\mathsf{U}}
\newcommand{\W}{\mathsf{W}}


\newcommand{\mm}{\mathfrak{m}}
\newcommand{\nn}{\mathfrak{n}}
\newcommand{\pp}{\mathfrak{p}}
\newcommand{\qq}{\mathfrak{q}}


\DeclareMathOperator{\Ass}{Ass}
\DeclareMathOperator{\Aut}{Aut}

\DeclareMathOperator{\ch}{ch}
\DeclareMathOperator{\cl}{cl}
\DeclareMathOperator{\codim}{codim}
\DeclareMathOperator{\coker}{coker}

\DeclareMathOperator{\Fitt}{Fitt}

\DeclareMathOperator{\gr}{gr}
\DeclareMathOperator{\grade}{grade}
\DeclareMathOperator{\Gr}{Gr}
\DeclareMathOperator{\height}{height}
\DeclareMathOperator{\Hom}{Hom}
\DeclareMathOperator{\id}{id}
\DeclareMathOperator{\Max}{Max}
\DeclareMathOperator{\Min}{Min}
\DeclareMathOperator{\rk}{rk}
\DeclareMathOperator{\Rees}{Rees}
\DeclareMathOperator{\Spec}{Spec}

\begin{document}

\title[Configuration hypersurfaces]{Matroid connectivity \\
and singularities of \\
configuration hypersurfaces}

\author[G.~Denham]{Graham Denham}
\address{G.~Denham\\
Department of Mathematics\\
University of Western Ontario\\
London, ON\\
Canada N6A 5B7}
\email{\href{mailto:gdenham@uwo.ca}{gdenham@uwo.ca}}
\thanks{}

\author[M.~Schulze]{Mathias Schulze}
\address{M.~Schulze\\
Department of Mathematics\\
TU Kaiserslautern\\
67663 Kaiserslautern\\
Germany}
\email{\href{mailto:mschulze@mathematik.uni-kl.de}{mschulze@mathematik.uni-kl.de}}
\thanks{}

\author[U.~Walther]{Uli Walther}
\address{U.~Walther\\
Department of Mathematics\\
Purdue University\\
West Lafayette, IN 47907\\
USA}
\email{\href{mailto:walther@math.purdue.edu}{walther@math.purdue.edu}}

\thanks{
GD was supported by NSERC of Canada.
MS was supported by the German Research Foundation (DFG) in the collaborative research center TRR 195 under Project B05 (\#324841351).
UW was supported by the NSF Grant DMS-1401392 and by the Simons Foundation Collaboration Grant for Mathematicians \#580839.}


\subjclass[2010]{Primary 14N20; Secondary 05C31, 14B05, 14M12, 81Q30}


\keywords{Configuration, matroid, singularity, Feynman, Kirchhoff, Symanzik, Cohen--Macaulay, determinantal}

\begin{abstract}
Consider a linear realization of a matroid over a field.
One associates with it a configuration polynomial and a symmetric bilinear form with linear homogeneous coefficients.
The corresponding configuration hypersurface and its non-smooth locus support the respective first and second degeneracy scheme of the bilinear form.

We show that these schemes are reduced and describe the effect of matroid connectivity:
for ($2$-)connected matroids, the configuration hypersurface is integral,
and the second degeneracy scheme is reduced Cohen--Macaulay of
codimension $3$. 
If the matroid is $3$-connected, then also the second degeneracy scheme is integral. 

In the process, we describe the behavior of configuration polynomials, forms and schemes with respect to various matroid constructions.
\end{abstract}

\maketitle

\tableofcontents

\numberwithin{equation}{section}

\section{Introduction}

\subsection{Feynman diagrams}

A basic problem in high-energy physics is to understand the scattering of particles. 
The basic tool for theoretical predictions is the Feynman diagram with underlying Feynman graph $G=(V,E)$. 
The scattering data correspond to Feynman integrals, computed in the positive orthant of the projective space labeled by the internal edges of the Feynman graph. 
The integrand is the square root of a rational function in the edge variables $x_e$, $e\in E$, that depends parametrically on the masses and moments of the involved particles (see \cite{Bro17}).

The convergence of a Feynman integral is determined by the structure of the denominator of this rational function, which always involves a power of the square root of the \emph{Symanzik polynomial} $\sum_{T\in\T_G}\prod_{e\not\in T}x_e$ of $G$ where $\T_G$ denotes the set of spanning trees of $G$. 
The remaining factor of the denominator, appearing for graphs with edge number less than twice the loop number, is a power of the square root of the second Symanzik polynomial obtained by summing over $2$-forests and involves masses and moments. 
Symanzik polynomials can factor, and the singularities and intersections of the individual components determine the behavior of the Feynman integrals.

Until about a decade ago, all explicitly computed integrals were built from multiple Riemann zeta values and polylogarithms; for example, Broadhurst and Kreimer display a large body of such computations in \cite{BK97}. 
In fact, Kontsevich at some point speculated that Symanzik polynomials, or equivalently their cousins the \emph{Kirchhoff polynomials}
\[
\psi_G(x)=\sum_{T\in\T_G}\prod_{e\in T}x_e
\]
be mixed Tate; this would imply the relation to multiple zeta values.
However, Belkale and Brosnan \cite{BB03} proved that the collection of Kirchhoff polynomials is a rather complicated class of singularities: 
their hypersurface complements generate the ring of all geometric motives.
This does not exactly rule out that Feynman integrals are in some way well-behaved, but makes it rather less likely, and explicit counterexamples to Kontsevich's conjecture were subsequently worked out by Doryn~\cite{Dor11} as well as by Brown and Schnetz~\cite{BS12}.
On the other hand, these examples make the study of these singularities, and especially any kind of uniformity results, that much more interesting.

The influential paper \cite{BEK06} of Bloch, Esnault and Kreimer generated a significant amount of work from the point of view of complex geometry: 
we refer to the book \cite{Mar10} of Marcolli for exposition, as well as \cite{Bro17,Dor11,BW10}. 
Varying ideas of Connes and Kreimer on renormalization that view Feynman integrals as specializations of the Tutte polynomial, Aluffi and Marcolli formulate in \cite{AM11a,AM11b} parametric Feynman integrals as periods, leading to motivic studies on cohomology.
On the explicit side, there is a large body of publications in which specific graphs and their polynomials and Feynman integrals are discussed. 
But, as Brown writes in \cite{Bro15}, while a diversity of techniques is used to study Feynman diagrams, \enquote{each new loop order involves mathematical objects which are an order of magnitude more complex than the last, [\ldots] the unavoidable fact is that arbitrary integrals remain out of reach as ever.}

The present article can be seen as the first step towards a search for uniform properties in this zoo of singularities. 
We view it as a stepping stone for further studies of invariants such as log canonical threshold, logarithmic differential forms and embedded resolution of singularities.

\subsection{Configuration polynomials}

The main idea of Belkale and Brosnan is to move the burden of proof into the more general realm of polynomials and constructible sets derived from matroids rather than graphs, and then to reduce to known facts about such polynomials. 
The article \cite{BEK06} casts Kirchhoff and Symanzik polynomials as very special instances of \emph{configuration polynomials}; this idea was further developed by Patterson in \cite{Pat10}. 
We consider this as a more natural setting since notions such as duality and quotients behave well for configuration polynomials as a whole, but these operations do not preserve the subfamily of matroids derived from graphs. 
In particular, we can focus exclusively on Kirchhoff/configuration  polynomials, since the Symanzik polynomial of $G$ appears as the configuration polynomial of the dual configuration induced by the incidence matrix of $G$.

The configuration polynomial does not depend on a matroid itself but on a configuration, that is, on a (linear) realization of a matroid over a field $\KK$.
The same matroid can admit different realizations, which, in turn, give rise to different configuration polynomials (see Example~\ref{167}).
The \emph{matroid (basis) polynomial} is a competing object, which is assigned to any, even non-realizable, matroid.
It has proven useful for combinatorial applications (see \cite{AGV18,Piq19}).
For graphs and, more generally, regular matroids, all configuration polynomials essentially agree with the matroid polynomial.
In general, however, configuration polynomials differ significantly from matroid polynomials, as documented in
Example~\ref{55}.

Configuration polynomials have a geometric feature that matroid polynomials lack:
generalizing Kirchhoff's matrix-tree theorem, the configuration polynomial arises as the determinant of a symmetric bilinear \emph{configuration form} with linear polynomial coefficients.
As a consequence, the corresponding \emph{configuration hypersurface} maps naturally to the generic symmetric determinantal variety.
In the present article, we establish further uniform, geometric properties of configuration polynomials, which we observe do not hold for matroid polynomials in general.

\subsection{Summary of results}

Some indication of what is to come can be gleaned from the following note by Marcolli in \cite[p.~71]{Mar10}: \enquote{graph hypersurfaces tend to have singularity loci of small codimension}.

Let $W\subseteq\KK^E$ be a realization of a matroid $\M$ of rank $\rk\M=\dim W$ on a set $E$ (see Definition~\ref{60}).
Fix coordinates $x_E=(x_e)_{e\in E}$.
There is an associated symmetric \emph{configuration (bilinear) form} $Q_W$ with linear homogeneous coefficients (see Definition~\ref{64}).
Its determinant is the \emph{configuration polynomial} (see Definition~\ref{48} and Lemma~\ref{36})
\[
\psi_W=\det Q_W=\sum_{B\in\B_\M}c_{W,B}\cdot\prod_{e\in B}x_e\in\KK[x_E]
\]
where $\B_\M$ denotes the set of bases of $\M$ and the coefficients $c_{W,B}\in\KK^*$ depend of the realization $W$.
The \emph{configuration hypersurface} defined by $\psi_W$ is the scheme
\[
X_W=\Spec(\KK[x_E]/\ideal{\psi_W})\subseteq\KK^E.
\]
It can be seen as the \emph{first degeneracy scheme} of $Q_W$ (see Definition~\ref{49}).
The \emph{second degeneracy scheme} $\Delta_W\subseteq\KK^E$ of $Q_W$, defined by the submaximal minors of $Q_W$, is a subscheme of the \emph{Jacobian scheme} $\Sigma_W\subseteq\KK^E$ of $X_W$, defined by $\psi_W$ and its partial derivatives (see Lemma~\ref{50}).
The latter defines the non-smooth locus of $X_W$ over $\KK$, which is the singular locus of $X_W$ if $\KK$ is perfect (see Remark~\ref{14}).
Patterson showed $\Sigma_W$ and $\Delta_W$ have the same underlying reduced scheme (see Theorem~\ref{13}), that is,
\[
\Delta_W\subseteq\Sigma_W\subseteq\KK^E,\quad \Sigma_W^\red=\Delta_W^\red.
\]
We give a simple proof of this fact.
He mentions that he does not know the reduced scheme structure (see \cite[p.~696]{Pat10}).
We show that $\Sigma_W$ is typically not reduced (see Example~\ref{45}), whereas $\Delta_W$ often is.
Our main results from Theorems~\ref{112}, \ref{100}, \ref{101} and \ref{133} can be summarized as follows.


\begin{mainthm}
Let $\M$ be a matroid on the set $E$ with a linear realization $W\subseteq\KK^E$ over a field $\KK$.
Then the configuration hypersurface $X_W$ is reduced and generically smooth over $\KK$.
Moreover, the second degeneracy scheme $\Delta_W$ is also reduced and agrees with $\Sigma_W^\red$, the non-smooth locus of $X_W$ over $\KK$.
Unless $\KK$ has characteristic $2$, the Jacobian scheme $\Sigma_W$ is generically reduced.

Suppose now that $\M$ is connected.
Then $X_W$ is integral unless $\M$ has rank zero.
Suppose in addition that the rank of $\M$ is at least $2$.
Then $\Delta_W$ is Cohen--Macaulay of codimension $3$ in $\KK^E$.
If, moreover, $\M$ is $3$-connected, then $\Delta_W$ is integral.\qed
\end{mainthm}


Note that $X_W=\emptyset$ if $\rk\M=0$ and $\Sigma_W=\emptyset=\Delta_W$ if $\rk\M\le1$ (see Remarks~\ref{6} and \ref{20}.\eqref{20a}).
It suffices to require the connectedness hypotheses after deleting all loops (see Remark~\ref{84}).
If $\M$ is disconnected even after deleting all loops, then $\Sigma_W$ and hence $\Delta_W$ has codimension $2$ in $\KK^E$ (see Proposition~\ref{112}).

While our main objective is to establish the results above, along the way we continue the systematic study of configuration polynomials in the spirit of \cite{BEK06,Pat10}. 
For instance, we describe the behavior of configuration polynomials with respect to connectedness, duality, deletion/contraction and $2$-separations (see Propositions~\ref{4}, \ref{30}, \ref{3} and \ref{63}).
Patterson showed that the \emph{second Symanzik polynomial} associated with a Feynman graph is, in fact, a configuration polynomial.
More precisely, we explain that its dual, the \emph{second Kirchhoff polynomial}, is associated with the quotient of the graph configuration by the momentum parameters (see Proposition~\ref{78}).
In this way, Patterson's result becomes a special case of a formula for configuration polynomials of elementary quotients (see Proposition~\ref{28}).

\subsection{Outline of the proof}\label{191}

The proof of the Main Theorem intertwines methods from matroid theory, commutative algebra and algebraic geometry.
In order to keep our arguments self-contained and accessible, we recall preliminaries from each of these subjects and give detailed proofs (see \S\ref{58}, \S\ref{57} and \S\ref{108}).
One easily reduces the claims to the case where $\M$ is connected (see Proposition~\ref{4} and Theorem~\ref{101}).

An important commutative algebra ingredient is a result of Kutz (see \cite{Kut74}):
the grade of an ideal of submaximal minors of a symmetric matrix cannot exceed $3$, and equality forces the ideal to be perfect.
Kutz' result applies to the defining ideal of $\Delta_W$.
The codimension of $\Delta_W$ in $\KK^E$ is therefore bounded by $3$ and $\Delta_W$ is Cohen--Macaulay in case of equality (see Proposition~\ref{44}).
In this case, $\Delta_W$ is pure-dimensional, and hence, it is reduced if it is generically reduced (see Lemma~\ref{176}).

On the matroid side our approach makes use of \emph{handles} (see Definition~\ref{126}), which are called \emph{ears} in case of graphic matroids.
A \emph{handle decomposition} builds up any connected matroid from a circuit by successively attaching handles (see Definition~\ref{196}).
Conversely, this yields for any connected matroid which is not a circuit a \emph{non-disconnective} handle which leaves the matroid connected when deleted (see Definition~\ref{126}).
This allows one to prove statements on connected matroids by induction.

We describe the effect of deletion and contraction of a handle $H$ to the configuration polynomial (see Corollary~\ref{18}).
In case the Jacobian scheme $\Sigma_{W\setminus H}$ associated with the deletion $\M\setminus H$ has codimension $3$ we prove the same for $\Sigma_W$ (see Lemma~\ref{99}).
Applied to a non-disconnective $H$ it follows with Patterson's result that $\Delta_W$ reaches the dimension bound and is thus Cohen--Macaulay of codimension $3$ (see Theorem~\ref{100}).
We further identify three (more or less explicit) types of generic points with respect to a non-disconnective handle (see Corollary~\ref{104}).

In case $\ch\KK\ne2$, generic reducedness of $\Sigma_W$ implies (generic) reducedness of $\Delta_W$.
The schemes $\Sigma_W$ and $\Delta_W$ show similar behavior with respect to  deletion and contraction (see Lemmas~\ref{31} and \ref{32}).
As a consequence, generic reducedness can be proved along the same lines (see Lemma~\ref{115}).
In both cases, we have to show reducedness at all (the same) generic points.
In what follows, we restrict ourselves to $\Delta_W$.
Our proof proceeds by induction on the cardinality $\abs{E}$ of the underlying set $E$ of the matroid $\M$.

Unless $\M$ a circuit, the handle decomposition guarantees the existence of a non-disconnective handle $H$.
In case $H=\set{h}$ has size $1$, the scheme $\Delta_{W\setminus h}$ associated with the deletion $\M\setminus h$ is the intersection of $\Delta_W$ with the divisor $x_e$ (see Lemma~\ref{31}).
This serves to recover generic reducedness of $\Delta_W$ from $\Delta_{W\setminus h}$ (see Lemma~\ref{88}).
The same argument works if $H$ does not arise from a handle decomposition.

This leads us to consider non-disconnective handles independently of a handle decomposition.
They turn out to be special instances of maximal handles which form the \emph{handle partition} of the matroid (see Lemma~\ref{16}).
As a purely matroid-theoretic ingredient, we show that the number of non-disconnective handles is strictly increasing when adding handles (see Proposition~\ref{15}).
For handle decompositions of length $2$, a distinguished role is played by the prism matroid (see Example~\ref{201}).
Its handle partition consists of $3$ non-disconnective handles of size $2$ (see Lemmas~\ref{17} and \ref{42}).
Here an explicit calculation shows that $\Delta_W$ is reduced in the torus $(\KK^*)^6$ (see Lemma~\ref{19}).
The corresponding result for $\Sigma_W$ holds only if $\ch\KK\ne2$.

Suppose now that $\M$ is not a circuit and has no non-disconnective handles of size $1$.
Then $\M\setminus e$ might be disconnected for all $e\in E$ and does not qualify for an inductive step.
In this case, we aim instead for contracting $W$ by a suitable subset $G\subsetneq E$ which keeps $\M$ connected.
In the partial torus $\KK^F\times(\KK^*)^G$ where $F:=E\setminus G$, the scheme $\Delta_{W/G}$ associated with the contraction $\M/G$ relates to the \emph{normal cone} of $\Delta_W$ along the coordinate subspace $V(x_F)$ where $x_F=(x_f)_{f\in F}$ (see Lemma~\ref{32}).
To induce generic reducedness from $\Delta_{W/G}$ to $\Delta_W$, we pass through a \emph{deformation to the normal cone}, which is our main ingredient from algebraic geometry.
The role of $x_h$ above is then played by the deformation parameter $t$.

In algebraic terms, this deformation is represented by the \emph{Rees algebra} $\Rees_IR$ with respect to an ideal $I\unlhd R$, and the normal cone by the \emph{associated graded ring} $\gr_IR$ (see Definition~\ref{173}).
Passing through $\Rees_IR$, we recover generic reducedness of $R$ \emph{along} $V(I)$ from generic reducedness of $\gr_IR$ (see Definition~\ref{131} and Lemma~\ref{95}).
By assumption on $\M$, there are at least $3$ more elements in $E$ than maximal handles (see Proposition~\ref{15}), and $\M$ is the prism matroid in case of equality.
Based on a strict inequality, we use a codimension argument to construct a suitable partition $E=F\sqcup G$ for which \emph{all} generic points of $\Delta_W$ are \emph{along} $V(x_F)$ (see Lemma~\ref{86}).
This yields generic reducedness of $\Delta_W$ in this case (see Lemma~\ref{87}).
A slight modification of the approach also covers the generic points outside the torus $(\KK^*)^6$ if $\M$ is the prism matroid.
The case where $\M$ is a circuit is reduced to that where $\M$ is a triangle by successively contracting an element of $E$ (see Lemma~\ref{97}).
In this base case $\Delta_W$ is a reduced point, but $\Sigma_W$ is reduced only if $\ch\KK\ne2$ (see Example~\ref{107}).

Finally, suppose that $\M$ is a $3$-connected matroid.
Here we prove that $\Delta_W$ is irreducible and hence integral, which implies that $\Sigma$ is irreducible (see Theorem~\ref{133}).
We first observe that handles of (co)size at least $2$ are $2$-separations (see Lemma~\ref{16}.\eqref{16c}).
It follows that the handle decomposition consists entirely of non-disconnective $1$-handles (see Proposition~\ref{53}) and that all generic points of $\Delta_W$ lie in the torus $(\KK^*)^E$ (see Corollary~\ref{106}).
We show that the number of generic points is bounded by that of $\Delta_{W\setminus e}$ for all $e\in E$ (see Lemma~\ref{88}).
Duality switches deletion and contraction and identifies generic points of $\Delta_W$ and $\Delta_{W^\perp}$ (see Corollary~\ref{71}).
Using Tutte's wheels-and-whirls theorem, the irreducibility of $\Delta_W$ can therefore be reduced to the cases where $\M$ is a wheel $\W_n$ or a whirl $\W^n$ for some $n\geq3$ (see Example~\ref{145} and Lemma~\ref{132}).
For fixed $n$, we show that the schemes $X_W$, $\Sigma_W$ and $\Delta_W$ are all isomorphic for all realizations $W$ of $\W_n$ and $\W^n$ (see Proposition~\ref{146}).
An induction on $n$ with an explicit study of the base cases $n\le4$ finishes the proof (see Corollary~\ref{138} and Lemma~\ref{137}).

\subsection*{Acknowledgments}

The project whose results are presented here started with a research in pairs meeting at the Centro de Giorgi in Pisa in February 2018.
We thank the Institute for a pleasant stay in a stimulating research environment.
We also thank Aldo Conca, Raul Epure, Darij Grinberg, Delphine Pol and Karen Yeats for helpful comments.
We are grateful to the referees for a careful reading of the manuscript and resulting improvements to the exposition.
This is a preprint of an article published in Letters in Mathematical Physics. The final authenticated version is available online at: \url{https://doi.org/10.1007/s11005-020-01352-3}

\section{Matroids and configurations}\label{38}

Our algebraic objects of interest are associated with a realization of a matroid.
In this section, we prepare the path for an inductive approach driven by the underlying matroid structure.
Our main tool is the handle decomposition, a matroid version of the ear decomposition of graphs.

\subsection{Matroid basics}\label{58}

In this subsection, we review the relevant basics of matroid theory using Oxley's book (see \cite{Oxl11}) as a comprehensive reference.

Denote by $\Min\P$ and $\Max\P$ the set of minima and maxima of a poset $\P$.
Let $\M$ be a \emph{matroid} on a set $E=:E_\M$.
We use this font throughout to denote matroids.
With $2^E$ partially ordered by inclusion, $\M$ can be defined by a monotone submodular \emph{rank} function (see \cite[Cor.~1.3.4]{Oxl11})
\[
\rk=\rk_\M\colon 2^E\to\NN=\set{0,1,2,\dots}
\]
with $\rk(S)\le\abs{S}$ for any subset $S\subseteq E$. 
The \emph{rank} of $\M$ is then
\[
\rk\M:=\rk_\M(E).
\]
Alternatively, it can be defined in terms of each of the following collections of subsets of $E$ (see \cite[Prop.~1.3.5, p.~28]{Oxl11}):
\begin{itemize}
\item \emph{independent sets} $\I_\M=\set{I\subseteq E\xmid\abs{I}=\rk_\M(I)}\subseteq2^E$,
\item \emph{bases} $\B_\M=\Max\I_\M=\set{B\subseteq E\xmid\abs{B}=\rk_\M(B)=\rk\M}\subseteq2^E$,
\item \emph{circuits} $\C_\M=\Min(2^E\setminus\I_\M)\subseteq 2^E$,
\item \emph{flats} $\L_\M=\set{F\subseteq E \xmid\forall e\in E\setminus F\colon\rk_\M(F\cup\set{e})>\rk_\M(F)}$.
\end{itemize}
For instance (see \cite[Lem.~1.3.3]{Oxl11}), for any subset $S\subseteq E$,
\begin{equation}\label{199}
\rk_\M(S)=\max\set{\abs{I}\xmid S\supseteq I\in\I_\M}.
\end{equation}
The \emph{closure} operator of $\M$ is defined by (see \cite[Lem.~1.4.2]{Oxl11})
\begin{equation}\label{195}
\cl_\M\colon 2^E\mapsto\L_\M,\quad\rk_\M=\rk_\M\circ\cl_\M.
\end{equation}


The following matroid plays a special role in the proof of our main result.

\begin{dfn}[Prism matroid]\label{200}
The \emph{prism matroid} has underlying set $E$ with $\abs{E}=6$ and circuits
\[
\C_{\M}=\set{\set{e_1,e_2,e_3,e_4},\set{e_1,e_2,e_5,e_6},\set{e_3,e_4,e_5,e_6}}.
\] 
The name comes from the observation that its independent sets $\I_{\M}$ are the affinely independent subsets of the vertices of the triangular prism (see Figure~\ref{113}).
\begin{figure}[h]
\caption{The triangular prism.}\label{113}
\begin{tikzpicture}[scale=2,baseline=(current bounding box.center)]
\def\dx{1.4}\def\dy{1}
\tikzstyle{root}=[circle,draw,inner sep=1.2pt,fill=black]
\node (1) at (0,0) [root,label=below:$e_1$] {};
\node (3) at (1,0) [root,label=below:$e_3$] {};
\node (5) at (0.5,0.707) [root,label=above:$e_5$] {};
\node (2) at (0+\dx,0+\dy) [root,label=below:$e_2$] {};
\node (4) at (1+\dx,0+\dy) [root,label=below:$e_4$] {};
\node (6) at (0.5+\dx,0.707+\dy) [root,label=above:$e_6$] {};
\draw (1) -- (3) -- (5) -- (1);
\draw (2) -- (4) -- (6) -- (2);
\draw (1) -- (2);
\draw (3) -- (4);
\draw (5) -- (6);
\end{tikzpicture}
\end{figure}
\end{dfn}


The elements of $E\setminus\bigcup\B_\M$ and $\bigcap\B_\M$ are called \emph{loops} and \emph{coloops} in $\M$, respectively.
A matroid is \emph{free} if $E\in\B_\M$, that is, every $e\in E$ is a coloop in $\M$.
By a \emph{$k$-circuit} in $\M$ we mean a circuit $C\in\C_\M$ with $\abs{C}=k$ elements, $3$-circuits are called \emph{triangles}.

The circuits in $\M$ give rise to an equivalence relation on $E$ by declaring $e,f\in E$ equivalent if $e=f$ or $e,f\in C$ for some $C\in\C_\M$ (see \cite[Prop.~4.1.2]{Oxl11}).
The corresponding equivalence classes are the \emph{connected components} of $\M$.
If there is at most one such a component, then $\M$ is said to be \emph{connected}.
The \emph{connectivity function} of $\M$ is defined by
\[
\lambda_\M\colon 2^E\to\NN,\quad\lambda_\M(S):=\rk_\M(S)+\rk_\M(E\setminus S)-\rk(\M).
\]
For $k\ge1$, a subset $S\subseteq E$, or the partition $E=S\sqcup(E\setminus S)$, is called a \emph{$k$-separation} of $\M$ if
\[
\lambda_\M(S)<k\le\min\set{\abs{S},\abs{E\setminus S}}.
\]
It is called \emph{exact} if the latter is an equality.
The \emph{connectivity} $\lambda(\M)$ of $\M$ is the minimal $k$ for which there is a $k$-separation of $\M$, or $\lambda(\M)=\infty$ if no such exists.
The matroid $\M$ is said to be \emph{$k$-connected} if $\lambda(\M)\ge k$.
Connectedness is the special case $k=2$.

We now review some standard constructions of new matroids from old.
Their geometric significance is explained in \S\ref{57}.

The \emph{direct sum} $\M_1\oplus \M_2$ of matroids $\M_1$ and $\M_2$ is the matroid on $E_{\M_1}\sqcup E_{\M_2}$ with independent sets
\begin{equation}\label{209}
\I_{\M_1\oplus \M_2}:=\set{I_1\sqcup I_2\xmid I_1\in\I_{\M_1}, I_2\in\I_{\M_2}}.
\end{equation}
The sum is \emph{proper} if $E_{\M_1}\ne\emptyset\ne E_{\M_2}$.
Connectedness means that a matroid is not a proper direct sum (see \cite[Prop.~4.2.7]{Oxl11}).
In particular, any (co)loop is a connected component.

Let $F\subseteq E$ be any subset.
Then the \emph{restriction matroid} $\M\vert_F$ is the matroid on $F$ with independent sets and bases (see \cite[\nopp 3.1.12, 3.1.14]{Oxl11})
\begin{equation}\label{153}
\I_{\M\vert_F}=\I_\M\cap 2^F,\quad\B_{\M\vert_F}=\Max\set{B\cap F\xmid B\in\B_\M}.
\end{equation}
Its set of circuits is (see \cite[\nopp 3.1.13]{Oxl11})
\begin{equation}\label{142}
\C_{\M\vert_F}=\C_{\M}\cap 2^F.
\end{equation}
By definition, $\rk_{\M\vert_F}=\rk_\M\vert_{2^F}$, so we may omit the index without ambiguity.
Thinking of restriction to $E\setminus F$ as an operation that deletes elements in $F$ from $E$, one defines the \emph{deletion matroid} 
\[
\M\setminus F:=\M\vert_{E\setminus F}.
\]
The \emph{contraction matroid} $\M/F$ is the matroid on $E\setminus F$ with independent sets and bases (see \cite[Prop.~3.1.7, Cor.~3.1.8]{Oxl11})
\begin{align}\label{154}
\I_{\M/F}&=\set{I\subseteq E\setminus F\xmid I\cup B\in\I_\M\text{ for some/every }B\in\B_{\M\vert F}},\\\nonumber
\B_{\M/F}&=\set{B'\subseteq E\setminus F\xmid B'\cup B\in\B_\M\text{ for some/every }B\in\B_{\M\vert F}}.
\end{align}
Its circuits are the minimal non-empty sets $C\setminus F$ where $C\in\C_{\M}$ (see \cite[Prop.~3.1.10]{Oxl11}), that is,
\begin{equation}\label{143}
\C_{\M/F}=\Min\set{C\setminus F\mid F\not\supseteq C\in\C_{\M}}.
\end{equation}

In \S\ref{57}, $E$ will be a basis and $E^\vee$ the corresponding dual basis.
We often identify $E=E^\vee$ by the bijection
\begin{equation}\label{158}
\nu\colon E\to E^\vee,\quad e\mapsto e^\vee.
\end{equation}
The complement of a subset $S\subseteq E$ corresponds to
\[
S^\perp:=\nu(E\setminus S)\subseteq E^\vee.
\]
The \emph{dual matroid} $\M^\perp$ is the matroid on $E^\vee$ with bases  
\begin{equation}\label{212}
\B_{\M^\perp}=\set{B^\perp\xmid B\in \B_\M}.
\end{equation}
In particular, we have (see \cite[\nopp 2.1.8]{Oxl11})
\[
\rk\M+\rk\M^\perp=\abs{E}.
\]
Connectivity is invariant under dualizing (see \cite[Cor.~8.1.5]{Oxl11}),
\begin{equation}\label{207}
\lambda_\M=\lambda_{\M^\perp}\circ\nu,\quad\lambda(\M)=\lambda(\M^\perp).
\end{equation}
We use $\nu^{-1}$ in place of \eqref{158} for $\M^\perp$, so that $S^{\perp\perp}=S$.
For subsets $F\subseteq E$ and $G\subseteq E^\vee$, one can identify (see \cite[\nopp 3.1.1]{Oxl11})
\begin{align}\label{144}
(\M/F)^\perp&=\M^\perp\vert_{F^\perp}=\M^\perp\setminus\nu(F),\\
\nonumber(\M\setminus\nu^{-1}(G))^\perp&=(\M\vert_{G^\perp})^\perp=\M^\perp/G.
\end{align}
Various matroid data of $\M^\perp$ is also considered as \emph{co}data of $\M$.
A \emph{triad} of $\M$ is a $3$-cocircuit of $\M$, that is, a triangle of $\M^\perp$.


\begin{exa}[Uniform matroids]\label{118}
The \emph{uniform matroid} $\U_{r,n}$ of rank $r\geq0$ on a set $E$ of size $\abs{E}=n$ has bases
\[
\B_{\U_{r,n}}=\set{B\subseteq E\mid \abs{B}=r}.
\]
For $r=n$ it is the free matroid of rank $r$.
It is connected if and only if $0<r<n$.
By definition, $\U_{r,n}^\perp=\U_{n-r,n}$ for all $0\leq r\leq n$.

Informally, we refer to a matroid $\M$ on $E$ for which $E\in\C_\M$, and hence, $\C_\M=\set{E}$, as a \emph{circuit}, and as a \emph{triangle} if $\abs{E}=3$.
It is easily seen that such a matroid is $\U_{n-1,n}$ where $n=\abs{E}$, and
that $\lambda(\U_{n-1,n})=2$.
\end{exa}

\subsection{Handle decomposition}\label{54}

In this subsection, we investigate handles as building blocks of connected matroids.


\begin{dfn}[Handles]\label{126}
Let $\M$ be a matroid.
A subset $\emptyset\ne H\subseteq E$ is a \emph{handle} in $\M$ if $C\cap H\neq\emptyset$ implies $H\subseteq C$ for all $C\in\C_\M$. 
Write $\H_\M$ for the set of handles in $\M$, ordered by inclusion.
A \emph{subhandle} of $H\in\H_\M$ is a subset $\emptyset\ne H'\subseteq H$.
We call $H\in\H_\M$
\begin{itemize}
\item \emph{proper} if $H\ne E$,
\item \emph{maximal} if $H\in\Max\H_\M$,
\item a \emph{$k$-handle} if $\abs{H}=k$,
\item \emph{disconnective} if $\M\setminus H$ is disconnected and
\item \emph{separating} if $\min\set{\abs{H},\abs{E\setminus H}}\ge2$.
\end{itemize}
\end{dfn}


Singletons $\set{e}$ and subhandles are handles.
If $\bigcup\C_\M\ne E$, then $E\setminus\bigcup\C_\M\in\Max\H_\M$ and is a union of coloops.
The maximal handles in $\bigcup\C_\M$ are the minimal non-empty intersections of all subsets of $\C_\M$.
Together they form the \emph{handle partition} of $E$
\[
E=\bigsqcup_{H\in\Max\H_\M}H,
\]
which refines the partition of $\bigcup\C_\M$ into connected components.
In particular, each circuit is a disjoint union of maximal handles.
For any subset $F\subseteq E$, \eqref{142} yields an inclusion
\[
\H_\M\cap 2^F\subseteq\H_{\M\vert_F}.
\]


\begin{lem}[Handle basics]\label{16}
Let $\M$ be a matroid and $H\in\H_\M$.

\begin{enumerate}[(a)]

\item\label{16e} If $H=E$, then $\M=\U_{r,n}$ where $n=\abs{E}\ge1$ and $r\in\set{n-1,n}$ (see Example~\ref{118}).
In the latter case, $\abs{E}=1$ if $\M$ is connected.

\item\label{16d}
Either $H\in\I_\M$ or $H\in\C_\M$.
In the latter case, $H$ is maximal and a connected component of $\M$.
In particular, if $\M$ is connected and $H$ is proper, then $H\in\I_\M$, $H\subsetneq C$ for some circuit $C\in\C_\M$, and $H\in\C_{\M/(E\setminus H)}$.

\item\label{16a}
For any subhandle $\emptyset\ne H'\subseteq H$, $H\setminus H'$ consists of coloops in $\M\setminus H'$.
In particular, non-disconnective handles are maximal.

\item\label{16b}
If $H\not\in\C_\M$, then there is a bijection
\[
\C_\M\to\C_{\M/H},\quad C\mapsto C\setminus H.
\]
If $H\not\in\Max\H_\M$, then there is a bijection
\[
\Max\H_\M\to\Max\H_{\M/H},\quad H'\mapsto H'\setminus H,
\]
which identifies non-disconnective handles.
In this case, the connected components of $\M$ which are not contained in $H\setminus\bigcup\C_\M$ correspond to the connected components of $\M/H$.

\item\label{16c} Suppose that $\M$ is connected and $H$ is proper.
Then
\[
\rk(\M/H)=\rk\M-\abs{H},\quad \lambda_\M(H)=1.
\]
In particular, if $H$ is separating, then $H$ is a $2$-separation of $\M$.

\end{enumerate}
\end{lem}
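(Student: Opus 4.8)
The plan is to prove the five assertions in turn, each time unwinding the defining property of a handle — that every circuit of $\M$ either contains $H$ or is disjoint from it — and feeding it into the recalled descriptions of $\I$, $\B$, $\C$ and $\rk$ under restriction and contraction (\eqref{142}, \eqref{143}, \eqref{153}, \eqref{154}), together with the facts that circuits form an antichain and that a (co)loop is a connected component. For \eqref{16e}: if $H=E$ is a handle, every circuit contains $E$, so $\C_\M\subseteq\set E$; thus $\C_\M=\emptyset$ yields $\M=\U_{n,n}$ and $\C_\M=\set E$ yields $\M=\U_{n-1,n}$, with $n=\abs E\ge1$ as handles are nonempty, and in the free case every element is a coloop, hence its own component, so connectedness forces $n=1$. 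For \eqref{16d}: if $H$ is dependent it contains a circuit $C$, and $C\cap H=C\ne\emptyset$ forces $H\subseteq C$, so $H=C\in\C_\M$; conversely if $H=C\in\C_\M$, any handle containing $H$ meets $C$ and hence lies inside $C=H$, so $H$ is maximal, while no circuit can properly contain $C$, so no circuit links an element of $H$ to one outside and $H$ is a component. If moreover $\M$ is connected and $H$ is proper, the unique component is $E\ne H$, so $H\in\I_\M$; picking $e\in H$, $f\in E\setminus H$ and a circuit $C$ through both (connectedness) gives $H\subsetneq C$, and as every circuit meeting $H$ contains it, $H=C\cap H$ is the inclusion-minimum in \eqref{143}, i.e.\ $H\in\C_{\M/(E\setminus H)}$.

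For \eqref{16a}: the circuits of $\M\setminus H'$ are the circuits of $\M$ avoiding $H'$, and any $e\in H\setminus H'$ lying in such a circuit $C$ would give $e\in C\cap H$, hence $\emptyset\ne H'\subseteq H\subseteq C\subseteq E\setminus H'$, which is absurd; so $H\setminus H'$ consists of coloops of $\M\setminus H'$. Consequently, if a non-disconnective $H$ were not maximal, a handle $\widehat H\supsetneq H$ would exhibit $\widehat H\setminus H\ne\emptyset$ as coloops of the connected matroid $\M\setminus H$, forcing $\abs{E\setminus H}=1$ and $\widehat H=E$, so \eqref{16e} applies and the claim is checked directly. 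For \eqref{16b}: by \eqref{16d} we have $H\in\I_\M$, and the trichotomy $C\cap H\in\set{\emptyset,H}$ lets one recover $C$ from $C\setminus H$, while the circuit antichain both forbids the two candidate preimages from coexisting and shows the sets $C\setminus H$ form an antichain; by \eqref{143} this makes $C\mapsto C\setminus H$ a bijection $\C_\M\to\C_{\M/H}$. Transporting the description of maximal handles — the blocks of the partition by minimal nonempty circuit intersections — through this bijection yields the bijection $\Max\H_\M\to\Max\H_{\M/H}$; it is compatible with deletion, so preserves non-disconnectedness, and the assertion on components follows from their circuit description once one notes that coloops inside $H$ disappear under contraction.

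For \eqref{16c}: since $H\in\I_\M$, $\rk_\M(H)=\abs H$ and $\rk(\M/H)=\rk\M-\abs H$. Submodularity gives $\lambda_\M(H)\ge0$, and strictness holds because $\M$ is connected while $H\ne\emptyset,E$. For the reverse inequality, a fundamental-circuit argument shows every basis $B$ of $\M$ omits at most one element of $H$: if $B$ omitted $h_1\ne h_2$ in $H$, the fundamental circuit of $h_1$ with respect to $B$ would meet $H$, hence contain $H$, hence contain $h_2\in B$, a contradiction. Thus $\rk_\M(E\setminus H)\le\rk\M-\abs H+1$, so $\lambda_\M(H)\le1$, giving $\lambda_\M(H)=1$; and for separating $H$ this reads $\lambda_\M(H)<2\le\min\set{\abs H,\abs{E\setminus H}}$, i.e.\ $H$ is a $2$-separation. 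The main obstacle is \eqref{16b}: making rigorous that $C\mapsto C\setminus H$ and the induced map on maximal handles are well-defined bijections carrying the stated extra data requires a careful interplay between the handle property, the antichain condition and the minimality built into \eqref{143}; the remaining parts are short deductions from the definitions.
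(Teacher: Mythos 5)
Parts (a), (b), the first claim of (c), and (e) are correct and essentially the paper's arguments; your proof of $\lambda_\M(H)\le1$ via ``a basis omits at most one element of $H$'' is only a cosmetic variant of the paper's argument that $(B\sqcup H)\setminus\set{h}$ is independent for a basis $B$ of $\M\setminus H$, both resting on the handle property applied to a fundamental circuit. The genuine gap is in part (d), exactly where you flagged ``the main obstacle'': your assertion that incomparability of circuits makes the family $\set{C\setminus H\mid C\in\C_\M}$ an antichain is false. The antichain condition does not exclude the mixed case $H\subseteq C_1$, $C_2\cap H=\emptyset$, $C_1\setminus H\subsetneq C_2$, and this case really occurs under the stated hypothesis $H\notin\C_\M$: take the graphic matroid of a triangle $\set{a,b,c}$ with a two-edge ear $H=\set{h_1,h_2}$ attached to the endpoints of $c$, so that $\C_\M=\set{\set{a,b,c},\set{c,h_1,h_2},\set{a,b,h_1,h_2}}$. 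Here $H$ is a handle, $H\notin\C_\M$, but $\set{a,b,h_1,h_2}\setminus H=\set{a,b}\subsetneq\set{a,b,c}$, so $\C_{\M/H}=\set{\set{c},\set{a,b}}$ and the map $C\mapsto C\setminus H$ does not even land in $\C_{\M/H}$, let alone bijectively. What makes the argument work is the stronger hypothesis $H\notin\Max\H_\M$ (the hypothesis under which the lemma is actually invoked later, via subhandles): if $H\subsetneq\widehat H$ for a handle $\widehat H$, then in the bad case $\widehat H\subseteq C_1$ and $\widehat H\cap C_2=\emptyset$, while $\emptyset\ne\widehat H\setminus H\subseteq C_1\setminus H\subseteq C_2$, a contradiction; in my example $H$ is a maximal handle. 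The paper's one-line proof of this bijection is equally silent here, so you are in good company, but as written your step fails, and no argument from $H\notin\C_\M$ alone can close it.

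A smaller issue of the same nature occurs in part (c): you correctly reduce ``non-disconnective implies maximal'' to the residual case $\abs{E\setminus H}=1$, $\widehat H=E$, $\M\in\set{\U_{n-1,n},\U_{n,n}}$, but then claim it ``is checked directly''---it cannot be, because in that corner the implication is false: for $n\ge2$ and $H$ any $(n-1)$-subset (e.g.\ two elements of a triangle), $\M\setminus H$ is a single coloop, hence connected, so $H$ is non-disconnective yet not maximal, $E$ being a handle. The implication tacitly assumes $\abs{E\setminus H}\ge2$ (equivalently that $E$ is not itself a handle), which holds in all later uses; the paper's proof passes over this case without comment, but your explicit discharge of it is incorrect as stated.
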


\begin{proof}\
\begin{asparaenum}[(a)]

\item Suppose that $H=E$.
Then $\C_\M\subseteq\set{E}$ and $\M=\U_{n-1,n}$ in case of equality.
Otherwise, $\C_\M=\emptyset$ implies $\B_\M=\set{E}$ and $\M=\U_{n,n}$ (see \cite[Prop.~1.1.6]{Oxl11}).

\item Suppose that $H\not\in\I_\M$.
Then there is a circuit $H\supseteq C\in\C_\M$.
By definition of handle and incomparability of circuits, $H=C\setminus(E\setminus H)\in\C_{\M/(E\setminus H)}$ (see \eqref{143}) and $H=C$ is disjoint from all other circuits and hence a connected component of $\M$.

\item Suppose that $h\in H\setminus H'$ is not a coloop in $\M\setminus H'$.
Then $h\in C\cap H$ for some $C\in\C_{\M\setminus H'}\subseteq\C_\M$ (see \eqref{142}) and hence $H'\subseteq H\subseteq C$ since $H$ is a handle, a contradiction.

\item The first bijection follows from \eqref{143} with $F=H$.
The remaining claims follow from the discussion preceding the lemma.

\item Part \eqref{16d} yields the first equality (see \cite[Prop.~3.1.6]{Oxl11}) along with a circuit $H\ne C\in\C_\M$.
Pick a basis $B\in\B_{\M\setminus H}$.
Clearly $S:=B\sqcup H$ spans $\M$.
For any $h\in H$, we check that $S\setminus\set{h}\in\I_\M$.
Otherwise, there is a circuit $S\setminus\set{h}\supseteq C\in\C_\M$.
Since $C\not\subseteq B$ and by definition of handle, we have $H\cap C\neq\emptyset$ and hence $h\in H\subseteq C$, a contradiction.
It follows that $\rk\M=\abs{S}-1=\rk(\M\setminus H)+\abs{H}-1$ and hence the second equality.\qedhere

\end{asparaenum}
\end{proof}


\begin{prp}[Handles in $3$-connected matroids]\label{53}
Let $\M$ be a $3$-connected matroid on $E$ with $\abs{E}>3$.
Then all its handles are non-disconnective $1$-handles.
\end{prp}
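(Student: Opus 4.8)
The plan is to combine the structural facts about handles from Lemma~\ref{16} with the definition of $k$-connectivity. Suppose $\M$ is $3$-connected with $\abs{E} > 3$ and let $H \in \H_\M$ be a handle. Since $\M$ is $3$-connected it is in particular connected, so by Lemma~\ref{16}.\eqref{16e} the case $H = E$ would force $\M = \U_{n-1,n}$ or $\U_{n,n}$; the former has $\lambda(\M) = 2$ (Example~\ref{118}) contradicting $3$-connectedness, and the latter with $n = \abs{E} > 3 > 1$ is disconnected, again a contradiction. Hence every handle is proper, and by Lemma~\ref{16}.\eqref{16d} we get $H \in \I_\M$.

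Next I would rule out $\abs{H} \geq 2$. By Lemma~\ref{16}.\eqref{16c}, since $\M$ is connected and $H$ is proper, $\lambda_\M(H) = 1$. If $H$ were a $k$-handle with $k \geq 2$, then I claim $H$ is separating, i.e. $\min\{\abs{H}, \abs{E \setminus H}\} \geq 2$: indeed $\abs{H} = k \geq 2$, and $\abs{E \setminus H} = \abs{E} - k$. To see $\abs{E \setminus H} \geq 2$ I would use $\rk(\M/H) = \rk\M - \abs{H}$ from Lemma~\ref{16}.\eqref{16c} together with the fact that $H \subsetneq C$ for some circuit $C$ (Lemma~\ref{16}.\eqref{16d}); the elements of $C \setminus H$ are nonempty, and a short argument using $\rk_\M(H) = \abs{H}$ versus $\rk_\M(C) = \abs{C} - 1$ shows $E \setminus H$ cannot be too small — alternatively, if $\abs{E \setminus H} \leq 1$ then $\abs{E} \leq k + 1$, and combined with $H \subsetneq C \subseteq E$ one forces $\M$ to be a circuit $\U_{n-1,n}$, which has $\lambda(\M) = 2$, contradiction. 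Once $H$ is separating, Lemma~\ref{16}.\eqref{16c} says $H$ is a $2$-separation of $\M$, so $\lambda(\M) \leq 2 < 3$, contradicting $3$-connectedness. Therefore $\abs{H} = 1$, so every handle is a $1$-handle.

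Finally, for non-disconnectivity: if a $1$-handle $H = \{h\}$ were disconnective, then $\M \setminus h$ is disconnected, so it has a proper direct sum decomposition $\M \setminus h = \M_1 \oplus \M_2$ on $E \setminus \{h\} = E_1 \sqcup E_2$. I would then show this yields a low-order separation of $\M$ itself: since deleting a single element drops the rank by at most $1$, one has $\rk_\M(E_1 \cup \{h\}) + \rk_\M(E_2) \leq \rk(\M) + 1$, whence $\lambda_\M(E_1 \cup \{h\}) \leq 1$, and since $\abs{E_1 \cup \{h\}} \geq 2$ and $\abs{E_2} \geq 1$; if also $\abs{E_2} \geq 2$ this is a $2$-separation, contradicting $3$-connectedness. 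The degenerate case $\abs{E_2} = 1$, say $E_2 = \{e\}$, means $e$ is a coloop of $\M \setminus h$; one then checks using $\abs{E} > 3$ and connectedness of $\M$ that $\{e, h\}$ or its complement is a $2$-separation of $\M$ (e.g. $e$ lies in a series pair with $h$ or becomes a coloop-type obstruction), again contradicting $3$-connectedness. Hence no $1$-handle is disconnective.

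The main obstacle is the bookkeeping in the second and third paragraphs: carefully ruling out the boundary cases $\abs{E \setminus H} \leq 1$ and $\abs{E_2} = 1$ where the naive separation estimate degenerates. The hypothesis $\abs{E} > 3$ is exactly what is needed to exclude the small sporadic matroids (triangles, $\U_{n,n}$ for small $n$) where a handle could fail to be a non-disconnective $1$-handle, so the argument should be organized to invoke $\abs{E} > 3$ precisely at those degenerate junctures. Everything else is a direct application of Lemma~\ref{16} and the definition of $\lambda(\M) \geq 3$.
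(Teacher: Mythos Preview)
Your proof is correct and follows essentially the same approach as the paper: use Lemma~\ref{16} to show $H$ is proper and non-separating (hence a $1$-handle), then construct a $2$-separation from a component of $\M\setminus h$ if $H$ were disconnective. The paper avoids your case split on $\abs{E_2}$ by choosing the component $X$ of $\M\setminus h$ of \emph{minimal} size, which forces $\abs{E'\setminus X}\ge 2$ directly and yields the $2$-separation $X\cup H$ without a degenerate case.
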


\begin{proof}
Let $H\in\H_\M$ be any handle.
By Lemma~\ref{16}.\eqref{16e}, $H$ must be proper.
By Lemma~\ref{16}.\eqref{16c}, $H$ is not separating, that is, $\abs{H}=1$ or $\abs{E\setminus H}=1$.
In the latter case, $\M$ is a circuit by Lemma~\ref{16}.\eqref{16d} and hence not $3$-connected (see Example~\ref{118}).
So $H$ is a $1$-handle.

Suppose that $H$ is disconnective.
Consider the deletion $\M':=\M\setminus H$ on the set $E':=E\setminus H$.
Pick a connected component $X$ of $\M'$ of minimal size $\abs{X}<\abs{E'}$.
Since $H\ne\emptyset$ and $\abs{E}>3$, both $X\cup H$ and its complement $E\setminus (X\cup H)=E'\setminus X$ have at least $2$ elements.
Since $X$ is a connected component of $\M'$ and by Lemma~\ref{16}.\eqref{16c},
\[
\rk(X)+\rk(E'\setminus X)=\rk\M'=\rk\M.
\]
Since $\rk(X\cup H)\leq\rk(X)+\abs{H}=\rk(X)+1$, it follows that
\[
\lambda_\M(X\cup H)=\rk(X\cup H)+\rk(E\setminus(X\cup H))-\rk\M<2.
\]
Whence $X\cup H$ is a $2$-separation of $\M$, a contradiction.
\end{proof}


The following notion is the basis for our inductive approach to connected matroids.


\begin{dfn}[Handle decompositions]\label{196}
Let $\M$ be a connected matroid.
A \emph{handle decomposition} of length $k$ of $\M$ is a filtration 
\[
\C_\M\ni F_1\subsetneq\cdots\subsetneq F_k=E
\]
such that $\M\vert_{F_i}$ is connected and $H_i:=F_i\setminus F_{i-1}\in\H_{\M\vert_{F_i}}$ for $i=2,\dots,k$.
\end{dfn}


By Lemma~\ref{16}.\eqref{16d} and \eqref{142}, a handle decomposition yields circuits
\begin{equation}\label{197}
C_1:=F_1\in\C_\M,\quad H_i\subsetneq C_i\in\C_{\M\vert_{F_i}}\subseteq\C_\M,\quad i=2,\dots,k.
\end{equation}
Conversely, it can be constructed from a suitable sequence of circuits.


\begin{exa}[Handle decomposition of the prism matroid]\label{201}
The prism matroid (see Example~\ref{200}) has handle partition
\[
E=\set{e_1,e_2}\sqcup\set{e_3,e_4}\sqcup\set{e_5,e_6}.
\]
A handle decomposition of length $2$ is given by
\[
F_1=\set{e_1,e_2,e_3,e_4}\subsetneq F_2=E.
\]
Note that all handles are proper, maximal, separating $2$-handles.
\end{exa}


\begin{prp}[Existence of handle decompositions]\label{12}
Let $\M$ be a connected matroid and $C_1\in\C_\M$.
Then there is a handle decomposition of $\M$ starting with $F_1=C_1$.
\end{prp}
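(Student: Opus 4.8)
The plan is to build the filtration by induction on $\abs{E}$, at each stage either stopping because the current set is everything, or peeling off a maximal handle that keeps the matroid connected. Starting from $F_1=C_1\in\C_\M$, note that $\M\vert_{F_1}$ is a circuit, hence connected. Suppose we have constructed $C_1=F_1\subsetneq\cdots\subsetneq F_i\subsetneq E$ with each $\M\vert_{F_j}$ connected and $H_j=F_j\setminus F_{j-1}$ a handle in $\M\vert_{F_j}$. I want to find $F_{i+1}$ with $F_i\subsetneq F_{i+1}\subseteq E$, $\M\vert_{F_{i+1}}$ connected, and $F_{i+1}\setminus F_i\in\H_{\M\vert_{F_{i+1}}}$.

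The key step is the choice of $H_{i+1}$. The natural candidate is a maximal handle of the contraction $\N:=\M/F_i$ that survives as a handle in the right restriction. More concretely, since $\M$ is connected and $F_i\subsetneq E$, the matroid $\M$ is not the direct sum $\M\vert_{F_i}\oplus(\M\setminus F_i)$ (using \cite[Prop.~4.2.7]{Oxl11}), so some circuit $C\in\C_\M$ meets both $F_i$ and $E\setminus F_i$; choose such a $C$ with $\abs{C\setminus F_i}$ minimal, put $F_{i+1}:=F_i\cup C$ and $H_{i+1}:=C\setminus F_i=F_{i+1}\setminus F_i$. I would then verify: (1) $H_{i+1}\in\H_{\M\vert_{F_{i+1}}}$, i.e.\ every circuit $C'\subseteq F_{i+1}$ meeting $H_{i+1}$ contains it — this uses minimality of $\abs{C\setminus F_i}$ together with the circuit elimination axiom to rule out a circuit $C'\subseteq F_{i+1}$ with $\emptyset\neq C'\cap H_{i+1}\subsetneq H_{i+1}$; and (2) $\M\vert_{F_{i+1}}$ is connected — since $\M\vert_{F_i}$ is connected, adjoining the elements of $H_{i+1}$ each of which lies on the circuit $C$ that also meets $F_i$ merges them into the same connected component via the circuit equivalence relation of \cite[Prop.~4.1.2]{Oxl11}.

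After finitely many steps the chain must reach $E$, since $\abs{F_i}$ strictly increases and is bounded by $\abs{E}$; when $F_i=E$ we set $k=i$ and stop. This produces the desired handle decomposition $\C_\M\ni F_1\subsetneq\cdots\subsetneq F_k=E$.

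The main obstacle is the verification that $H_{i+1}=C\setminus F_i$ is genuinely a handle in $\M\vert_{F_{i+1}}$, not merely that it lies on one circuit. The subtlety is that a second circuit inside $F_{i+1}$ could clip $H_{i+1}$ in a proper nonempty subset; excluding this is where the minimality of $\abs{C\setminus F_i}$ and a strong circuit elimination argument do the work. If circuit elimination inside $F_{i+1}$ produced a circuit $C''$ with $C''\cap(E\setminus F_i)$ a proper nonempty subset of $H_{i+1}$, that would contradict the minimal choice of $C$; handling the case analysis carefully (and invoking \eqref{142} to stay inside the restriction) is the delicate part, while connectedness and termination are routine.
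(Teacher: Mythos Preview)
Your approach is correct and essentially parallel to the paper's: both build the filtration by adjoining one circuit at a time, chosen so that its part outside $F_i$ is minimal. The paper cites \cite{CH96} for a sequence with $C_i\setminus F_{i-1}\in\C_{\M/F_{i-1}}$ (minimal by inclusion, see \eqref{143}) and then verifies the handle property via strong circuit elimination. Your greedy choice minimizing $\abs{C\setminus F_i}$ is a self-contained substitute for that citation and in fact implies the paper's condition: by incomparability of circuits, any $D\in\C_\M$ with $\emptyset\ne D\setminus F_i\subsetneq C\setminus F_i$ must meet $F_i$, contradicting minimality of $\abs{C\setminus F_i}$. The handle verification you flag as delicate is actually simpler than you anticipate: if $C'\in\C_{\M\vert_{F_{i+1}}}$ satisfies $\emptyset\ne C'\cap H_{i+1}\subsetneq H_{i+1}$, then $C'\cap F_i\ne\emptyset$ (otherwise $C'\subsetneq C$), so $C'$ already contradicts the minimality of $\abs{C\setminus F_i}$ --- no circuit elimination is needed.
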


\begin{proof}
There is a sequence of circuits $C_1,\ldots,C_k\in\C_\M$ which defines a filtration $F_i:=\bigcup_{j\le i}C_j$ such that $C_i\cap F_{i-1}\ne\emptyset$ and $C_i\setminus F_{i-1}\in\C_{\M/F_{i-1}}$ for $i=2,\dots,k$ (see \cite{CH96}).
The hypothesis $C_i\cap F_{i-1}\neq\emptyset$ implies that $\M\vert_{F_i}$ is connected for $i=1,\dots,k$.

It remains to check that $H_i=C_i\setminus F_{i-1}\in\H_{\M\vert_{F_i}}$ for $i=2,\dots,k$. 
Since circuits are nonempty, $\emptyset\ne H_i\subsetneq F_i$.
Let $C\in\C_{\M\vert_{F_i}}$ be a circuit such that $e\in C\cap H_i\subseteq C\cap C_i$. 
Suppose by way of contradiction that $H_i\not\subseteq C$.
Then there exists some $d\in C_i\setminus(C\cup F_{i-1})$.
By the strong circuit elimination axiom (see \cite[Prop.~1.4.12]{Oxl11}), there is a circuit $C'\in\C_{\M\vert_{F_i}}\subseteq\C_\M$ (see \eqref{142}) for which $d\in C'\subseteq (C\cup C_i) \setminus\set{e}$.
Then $C'\setminus F_{i-1}\subseteq C_i\setminus F_{i-1}\in\C_{\M/F_{i-1}}$ by assumption on $C_i$.
It follows that either $C'\subseteq F_{i-1}$ or $C'\setminus F_{i-1}=C_i\setminus F_{i-1}$ (see \eqref{143}).
The former is impossible because $C'\ni d\not\in F_{i-1}$, and the latter because $C'\cup F_{i-1}\not\ni e\in C_i$.
\end{proof}


In the sequel, we develop a bound for the number of non-disconnective handles.


\begin{lem}[Non-disconnective handles]\label{10}
Let $\M$ be a connected matroid.
Suppose that $H\in\H_\M$ and $H'\in\H_{\M\setminus H}$ are non-disconnective with $H\cup H'\ne E$.
Then there is a non-disconnective handle $H''\in\H_\M$ such that $H''\subseteq H'$, with equality if $H'\in\H_\M$.
\end{lem}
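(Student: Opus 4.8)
The plan is to analyze how a non-disconnective handle $H'$ of the deletion $\M\setminus H$ sits inside the ambient matroid $\M$, and to extract from it a non-disconnective handle $H''$ of $\M$ contained in $H'$. Since $\M\setminus H$ is connected (as $H$ is non-disconnective in $\M$) and $H'$ is a proper handle of it with $H\cup H'\ne E$, by Lemma~\ref{16}.\eqref{16d} applied to $\M\setminus H$ we know $H'\in\I_{\M\setminus H}$, and by Lemma~\ref{16}.\eqref{16a} that $H'$ is maximal in $\H_{\M\setminus H}$. The first thing I would do is observe that $H'$ need not be a handle of $\M$ at all: some circuit $C\in\C_\M$ with $C\cap H'\ne\emptyset$ may fail to contain $H'$, precisely because $C$ uses elements of $H$. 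So the handle partition of $\M$ restricted to $H'$ may chop $H'$ into several pieces; let $H''$ be the block of the handle partition of $\M$ that meets $H'$ — or more carefully, a minimal nonempty intersection of circuits of $\M$ that is contained in $H'$.

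**Producing $H''$ inside $H'$.** I would argue that $H'$ is a union of maximal handles of $\M$. Indeed, $H'\in\I_{\M\setminus H}$ together with \eqref{142} shows $H'$ contains no circuit of $\M$ disjoint from $H$; and for a circuit $C\in\C_\M$ meeting $H'$, the intersection $C\cap H'$ is a ``sub-block'' structure — the point being that within $H'$, the trace of the circuit system of $\M$ refines but is compatible with the handle structure of $\M\setminus H$. Concretely: the maximal handles of $\M$ contained in $H'$ partition $H'$ (because $H'$ lies in $\bigcup\C_\M$, as any element of a non-disconnective handle lies in some circuit by Lemma~\ref{16}.\eqref{16d}). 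Pick any maximal handle $H''\in\Max\H_\M$ with $H''\subseteq H'$. This $H''$ is maximal in $\M$, hence by Lemma~\ref{16}.\eqref{16a} (contrapositive of ``non-disconnective $\Rightarrow$ maximal'' is not what we want — rather we must verify non-disconnectivity directly).

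**Non-disconnectivity of $H''$.** This is the heart of the argument and the step I expect to be the main obstacle. I need $\M\setminus H''$ connected. The strategy: $\M\setminus H$ is connected and $H'$ is non-disconnective in it, so $(\M\setminus H)\setminus H'=\M\setminus(H\cup H')$ is connected; since $H\cup H'\ne E$ and $H''\subseteq H'$, we have $\M\setminus(H\cup H')=(\M\setminus H'')\setminus(H\cup(H'\setminus H''))$. So $\M\setminus H''$ becomes connected after deleting the further set $H\cup(H'\setminus H'')$. To promote this to connectedness of $\M\setminus H''$ itself, I would show that each element of $H\cup(H'\setminus H'')$, once added back, does not disconnect — using that $H$ is non-disconnective in $\M$ (so $\M\setminus H''$ restricted appropriately stays connected when $H$ is reinstated provided $H''$ doesn't interfere), and that $H'\setminus H''$ consists of elements that, within $\M$, are ``absorbed'' by circuits crossing between the blocks. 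A cleaner route may be: since $H''$ is a maximal handle of $\M$ and $H''\subseteq H'\subsetneq E$ with $H'$ independent in $\M\setminus H$, one shows $H''$ is a proper subset of some circuit $C$ of $\M$ with $C\cap H\ne\emptyset$; then deleting $H''$ leaves $C\setminus H''$ spanning the same flat, and one checks via the connectivity function $\lambda$ that $\lambda_{\M\setminus H''}$ cannot produce a separation — reducing to the fact that $\M\setminus H$ is connected. I would set this up with the rank/connectivity identities from Lemma~\ref{16}.\eqref{16c}.

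**The equality case.** Finally, if $H'\in\H_\M$, then $H'$ is itself a handle of $\M$; being maximal in $\H_{\M\setminus H}$ and a handle of $\M$ contained in itself, the minimal block $H''$ chosen above must equal $H'$ (there is nothing finer, since any circuit of $\M$ meeting $H'$ contains all of $H'$), and the non-disconnectivity argument above gives $\M\setminus H'=\M\setminus H''$ connected. The main obstacle throughout is cleanly controlling how circuits of $\M$ that pass through $H$ interact with $H'$; I would handle this by working with $\M/(E\setminus(H\cup H'))$ or with the strong circuit elimination axiom (as in the proof of Proposition~\ref{12}) to show that the handle partition of $\M$ inside $H'$ is forced, and that deleting one block leaves enough of the surrounding circuits to keep things connected.
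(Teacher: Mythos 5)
Your reduction of the problem to finding a suitable block inside $H'$ is reasonable, and your structural claim that the maximal handles of $\M$ meeting $H'$ partition $H'$ is in fact true --- but not for the reason you give ($H'\subseteq\bigcup\C_\M$ only shows $H'$ is \emph{covered} by blocks); it follows from the maximality of $H'$ in $\H_{\M\setminus H}$ (Lemma~\ref{16}.\eqref{16a}) together with the circuit $C'\in\C_{\M\setminus H}$ containing $H'$. The genuine gap is the step you yourself flag as the heart of the matter: you never actually prove that your $H''$ is non-disconnective, and as set up it cannot be proved, because it is false that an \emph{arbitrary} maximal handle of $\M$ contained in $H'$ is non-disconnective. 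Take $\M$ to be the graphic matroid of the graph on vertices $u,m_1,m_2,v$ with edges $a_1=um_1$, $a_2=m_1m_2$, $a_3=m_2v$, two parallel edges $c,d$ joining $u$ and $v$, and $h=m_1v$. With $H=\set{h}$ and $H'=\set{a_1,a_2,a_3}$ all hypotheses of Lemma~\ref{10} hold: $\M$ and $\M\setminus H$ are connected, $H'$ is a non-disconnective handle of $\M\setminus H$ (whose circuits are $\set{c,d}$, $\set{a_1,a_2,a_3,c}$, $\set{a_1,a_2,a_3,d}$), and $H\cup H'\ne E$. The maximal handles of $\M$ inside $H'$ are $\set{a_1}$ and $\set{a_2,a_3}$, and $\M\setminus a_1$ is disconnected (its circuits are $\set{c,d}$ and $\set{h,a_2,a_3}$). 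So ``pick any maximal handle $H''\subseteq H'$'' can land on a disconnective one, and no argument via $\lambda$ or by reinstating the elements of $H\cup(H'\setminus H'')$ can rescue that choice.

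What is missing is precisely the careful selection mechanism that the paper's proof supplies: it chooses a circuit $C\in\C_\M$ with $H\subsetneq C$, and (after an application of strong circuit elimination if necessary) arranges $H'\not\subseteq C$ and $C\not\subseteq H\cup H'$; it then sets $H'':=H'\setminus C$ and uses $C$ itself as the bridge showing that $H$, $C\cap H'$ and $E\setminus(H\cup H')$ lie in one connected component of $\M\setminus H''$, so that connectedness of $\M\setminus(H\cup H')$ yields connectedness of $\M\setminus H''$; finally, if $H''$ is not yet a handle of $\M$, any circuit splitting it must contain $H$ and hence keeps things connected, so one shrinks $H''$ by such circuits and iterates. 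In the example above this produces $H''=H'\setminus\set{a_1,h,c}=\set{a_2,a_3}$, the correct block. Your sketch contains neither this choice of $H''$ guided by circuits through $H$ nor a completed connectedness argument (the same issue afflicts your equality case, where non-disconnectivity of $H'$ in $\M$ is also asserted without proof), so the proposal as it stands does not establish the lemma.
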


\begin{proof}
By hypothesis, $\M$ and $\M\setminus H$ are connected and $H\cup H'\ne E$ implies that both $H$ and $H'$ are proper handles.
Then Lemma~\ref{16}.\eqref{16d} yields circuits $C\in\C_\M$ and $C'\in\C_{\M\setminus H}\subseteq\C_\M$ (see \eqref{142}) such that $H\subsetneq C$ and $H'\subsetneq C'$.

Suppose that $C\subseteq H\cup H'$.
Then the strong circuit elimination axiom (see \cite[Prop.~1.4.12]{Oxl11}) yields a circuit $C''\in\C_{\M}$ for which $C''\subseteq H\cup C'$, $H'\not\subseteq C''$ and $C''\not\subseteq H\cup H'$.
Since $C''\subsetneq C'$ contradicts incomparability of circuits, $H\subsetneq C''$ since $H$ is a handle and Lemma~\ref{16}.\eqref{16d} forbids equality.

Replacing $C$ by $C''$ if necessary, we may assume that $H'\not\subseteq C$ and $C\not\subseteq H\cup H'$.
In particular, $H'':=H'\setminus C\in\H_{\M\setminus H}$ and $H''=H'$ if $H'\in\H_\M$.
Since $\M\setminus(H\cup H')$ is connected by hypothesis, $C$ witnesses the fact that $H$,  $C\cap H'$ and $E\setminus(H\cup H')$ are in the same connected component of $\M\setminus H''$ (see \eqref{142}).
In other words, $\M\setminus H''$ is connected.
If $H''\in\H_\M$ is a handle, then $H''$ is therefore non-disconnective.

Otherwise, there is a circuit $C''\in\C_\M$ such that $\emptyset\ne C''\cap H''\ne H''$.
In particular, $H\subseteq C''$ since otherwise $C''\cap H=\emptyset$ and $C''\in\C_{\M\setminus H}$ (see \eqref{142}) which would contradict $H''\in\H_{\M\setminus H}$.
This means that $C''$ connects $H$ with $C''\cap H''$.
We may therefore replace $H''$ by $\emptyset\ne H''\setminus C''\subsetneq H''$ and iterate. 
Then $H''\in\H_\M$ after finitely many steps.
\end{proof}


\begin{lem}[Handle decomposition of length $2$]\label{17}
Let $\M$ be a connected matroid with a handle decomposition of length $2$.
Then $\M$ has at least $3$ (disjoint) non-disconnective handles.
In case of equality, they form the handle partition of $\M$.
\end{lem}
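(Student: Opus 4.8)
\textbf{Proof proposal for Lemma~\ref{17}.}

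The plan is to use the handle decomposition $C_1 \in \C_\M$, $C_1 = F_1 \subsetneq F_2 = E$, with $H_2 := E \setminus F_1 \in \H_{\M\vert_{F_1}}$... wait, that's not right, $H_2 = F_2 \setminus F_1 = E \setminus C_1$, and by \eqref{197} there is a circuit $H_2 \subsetneq C_2 \in \C_\M$. So $\M$ has (at least) two circuits $C_1, C_2$. First I would analyze the handle partition of $\M$ in terms of $C_1$ and $C_2$. Since every circuit is a disjoint union of maximal handles and the maximal handles in $\bigcup \C_\M$ are the minimal non-empty intersections of subsets of $\C_\M$, the presence of exactly the ``relations'' coming from $C_1, C_2$ (and those forced by circuit elimination) must be examined. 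The key structural point: set $A := C_1 \cap C_2$, $B := C_1 \setminus C_2$, $D := C_2 \setminus C_1$. One checks $A \neq \emptyset$ (because $H_2 = C_2 \setminus F_1 \subsetneq C_2$ is a proper nonempty subhandle, so $C_2$ meets $F_1 = C_1$), and $B, D \neq \emptyset$ (because $H_2 = D$ is a proper handle in $\M\vert_{F_1}$, forcing... actually $D = H_2 \neq \emptyset$; and $B \neq \emptyset$ since $C_1 \neq C_2$ and circuits are incomparable). So $E = A \sqcup B \sqcup D$ with all three parts nonempty.

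Next I would show that $A$, $B$, $D$ are each handles of $\M$ and in fact they are unions of maximal handles that refine to exactly the handle partition when $\M$ has no other circuits; more carefully, I claim each of $A, B, D$ is a handle: for $B$, any circuit $C$ meeting $B$ — using that $\M\vert_{C_1 \cup C_2}$ has only circuits obtainable from $C_1, C_2$ by elimination, and the connectivity of $\M\vert_{F_i}$ in the decomposition — must contain all of $B$; similarly for $A$ and $D$. (This is where I would invoke strong circuit elimination: if a circuit $C$ separated $B$, eliminating against $C_1$ would produce a proper sub-circuit of $C_1 \cup C_2$ contradicting the handle structure.) Granting that $A, B, D \in \H_\M$, I would then show each is non-disconnective: deleting $B$ leaves $\M$ connected because $C_2 = A \sqcup D$ survives (it is disjoint from $B$) and connects $A$ to $D$, hence $\M \setminus B$ is connected; symmetrically $\M \setminus D$ is connected via $C_1 = A \sqcup B$; and $\M \setminus A$ is connected because $C_1 \triangle C_2 = B \sqcup D$... here I need a circuit contained in $B \cup D$ that meets both, which strong circuit elimination applied to $C_1, C_2$ against an element of $A$ provides. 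So $A, B, D$ are three disjoint non-disconnective handles, which by Lemma~\ref{16}.\eqref{16a} are automatically maximal.

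For the equality case: suppose $\M$ has exactly three non-disconnective handles. By Lemma~\ref{16}.\eqref{16a} every non-disconnective handle is maximal, so $A, B, D$ are among the maximal handles; since the maximal handles partition $E = A \sqcup B \sqcup D$, and every maximal handle is either non-disconnective or (if disconnective) still a block of the partition, I must rule out the possibility that some $H \in \Max\H_\M$ is disconnective while still having $\le 3$ non-disconnective handles total — but here it suffices to observe that since $A, B, D$ are three non-disconnective handles partitioning $E$, there is no room for a fourth maximal handle, so $\{A, B, D\} = \Max\H_\M$ and this is the handle partition. (The case $\bigcup \C_\M \subsetneq E$, i.e.\ presence of coloops, is excluded since $\M$ is connected with a circuit and $\abs{E} > 1$.)

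The main obstacle I anticipate is proving rigorously that $A = C_1 \cap C_2$, $B = C_1 \setminus C_2$, $D = C_2 \setminus C_1$ are genuinely handles of $\M$ — i.e.\ that no other circuit of $\M$ cuts across one of them. A length-$2$ handle decomposition does not literally say $\C_\M = \{C_1, C_2\}$; extra circuits can appear. I would handle this by a careful strong-circuit-elimination argument: any circuit $C$ with $\emptyset \ne C \cap B \ne B$ can be combined with $C_1$ to yield, after bookkeeping, a circuit violating either incomparability or the defining property of the decomposition (that $H_2 = C_2 \setminus F_1 \in \C_{\M/F_1}$ is a handle in $\M\vert_{F_2}$ over $\M\vert_{F_1}$); alternatively, one can phrase this via Lemma~\ref{16}.\eqref{16d} applied to $\M\vert_{F_1}$ and $\M/F_1$ to control all circuits, since $\C_{\M/F_1} = \{D\}$ forces $D$ to be a handle and then intersecting back with $C_1$ controls $A$ and $B$. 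Making this airtight is the crux; the rest is routine.
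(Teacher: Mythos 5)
Your tripartition $E=A\sqcup B\sqcup D$ with $A=C_1\cap C_2$, $B=C_1\setminus C_2$, $D=C_2\setminus C_1=H_2$ is exactly the paper's starting point, but the step you flag as the crux is not merely hard to make airtight -- it is false: $A=C_1\cap C_2$ need not be a handle of $\M$. The paper's own Example~\ref{217} is a counterexample: there $C_1=F_1=\set{1,2,3,4,5}$, $H_2=\set{6}$, and for every admissible choice of $C_2\in\bigl\{\set{1,2,3,4,6},\set{1,2,5,6},\set{3,4,5,6}\bigr\}$ the intersection $C_1\cap C_2$ (namely $\set{1,2,3,4}$, $\set{1,2,5}$ or $\set{3,4,5}$) is cut properly by one of the remaining circuits, so it is not a handle; the non-disconnective handles there are $\set{1,2},\set{3,4},\set{5},\set{6}$. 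A length-$2$ decomposition pins down $\C_{\M/F_1}=\set{D}$ but does not control the circuits of $\M$ inside $C_1\cup C_2$, so ``intersecting back with $C_1$'' cannot control $A$; and your non-disconnectivity argument for $A$ has a second gap even where $A$ is a handle, since circuit elimination against one element $e\in A$ only produces a circuit avoiding that single element, not a circuit contained in $B\sqcup D$.

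What the paper proves is correspondingly weaker and is what actually suffices: $D=H_2$ is non-disconnective by hypothesis; $B=C_1\setminus C_2$ is shown to be a non-disconnective handle, but only after reducing to $\abs{H_2}=1$ by contracting a subhandle (Lemma~\ref{16}.\eqref{16b} preserves circuits, maximal handles and non-disconnectivity) and then using that every circuit other than $C_2$ through the single element of $H_2$ contains $C_1\setminus C_2$; and the third non-disconnective handle is only claimed to be \emph{contained in} $A$: if some circuit $C''$ cuts $A$, replace $A$ by $A\setminus C''$ -- connectivity of the complement is preserved because $C''\supseteq B\sqcup D$ ties the removed piece to the rest -- and iterate until a handle is reached, as in Lemma~\ref{10}. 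The equality case also falls out of this dichotomy ($A$ itself is a handle exactly when every further circuit equals $B\sqcup D$), rather than from your ``no room for a fourth maximal handle'' argument, which again presupposes that $A$, $B$, $D$ are handles. So the proposal needs to be restructured around extracting a non-disconnective handle inside $C_1\cap C_2$, not around proving that $C_1\cap C_2$ is one.
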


\begin{proof}
Consider the circuits $C':=C_1\in\C_\M$, $C:=C_2\in\C_\M$ (see \eqref{197}), the non-disconnective handle $H:=H_2\in\H_\M$ and the subsets $\emptyset\ne H':=C'\setminus C\subseteq E$ and $\emptyset\ne H'':=C\cap C'\subseteq E$.
Then $E=H\sqcup H'\sqcup H''$ and $C'=H'\cup H''$ and $C=H\cup H''$.

Let $C''\in\C_\M$ be any circuit with $C'\ne C''\ne C$.
By incomparability of circuits, $C''\not\subseteq C'$ and hence $H\subseteq C''$ since $H$ is a handle.
By Lemma~\ref{16}.\eqref{16b}, we may assume that $\abs{H}=1$.
Then $H'\subseteq C''$ (see \cite[\S1.1, Exc.~5]{Oxl11}).
In particular, $H'\in\H_\M$ is a third non-disconnective handle.
If $H\cup H'\subseteq C''$ is an equality, then also $H''\in\H_\M$ is a non-disconnective handle and $H\sqcup H'\sqcup H''$ is the handle decomposition.

Otherwise, $C''$ witnesses the fact that $H$, $H'$ and $\emptyset\ne C''\cap H''\ne H''$ are in the same connected component of $\M\vert_{C''}$ (see \eqref{142}).
If $H''\setminus C''\in\H_\M$ is a handle, then it is therefore non-disconnective.
Otherwise, iterating yields a third non-disconnective handle $H''\setminus C''\supseteq H'''\in\H_\M$.
\end{proof}


\begin{exa}[Unexpected handles]\label{217}
Consider the matroid $\M$ on $E=\set{1,\dots,6}$ whose bases
\begin{gather*}
\B_\M=\{\set{1,2,3,4},\set{1,2,3,5},\set{1,2,4,5},\set{1,3,4,5},\set{2,3,4,5},\\
\set{1,2,3,6},\set{1,2,4,6},\set{1,3,4,6},\set{2,3,4,6},\\
\set{1,3,5,6},\set{1,4,5,6},\set{2,3,5,6},\set{2,4,5,6}\}
\end{gather*}
index those sets of columns of the matrix
\[
\begin{pmatrix}
1 & 0 & 0 & 0 & 1 & 1 \\
0 & 1 & 0 & 0 & 1 & 1 \\
0 & 0 & 1 & 0 & 1 & 2 \\
0 & 0 & 0 & 1 & 1 & 2 
\end{pmatrix}
\]
which form a basis of $\FF_3^4$ (see Remark~\ref{65}).
Its circuits and maximal handles are given by
\begin{align*}
\C_\M&=\set{F_1:=\set{1,2,3,4,5},\set{1,2,3,4,6},\set{1,2,5,6},\set{3,4,5,6}},\\
\Max\H_\M&=\set{\set{1,2},\set{3,4},\set{5},\set{6}=:H_2}.
\end{align*}
In particular, $\M$ is connected with a handle decomposition
\[
F_1\subsetneq F_1\sqcup H_2=:F_2=E
\]
of length $2$.
Here all $4$ maximal handles are non-disconnective and the inequality in Lemma~\ref{17} is strict.
This can happen because $\M$ is not a graphic matroid (see Lemma~\ref{42}).
\end{exa}


\begin{prp}[Lower bound for non-disconnective handles]\label{15}
Let $\M$ be a connected matroid with a handle decomposition of length $k\ge2$.
Then $\M$ has at least $k+1$ (disjoint) non-disconnective handles.
\end{prp}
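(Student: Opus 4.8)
The plan is to argue by induction on the length $k$ of the handle decomposition, using Lemma~\ref{17} as the base case $k=2$. So suppose $k\ge3$ and that the claim holds for connected matroids with a handle decomposition of length $k-1$. Given a handle decomposition $\C_\M\ni F_1\subsetneq\cdots\subsetneq F_k=E$ of $\M$, the truncated filtration $F_1\subsetneq\cdots\subsetneq F_{k-1}$ is a handle decomposition of the connected matroid $\M':=\M\vert_{F_{k-1}}$ of length $k-1$, so by induction $\M'$ has at least $k$ disjoint non-disconnective handles $H^{(1)},\dots,H^{(k)}\in\H_{\M'}$. Writing $H:=H_k=E\setminus F_{k-1}\in\H_\M$, I want to promote enough of the $H^{(j)}$ to non-disconnective handles of $\M$ and then add $H$ itself (or a subhandle) to reach $k+1$.

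\textbf{Main steps.} First, $H=H_k$ is a non-disconnective handle of $\M$ by construction: $\M\setminus H=\M'$ is connected, and $H\in\H_{\M'\cup\text{stuff}}$; more precisely $H$ is a handle of $\M$ since it comes from the decomposition, so $H$ is our ``extra'' handle beyond whatever we salvage from $\M'$. Second, and this is the crux, I need to transfer the $k$ non-disconnective handles of $\M'=\M\setminus H$ up to $\M$. This is exactly the situation of Lemma~\ref{10}: each $H^{(j)}\in\H_{\M\setminus H}$ is non-disconnective in $\M\setminus H$, and $H\in\H_\M$ is non-disconnective; provided $H\cup H^{(j)}\ne E$, Lemma~\ref{10} produces a non-disconnective handle $H''_j\in\H_\M$ with $H''_j\subseteq H^{(j)}$. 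Since the $H^{(j)}$ are pairwise disjoint and all disjoint from $H$, the resulting $H''_j$ are pairwise disjoint and disjoint from $H$. Together with $H$ this gives $k+1$ disjoint non-disconnective handles of $\M$ — \emph{once} I handle the edge case $H\cup H^{(j)}=E$.

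\textbf{The obstacle.} The delicate point is precisely the proviso $H\cup H'\ne E$ in Lemma~\ref{10}: for at most one index $j$ can $H\cup H^{(j)}=E$ hold (since the $H^{(j)}$ are disjoint and nonempty, two of them union with $H$ cannot both exhaust $E$ unless $k=2$ and one $H^{(j)}$ is empty, which is impossible). If no such $j$ exists, we are done by the paragraph above, getting $k+1$ handles. If exactly one $H^{(j_0)}$ satisfies $H\cup H^{(j_0)}=E$, then $\M$ has a handle decomposition of length $2$, namely $E\setminus H^{(j_0)}=H\cup\bigl(\text{common part}\bigr)$ versus $E$ — more carefully, $H^{(j_0)}$ and $H$ are then complementary-ish handles and one checks $F_1':=E\setminus H$, wait — instead, observe $C_k\in\C_\M$ with $H\subsetneq C_k$ (see \eqref{197}) has $C_k\setminus H\subseteq F_{k-1}$, and if $H\cup H^{(j_0)}=E$ then $H^{(j_0)}=E\setminus H\supseteq C_k\setminus H$; since $H^{(j_0)}$ is a maximal handle of $\M'$ (Lemma~\ref{16}.\eqref{16a}) contained in no proper subset structure, $\M$ in fact admits a length-$2$ handle decomposition $C_k\subsetneq E$ after regrouping, and we finish by invoking Lemma~\ref{17} together with the fact that each of the remaining $H^{(j)}$, $j\ne j_0$, still transfers up via Lemma~\ref{10} (now their unions with $H$ are proper). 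The bookkeeping to show the salvaged handles from the two cases do not overlap and total at least $k+1$ is routine but must be done carefully; I expect this disjointness-and-counting argument in the boundary case to be the only real obstacle, the rest being direct application of Lemmas~\ref{10}, \ref{16} and \ref{17}.
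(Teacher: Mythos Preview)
Your approach is exactly the paper's: induct on $k$, use Lemma~\ref{17} for $k=2$, and for $k\ge3$ apply Lemma~\ref{10} to transfer the $k$ non-disconnective handles of $\M'=\M\setminus H_k$ up to $\M$, then adjoin $H_k$.

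The ``obstacle'' you worry about does not exist. You correctly note that $H\cup H^{(j_0)}=E$ forces $H^{(j_0)}=F_{k-1}$. But the $H^{(j)}$ are $k\ge3$ pairwise disjoint \emph{nonempty} subsets of $F_{k-1}$, so none of them can equal all of $F_{k-1}$. Hence $H\cup H^{(j)}\ne E$ for \emph{every} $j$, and Lemma~\ref{10} applies directly to each. This is the one-line observation the paper makes (``Since $k\ge3$ and handles are non-empty, $H_k\cup H'_i\ne E$''), and it eliminates your entire boundary-case analysis. Your attempted workaround for that case is both unnecessary and, as written, not obviously correct: invoking Lemma~\ref{17} on a length-$2$ decomposition would only guarantee $3$ non-disconnective handles, and you would still need to argue carefully that these, together with the $k-1$ transferred $H''_j$, are disjoint and number at least $k+1$.
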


\begin{proof}
We argue by induction on $k$.
The base case $k=2$ is covered by Lemma~\ref{17}.
Suppose now that $k\ge3$.
By hypothesis (see Definition~\ref{196}), $H_k\in\H_\M$ is a non-disconnective handle and the connected matroid $\M\setminus H_k=\M\vert_{F_{k-1}}$ has a handle decomposition of length $k-1$.
By induction, there are $k$ (disjoint) non-disconnective handles $H'_0,\dots,H'_{k-1}\in\H_{\M\setminus H_k}$.
Since $k\ge3$ and handles are non-empty, $H_k\cup H'_i\ne E$ for $i=0,\dots,k-1$.
For each $i=0,\dots,k-1$, Lemma~\ref{10} now yields a non-disconnective handle $H'_i\supseteq H''_i\in\H_\M$.
Thus, $\M$ has $k+1$ (disjoint) non-disconnective handles $H''_0,\dots,H''_{k-1},H_k\in\H_\M$.
\end{proof}


We conclude this section with an observation.

\begin{lem}[Existence of circuits]\label{66}
Let $\M$ be a connected matroid of rank $\rk\M\ge2$.
Then there is a circuit $C\in\C_\M$ of size $\abs{C}\ge3$.
\end{lem}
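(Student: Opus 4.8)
The statement asserts that a connected matroid $\M$ with $\rk\M\ge2$ has a circuit of size at least $3$. The plan is to argue by contradiction: suppose every circuit of $\M$ has size at most $2$. Since $\M$ is connected and has rank $\ge2$, it has more than one element, so $E_\M=\bigcup\C_\M$ (in a connected matroid with $|E|\ge2$ there are no loops and no coloops, so every element lies in a circuit). Thus every element of $E_\M$ lies in a $1$- or $2$-circuit. A $1$-circuit is a loop, which is impossible in a connected matroid of rank $\ge 2$ with $|E|\ge 2$; so in fact every element lies in a $2$-circuit, i.e. a pair of parallel elements.

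First I would show that parallelism is an equivalence relation whose classes are exactly the connected components in this situation. If $\set{e,f}$ and $\set{f,g}$ are circuits with $e\ne g$, then strong circuit elimination (see \cite[Prop.~1.4.12]{Oxl11}) produces a circuit contained in $\set{e,g}$, which must be $\set{e,g}$ itself (it is nonempty and cannot be a singleton). Hence the relation ``$e=f$ or $\set{e,f}\in\C_\M$'' is transitive, and since it visibly coincides with the connectedness equivalence relation on $E_\M=\bigcup\C_\M$ (every circuit has size $\le2$), connectedness of $\M$ forces all of $E_\M$ to be a single parallel class. But then $\rk_\M(E_\M)=1$, contradicting $\rk\M\ge2$.

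Alternatively — and perhaps more cleanly using the tools already set up — I would invoke the handle decomposition. By Proposition~\ref{12}, $\M$ has a handle decomposition $\C_\M\ni F_1\subsetneq\cdots\subsetneq F_k=E$; the first member $C_1=F_1$ is a circuit, so if $|C_1|\ge3$ we are done. Otherwise $|C_1|=2$, and since $\rk\M\ge2$ we must have $k\ge2$, so there is at least one handle $H_2=F_2\setminus F_1$ with $H_2\subsetneq C_2$ for some circuit $C_2\in\C_\M$ (see \eqref{197}); as $C_2\supsetneq H_2\ne\emptyset$ and $C_2\cap F_1\ne\emptyset$ with $C_2\ne C_1$, the circuit $C_2$ properly contains $C_1$'s intersection piece and a fresh element, forcing $|C_2|\ge3$. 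I expect the main obstacle is a bookkeeping one: making sure the base case $|C_1|=2$ genuinely forces the decomposition to have length $\ge2$ (equivalently, that a rank-$\ge2$ connected matroid cannot have all of $E$ covered by a single $2$-circuit), and handling loops correctly — but both follow from the fact that a connected matroid with $|E|\ge2$ has no loops or coloops and from $\rk_\M$ being submodular, so I anticipate no serious difficulty.
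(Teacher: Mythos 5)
Your first argument is correct and is essentially the paper's own proof: the paper likewise argues by contradiction, picking $e\in E$, using connectedness to write $E$ as the union of the circuits through $e$, and noting that if all circuits were $2$-circuits then $E=\cl_\M(e)$ would have rank $1$. Your parallel-class formulation is the same argument with the closure step spelled out via circuit elimination, so the proposal stands as it is.

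The alternative sketch via handle decompositions, however, contains a genuine gap: the step ``forcing $\abs{C_2}\ge3$'' is not valid. From $H_2\subsetneq C_2$, $C_2\cap F_1\ne\emptyset$ and $C_2\ne C_1$ you only get $\abs{C_2}\ge 2$, and $\abs{C_2}=2$ can genuinely occur when the decomposition has length $k>2$. For example, take the graphic matroid of the graph on vertices $u,v,w$ with three parallel edges $f,f',h$ joining $u$ and $v$ and a path $a,b$ from $u$ through $w$ to $v$; it is connected of rank $2$, and $F_1=\set{f,f'}$, $H_2=\set{h}$, $H_3=\set{a,b}$ is a handle decomposition in which $C_1=\set{f,f'}$ and $C_2=\set{h,f}$ both have size $2$ (the desired large circuit only appears at the next step, as $C_3=\set{f,a,b}$). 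Only in the case $k=2$ does $\abs{C_2}=2$ lead to a contradiction, and then only via the rank hypothesis, not by the set-theoretic counting you give. The route can be repaired by a rank count: for a proper handle one has $\rk(F_i)=\rk(F_{i-1})+\abs{H_i}-1$ by Lemma~\ref{16}, so if every $C_i$ from \eqref{197} had size at most $2$, then every $H_i$ would be a $1$-handle and $\rk\M=\rk(F_1)=1$, a contradiction. Since your first argument is self-contained, the proposal is fine overall; just drop or fix the alternative.
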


\begin{proof}
Pick $e\in E$.
Since $\M$ is connected, $E$ is the union of all circuits $e\in C\in\C_\M$.
Suppose that there are only $2$-circuits.
Then $E=\cl_\M(e)$ (see \cite[Prop.~1.4.11.(ii)]{Oxl11}) and hence $\rk\M=1$ (see \eqref{195}), a contradiction.
\end{proof}

\subsection{Configurations and realizations}\label{57}

Our objects of interest are not associated with a matroid itself but with a realization as defined in the following.
All matroid operations we consider come with a counterpart for realizations.
For graphic matroids, these agree with familiar operations on graphs (see \S\ref{61}).


Fix a field $\KK$ and denote the $\KK$-dualizing functor by
\[
-^\vee:=\Hom_\KK(-,\KK).
\]
We consider a finite set $E$ as a basis of the \emph{based $\KK$-vector space} $\KK^E$ and denote by $E^\vee=(e^\vee)_{e\in E}$ the dual basis of 
\begin{equation}\label{188}
(\KK^E)^\vee=\KK^{E^\vee}.
\end{equation}
By abuse of notation, we set $S^\vee:=(e^\vee)_{e\in S}$ for any subset $S\subseteq E$.


We consider configurations as defined by Bloch, Esnault and Kreimer (see \cite[\S1]{BEK06}).

\begin{dfn}[Configurations and realizations]\label{60}
Let $E$ be a finite set.
A $\KK$-vector subspace $W\subseteq\KK^E$ is called a \emph{configuration} (over $\KK$).
It defines a matroid $\M_W$ on $E$ with independent sets
\begin{equation}\label{198}
\I_{\M_W}=\set{S\subseteq E\mid S^\vee\vert_W\text{ is $\KK$-linearly independent in $W^\vee$}}.
\end{equation}
Let $\M$ be a matroid and $W\subseteq\KK^E$ a configuration (over $\KK$).
If $\M=\M_W$, then $W$ is called a \emph{(linear) realization} of $\M$ and $\M$ is called \emph{(linearly) realizable} (over $\KK$).
A matroid is called \emph{binary} if it is realizable over $\FF_2$.
A configuration $W\subseteq\KK^E$ is called \emph{totally unimodular} if $\ch\KK=0$ and $W$ admits a basis whose coefficient matrix with respect to $E$ has all (maximal) minors in $\set{0,\pm1}$.
A matroid is called \emph{regular} if it admits a totally unimodular realization.
Equivalently, a regular matroid is realizable over every field (see \cite[Thm.~6.6.3]{Oxl11}).
\end{dfn}


Since $E^\vee\vert_W$ generates $W^\vee$, we have (see \eqref{198})
\begin{equation}\label{202}
\rk(\M_W)=\dim W^\vee=\dim W.
\end{equation}


\begin{rmk}[Matroids and linear algebra]\label{65}
The notions in matroid theory (see \S\ref{58}) are derived from linear (in)dependence over $\KK$.
Let $W\subseteq\KK^E$ be a realization of a matroid $\M$.
Pick a basis $w=(w^1,\dots,w^r)$ of $W$ where $r:=\rk\M$ (see \eqref{202}).
For each $e\in E$, $e^\vee\vert_W$ is then represented by the vector $(w^i_e)_i\in\KK^r$ where $w^i_e:=e^\vee(w^i)$ for $i=1,\dots,r$.
Order $E=\set{e_1,\dots,e_n}$ and set $w^i_j:=w^i_{e_j}$ for $j=1,\dots,n$.
Then these vectors form the columns of the coefficient matrix $A=(w^i_j)_{i,j}\in\KK^{r\times n}$ of $w$.
By construction, $W$ is the row span of $A$.
The matroid rank $\rk_\M(S)$ of any subset $S\subseteq E$ now equals the $\KK$-linear rank of the submatrix of $A$ with columns $S$ (see \eqref{199} and \eqref{198}).
An element $e\in E$ is a loop in $\M$ if and only if column $e$ of $A$ is zero; 
$e$ is a coloop in $\M$ if and only if column $e$ is not in the span of the other columns.
\end{rmk}


\begin{rmk}[Classical configurations]\label{68}
Suppose that $\M_W$ has no loops, that is, $e^\vee\vert_W\neq 0$ for each $e\in E$.
Then the images of the $e^\vee\vert_W$ in $\PP W^\vee$ form a \emph{projective point configuration} in the classical sense (see \cite{HC52}).
Dually, the hyperplanes $\ker(e^\vee)\cap W$ form a \emph{hyperplane arrangement} in $W$ (see \cite{OT92}), which is an equivalent notion in this case.
\end{rmk}


We fix some notation for realizations of basic matroid operations.
Any subset $S\subseteq E$ gives rise to an inclusion and a projection
\begin{equation}\label{206}
\iota_S\colon \KK^S\into\KK^E,\quad
\pi_S\colon\KK^E\onto\KK^E/\KK^{E\setminus S}=\KK^S
\end{equation}
of based $\KK$-vector spaces.


\begin{dfn}[Realizations of matroid operations]\label{33}

Let $W\subseteq\KK^E$ be a realization of a matroid $\M$, and let $F\subseteq E$ be any subset.

\begin{enumerate}[(a)]

\item\label{33c} The \emph{restriction configuration} (see \eqref{206})
\begin{align*}
W\vert_F&:=\pi_F(W)\subseteq\KK^F\\
&\cong(W+\KK^{E\setminus F})/\KK^{E\setminus F}\cong W/(W\cap\KK^{E\setminus F})
\end{align*}
realizes the restriction matroid $\M\vert_F$.

\item\label{33d} The \emph{deletion configuration}
\[
W\setminus F:=W\vert_{E\setminus F}
\]
realizes the deletion matroid $\M\setminus F$.
We write $W\setminus e:=W\setminus\set{e}$ for $e\in E$.

\item\label{33e} The \emph{contraction configuration}
\[
W/F:=W\cap\KK^{E\setminus F}\subseteq\KK^{E\setminus F}
\]
realizes the contraction matroid $\M/F$.

\item\label{33a} The \emph{dual configuration} (see \eqref{188})
\[
W^\perp:=(\KK^E/W)^\vee\subseteq\KK^{E^\vee}
\]
realizes the dual matroid $\M^\perp$.

\item\label{33b} Any $0\ne\varphi\in W^\vee$ defines an \emph{elementary quotient configuration}
\[
W_\varphi:=\ker\varphi\subseteq\KK^E.
\]

\end{enumerate}
\end{dfn}


\begin{rmk}\label{34}

Let $W\subseteq\KK^E$ be a realization of a matroid $\M$.

\begin{enumerate}[(a)]

\item\label{34a} An element $e\in E$ is a loop or coloop in $\M$ if and only if $W\subseteq\KK^{E\setminus\set{e}}$ or $W=(W\setminus e)\oplus\KK^\set{e}$, respectively.
In both cases, $W\setminus e=W/e\subseteq\KK^{E\setminus\set{e}}$.

\item\label{34b} For $0\ne\varphi\in W^\vee$, pick $w\in W\setminus W_\varphi$ and $e\notin E$.
Consider the configuration
\[
W_{\varphi,w}:=W_\varphi\oplus\KK\cdot(w+e)\subseteq\KK^{E\sqcup\set{e}}.
\]
Then $W_{\varphi,w}\setminus e=W$ and $W_{\varphi,w}/e=W_\varphi$.
By definition, $\M_{W_\varphi}$ is therefore an \emph{elementary quotient} of $\M_W$; it can be characterized in terms of the notion of a \emph{modular cut} (see \cite[\S5.5]{Kat16} and \cite[\S7.3]{Oxl11}).\qedhere

\end{enumerate}
\end{rmk}


\begin{lem}[Lift of direct sums to realizations]\label{72}
Let $W\subseteq\KK^E$ be a realization of a matroid $\M$.
Suppose that $\M=\M_1\oplus\M_2$ decomposes with underlying partition $E=E_1\sqcup E_2$.
Then $W=W_1\oplus W_2$ where $W_i:=\M/E_j\subseteq\KK^{E_i}$ realizes $\M_i=\M\vert_{E_i}$ for $\set{i,j}=\set{1,2}$.
\end{lem}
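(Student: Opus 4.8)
The plan is to exploit the characterization of loops, coloops, and direct sum decompositions in terms of the configuration, together with the dimension bookkeeping already established in \eqref{202}. First I would set $W_i := W/E_j \subseteq \KK^{E_i}$ for $\{i,j\}=\{1,2\}$, so that by Definition~\ref{33}.\eqref{33e} and \eqref{33c} we have $W_i = W \cap \KK^{E_i}$, and $W_i$ realizes $\M/E_j$. The key matroid fact is that, since $\M = \M_1 \oplus \M_2$ with ground set partition $E = E_1 \sqcup E_2$, the contraction $\M/E_j$ agrees with the restriction $\M|_{E_i} = \M_i$: indeed, a basis of $\M$ is a disjoint union $B_1 \sqcup B_2$ of bases of $\M_1$ and $\M_2$ (see \eqref{209}), so contracting a basis of $\M|_{E_j}$ from $\M$ leaves exactly a basis of $\M_i$ on $E_i$ (compare \eqref{154} with \eqref{153}). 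Hence $W_i$ realizes $\M_i$, and it only remains to prove the internal direct sum $W = W_1 \oplus W_2$ inside $\KK^E = \KK^{E_1} \oplus \KK^{E_2}$.

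For the decomposition, the containment $W_1 \oplus W_2 \subseteq W$ is automatic since $W_i = W \cap \KK^{E_i} \subseteq W$ and $W$ is a subspace. For the reverse containment it suffices, since both sides are subspaces of $\KK^E$, to compare dimensions and invoke \eqref{202}:
\[
\dim(W_1 \oplus W_2) = \dim W_1 + \dim W_2 = \rk\M_1 + \rk\M_2 = \rk\M = \dim W,
\]
where the middle equality is $\rk(\M_1 \oplus \M_2) = \rk\M_1 + \rk\M_2$, immediate from \eqref{209}. Alternatively, and perhaps more transparently, one argues directly: given $w \in W$, write $w = w_1 + w_2$ with $w_i \in \KK^{E_i}$ under the coordinate decomposition; the point is that each $w_i$ already lies in $W$. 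This is where the product structure of $\M$ is used. Pick a basis $B = B_1 \sqcup B_2$ of $\M$ with $B_i \in \B_{\M_i}$; expressing $w$ in terms of the corresponding basis of $W$ (cf.\ Remark~\ref{65}), the $E_1$-coordinates of the basis vectors indexed by $B_2$ vanish because no element of $E_2$ is in the closure of $E_1$ in $\M$ (the two ground sets are in different connected components), and symmetrically. Hence the $E_1$-part and $E_2$-part of $w$ are each realized as linear combinations of basis vectors of $W$, i.e.\ lie in $W$, so $w_i \in W \cap \KK^{E_i} = W_i$.

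The main obstacle is the bookkeeping in the last step: making precise why, in a realization of $\M_1 \oplus \M_2$, a basis can be chosen whose coefficient matrix is block-diagonal with respect to the partition $E = E_1 \sqcup E_2$. This follows because $\M$ has no circuit meeting both $E_1$ and $E_2$, so no linear dependence among the columns of the coefficient matrix mixes the two blocks; consequently the row space $W$ splits as a block sum. Once this is granted, everything else is the formal dimension count above, and the identification $\M/E_j = \M|_{E_i} = \M_i$ is a routine consequence of \eqref{153}, \eqref{154} and \eqref{209}. I would present the dimension-count version as the clean argument and relegate the block-diagonality remark to a one-line justification.
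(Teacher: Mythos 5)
Your proof is correct and follows essentially the same route as the paper: both arguments rest on the identification $\M/E_j=\M\vert_{E_i}=\M_i$ together with the dimension formula \eqref{202}, the paper phrasing the count via the sandwich $W_1\oplus W_2\into W\into W\vert_{E_1}\oplus W\vert_{E_2}$ while you compare $\dim(W_1\oplus W_2)=\rk\M_1+\rk\M_2=\rk\M=\dim W$ directly. (Your secondary block-diagonality remark is fine via the ``no circuit meets both $E_1$ and $E_2$'' justification, though the phrase about closures would need care when $\M_2$ has loops; the main dimension count does not depend on it.)
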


\begin{proof}
By definition (see Definition~\ref{33}.\eqref{33c} and \eqref{33e}), 
\[
W_1\oplus W_2\into W\into W\vert_{E_1}\oplus W\vert_{E_2},\quad W_i\into W\vert_{E_i},\quad i=1,2.
\]
By the direct sum hypothesis, $W_i$ and $W\vert_{E_i}$ realize the same matroid (see \eqref{209}, \eqref{153} and \eqref{154})
\[
\M/E_j=\M\vert_{E_i}=\M_i,\quad\set{i,j}=\set{1,2}.
\]
Thus, $\dim W_i=\dim(W\vert_{E_i})$ for $i=1,2$ (see \eqref{202}) and the claim follows.
\end{proof}


\begin{exa}[Realizations of uniform matroids]\label{119}
Let $W\subseteq\KK^E$ be the row span of a matrix $A\in\KK^{r\times n}$ (see Remark~\ref{65}).
If $A$ is generic in the sense that all maximal minors of $A$ are nonzero, then $W$ realizes the uniform matroid $\U_{r,n}$ (see Example~\ref{118}).
\end{exa}

\subsection{Graphic matroids}\label{61}

Configurations arising from graphs are the most prominent examples for our results.
In this subsection, we review this construction and discuss important examples such as prism, wheel and whirl matroids.

A \emph{graph} $G=(V,E)$ is a pair of finite sets $V$ of \emph{vertices} and $E$ of (unoriented) \emph{edges} where each edge $e\in E$ is associated with a set of one or two vertices in $V$.
This allows for multiple edges between pairs of vertices, and loops at vertices.

A graph $G$ determines a \emph{graphic matroid} $\M_G$ on the edge set $E$.
Its independent sets are the \emph{forests} and its circuits the \emph{simple cycles} in $G$.
Any graphic matroid comes from a (non-unique) \emph{connected} graph (see \cite[Prop.~1.2.9]{Oxl11}).
Unless specified otherwise, we therefore assume that $G$ is connected.
Then the bases of $\M_G$ are the \emph{spanning trees} of $G$ (see \cite[18]{Oxl11}),
\begin{equation}\label{127}
\B_{\M_G}=\T_G.
\end{equation}


\begin{rmk}[Graph and matroid connectivity]\label{203}
A \emph{vertex cut} of a graph $G=(V,E)$ is a subset of $V$ whose removal (together with all incident edges) disconnects $G$.
If $G$ has at least one pair of distinct non-adjacent vertices, then $G$ is called \emph{$k$-connected} if $k$ is the minimal size of a vertex cut.
Otherwise, $G$ is $(\abs{V}-1)$-connected by definition.
Suppose that $\abs{V}\ge3$.
Then $\M_G$ is ($2$-)connected if and only if $G$ is $2$-connected and loopless (see \cite[Prop.~4.1.7]{Oxl11}).
Provided that $\abs{E}\ge4$, $\M_G$ is $3$-connected if and only if $G$ is $3$-connected and simple (see \cite[Prop.~8.1.9]{Oxl11}).
\end{rmk}


\begin{exa}[Prism matroid as graphic matroid]\label{47}
The prism matroid (see Definition~\ref{200}) is associated with the $(2,2,2)$-theta graph in Figure~\ref{103}.
In particular it is $3$-connected as witnessed by the minimal vertex cut $\set{v_1,v_2,v_3}$ (see Remark~\ref{203}).
\begin{figure}[h]
\caption{The $(2,2,2)$-theta graph with a choice of orientation.}\label{103}
\begin{tikzpicture}[scale=1,baseline=(current bounding box.center),decoration={markings,mark=at position 0.5 with {\arrow{latex}}}]
\tikzstyle{root}=[circle,draw,inner sep=1.2pt,fill=black]
\node (0) at (0,0) [root,label=right:$v_2$] {};
\node (1) at (-1,-1) [root,label=left:$v_4$] {};
\node (2) at (-1,1) [root,label=left:$v_1$] {};
\node (3) at (1,1) [root,label=right:$v_5$] {};
\node (4) at (1,-1) [root,label=right:$v_3$] {};
\draw[postaction={decorate}] (2) -- (1) node[midway,left]{$e_1$};
\draw[postaction={decorate}] (2) -- (3) node[midway,above]{$e_2$};
\draw[postaction={decorate}] (4) -- (3) node[midway,right]{$e_6$};
\draw[postaction={decorate}] (4) -- (1) node[midway,below]{$e_5$};
\draw[postaction={decorate}] (0) -- (1) node[midway,right]{$e_3$};
\draw[postaction={decorate}] (0) -- (3) node[midway,left]{$e_4$};
\end{tikzpicture}
\end{figure}
\end{exa}


Graphic matroids have realizations derived from the edge-vertex incidence matrix of the graph (see \cite[\S2]{BEK06}).
A choice of orientation on the edge set $E$ turns the graph $G$ into a CW-complex.
This gives rise to an exact sequence
\begin{equation}\label{41}
\xymatrix@R=5pt{
0\ar[r] & H_1\ar[r] & \KK^E\ar[r]^-\delta & \KK^V\ar[r]^-\sigma & H_0\ar[r] & 0\\
&& (s\to t)\ar@{}[u]|*[@]{\in}\ar@{|->}[r] & t-s\ar@{}[u]|*[@]{\in} & \KK\ar@{}[u]|*[@]{\cong}\\
&&& v\ar@{|->}[r] & 1\ar@{}[u]|*[@]{\in}
}
\end{equation}
where $H_\bullet:=H_\bullet(G,\KK)$ denotes the graph homology of $G$ over $\KK$.
The dual exact sequence
\begin{equation}\label{184}
\xymatrix{
0 & H^1\ar[l] & \KK^{E^\vee}\ar[l] && \KK^{V^\vee}\ar[ll]_-{\delta^\vee} & H^0\ar[l] & 0\ar[l]
}
\end{equation}
involves the graph cohomology $H^\bullet:=H^\bullet(G,\KK)$ of $G$ over $\KK$.


\begin{dfn}[Graph configurations]\label{43}
We call the image
\[
\xymatrix{
\KK^{E^\vee}\supseteq W_G:=\delta^\vee(\KK^{V^\vee})&\ar[l]_-{\ol{\delta^\vee}}^-\cong\ker(\sigma)^\vee
}
\]
of $\delta^\vee$ the \emph{graph configuration} of the graph $G$ over $\KK$.
Note that it is independent of the orientation chosen to define $\delta$ in \eqref{41}.
\end{dfn}


For any $S\subseteq E$, the sequence \eqref{41} induces a short exact sequence
\[
\xymatrix{
0\ar[r] & H_1\cap\KK^S\ar[r] & \KK^S\ar[r] & W_G^\vee.
}
\]
By definition of $\M_G$ and $\M_{W_G}$ (see Definition~\ref{60}) and since $H_1$ is generated by indicator vectors of (simple) cycles, we have
\[
S\in\I_{\M_G}\iff H_1\cap\KK^S=0\iff S\in\I_{\M_{W_G}},
\]
which implies that
\[
\M_G=\M_{W_G}.
\]
The configuration $W_G$ is totally unimodular if $\ch\KK=0$ (see \cite[Lem.\ 5.1.4]{Oxl11}) which makes $\M_G$ a regular matroid.
By construction, $W_G^\perp=H_1\subseteq\KK^E$ realizes the dual matroid $\M_G^\perp$ (see Definition~\ref{33}.\eqref{33a}).


\begin{exa}[Configuration of the $(2,2,2)$-theta graph]\label{216}
With the orientation of the $(2,2,2)$-theta graph $G$ depicted in Figure~\ref{103}, the map $\delta^\vee$ in \eqref{184} is represented by the transpose of the matrix
\[
\begin{pmatrix}
1 & 1 & 0 & 0 & 0 & 0 \\
0 & 0 & 1 & 1 & 0 & 0 \\
0 & 0 & 0 & 0 & 1 & 1 \\
-1 & 0 & -1 & 0 & -1 & 0
\end{pmatrix}.
\]
Its rows generate the graph configuration $W_G$ realizing the prism matroid (see Example~\ref{47}).
\end{exa}


\begin{lem}[Characterization of the prism matroid]\label{42}
Let $\M$ be a connected matroid on $E=\set{e_1,\dots,e_6}$ with $\abs{E}=6$ whose handle partition 
\[
E=H_1\sqcup H_2\sqcup H_3,\quad H_1=\set{e_1,e_2},\quad H_2=\set{e_3,e_4},\quad H_3=\set{e_5,e_6},
\]
is made of $3$ maximal $2$-handles (see Example~\ref{201} and Lemma~\ref{17}).
Then $\M$ is the prism matroid (see Definition~\ref{200}).
Up to scaling $E$, it has the unique realization $W\subseteq\KK^E$ with basis
\[
w^1:=e_1+e_2,\quad w^2:=e_3+e_4,\quad w^3:=e_5+e_6,\quad w^4:=e_1+e_3+e_5,
\]
the graph configuration of the $(2,2,2)$-theta graph (see Example~\ref{216}).
\end{lem}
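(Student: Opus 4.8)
The plan is to prove the two assertions in turn: first that $\M$ must be the prism matroid, then that (up to rescaling the coordinates) the realization is the stated one, and finally to identify it with the graph configuration of the $(2,2,2)$-theta graph.

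For the first assertion, I would reconstruct the circuit set from the handle partition. By Lemma~\ref{16}.\eqref{16d}, each circuit of $\M$ is a disjoint union of maximal handles; since $\abs{E} = 6$ and the three maximal handles each have size $2$, every circuit is a union of either two of the $H_i$ (size $4$) or all three (size $6 = E$), the latter being impossible because $E \notin \C_\M$ (a proper handle partition into three blocks means $\M$ is not a circuit, by Lemma~\ref{16}.\eqref{16e}). So $\C_\M \subseteq \{H_1 \cup H_2,\ H_1 \cup H_3,\ H_2 \cup H_3\}$. Connectedness forces every element of $E$ to lie in some circuit, so all three candidate $4$-sets must actually be circuits (if only two were, say $H_1\cup H_2$ and $H_1 \cup H_3$, then the handle partition would not separate $H_2$ from $H_3$, or equivalently $\M$ would have a handle larger than $H_1$). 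Matching against Definition~\ref{200} with the relabeling $e_1,e_2 \leftrightarrow H_1$, etc., gives $\C_\M = \{\{e_1,e_2,e_3,e_4\}, \{e_1,e_2,e_5,e_6\}, \{e_3,e_4,e_5,e_6\}\}$, which is exactly the prism matroid. One should double-check that this circuit collection genuinely satisfies the circuit axioms (or invoke Definition~\ref{200} directly, which asserts it does).

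For the uniqueness of the realization, I would work with the rank-$4$ configuration $W \subseteq \KK^E$. From $\rk\M = 4$ and $\abs{E}=6$, the dual $W^\perp$ has dimension $2$, and $W^\perp = H_1$ is spanned by the indicator vectors of a circuit basis, say the cycle vectors supported on $\{e_1,e_2,e_3,e_4\}$ and $\{e_1,e_2,e_5,e_6\}$. Since $\{e_1,e_2\}$ is a handle appearing in every circuit it meets, the two coordinates $e_1, e_2$ always occur with a fixed ratio in any element of $W^\perp$; rescaling $e_2$ we may assume this ratio is $1$, so $W^\perp$ contains a vector of the form $(1,1,a,a',0,0)$; since $\{e_3,e_4\}$ is also a handle, $a = a'$ after rescaling $e_4$, and rescaling the whole vector makes it $(1,1,1,1,0,0)$ — likewise a second generator becomes $(1,1,0,0,1,1)$ after rescaling $e_5,e_6$. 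These are precisely the cycle vectors of the $(2,2,2)$-theta graph for the orientation of Figure~\ref{103}; hence $W = (W^\perp)^\perp$ is the annihilator, which is spanned by $w^1,\dots,w^4$ as listed (one verifies directly that each $w^i$ pairs to zero with both generators of $W^\perp$ and that the four are independent). Comparing with Example~\ref{216}, the rows of the displayed matrix span exactly this $W$, so $W = W_G$.

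The main obstacle is bookkeeping the rescalings cleanly: one must check that the three independent coordinate rescalings used (of $e_2$ relative to $e_1$ within $H_1$, of $e_4$ within $H_2$, of $e_6$ within $H_5$, plus one overall scalar per generator) suffice to normalize both generators of the $2$-dimensional space $W^\perp$ simultaneously, and that no further freedom remains — equivalently, that the stabilizer of the normalized configuration in the torus acting coordinatewise is trivial modulo overall scaling. I expect this to come down to a small explicit linear-algebra argument: the "handle forces a fixed coordinate ratio" observation is the crux, and everything else is a dimension count plus verifying the $4 \times 6$ matrix has the right row span. Alternatively, one can bypass the normalization narrative by simply exhibiting $W_G$ as \emph{a} realization (it realizes the prism matroid, as shown after Definition~\ref{43} and in Example~\ref{216}) and then arguing any two realizations of a connected matroid that differ only in the torus action are related by coordinate rescaling once the "handle ratio" constraints pin down the off-diagonal structure.
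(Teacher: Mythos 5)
Your route is genuinely different from the paper's and is viable in outline. For the uniqueness statement the paper works on the primal side: it picks a basis of $W$ whose coefficient matrix has an identity block in the columns $e_2,e_4,e_6,e_5$ and then uses the three circuits to force zero/nonzero patterns, normalizing by row operations and scalings of $E$. You instead normalize the $2$-dimensional dual $W^\perp$, whose nonzero vectors of minimal support are supported exactly on the circuits, and recover $W$ as the annihilator; this trades the $4\times 6$ reduction for a computation with two vectors, at the cost of having to justify the key "handle forces a fixed coordinate ratio" fact. For the combinatorial part the paper gets two circuits from Lemma~\ref{16}.\eqref{16d} and produces the third by strong circuit elimination.

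There are, however, three concrete gaps. First, your argument that all three $4$-sets are circuits does not work as stated: the two circuits $H_1\sqcup H_2$ and $H_1\sqcup H_3$ already cover $E$, so "every element lies in some circuit" proves nothing, and your fallback claim is false --- with only those two circuits, $H_1,H_2,H_3$ would still be the maximal handles (e.g.\ $H_2\cup H_3$ is not a handle because $H_1\sqcup H_2$ meets it without containing it). What rules this out is that in a connected matroid any two elements lie in a common circuit (see \cite[Prop.~4.1.2]{Oxl11}, the relation the paper uses to define components): applying this to a pair in $H_2\times H_3$ forces $H_2\sqcup H_3\in\C_\M$, and similarly for the other pairs; alternatively use circuit elimination as in the paper. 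You should also explicitly exclude a single $H_i$ from being a circuit (Lemma~\ref{16}.\eqref{16d} plus connectedness). Second, the crux you flag but do not prove --- that the handle $\set{e_1,e_2}$ forces a fixed ratio between the coordinates $e_1,e_2$ on all of $W^\perp$ --- does need an argument. It is true: for a proper $2$-handle $H$ in a connected matroid, $E\setminus H$ is a hyperplane (no circuit can put $e_1$ or $e_2$ in its closure, since such a circuit would meet $H$ without containing it, and $\rk(E\setminus H)=\rk\M-1$ by the proof of Lemma~\ref{16}.\eqref{16c}), so $H$ is a cocircuit and $e_1^\vee,e_2^\vee$ are parallel in $\M^\perp$, which is exactly proportionality of the two coordinate functionals on $W^\perp$; alternatively, the existence of the third circuit supported on $H_2\sqcup H_3$ forces the $e_1{:}e_2$ ratios of your two spanning vectors to agree. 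Third, a sign slip: once you normalize the spanning vectors of $W^\perp$ to be all ones on their supports, the listed $w^i$ do \emph{not} annihilate them unless $\ch\KK=2$ (e.g.\ $e_1+e_2$ pairs to $2$ against $(1,1,1,1,0,0)$); the cycle vectors of the oriented theta graph have entries $\pm1$, so one further sign rescaling of, say, $e_2,e_4,e_6$ is needed before the final "verify directly" step. This is harmless, since it stays within the allowed scaling of $E$, but the verification fails as written.
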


\begin{proof}
Each circuit $C\in\C_\M$ is a (non-empty) disjoint union of $H_1,H_2,H_3$ (see Definition~\ref{126}).
By Lemma~\ref{16}.\eqref{16d}, no $H_i$ is a circuit, but each $H_i$ is properly contained in one.
By hypothesis, $E$ is not a maximal handle and hence $E\not\in\C_\M$.
Up to renumbering $H_1,H_2,H_3$, this yields circuits $H_2\sqcup H_3$ and $H_1\sqcup H_3$.
By the strong circuit elimination axiom (see \cite[Prop.~1.4.12]{Oxl11}), there is a third circuit $H_1\sqcup H_2$.
Then
\[
\C_\M=\set{C_1,C_2,C_3},\quad C_1=H_2\sqcup H_3,\quad C_2=H_1\sqcup H_3,\quad C_3=H_1\sqcup H_2,
\]
identifies with the circuits of the prism matroid.
It follows that $\M$ must be the prism matroid.

Let $W\subseteq\KK^E$ be any realization of $\M$.
Then $\dim W=\rk\M=4$ (see \eqref{202} and \eqref{127}).
Pick a basis $w=(w^1,\dots,w^4)$ of $W$ and denote by $A=(w^i_j)_{i,j}$ the coefficient matrix (see Remark~\ref{65}).
We may assume that columns $2,4,6,5$ of $A$ form an identity matrix.
Since $C_1$ and $C_2$ are circuits, $w^1_3=0\ne w^2_3$ and $w^2_1=0\ne w^1_1$.
Thus,
\[
A=
\begin{pmatrix}
* & 1 & 0 & 0 & 0 & 0 \\
0 & 0 & * & 1 & 0 & 0 \\
* & 0 & * & 0 & 0 & 1 \\
* & 0 & * & 0 & 1 & 0
\end{pmatrix}.
\]
Since $C_3$ is a circuit, suitably replacing $w^3,w^4\in\ideal{w^3,w^4}$, reordering $H_3$ and scaling $e_1,e_3$ makes
\[
A=
\begin{pmatrix}
* & 1 & 0 & 0 & 0 & 0 \\
0 & 0 & * & 1 & 0 & 0 \\
0 & 0 & 0 & 0 & * & 1 \\
1 & 0 & 1 & 0 & 1 & 0
\end{pmatrix},
\]
where $w^1_1,w^2_3,w^3_5\ne0$.
Now suitably scaling first $w^1,w^2,w^3$ and then $e_2,e_4,e_6$ makes
\[
A=
\begin{pmatrix}
1 & 1 & 0 & 0 & 0 & 0 \\
0 & 0 & 1 & 1 & 0 & 0 \\
0 & 0 & 0 & 0 & 1 & 1 \\
1 & 0 & 1 & 0 & 1 & 0
\end{pmatrix}.
\]
Now $w=(w^1,\dots,w^4)$ is the desired basis.
\end{proof}


The following classes of matroids play a distinguished role in connection with $3$-connectedness.


\begin{exa}[Wheels and whirls]\label{145}
For $n\ge2$, the \emph{wheel graph} $W_n$ in Figure~\ref{147} is obtained from an $n$-cycle, the \enquote{rim}, by adding an additional vertex and edges, the \enquote{spokes}, joining it to each vertex in the rim.
There is a partition of the set of edges 
\[
E=S\sqcup R,\quad S=\set{s_1,\dots,s_n},\quad R=\set{r_1,\dots,r_n},
\]
into the set $S$ of spokes and the set $R$ of edges in the rim.
The symmetry suggests to use a cyclic index set $\ZZ_n:=\ZZ/n\ZZ=\set{1,\dots,n}$.


\begin{figure}[h]
\begin{tikzpicture}[scale=1,baseline=(current bounding box.center)]
\tikzstyle{root}=[circle,draw,inner sep=1.2pt,fill=black]
\draw (0,0) -- (0:3) node [root] {} -- (30:3) node [root] {}
(5:2.2) node {$s_n$}
(15:3.1) node {$r_n$};
\draw \foreach \x in {1,...,9} {
(0,0) -- (\x*30:3) -- (\x*30+30:3) node [root] {} 
(\x*30+6:2.2) node {$s_\x$}
(\x*30+15:3.1) node {$r_\x$}
};
\draw (0,0) node [root] {} -- (300:3) node [root] {};
\draw [line width=1pt,line cap=round,dash pattern={on 0pt off 4pt},dash phase=2pt,domain=300:360] plot ({3*cos(\x)}, {3*sin(\x)});
\end{tikzpicture}
\caption{The wheel graph $W_n$.}\label{147}
\end{figure}


For $n\ge 3$, the \emph{wheel matroid} is the graphic matroid $\W_n:=\M_{W_n}$ on $E$.
For $n\ge 2$, the \emph{whirl matroid} is the (non-graphic) matroid on $E$ obtained from $\M_{W_n}$ by \emph{relaxation} of the rim $R$, that is,
\[
\B_{\W^n}:=\B_{\M_{W_n}}\sqcup\set{R}.
\]
In terms of circuits, this means that
\[
\C_{\W^n}=\C_{\M_{W_n}}\setminus R\sqcup\set{\set{s}\sqcup R\mid s\in S}.
\]
The matroids $\W_n$ and $\W^n$ are $3$-connected (see \cite[Exa.~8.4.3]{Oxl11}) of rank
\[
\rk\W_n=n=\rk\W^n.
\]

For each $i\in\ZZ_n$, $\set{s_i,r_i,s_{i+1}}$ is a triangle and $\set{r_i,r_{i+1},s_{i+1}}$ a triad.
Conversely, this property enforces $\M\in\set{\W_n,\W^n}$ for any connected matroid $\M$ on $E$ (see \cite[(6.1)]{Sey80}).
\end{exa}


We describe all realizations of wheels and whirls up to equivalence.
In particular, we recover the well-known fact that whirls are not binary.


\begin{lem}[Realizations of wheels and whirls]\label{134}
Let $W\subseteq\KK^E$ be any realization of $\M\in\set{\W_n,\W^n}$.
Up to scaling $E=S\sqcup R$, $W$ has a basis
\begin{equation}\label{136}
w^1=s_1+r_1-t\cdot r_n,\quad w^i=s_i+r_i-r_{i-1},\quad i=2,\dots,n,
\end{equation}
where $t=1$ if $\M=\W_n$, and $t\in\KK\setminus\set{0,1}$ if $\M=\W^n$.
\end{lem}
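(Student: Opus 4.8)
The plan is to produce an explicit basis for any realization $W$ of $\M\in\{\W_n,\W^n\}$ by exploiting the triangles $\{s_i,r_i,s_{i+1}\}$ and triads $\{r_i,r_{i+1},s_{i+1}\}$ identified in Example~\ref{145}, normalizing coordinates step by step via the scaling action on $E=S\sqcup R$ together with a change of basis inside $W$. First I would record that $\dim W=\rk\M=n$ (see \eqref{202}), pick an arbitrary basis of $W$, and write the $n\times 2n$ coefficient matrix $A$ with respect to $E$ as in Remark~\ref{65}, with columns indexed $s_1,\dots,s_n,r_1,\dots,r_n$.

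The key normalization step is to use the spokes. Since $\{s_i,r_i,s_{i+1}\}$ is a triangle for each $i$, the three corresponding columns of $A$ are pairwise independent but together dependent, so $r_i$ is a nonzero combination of $s_i$ and $s_{i+1}$ (note $S$ is independent, being the complement of the rim in a spanning tree/basis, hence the spoke columns are linearly independent and can after a change of basis in $W$ be taken as the standard basis vectors $\varepsilon_1,\dots,\varepsilon_n$). With the spokes made standard, column $r_i$ becomes $\alpha_i\varepsilon_i+\beta_i\varepsilon_{i+1}$ with $\alpha_i,\beta_i\ne 0$ for each $i\in\ZZ_n$; scaling the rim edges $r_i$ then lets me assume $\alpha_i=1$ for all $i$, and scaling the spokes $s_i$ (which rescales the $\varepsilon_i$) lets me further normalize all but one of the $\beta_i$, say $\beta_1,\dots,\beta_{n-1}$, to $-1$, leaving one free parameter $t:=-\beta_n$. (One must track carefully that the cyclic scaling of the $\varepsilon_i$ has exactly a one-dimensional cokernel, which is why exactly one $\beta_i$ survives as a genuine invariant.) Reading off the rows of the resulting matrix gives precisely the basis \eqref{136}: $w^1=s_1+r_1-t\,r_n$ and $w^i=s_i+r_i-r_{i-1}$ for $i=2,\dots,n$.

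Finally I would pin down the value of $t$ by distinguishing the two matroids: the rim $R=\{r_1,\dots,r_n\}$ is a circuit of $\W_n$ but a basis of $\W^n$. The columns $r_1,\dots,r_n$ of the normalized matrix are $\varepsilon_i-t\,\varepsilon_{i+1}$-type vectors (with the single $-t$ in position $n$), whose determinant is $1-(-1)^?\,t$; computing this circulant-type determinant shows it vanishes iff $t=1$. Hence $t=1$ recovers the wheel $\W_n$ and $t\in\KK\setminus\{0,1\}$ gives the whirl $\W^n$ (and conversely every such $t\ne 0$ yields a valid realization, since the triangle/triad conditions force $\M\in\{\W_n,\W^n\}$ by \cite[(6.1)]{Sey80}); in particular $\W^n$ needs $t\notin\{0,1\}$, so it is not realizable over $\FF_2$.

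The main obstacle I expect is the bookkeeping of the normalization: making precise that the torus of diagonal scalings on $E$, acting through the induced change of basis on $W$, has just enough freedom to kill all the $\alpha_i$ and all but one $\beta_i$ — equivalently, that the only invariant of the cyclic data $(\beta_i)$ under this action is the product $\prod_i\beta_i$ (up to sign conventions) — and checking that this surviving invariant is exactly what separates $\W_n$ from $\W^n$. Everything else is a routine finite computation with the circulant matrix.
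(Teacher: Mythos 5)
Your proof is correct and follows essentially the same route as the paper: normalize the spoke columns to the identity matrix using $S\in\B_\M$, let the triangles $\set{s_i,r_i,s_{i+1}}$ force each rim column to be supported on rows $i,i+1$ with nonzero entries, scale away all parameters but one, and determine $t$ from $R$ being a circuit of $\W_n$ resp.\ a basis of $\W^n$ (with $t\ne0$ already forced by the triangle at $r_n$). The only cosmetic difference is bookkeeping: you organize the scaling via the residual torus action with invariant $\prod_i\beta_i$, whereas the paper scales $r_1,w^2,r_2,w^3,\dots,s_1,\dots,s_n$ successively.
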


\begin{proof}
Since $S\in\B_\M$, we may assume that the coefficients of $s_j$ in $w^i$ form an identity matrix, that is, $w^i_{s_j}=\delta_{i,j}$.
The triangle $\set{s_j,r_j,s_{j+1}}$ then forces $w^j_{r_j},w^{j+1}_{r_j}\ne0$ and $w^i_{r_j}=0$ for all $i\in\ZZ_n\setminus\set{j,j+1}$.
Suitably scaling $r_1,w^2,r_2,w^3,\dots,r_{n-1},w^n,s_1,\dots,s_n$ successively yields \eqref{136}.
The claim on $t$ follows from $R\in\C_{\W_n}$ and $R\in\B_{W^n}$, respectively.
\end{proof}

\section{Configuration polynomials and forms}\label{37}

In this section, we develop Bloch's strategy of putting graph polynomials into the context of configuration polynomials and configuration forms.
We lay the foundation for an inductive proof of our main result using a handle decomposition.
In the process, we generalize some known results on graph polynomials to configuration polynomials.

\subsection{Configuration polynomials}\label{24}

To prepare the definition of configuration polynomials we introduce some notation.

Let $W\subseteq\KK^E$ be a configuration, and let $S\subseteq E$ be any subset.
Compose the associated inclusion map with $\pi_S$ to a map (see \eqref{206})
\begin{equation}\label{62}
\xymatrix{
\alpha_{W,S}\colon W\ar@{^(->}[r] & \KK^E\ar[r]^-{\pi_S} & \KK^S.
}
\end{equation}
Fix an isomorphism
\begin{equation}\label{156}
\xymatrix{
c_W\colon\KK\ar[r]_-\cong & \bigwedge^{\dim W}W
}
\end{equation}
and set $c_0:=\id_\KK$ for the zero vector space.
Any basis of $W$ gives rise to such an isomorphism and any two such isomorphisms differ by a nonzero multiple $c\in \KK^*$.
Up to sign or ordering $E$, we identify
\begin{equation}\label{152}
\bigwedge^\abs{S}\KK^S=\KK,\quad\mathop{\wedge}\limits_{s\in S}s\mapsto 1,
\end{equation}
as based vector spaces.
Suppose that $\abs{S}=\dim W$.
Then the determinant
\begin{equation}\label{35}
\xymatrix@C=4em{
\det\alpha_{W,S}\colon\KK\ar[r]^-{c_W}_-\cong &
\bigwedge^\abs{S}W\ar[r]^-{\bigwedge^\abs{S}\alpha_{W,S}} &
\bigwedge^\abs{S}\KK^S=\KK
}
\end{equation}
is defined up to sign.
Its square
\begin{equation}\label{39}
c_{W,S}:=(\det\alpha_{W,S})^2\in\KK
\end{equation}
is defined up to a factor $c^2$ for some $c\in\KK^*$ independent of $S$.
Note that $\det\alpha_{0,\emptyset}=\id_\KK$ and hence $c_{0,\emptyset}=1$.
By definition (see \eqref{198}),
\begin{equation}\label{205}
c_{W,S}\ne0\iff S\in\B_{\M_W}.
\end{equation}


\begin{rmk}[Compatibility of coefficients with restriction]\label{157}
Let $W\subseteq\KK^E$ be a configuration, and let $S\subseteq F\subseteq E$ with $\abs{S}=\dim W$. 
Then the maps \eqref{62} for $W$ and $W\vert_F$ form a commutative diagram
\[
\xymatrix{
W\ar@/^2pc/[rr]^-{\alpha_{W,S}}\ar[d]_{\pi_F\vert_W}^-\cong\ar@{^(->}[r] & \KK^E\ar[d]_{\pi_F}\ar[r]^-{\pi_S} & \KK^S\ar@{=}[d]\\
W\vert_F\ar@/_2pc/[rr]_-{\alpha_{W\vert_F,S}}\ar@{^(->}[r] & \KK^F\ar[r]^-{\pi_S} & \KK^S
}
\]
and hence $c_{W,S}=c^2\cdot c_{W\vert_F,S}$ for some $c\in\KK^*$ independent of $S$.
\end{rmk}


Consider the dual basis $E^\vee=(e^\vee)_{e\in E}$ of $E$ as coordinates on $\KK^E$,
\begin{equation}\label{159}
x_e:=e^\vee,\quad \partial_e:=\frac\partial{\partial x_e},\quad e\in E.
\end{equation}
Given an enumeration of $E=\set{e_1,\dots,e_n}$, we write 
\[
x_i:=x_{e_i},\quad\partial_i:=\partial_{e_i},\quad i=1,\dots,n.
\]
For any subset $S\subseteq E$, we set
\begin{equation}\label{214}
x_S:=(x_e)_{e\in S},\quad x^S:=\prod_{e\in S}x_e,\quad x:=x_E.
\end{equation}


\begin{dfn}[Configuration polynomials]\label{48}
Let $W\subseteq\KK^E$ be a realization of a matroid $\M$.
Then the \emph{configuration polynomial} of $W$ is (see \eqref{39})
\[
\psi_W:=\sum_{B\in\B_\M}c_{W,B}\cdot x^B\in\KK[x].
\]
\end{dfn}


\begin{rmk}[Well-definedness of configuration polynomials]\label{150}
Any two isomorphisms $c_W$ (see \eqref{156}) differ by a nonzero multiple $c\in \KK^*$.
Using the isomorphism $c\cdot c_W$ in place of $c_W$ replaces $\psi_W$ by $c^2\cdot\psi_W$.  
In other words, $\psi_W$ is well-defined up to a  nonzero constant square factor.
Whenever $\psi_W$ occurs in a formula, we mean that the formula holds true for a suitable choice of such a factor.
\end{rmk}


\begin{rmk}[Equivalence of configuration polynomials]\label{160}
Dividing $e\in E$ by $c\in\KK^*$ multiplies both $x_e=e^\vee$ (see Remark~\ref{68}) and the identifications \eqref{152} with $e\in S$ by $c$.
For each $e\in B\in\B_\M$, this multiplies $c_{W,B}$ by $c^2$ and $x^B$ by $c$.
This is equivalent to substituting $c^3\cdot x_e$ for $x_e$ in $\psi_W$.
Scaling $E$ thus results in scaling $x$ in $\psi_W$.

However, dropping the equality \eqref{159} and scaling $e\in E$ for fixed $x_e$ replaces $W$ in $\psi_W$ by a projectively equivalent realization (see \cite[\nopp \S 6.3]{Oxl11}).
If $\M$ is binary, then all realizations of $\M$ over $\KK$ are projectively equivalent (see \cite[Prop.~6.6.5]{Oxl11}).
The corresponding configuration polynomials are geometrically equivalent in this case.
In general, however, there are geometrically different configuration polynomials for fixed $\M$ and $\KK$ (see Example~\ref{167}).
\end{rmk}


\begin{rmk}[Degree of configuration polynomials]\label{6}
Let $W\subseteq\KK^E$ be a realization of a matroid $\M$.
Then (see \eqref{202} and \eqref{205})
\[
\deg\psi_W=\rk\M=\dim W.
\]
In particular, $\psi_W\ne0$, and $\psi_W=1$ if and only if $\rk\M=0$.
By definition, $\psi_W$ is independent of (divided by) $x_e$ if and only if $e\in E$ is a (co)loop in $\M$.
\end{rmk}


\begin{rmk}[Matroid polynomials and regularity]\label{149}
For any matroid $\M$, not necessarily realizable, there is a \emph{matroid (basis) polynomial}
\[
\psi_{\M}:=\sum_{B\in\B_\M}x^B.
\]
If $\M$ is regular, then $\psi_W=\psi_\M$ for any totally unimodular realization $W$ of $\M$ over $\KK$.
Conversely, this equality for some realization $W$ over $\KK$ with $\ch\KK=0$ establishes regularity of $\M$.
For regular $\M$, all configuration polynomials over $\KK$ are geometrically equivalent (see Remark~\ref{160}).
In general, however, $\psi_W$ and $\psi_\M$ are geometrically different (see Example~\ref{55}).
\end{rmk}


\begin{exa}[Configuration polynomials of uniform matroids]\label{122}
Let $W\subseteq\KK^E$ be a realization of a uniform  matroid $\M=\U_{r,n}$ (see Example~\ref{119}).

\begin{asparaenum}[(a)]

\item\label{122a} Suppose that $\M=\U_{n,n}$ is a free matroid.
Then $E\in\B_\M$ and 
\[
\psi_W=x^E
\]
is the elementary symmetric polynomial of degree $n$ in $n$ variables.

\item\label{122b} Suppose that $\M=\U_{n-1,n}$ is a circuit.
Then $E\in\C_\M$ and by Remark~\ref{157} and \eqref{122a}
\[
\psi_{W}=\sum_{e\in E}\psi_{W\setminus e},\quad \psi_{W\setminus e}=x^{E\setminus\set{e}}.
\]
A priori, substituting $x^{E\setminus\set{e}}$ for $\psi_{W\setminus e}$ in $\psi_{W}$ is invalid (see Remark~\ref{150}).
However, this can be achieved as follows:
Ordering $E=\set{e_1,\dots,e_n}$, $W$ has a basis $w^i=e_i+c_i\cdot e_n$ with $c_i\in\KK^*$ where $i=1,\dots,n-1$.
Scaling first $w^1,\dots,w^{n-1}$ and then $e_1,\dots,e_{n-1}$ makes $c_1=\dots=c_{n-1}=1$.
This turns $\psi_W$ into
\[
\psi_W=\sum_{e\in E}x^{E\setminus\set{e}},
\]
the elementary symmetric polynomial of degree $n-1$ in $n$ variables.

\item\label{122c} If $\M=\U_{n-2,n}$, then $\M$ has $n\choose n-2$ bases, and $\psi_W$ has $n\choose n-2$ monomials whose coefficients depend on the choice of $W$.
For instance, the row span $W$ of the matrix
\[
\begin{pmatrix}
1 & 0 & 1 & 1\\ 0 & 1 & 1 & -1
\end{pmatrix}
\]
realizes $\U_{2,4}$ and
\[
\psi_W=x_1x_2+x_1x_3+x_1x_4+x_2x_3+x_2x_4+4x_3x_4.
\]
Realizations of $\U_{2,n}$ are treated in Example~\ref{148}.\qedhere

\end{asparaenum}
\end{exa}


In the following, we put matroid connectivity in correspondence with irreducibility of configuration polynomials.


\begin{prp}[Connectedness and irreducibility]\label{4}
Let $\M$ be a matroid of rank $\rk\M\ge1$ with realization $W\subseteq\KK^E$.
Then $\M$ is connected if and only if $\M$ has no loops and $\psi_W$ is irreducible.
In particular, if $\M=\bigoplus_{i=1}^n\M_i$ with connected components $\M_i$ and induced decomposition $W=\bigoplus_{i=1}^nW_i$ (see Lemma~\ref{72}), then $\psi_W=\prod_{i=1}^n\psi_{W_i}$ where $\psi_{W_i}$ is irreducible if $\rk\M_i\ge1$, and $\psi_{W_i}=1$ otherwise.
\end{prp}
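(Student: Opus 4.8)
The plan is to prove the ``in particular'' statement first and then derive the equivalence from it. For the multiplicativity, observe that if $\M=\bigoplus_{i=1}^n\M_i$ with $W=\bigoplus_i W_i$ and underlying partition $E=\bigsqcup_i E_i$, then every basis $B\in\B_\M$ splits uniquely as $B=\bigsqcup_i B_i$ with $B_i\in\B_{\M_i}$ by \eqref{209}, and conversely. Choosing $c_W$ compatibly with the $c_{W_i}$ via the wedge of the direct sum, the coefficient $c_{W,B}$ factors as $\prod_i c_{W_i,B_i}$ (up to the usual global square constant), and $x^B=\prod_i x^{B_i}$. Summing over all bases and distributing the product gives $\psi_W=\prod_{i=1}^n\psi_{W_i}$. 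For a component of rank zero we get $\psi_{W_i}=1$ by Remark~\ref{6}, so these factors may be dropped; for the others $\rk\M_i\ge1$, and irreducibility of each such $\psi_{W_i}$ is exactly the forward direction of the equivalence applied to the connected matroid $\M_i$ (which has no loops since a loop would be its own component).

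So the crux is the equivalence for a single matroid $\M$ of rank $\ge1$. For the ``only if'' direction: assume $\M$ is connected. Then $\M$ has no loops (a loop would be a proper direct summand, as noted after \eqref{209}), so every variable $x_e$ actually occurs in $\psi_W$ (Remark~\ref{6}). To show $\psi_W$ is irreducible, note $\psi_W$ is multilinear (squarefree in each $x_e$, since $\psi_W=\det Q_W$ has degree $\rk\M$ and each monomial is $\prod_{e\in B}x_e$ over a basis). Suppose $\psi_W=fg$ with $f,g$ nonconstant. Multilinearity forces a partition $E=E_f\sqcup E_g$ with $f\in\KK[x_{E_f}]$, $g\in\KK[x_{E_g}]$, both partitions nonempty. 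I then want to show this forces $\M$ to be a nontrivial direct sum. The key combinatorial fact to extract is: if $\psi_W$ factors through a partition $E=E_f\sqcup E_g$, then every basis $B$ of $\M$ meets $E_f$ in a set of fixed size $r_f$ and $E_g$ in a set of fixed size $r_g$ with $r_f+r_g=\rk\M$; moreover the supports of $f$ and $g$ are exactly the ``$E_f$-parts'' and ``$E_g$-parts'' of bases. From $\B_\M=\{B_f\sqcup B_g\}$ ranging over such pairs one reads off (via the basis-exchange characterization, \cite[Prop.~4.2.7]{Oxl11}) that $\M=\M\vert_{E_f}\oplus\M\vert_{E_g}$, contradicting connectedness. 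Thus $\psi_W$ is irreducible.

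For the ``if'' direction: assume $\M$ has no loops and $\psi_W$ is irreducible, and suppose for contradiction that $\M$ is disconnected, say $\M=\M_1\oplus\M_2$ properly. Since $\M$ has no loops and $\rk\M\ge1$, at least one $\M_i$ has rank $\ge1$; if both have rank $\ge1$ then $\psi_W=\psi_{W_1}\psi_{W_2}$ is a product of two polynomials of positive degree by Remark~\ref{6}, contradicting irreducibility. If some $\M_i$ has rank $0$, being loopless it must be empty, so that summand is improper — contradiction. Hence $\M$ is connected.

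\textbf{Main obstacle.} The delicate point is the purely matroid-theoretic step in the ``only if'' direction: deducing from the algebraic factorization $\psi_W=fg$ through a partition $E=E_f\sqcup E_g$ that $\M$ genuinely decomposes as a direct sum along that partition. One must argue that no basis can ``leak'' between the two blocks, i.e.\ that the intersection sizes $|B\cap E_f|$ are constant over all bases $B$, and then invoke the exchange property to identify $\B_\M$ with $\B_{\M\vert_{E_f}}\times\B_{\M\vert_{E_g}}$; the multilinearity of $\psi_W$ and the fact that distinct bases give distinct (nonzero-coefficient) monomials are what make this work, but it needs to be written carefully.
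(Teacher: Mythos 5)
Your proposal is correct and follows essentially the same route as the paper: multiplicativity of $\psi_W$ over direct summands via factorization of the coefficients $c_{W,B}$, and, conversely, a nontrivial factorization of the squarefree homogeneous $\psi_W$ forces a variable-disjoint splitting $E=E_1\sqcup E_2$ along which bases split and recombine into a proper direct sum. The delicate step you flag is handled in the paper exactly as you sketch it: homogeneity of the factors makes $\abs{B\cap E_i}$ constant, so by \eqref{153} each $B\cap E_i$ is a basis of $\M\vert_{E_i}$, and the no-cancellation monomial argument via \eqref{205} and \eqref{209} yields $\B_\M=\B_{\M\vert_{E_1}\oplus\M\vert_{E_2}}$, hence disconnectedness.
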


\begin{proof}
First suppose that $\M=\M_1\oplus\M_2$ is disconnected with underlying proper partition $E=E_1\sqcup E_2$.
By Lemma~\ref{72}, $W=W_1\oplus W_2$ where $W_i\subseteq\KK^{E_i}$ realizes $\M_i$.
Then $\alpha_{W,B}=\alpha_{W_1,B_1}\oplus\alpha_{W_2,B_2}$ and hence $c_{W,B}=c_{W_1,B_1}\cdot c_{W_2,B_2}$ for all $B=B_1\sqcup B_2\in\B_\M$ where $B_i\in\B_{\M_i}$ for $i=1,2$ (see \eqref{209}).
It follows that $\psi_W=\psi_{W_1}\cdot\psi_{W_2}$.
This factorization is proper if $\M$ and hence each $\M_i$ has no loops (see Remark~\ref{6}).
Thus, $\psi_W$ is reducible in this case.

Suppose now that $\psi_W$ is reducible.
Then
\[
\psi_W=\psi_1\cdot\psi_2
\]
with $\psi_i$ homogeneous non-constant for $i=1,2$.
Since $\psi_W$ is a linear combination of square-free monomials (see Definition~\ref{48}), this yields a proper partition $E=E_1\sqcup E_2$ such that $\psi_i\in\KK[x_{E_i}]$ for $i=1,2$.
Set
\begin{equation}\label{211}
\M_i:=\M\vert_{E_i},\quad i=1,2.
\end{equation}

Each basis $B\in\B_\M$ indexes a monomial $x^B$ in $\psi_W$ (see \eqref{205}).
Set $B_i:=B\cap E_i\in\I_{\M_i}$ for $i=1,2$ (see \eqref{153}).
Then $x^B=x^{B_1}\cdot x^{B_2}$ where $x^{B_i}$ is a monomial in $\psi_i$ for $i=1,2$.
By homogeneity of $\psi_i$, $B_i\in\B_{\M_i}$ for $i=1,2$ and hence $B=B_1\sqcup B_2\in\B_{\M_1\oplus\M_2}$ (see \eqref{209}).
It follows that $\B_\M\subseteq\B_{\M_1\oplus\M_2}$.

Conversely, let $B=B_1\sqcup B_2\in\B_{\M_1\oplus\M_2}$ where $B_i\in\B_{\M_i}$ for $i=1,2$.
Then $B_i=B_i'\cap E_i$ for some $B_i'\in\B_\M$ for $i=1,2$ (see \eqref{153} and \eqref{211}).
As above, $x^{B_i}$ is a monomial in $\psi_i$ for $i=1,2$.
Then $x^B=x^{B_1}\cdot x^{B_2}$ is a monomial in $\psi_W$ and hence $B\in\B_\M$ (see \eqref{205}).
It follows that $\B_\M\supseteq\B_{\M_1\oplus\M_2}$ as well.

So $\M=\M_1\oplus \M_2$ is a proper decomposition and $\M$ is disconnected.

This proves the equivalence and the particular claims follow.
\end{proof}


We use the following well-known fact from linear algebra.


\begin{rmk}[Determinant formula]\label{56}
Consider a short exact sequence of finite dimensional $\KK$-vector spaces
\[
\xymatrix{
0\ar[r] & W\ar[r] & V\ar[r] & U\ar[r] & 0.
}
\]
Abbreviate $\bigwedge V:=\bigwedge^{\dim V}V$.
There is a unique isomorphism
\[
\bigwedge W\otimes\bigwedge U=\bigwedge V
\]
that fits into a commutative diagram of canonical maps
\[
\xymatrix{
\bigwedge W\otimes \bigwedge^{\dim U}V\ar[d]\ar[r] & \bigwedge^{\dim W} V\otimes \bigwedge^{\dim U}V\ar[d]\\
\bigwedge W\otimes \bigwedge U\ar@{=}[r] & \bigwedge V.
}
\]
Tensored with
\[
(\bigwedge U)^\vee=\bigwedge(U^\vee),\quad
(\bigwedge W)^\vee=\bigwedge(W^\vee),
\]
respectively, it induces identifications
\[
\bigwedge W=\bigwedge V\otimes\bigwedge U^\vee,\quad
\bigwedge U=\bigwedge W^\vee\otimes\bigwedge V.
\]
Consider a commutative diagram of finite dimensional $\KK$-vector spaces with short exact rows
\[
\xymatrix{
0\ar[r] & W\ar[d]_-\alpha^-\cong\ar[r] & V\ar[d]_-\gamma^-\cong\ar[r] & U\ar[r] & 0\\
0 & U'\ar[l] & V'\ar[l] & W'\ar[u]^-\beta_-\cong\ar[l] & 0.\ar[l]
}
\]
Then the above identifications for both rows fit into a commutative diagram
\[
\xymatrix{
\bigwedge W\ar[d]_-{\bigwedge\alpha}^-\cong\ar@{=}[r] & \bigwedge W\otimes\bigwedge U\otimes\bigwedge U^\vee\ar[d]_-{\bigwedge\alpha\otimes\bigwedge\beta^{-1}\otimes\bigwedge\beta^\vee}^-\cong\ar@{=}[r] & \bigwedge V\otimes\bigwedge U^\vee\ar[d]_-{\bigwedge\gamma\otimes\bigwedge\beta^\vee}^-\cong\\
\bigwedge U'\ar@{=}[r] & \bigwedge U'\otimes\bigwedge W'\otimes\bigwedge W'^\vee\ar@{=}[r] & \bigwedge V'\otimes\bigwedge W'^\vee.
}
\]
\end{rmk}


The following result of Bloch, Esnault and Kreimer describes the behavior of configuration polynomials under duality (see \cite[Prop.~1.6]{BEK06}).


\begin{prp}[Dual configuration polynomials]\label{30}
Let $W\subseteq\KK^E$ be a realization of a matroid $\M$.
For a suitable choice of $c_W$ (see \eqref{156}),
\[
\det\alpha_{W^\perp,S^\perp}=\det\alpha_{W,S}
\]
for all $S\subseteq E$ of size $\abs{S}=\rk\M$.
In particular,
\[
\psi_{W^\perp}=x^{E^\vee}\cdot\psi_W((x^{-1}_{e^\vee})_{e\in E}).
\]
\end{prp}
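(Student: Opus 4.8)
The plan is to prove the statement on determinants first and then derive the polynomial identity as a formal consequence. Recall the defining short exact sequence $0\to W\to\KK^E\to\KK^E/W\to 0$ whose $\KK$-dual is $0\to W^\perp\to\KK^{E^\vee}\to W^\vee\to 0$ (using $W^\perp=(\KK^E/W)^\vee$ from Definition~\ref{33}.\eqref{33a} and $(\KK^E)^\vee=\KK^{E^\vee}$ from \eqref{188}). Fix a subset $S\subseteq E$ with $\abs{S}=\rk\M=\dim W$, so that $\abs{E\setminus S}=\abs{E}-\rk\M=\rk\M^\perp=\dim W^\perp$ and $S^\perp=\nu(E\setminus S)$ indexes a candidate basis for $W^\perp$. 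The map $\alpha_{W,S}\colon W\to\KK^S$ is, up to the identification $\KK^S=\KK^E/\KK^{E\setminus S}$, exactly the composite $W\to\KK^E\onto\KK^E/\KK^{E\setminus S}$; dually, $\alpha_{W^\perp,S^\perp}\colon W^\perp\to\KK^{S^\perp}$ is the composite $W^\perp\into\KK^{E^\vee}\onto\KK^{E^\vee}/\KK^{(E\setminus S)^\vee}$. The key observation is that both determinants compute, via Remark~\ref{56}, the comparison isomorphism between $\bigwedge^{\dim W}W$ (resp.\ $\bigwedge^{\dim W^\perp}W^\perp$) and a fixed exterior power of the coordinate space, and that these two comparisons are dual to one another.

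Concretely, I would set up the two commutative diagrams of short exact sequences to which Remark~\ref{56} applies. For $W$: the sequence $0\to W\cap\KK^{E\setminus S}\to W\to\KK^S$ need not be exact on the right in general, but precisely when $S\in\B_\M$ (equivalently $\det\alpha_{W,S}\ne0$) it is an isomorphism $W\cong\KK^S$, and when $S\notin\B_\M$ both sides of the claimed equality vanish, so it suffices to treat the case $S\in\B_\M$. In that case $E\setminus S\in\B_{\M^\perp}$ as well (by \eqref{212} and the identification $\nu$), so $\alpha_{W^\perp,S^\perp}$ is also an isomorphism, and both $\det\alpha_{W,S}$ and $\det\alpha_{W^\perp,S^\perp}$ are nonzero. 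Now apply Remark~\ref{56} to the short exact sequence $0\to W\to\KK^E\to\KK^E/W\to 0$: it identifies $\bigwedge W=\bigwedge\KK^E\otimes\bigwedge(\KK^E/W)^\vee=\bigwedge\KK^E\otimes\bigwedge W^\perp$. Choosing $c_W$ and $c_{W^\perp}$ so that $c_W\otimes c_{W^\perp}$ corresponds to the standard volume form $\bigwedge E$ on $\KK^E$ under this identification, I would then chase the determinant maps \eqref{35} through the two induced diagrams to see that $\det\alpha_{W,S}$ and $\det\alpha_{W^\perp,S^\perp}$ are computed by mutually dual (hence, with the standard volume normalization, equal) entries of the comparison isomorphism. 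This is the heart of the argument and essentially a careful bookkeeping of signs and which exterior powers are paired with which; the sign ambiguities in \eqref{152} and \eqref{35} mean we only need equality up to sign, which is exactly what is claimed.

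The main obstacle, and where I would spend the most care, is the sign/orientation bookkeeping: the identifications in \eqref{152} depend on an ordering of $S$, the isomorphism \eqref{156} is only fixed up to scalar, and the determinant in \eqref{35} is only defined up to sign, so one must check that a \emph{single} choice of $c_W$ works simultaneously for all $S$ of the right size. This is why the statement says "for a suitable choice of $c_W$": once $c_W$ is fixed, $c_{W^\perp}$ is determined by the duality pairing above, and the equality $\det\alpha_{W^\perp,S^\perp}=\det\alpha_{W,S}$ then holds for every $S$ with $\abs{S}=\rk\M$ simultaneously (both sides being zero unless $S\in\B_\M$). A clean way to organize this is to work with Remark~\ref{56}'s diagram for the data $\alpha=\id_W$, $\gamma=\id_{\KK^E}$ (so the "second row" is literally the dual sequence), which pins down the identification $\bigwedge W^\perp=\bigwedge W^\vee\otimes\bigwedge\KK^E$ functorially.

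Finally I would deduce the polynomial identity. Squaring $\det\alpha_{W^\perp,S^\perp}=\det\alpha_{W,S}$ gives $c_{W^\perp,S^\perp}=c_{W,S}$ for all $S$ with $\abs{S}=\rk\M$ (see \eqref{39}), in particular for all bases. By Definition~\ref{48} and \eqref{212},
\[
\psi_{W^\perp}=\sum_{B\in\B_\M}c_{W^\perp,B^\perp}\cdot x^{B^\perp}=\sum_{B\in\B_\M}c_{W,B}\cdot\prod_{e\in E\setminus B}x_{e^\vee}.
\]
Writing $\prod_{e\in E\setminus B}x_{e^\vee}=\bigl(\prod_{e\in E}x_{e^\vee}\bigr)\cdot\prod_{e\in B}x_{e^\vee}^{-1}=x^{E^\vee}\cdot\prod_{e\in B}x_{e^\vee}^{-1}$ and comparing with $\psi_W=\sum_{B\in\B_\M}c_{W,B}\,x^B$ evaluated at $x_e\mapsto x_{e^\vee}^{-1}$, we get exactly $\psi_{W^\perp}=x^{E^\vee}\cdot\psi_W\bigl((x^{-1}_{e^\vee})_{e\in E}\bigr)$, as claimed (with the usual understanding, per Remark~\ref{150}, that both $\psi_W$ and $\psi_{W^\perp}$ are only defined up to a common nonzero square constant, which matches the scalar ambiguity in $c_W$).
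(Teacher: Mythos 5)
Your proposal is correct and follows essentially the same route as the paper: dualize the defining short exact sequence $0\to W\to\KK^E\to\KK^E/W\to0$, reduce to the case $S\in\B_\M$ (both determinants vanishing otherwise), apply the determinant formula of Remark~\ref{56} to compare $\bigwedge^{\rk\M}W$ with $\bigwedge^{\abs{E}}\KK^E\otimes\bigwedge^{\rk\M^\perp}W^\perp$, and normalize $c_W$ (hence $c_{W^\perp}$) against the standard volume form so the comparison becomes an equality independent of $S$, after which squaring and summing over bases gives the polynomial identity. The only difference is presentational: the paper packages the diagram chase you describe as a single commutative diagram with vertical maps $\alpha_{W,S}$, $\nu$, and $\alpha_{W^\perp,S^\perp}^\vee$, but the content is the same.
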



\begin{proof}
Let $S\subseteq E$ be of size $\abs{S}=\rk\M$.
Then $S\in\B_\M$ if and only if $S^\perp\in\B_{\M^\perp}$ (see Remark~\ref{150}).
We may assume that this is the case as otherwise both determinants are zero.
Then there is a commutative diagram with exact rows
\[
\xymatrix{
0\ar[r] & W\ar[r]\ar[d]_-{\alpha_{W,S}}^-\cong & \KK^E\ar[r]\ar[d]_-\nu^-\cong & \KK^E/W\ar[r] & 0\\
0 & \KK^S\ar[l] & \KK^{E^\vee}\ar[l]_-{\pi_S\circ\nu^{-1}} & \KK^{S^\perp}\ar[l]_-{\pi_{S^\perp}^\vee}\ar[u]_-{\alpha_{W^\perp,S^\perp}^\vee}^-\cong & 0\ar[l]
}
\]
where the middle isomorphism is induced by \eqref{158}.
This yields a commutative diagram (Remark~\ref{56} and \eqref{202})
\[
\xymatrix{
\KK\ar[d]_-{c_{W}}\ar[r]_-\cong & \bigwedge^\abs{E}\KK^E\otimes_\KK\KK\ar[d]^-{\id\otimes c_{W^\perp}}\\
\bigwedge^{\rk\M}W\ar[d]_-{\bigwedge^{\rk\M}\alpha_{W,S}}\ar@{=}[r] & \bigwedge^\abs{E}\KK^E\otimes\bigwedge^{\rk\M^\perp} W^\perp
\ar[d]^-{\bigwedge^\abs{E}\nu\otimes\bigwedge^{\rk\M^\perp}\alpha_{W^\perp,S^\perp}}
\\
\bigwedge^{\rk\M}\KK^S\ar@{=}[r] & \bigwedge^\abs{E}\KK^{E^\vee}\otimes\bigwedge^{\rk\M^\perp}\KK^{S^\perp}.
}
\]
Using \eqref{152}, we may drop $\bigwedge^\abs{E}\KK^E$ and $\bigwedge^\abs{E}\KK^{E^\vee}$.
A suitable choice of $c_W$ turns the upper isomorphism into an equality.
The claim follows by definition (see \eqref{35} and Definition~\ref{48}).
\end{proof}


The coefficients of the configuration polynomial satisfy the following restriction--contraction formula.

\begin{lem}[Restriction--contraction for coefficients]\label{2}
Let $W\subseteq\KK^E$ be a realization of a matroid $\M$, and let $F\subseteq E$ be any subset.
For any basis $B\in\B_\M$, $B\cap F\in\B_{\M\vert_F}$ if and only if $B\setminus F\in\B_{\M/F}$.
In this case,
\[
c_{W,B}=c^2\cdot c_{W/F,B\setminus F}\cdot c_{W\vert_F,B\cap F}
\]
where $c\in\KK^*$ is independent of $B$.
\end{lem}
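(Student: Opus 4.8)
The first claim is a purely combinatorial statement about bases of $\M$, $\M\vert_F$ and $\M/F$. I would derive it directly from the defining formula \eqref{154}: by that description $B\setminus F\in\B_{\M/F}$ precisely when $(B\setminus F)\cup B'\in\B_\M$ for some/every $B'\in\B_{\M\vert F}$. Starting from $B\in\B_\M$ and assuming $B\cap F\in\B_{\M\vert F}=\B_{\M\vert_F}$, the set $B'=B\cap F$ is such a basis of $\M\vert F$ and $(B\setminus F)\cup(B\cap F)=B\in\B_\M$, so $B\setminus F\in\B_{\M/F}$; conversely, if $B\setminus F\in\B_{\M/F}$, then $\abs{B\setminus F}=\rk(\M/F)=\rk\M-\rk_\M(F)$ (using \cite[Prop.~3.1.6]{Oxl11}), so $\abs{B\cap F}=\rk\M-\abs{B\setminus F}=\rk_\M(F)$, and since $B\cap F\subseteq F$ is independent in $\M$ of the right cardinality it is a basis of $\M\vert_F$. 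This handles the equivalence with only bookkeeping of ranks.

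\textbf{The coefficient identity.} For the factorization of $c_{W,B}$, I would work with the map $\alpha_{W,B}\colon W\to\KK^B$ from \eqref{62} and split it according to the partition $B=(B\cap F)\sqcup(B\setminus F)$, i.e.\ $\KK^B=\KK^{B\cap F}\oplus\KK^{B\setminus F}$. The idea is to produce a commutative diagram with exact rows relating $\alpha_{W,B}$ to the two smaller determinant maps $\alpha_{W/F,\,B\setminus F}$ and $\alpha_{W\vert_F,\,B\cap F}$. Recall $W/F=W\cap\KK^{E\setminus F}$ (Definition~\ref{33}.\eqref{33e}) and $W\vert_F=\pi_F(W)$ (Definition~\ref{33}.\eqref{33c}), which fit into the short exact sequence
\[
\xymatrix{
0\ar[r] & W/F\ar[r] & W\ar[r]^-{\pi_F\vert_W} & W\vert_F\ar[r] & 0.
}
\]
Composing with the projection $\KK^B\onto\KK^{B\cap F}$ and noting that $\pi_{B\cap F}$ factors through $\pi_F$, the top-right square gives $\pi_{B\cap F}\circ\alpha_{W,B}$ as $\alpha_{W\vert_F,\,B\cap F}$ precomposed with $\pi_F\vert_W$; on kernels, $\alpha_{W,B}$ restricted to $W/F$ lands in $\KK^{B\setminus F}$ and equals (up to a relabelling of coordinates) $\alpha_{W/F,\,B\setminus F}$, because $B\setminus F\subseteq E\setminus F$. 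Applying the determinant formalism of Remark~\ref{56} to this diagram — exactly the mechanism used in the proof of Proposition~\ref{30} — yields $\det\alpha_{W,B}=\pm\det\alpha_{W/F,\,B\setminus F}\cdot\det\alpha_{W\vert_F,\,B\cap F}$ once one fixes compatible choices of the isomorphisms $c_W$, $c_{W/F}$, $c_{W\vert_F}$. Squaring and recalling \eqref{39} gives $c_{W,B}=c^2\cdot c_{W/F,\,B\setminus F}\cdot c_{W\vert_F,\,B\cap F}$, and the constant $c$ absorbs the sign ambiguity and the fact that changing $c_W$ etc.\ scales each factor by a constant square; crucially $c$ is independent of $B$ because the only choices involved — the splitting of the short exact sequence via $c_W$, $c_{W/F}$, $c_{W\vert_F}$ — do not depend on $B$. (When $B\cap F\notin\B_{\M\vert_F}$, both sides are zero by \eqref{205}, so there is nothing to prove.)

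\textbf{Main obstacle.} The genuinely delicate point is the \emph{$B$-independence} of the constant $c$ and the coherent bookkeeping of signs/orderings: the identifications \eqref{152} for $\KK^B$, $\KK^{B\cap F}$, $\KK^{B\setminus F}$ must be chosen once and for all so that the three determinant maps are compatible across all bases simultaneously, and one must check that the wedge-product identities of Remark~\ref{56} patch together over varying $B$ with a single global constant rather than a $B$-dependent one. This is a routine but care-demanding diagram chase, analogous to (and slightly more involved than) the argument already carried out for Proposition~\ref{30}; I expect it to be the bulk of the work, while the matroid-theoretic equivalence at the start is essentially immediate from \eqref{153} and \eqref{154}.
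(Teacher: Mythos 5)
Your proposal is correct and follows essentially the same route as the paper: the equivalence is read off from \eqref{154}, and the coefficient identity comes from applying the determinant formalism of Remark~\ref{56} to the short exact sequence $0\to W/F\to W\to W\vert_F\to0$ mapped compatibly onto $0\to\KK^{B\setminus F}\to\KK^B\to\KK^{B\cap F}\to0$ (via the middle row $\KK^{E\setminus F}\to\KK^E\to\KK^F$), with the constant $c$ arising from comparing $c_W$ with $c_{W/F}\otimes c_{W\vert_F}$ and therefore independent of $B$. Only your final parenthetical is off: if $B\in\B_\M$ but $B\cap F\notin\B_{\M\vert_F}$, then $c_{W,B}\ne0$ by \eqref{205} while the right-hand side vanishes, so the identity genuinely fails in that case — which is exactly why the lemma is stated only for $B\cap F\in\B_{\M\vert_F}$; this aside does not affect your proof of the stated claim.
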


\begin{proof}
The equivalence for $B\in\B_\M$ holds by definition of matroid contraction (see \eqref{154}). 
For any such $B$, there is a commutative diagram with exact rows (see Definition~\ref{33}.\eqref{33c} and \eqref{33e})
\[
\xymatrix{
0\ar[r] & W/F\ar[r]\ar@{^(->}[d] & W\ar[r]\ar@{^(->}[d] & W\vert_F\ar[r]\ar@{^(->}[d] & 0\\
0\ar[r] & \KK^{E\setminus F}\ar[r]\ar[d] & \KK^E\ar[r]\ar[d] & \KK^F\ar[r]\ar[d] & 0\\
0\ar[r] & \KK^{B\setminus F}\ar[r] & \KK^B\ar[r] & \KK^{B\cap F}\ar[r] & 0.
}
\]
Taking exterior powers yields (see Remark~\ref{56} and \eqref{202})
\[
\xymatrix{
\KK\ar[d]_-{c_W}^-\cong\ar[r]_-\cong^-c & \KK=\KK\otimes\KK\ar[d]^-{c_{W/F}\otimes c_{W\vert_F}}_-\cong\\
\bigwedge^{\rk\M} W\ar@{=}[r]\ar[d]_-{\bigwedge^{\rk\M}\alpha_{W,B}}^-\cong & \bigwedge^{\rk(\M/F)}W/F\otimes\bigwedge^{\rk(\M\vert_F)}W\vert_F\ar[d]^-{\bigwedge^{\rk(\M/F)}\alpha_{W/F,B\setminus F}\otimes\bigwedge^{\rk(\M\vert_F)}\alpha_{W\vert_F,B\cap F}}_-\cong\\
\bigwedge^{\rk\M}\KK^B\ar@{=}[r] & \bigwedge^{\rk(\M/F)}\KK^{B\setminus F}\otimes\bigwedge^{\rk(\M\vert_F)}\KK^{B\cap F}.
}
\]
\end{proof}


The following result describes the behavior of configuration polynomials under deletion--contraction.
It is the basis for our inductive approach to Jacobian schemes of configuration polynomials.
The statement on $\partial_e\psi_W$ was proven by Patterson (see \cite[Lem.~4.4]{Pat10}).

\begin{prp}[Deletion--contraction for configuration polynomials]\label{3}
Let $W\subseteq\KK^E$ be a realization of a matroid $\M$, and let $e\in E$.
Then 
\[
\psi_W=
\begin{cases}
\psi_{W\setminus e}=\psi_{W/e} & \text{if $e$ is a loop in $\M$},\\
\psi_{W\vert_e}\cdot\psi_{W/e}=\psi_{W\vert_e}\cdot\psi_{W\setminus e} & \text{if $e$ is a coloop in $\M$},\\
\psi_{W\setminus e}
+\psi_{W\vert_e}\cdot\psi_{W/e} & \text{otherwise,}
\end{cases}\\
\]
where $\psi_{W\vert_e}=x_e$ if $e$ is not a loop in $\M$.
In particular,
\begin{align*}
\partial_e\psi_W&=
\begin{cases}
0 & \text{if $e$ is a loop in $\M$},\\
\psi_{W/e}=\psi_{W\setminus e} & \text{if $e$ is a coloop in $\M$},\\
\psi_{W/e} & \text{otherwise},
\end{cases}\\
\psi_W\vert_{x_e=0}&=
\begin{cases}
\psi_{W\setminus e}=\psi_{W/e} & \text{if $e$ is a loop in $\M$},\\
0 & \text{if $e$ is a coloop in $\M$},\\
\psi_{W\setminus e} & \text{otherwise}.
\end{cases}
\end{align*}
\end{prp}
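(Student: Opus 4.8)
The plan is to reduce everything to the coefficient-level formula already established in Lemma~\ref{2} and the well-known deletion--contraction recursion for the set of bases of a matroid. First I would separate the three cases according to whether $e$ is a loop, a coloop, or neither in $\M$.

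If $e$ is a loop, then $e$ lies in no basis of $\M$, so $\B_\M=\B_{\M\setminus e}$ (identifying $E$ and $E\setminus\set{e}$ on the basis side) and also $\B_{\M/e}=\B_{\M}$ since contracting a loop equals deleting it (see Remark~\ref{34}.\eqref{34a}); moreover $W\setminus e=W/e$ there, and the coefficients agree by Remark~\ref{157} applied to $F=E\setminus\set{e}$. So $\psi_W=\psi_{W\setminus e}=\psi_{W/e}$, no variable $x_e$ appears, and the formulas for $\partial_e\psi_W$ and $\psi_W\vert_{x_e=0}$ are immediate.

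If $e$ is a coloop, then $e$ lies in every basis of $\M$, so $\B_\M=\set{B'\sqcup\set{e}\xmid B'\in\B_{\M/e}}$ and $\M/e=\M\setminus e$ (again Remark~\ref{34}.\eqref{34a}). By Lemma~\ref{2} with $F=\set{e}$, each $c_{W,B}=c^2\cdot c_{W/e,B\setminus\set{e}}\cdot c_{W\vert_e,\set{e}}$ with $c$ independent of $B$; since $\M\vert_e=\U_{1,1}$ is free on one element, $\psi_{W\vert_e}=x_e$ (see Example~\ref{122}.\eqref{122a}), and absorbing $c^2$ into the choice of $c_W$ (see Remark~\ref{150}) gives $\psi_W=x_e\cdot\psi_{W/e}=\psi_{W\vert_e}\cdot\psi_{W\setminus e}$. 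Differentiating in $x_e$ and setting $x_e=0$ then yield the stated identities.

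In the remaining case $e$ is neither a loop nor a coloop; here $\B_\M$ splits as the disjoint union of $\set{B\in\B_\M\xmid e\notin B}=\B_{\M\setminus e}$ and $\set{B'\sqcup\set{e}\xmid B'\in\B_{\M/e}}$. The first family contributes $\sum_{e\notin B}c_{W,B}\cdot x^B$, which equals $\psi_{W\setminus e}$ after invoking Remark~\ref{157} (the coefficients $c_{W,B}$ with $e\notin B$ match $c_{W\setminus e,B}$ up to a uniform square factor); the second family contributes $x_e\cdot\sum_{B'\in\B_{\M/e}}c_{W,B'\sqcup\set{e}}\cdot x^{B'}$, which by Lemma~\ref{2} (with $F=\set{e}$, noting $c_{W\vert_e,\set{e}}=x_e$-coefficient normalization) equals $\psi_{W\vert_e}\cdot\psi_{W/e}=x_e\cdot\psi_{W/e}$, again after harmonizing the normalizing constants. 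Adding gives $\psi_W=\psi_{W\setminus e}+x_e\cdot\psi_{W/e}$; since $\psi_{W\setminus e}$ does not involve $x_e$ and $\psi_{W/e}$ does not either, we read off $\partial_e\psi_W=\psi_{W/e}$ and $\psi_W\vert_{x_e=0}=\psi_{W\setminus e}$.

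\textbf{Main obstacle.} The only genuine subtlety is bookkeeping of the ambiguous square factors: $\psi_W$, $\psi_{W\setminus e}$, $\psi_{W/e}$, $\psi_{W\vert_e}$ are each defined only up to a nonzero constant square (Remark~\ref{150}), and one must check that a \emph{single} consistent choice of the isomorphisms $c_W$, $c_{W\setminus e}$, $c_{W/e}$ makes all the displayed equalities hold simultaneously. This is exactly what Lemma~\ref{2} and Remark~\ref{157} are engineered to deliver --- the constant $c$ there is independent of the basis $B$ --- so the argument goes through, but it is worth stating explicitly that one fixes $c_W$ first and then chooses the others to match, rather than the reverse.
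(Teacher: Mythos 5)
Your proposal is correct and follows essentially the same route as the paper's proof: split the basis sum of $\psi_W$ according to whether $e\in B$, identify the two parts via Lemma~\ref{2} (with $F=\set{e}$, resp.\ the restriction to $E\setminus\set{e}$) together with Example~\ref{122}.\eqref{122a}, and dispose of the loop/coloop degeneracies via Remark~\ref{34}.\eqref{34a}, with the square-factor ambiguity of Remark~\ref{150} absorbed exactly as you describe. The only cosmetic difference is that you organize by cases and invoke Remark~\ref{157} where the paper applies Lemma~\ref{2} with $F=E\setminus\set{e}$ (using $c_{0,\emptyset}=1$), which amounts to the same computation.
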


\begin{proof}
Decompose
\begin{equation}\label{7}
\psi_W
=\sum_{e\not\in B\in\B_\M}c_{W,B}\cdot x^B
+x_e\cdot\sum_{e\in B\in\B_\M}c_{W,B}\cdot x^{B\setminus\set{e}}.
\end{equation}
The second sum in \eqref{7} is nonzero if and only if $e$ is not a loop.
Suppose that this is the case.
Then $\M\vert_e$ is free with basis $\set{e}$ and $\psi_{W\vert_e}=x_e$ by Remark~\ref{122}.\eqref{122a}.
By Lemma~\ref{2} applied to $F=\set{e}$, the second sum in \eqref{7} then equals (see \eqref{154} and Remark~\ref{150})
\[
c^2\cdot c_{W\vert_e,\set{e}}\cdot\sum_{B\in\B_{\M/e}}c_{W/e,B}\cdot x^B=\psi_{W/e}
\]
for some $c\in\KK^*$.
The first sum in \eqref{7} is nonzero if and only if $e$ is not a coloop.
By Lemma~\ref{2} applied to $F=E\setminus\set{e}$, it equals in this case (see \eqref{153} and Remark~\ref{150})
\[
c^2\cdot c_{0,\emptyset}\cdot\sum_{B\in\B_{\M\setminus e}}c_{W\setminus e,B}\cdot x^B=\psi_{W\setminus e}
\]
for some $c\in\KK^*$.
If $e$ is a (co)loop, then $W/e=W\setminus e$ (see Remark~\ref{34}.\eqref{34a}).
The claimed formulas follow.
\end{proof}


The following formula relates configuration polynomials with deletion and contraction of handles.
It is the starting point for our description of generic points of Jacobian schemes of configuration hypersurfaces in terms of handles.

\begin{cor}[Configuration polynomials and handles]\label{18}
Let $W\subseteq\KK^E$ be a realization of a connected matroid $\M$ on $E$, and let $E\ne H\in\H_\M$ be a proper handle.
Then 
\begin{align}
\label{18a}\psi_W&=\psi_{W/(E\setminus H)}\cdot\psi_{W\setminus H}+\psi_{W\vert_{H}}\cdot\psi_{W/H},\\
\label{18b}\psi_{W/(E\setminus H)}&=\sum_{h\in H}\psi_{W\vert_{H\setminus\set{h}}},\\
\label{18c}\psi_{W\vert_{H}}&=x^H,\quad
\psi_{W\vert_{H\setminus\set{h}}}=x^{H\setminus\set{h}}.
\end{align}
In particular, after suitably scaling $H$,
\begin{equation}\label{18d}
\psi_W=\sum_{h\in H}x^{H\setminus\set{h}}\cdot\psi_{W\setminus H}+x^H\cdot\psi_{W/H}.
\end{equation}
\end{cor}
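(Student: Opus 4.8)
The plan is to evaluate $\psi_W=\sum_{B\in\B_\M}c_{W,B}x^B$ by sorting the bases $B$ according to how large $B\cap H$ is, and then to recognize the two resulting parts through the restriction--contraction formula for coefficients (Lemma~\ref{2}).

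First I would dispose of \eqref{18b} and \eqref{18c}. Since $\M$ is connected and $H$ is a proper handle, Lemma~\ref{16}.\eqref{16d} gives $H\in\I_\M$, so the restrictions $\M\vert_H$ and $\M\vert_{H\setminus\set{h}}$ are free matroids for every $h\in H$; thus \eqref{18c} follows from Example~\ref{122}.\eqref{122a}, for a suitable choice of the square-constant factors allowed by Remark~\ref{150}. By the same lemma, $H\in\C_{\M/(E\setminus H)}$, so $\M/(E\setminus H)$ is the circuit $\U_{\abs{H}-1,\abs{H}}$ on $H$ (see Example~\ref{118}); after rescaling the elements of $H$ as in Example~\ref{122}.\eqref{122b}, the realization $W/(E\setminus H)$ has configuration polynomial $\sum_{h\in H}x^{H\setminus\set{h}}$, which together with \eqref{18c} gives \eqref{18b}.

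The main point is \eqref{18a}. Combining $\rk_\M(H)=\abs{H}$ (as $H\in\I_\M$) with $\lambda_\M(H)=1$ (Lemma~\ref{16}.\eqref{16c}) yields $\rk\M=\abs{H}-1+\rk(\M\setminus H)$; since $B\setminus H$ is independent in $\M\setminus H$ for every $B\in\B_\M$, this forces $\abs{B\cap H}\in\set{\abs{H}-1,\abs{H}}$, so the bases split into exactly two classes. For the bases with $H\subseteq B$, I would apply Lemma~\ref{2} with $F=H$: as $\M\vert_H$ is free, $B\cap H=H$ is automatically a basis of $\M\vert_H$, the pieces $B\setminus H$ run exactly over $\B_{\M/H}$ (by \eqref{154}), and $c_{W,B}=c^2\cdot c_{W/H,B\setminus H}\cdot c_{W\vert_H,H}$ with $c$ independent of $B$, so this class contributes $\psi_{W\vert_H}\cdot\psi_{W/H}$ up to a square constant. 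For the bases with $\abs{B\cap H}=\abs{H}-1$, I would apply Lemma~\ref{2} with $F=E\setminus H$: now $B\cap H$ runs over $\B_{\M/(E\setminus H)}=\set{H\setminus\set{h}\mid h\in H}$, the pieces $B\setminus H$ run over $\B_{\M\setminus H}$, every such pair $(B\cap H,B\setminus H)$ occurs by \eqref{154}, the monomials $x^{B\cap H}x^{B\setminus H}$ are pairwise distinct, and $c_{W,B}=c^2\cdot c_{W/(E\setminus H),B\cap H}\cdot c_{W\setminus H,B\setminus H}$, so this class contributes $\psi_{W/(E\setminus H)}\cdot\psi_{W\setminus H}$ up to a square constant. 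Absorbing these square constants into the representatives of $\psi_{W/(E\setminus H)}$ and $\psi_{W/H}$ (and normalizing $\psi_{W\vert_H}=x^H$) gives \eqref{18a}; substituting \eqref{18b} and \eqref{18c} into it then yields \eqref{18d}.

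I expect the only genuine obstacle to be the bookkeeping of ambiguities. Configuration polynomials are defined only up to a nonzero square constant (Remark~\ref{150}), so one must check that the square constants appearing above can be absorbed consistently; and \eqref{18d}, with its explicit monomials $x^{H\setminus\set{h}}$ and $x^H$, additionally requires an honest rescaling of the elements of $H$ (Remark~\ref{160}), so one has to verify that a single such rescaling simultaneously normalizes the circuit polynomial of $\M/(E\setminus H)$ as in Example~\ref{122}.\eqref{122b} while leaving the identity \eqref{18a} intact (which it does, since \eqref{18a} holds verbatim for every realization). All of this is routine once the matroid input of Lemma~\ref{16} — the collapse to $\abs{B\cap H}\in\set{\abs{H}-1,\abs{H}}$ and the identification of the complementary pieces with $\B_{\M/H}$ and $\B_{\M\setminus H}$ — is in place.
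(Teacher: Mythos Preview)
Your proof is correct and takes a genuinely different route from the paper's. The paper proves \eqref{18a} by induction on $\abs{H}$: it peels off a single element $h\in H$ using the one-step deletion--contraction formula (Proposition~\ref{3}), identifies $H':=H\setminus\set{h}$ as a proper handle in the connected matroid $\M/h$ via Lemma~\ref{16}.\eqref{16b}, and applies the induction hypothesis to $W/h$, reassembling the pieces with \eqref{73}. You instead bypass the induction entirely by applying the restriction--contraction formula for coefficients (Lemma~\ref{2}) once with $F=H$ and once with $F=E\setminus H$, after first using Lemma~\ref{16}.\eqref{16c} and \eqref{16d} to show that every basis meets $H$ in exactly $\abs{H}$ or $\abs{H}-1$ elements. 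Your approach is more direct and makes the structure of the formula transparent in a single stroke; the paper's inductive argument is more modular (it only ever invokes the single-element formula Proposition~\ref{3}, which is the workhorse elsewhere) but requires more bookkeeping to track the pieces through the induction. Both ultimately rest on Lemma~\ref{2}, since Proposition~\ref{3} itself is proved from it.
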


\begin{proof}
By Lemma~\ref{16}.\eqref{16d}, $H\in\C_{\M/(E\setminus H)}$ and hence \eqref{18b} by Example~\ref{122}.\eqref{122b}.
By Lemma~\ref{16}.\eqref{16d} (see \eqref{153}), $\M\vert_H$ is free, and equalities~\eqref{18c} follows from Example~\ref{122}.\eqref{122a}.
Equality~\eqref{18d} follows from \eqref{18a}, \eqref{18b} and Example~\ref{122}.\eqref{122b}.
It remains to prove equality~\eqref{18a}.

We proceed by induction on $\abs{H}$.
Let $h\in H$ and set $H':=H\setminus\set{h}$.
Since $\M$ is connected, it has no (co)loops and hence
\begin{equation}\label{29}
\psi_W=\psi_{W\setminus h}+\psi_{W\vert_h}\cdot \psi_{W/h}
\end{equation}
by Proposition~\ref{3}.
If $\abs{H}=1$, then $H\in\C_{\M/(E\setminus H)}$ implies that $\rk(\M/(E\setminus h))=0$ and hence $\psi_{W/(E\setminus h)}=1$ (see Remark~\ref{6}).
Suppose now that $\abs{H}\ge2$.
By Lemma~\ref{16}.\eqref{16d} and \eqref{16a}, $\M\vert_{H'}$ is free and $H'$ consists of coloops in $\M\setminus h$.
Iterating Proposition~\ref{3} thus yields
\begin{equation}\label{22}
\psi_{W\setminus h}=\prod_{h'\in H'}\psi_{W\vert_{h'}}\cdot\psi_{W\setminus H}=\psi_{W\vert_{H'}}\cdot\psi_{W\setminus H}.
\end{equation}

By Lemma~\ref{16}.\eqref{16b}, the set $H'$ is a proper handle in the connected matroid $\M/h$.
By Lemma~\ref{16}.\eqref{16a}, $h$ is a coloop in $\M\setminus H'$ and hence
\[
W/h\setminus H'=W\setminus H'/h=W\setminus H'\setminus h=W\setminus H.
\]
by Remark~\ref{34}.\eqref{34a}.
By the induction hypothesis,
\begin{equation}\label{69}
\psi_{W/h}=\sum_{h'\in H'}\psi_{W\vert_{H'\setminus\set{h'}}}\cdot\psi_{W\setminus H}+\psi_{W\vert_{H'}}\cdot\psi_{W/H}.
\end{equation}
By Lemma~\ref{16}.\eqref{16d}, $\M\vert_H$ and $\M\vert_{H\setminus\set{h'}}$ are free.
Iterating Proposition~\ref{3} thus yields
\begin{equation}\label{73}
\psi_{W\vert_h}\cdot\psi_{W\vert_{H'}}=\psi_{W\vert_H},\quad
\psi_{W\vert_h}\cdot\psi_{W\vert_{H'\setminus\set{h'}}}=\psi_{W\vert_{H\setminus\set{h'}}}.
\end{equation}
Using equalities~\eqref{18b} and \eqref{73}, equality~\eqref{18a} is obtained by substituting \eqref{22} and \eqref{69} into \eqref{29} (see Remark~\ref{150}).
\end{proof}


The following result describes the behavior of configuration polynomials when passing to an elementary quotient.

\begin{prp}[Configuration polynomials of quotients]\label{28}
Let $W\subseteq\KK^E$ be a realization of a matroid $\M$, and let $0\ne\varphi\in W^\vee$.
Then
\[
\psi_{W_\varphi}=\sum_{\substack{S\subseteq E\\\abs{S}=\rk\M-1}}\left(\sum_{e\not\in S}\pm\tilde\varphi_e\cdot\det\alpha_{W,S\cup\set{e}}\right)^2x^S,
\]
where $\tilde\varphi=(\tilde\varphi_e)_{e\in E}\in(\KK^E)^\vee$ is any lift of $\varphi$ with a sign $\pm$ determined by a Laplace expansion.
\end{prp}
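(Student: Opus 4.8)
The plan is to reduce the identity to a single determinant computation, carried out for each index set $S$ separately. Since $0\ne\varphi\in W^\vee$, the elementary quotient configuration $W_\varphi=\ker\varphi$ (see Definition~\ref{33}.\eqref{33b}) is a hyperplane in $W$, so that $\rk\M_{W_\varphi}=\dim W_\varphi=\rk\M-1=:r-1$ (see \eqref{202}), where $r\ge1$ because $\varphi\ne0$. By Definition~\ref{48} and \eqref{39}, $\psi_{W_\varphi}=\sum_S(\det\alpha_{W_\varphi,S})^2\cdot x^S$, the sum ranging over $S\subseteq E$ with $\abs{S}=r-1$ and the coefficient vanishing unless $S\in\B_{\M_{W_\varphi}}$ (see \eqref{205}). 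It therefore suffices, for each such $S$, to identify $\det\alpha_{W_\varphi,S}$ up to sign with $\sum_{e\notin S}\pm\tilde\varphi_e\cdot\det\alpha_{W,S\cup\set{e}}$, for suitable choices of the isomorphisms $c_W$ and $c_{W_\varphi}$ (see \eqref{156}); squaring then removes all remaining sign ambiguity.

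To compare the two quantities, I would fix a basis $w=(w^1,\dots,w^r)$ of $W$ with coefficient matrix $A=(w^i_e)$ (see Remark~\ref{65}), normalize $c_W$ by $c_W(1)=w^1\wedge\cdots\wedge w^r$, and let $c_{W_\varphi}$ be the composite of $c_W$ with the canonical isomorphism $\bigwedge^rW\cong\bigwedge^{r-1}W_\varphi$ that Remark~\ref{56} attaches to the short exact sequence $0\to W_\varphi\to W\xrightarrow{\varphi}\KK\to0$. The key device is the map $\beta_S:=(\alpha_{W,S},\varphi)\colon W\to\KK^S\oplus\KK$ (see \eqref{62}), which restricts to $\alpha_{W_\varphi,S}$ on the kernel $W_\varphi$ of $\varphi$ and induces the identity on the cokernels; this gives a morphism from the sequence above to $0\to\KK^S\to\KK^S\oplus\KK\to\KK\to0$, and multilinearity of the determinant (as in Remark~\ref{56}, but not requiring the vertical maps to be isomorphisms) yields $\det\beta_S=\pm\det\alpha_{W_\varphi,S}$; in particular both sides vanish when $S\notin\B_{\M_{W_\varphi}}$, as they should.

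It then remains to compute $\det\beta_S$ by hand. With respect to $w$ and the basis $\set{e^\vee\mid e\in S}\cup\set{1}$ of $\KK^S\oplus\KK$, the matrix of $\beta_S$ is the submatrix of $A$ on the columns indexed by $S$ with the extra column $\bigl(\varphi(w^i)\bigr)_{i}$ appended, and $\varphi(w^i)=\sum_{e\in E}\tilde\varphi_e\,w^i_e$ for any lift $\tilde\varphi$. Expanding this $r\times r$ determinant by Laplace along the last column and interchanging the two summations, each inner sum over $i$ is recognized as the Laplace expansion of the $r\times r$ minor of $A$ on the columns $S\cup\set{e}$, which vanishes when $e\in S$; since that minor equals $\pm\det\alpha_{W,S\cup\set{e}}$ by the normalization of $c_W$ (see \eqref{35}), this gives $\det\beta_S=\sum_{e\notin S}\pm\tilde\varphi_e\cdot\det\alpha_{W,S\cup\set{e}}$. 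Squaring and summing over $S$ finishes the proof; independence of the chosen lift is automatic, because $\beta_S$ involves only $\varphi$. (Alternatively one could realize $W_\varphi$ as a contraction $W_{\varphi,w}/e$ for a new element $e$ as in Remark~\ref{34}.\eqref{34b} and invoke the deletion--contraction formula of Proposition~\ref{3}, but the direct route seems shorter.)

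The point requiring the most care is the compatibility of volume forms in the appeal to Remark~\ref{56}: one must check that the $c_{W_\varphi}$ produced from the short exact sequence is exactly the one for which $\det\beta_S=\pm\det\alpha_{W_\varphi,S}$, so that after squaring the residual normalization ambiguity (an overall nonzero square, see Remark~\ref{150}) does no harm. Everything else is a routine pair of Laplace expansions.
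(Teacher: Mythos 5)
Your proof is correct, but it takes a genuinely different route from the paper. The paper dualizes: it sets $V:=W^\perp$, $V_\varphi:=W_\varphi^\perp$, builds the big commutative diagram of short exact sequences relating these, uses Remark~\ref{56} to compare $c_V$ and $c_{V_\varphi}$, performs the Laplace expansion on the dual side via the map $(\tilde\varphi_{S'}\ \alpha_{V,S'})\colon\KK\oplus V\to\KK^{S'}$, and then invokes Proposition~\ref{30} (twice, in effect) to translate $\det\alpha_{V_\varphi,S^\perp}$ and $\det\alpha_{V,S^\perp\setminus\set{e}}$ back into $\det\alpha_{W_\varphi,S}$ and $\det\alpha_{W,S\cup\set{e}}$. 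You instead stay on the primal side: the bordered map $\beta_S=(\alpha_{W,S},\varphi)\colon W\to\KK^S\oplus\KK$ is essentially the transpose of the paper's dual-side map, and your single explicit Laplace expansion of the bordered matrix replaces the detour through $W^\perp$ and Proposition~\ref{30}. What each buys: your argument is shorter, more elementary, and makes the independence of the lift $\tilde\varphi$ transparent; the paper's route reuses its already-established duality machinery and keeps all volume-form bookkeeping inside the literal statement of Remark~\ref{56}. On your one delicate point: the appeal to Remark~\ref{56} for non-isomorphisms is legitimate (the identification $\bigwedge^{\mathrm{top}}V\cong\bigwedge^{\mathrm{top}}W\otimes\bigwedge^{\mathrm{top}}U$ is functorial for arbitrary morphisms of short exact sequences), but you can also avoid the extension entirely: when $S\in\B_{\M_{W_\varphi}}$ the maps $\alpha_{W_\varphi,S}$ and $\beta_S$ are isomorphisms (injectivity of $\beta_S$ follows since $\ker\beta_S\subseteq W_\varphi\cap\ker\alpha_{W_\varphi,S}$), so Remark~\ref{56} applies verbatim, and when $S\notin\B_{\M_{W_\varphi}}$ both $\det\alpha_{W_\varphi,S}$ and $\det\beta_S$ vanish by the same kernel argument, so the identity is trivial there.
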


\begin{proof}
Set $V:=W^\perp$ and $V_\varphi:=W_\varphi^\perp$ and consider the commutative diagram with short exact rows and columns
\[
\xymatrix{
&&& 0\ar[d]\\
& 0\ar[d] && \KK\ar[d]\\
0\ar[r] & W_\varphi\ar[d]\ar[r] & \KK^E\ar[r]\ar@{=}[d] & V_\varphi^\vee\ar[r]\ar[d] & 0\\
0\ar[r] & W\ar[r]\ar[d]_-\varphi & \KK^E\ar[dl]^-{\tilde\varphi}\ar[r] & V^\vee\ar[r]\ar[d] & 0\\
&\KK\ar[d] && 0\\
&0.
}
\]
Dualizing and identifying the two copies of $\KK$ by the Snake Lemma yields a commutative diagram with short exact rows and columns
\begin{equation}\label{11}
\xymatrix{
&&& 0\\
& 0 && \KK\ar[u]\ar[dl]_-{\cdot\tilde\varphi}\\
0 & W_\varphi^\vee\ar[l]\ar[u] & \KK^{E^\vee}\ar[l] & V_\varphi\ar[l]\ar[u] & 0\ar[l]\\
0 & W^\vee\ar[l]\ar[u] & \KK^{E^\vee}\ar[l]\ar@{=}[u] & V\ar[l]\ar[u] & 0\ar[l]\\
& \KK\ar[u]^-{\cdot\varphi}\ar[ur]_-{\cdot\tilde\varphi} && 0\ar[u]\\
& 0.\ar[u]
}
\end{equation}
By Remark~\ref{56} and with a suitable choice of $c_V$ (see Remark~\ref{150}), the right vertical short exact sequence in \eqref{11} gives rise to a  commutative square
\[
\xymatrix{
\KK\ar[r]^-{c_{V_\varphi}}\ar@{=}[d] & \bigwedge^{\rk\M^\perp+1}V_\varphi\\
\KK\ar[r]^-{c_{V}} & \bigwedge^{\rk\M^\perp}V\ar@{=}[u]
}
\]
Let $S'\subseteq E^\vee$ with $\abs{S'}=\dim V_\varphi=\rk\M^\perp+1$ and denote (see \eqref{158})
\[
\tilde\varphi_{S'}=(\tilde\varphi_{\nu^{-1}(e)})_{e\in S'}\in\KK^{S'}.
\]
Due to \eqref{11} the maps $\alpha_{V_\varphi,S'}$ (see \eqref{62}) and
\[
\xymatrix{
\llap{$\begin{pmatrix}
\tilde\varphi_{S'} & \alpha_{V,S'}
\end{pmatrix}
\colon$}\KK\oplus V\ar[r] & \KK^{E^\vee}\ar[r]^-{\pi_{S'}} & \KK^{S'}
}
\]
agree after applying $\bigwedge^{\rk\M^\perp+1}$.
Laplace expansion thus yields
\[
\det\alpha_{V_\varphi,S'}=\sum_{e\in S'}\pm\tilde\varphi_{\nu^{-1}(e)}\cdot\det\alpha_{V,S'\setminus\set{e}}.
\]
Let $S\subseteq E$ with $\abs{S}=\dim W_\varphi=\rk\M-1$ and $S'=S^\perp$.
Then Proposition~\ref{30} yields
\[
c_{W_\varphi,S}=\left(\sum_{e\not\in S}\pm\tilde\varphi_e\cdot\det\alpha_{W,S\cup\set{e}}\right)^2.\qedhere
\]
\end{proof}

\subsection{Graph polynomials}\label{25}

We continue the discussion of graphic matroids from \S\ref{61} and consider their configuration polynomials.


\begin{dfn}[Graph polynomials]\label{76}
The \emph{(first) Kirchhoff polynomial} of a graph $G$ over $\KK$ is the polynomial
\[
\psi_G:=\sum_{T\in\T_G} x^T\in\KK[x].
\]
Replacing $x^T$ by $x^{E\setminus T}$ defines the \emph{(first) Symanzik polynomial} $\psi_G^\perp$ of a graph $G$ over $\KK$.
We refer to $\psi_G$ and $\psi_G^\perp$ as \emph{(first) graph polynomials}.
\end{dfn}


By \eqref{127}, we have $\psi_G=\psi_W$ for any totally unimodular realization $W$ of $\M_G$.
In particular, this yields the following result of Bloch, Esnault and Kreimer (see \cite[Prop.~2.2]{BEK06} and Proposition~\ref{30}).


\begin{prp}[Graph polynomials as configuration polynomials]\label{213}
The graph polynomials
\[
\psi_G=\psi_{W_G},\quad\psi_G^\perp=\psi_{W_G^\perp},
\]
are the configuration polynomials of the graph configuration and of its dual (see Definition~\ref{43}).\qed
\end{prp}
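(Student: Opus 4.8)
The plan is to prove Proposition~\ref{213} directly from the identification $\M_G = \M_{W_G}$ established in \S\ref{61} together with total unimodularity of the graph configuration. The statement $\psi_G = \psi_{W_G}$ is really a consequence of Remark~\ref{149}: the graph configuration $W_G$ is totally unimodular when $\ch\KK = 0$, so by that remark $\psi_{W_G} = \psi_{\M_G}$; but $\psi_{\M_G} = \sum_{B \in \B_{\M_G}} x^B = \sum_{T \in \T_G} x^T = \psi_G$ by \eqref{127}. One subtlety is that Remark~\ref{149} assumes $\ch\KK = 0$ in its definition of total unimodularity, whereas $\psi_G$ is defined over an arbitrary field; however, the coefficients $c_{W_G,B} \in \set{0,1}$ are computed as squares of $\set{0,\pm1}$-minors and hence equal $1$ for every basis $B$, independently of the characteristic, so the identity $\psi_G = \psi_{W_G}$ holds over any $\KK$.

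Next I would handle the dual statement. Here the key input is Proposition~\ref{30}: for a suitable choice of $c_W$ one has $\psi_{W^\perp} = x^{E^\vee} \cdot \psi_W((x^{-1}_{e^\vee})_{e \in E})$. Applying this with $W = W_G$ and using $\psi_{W_G} = \psi_G = \sum_{T \in \T_G} x^T$, one computes
\[
\psi_{W_G^\perp} = x^{E^\vee} \cdot \sum_{T \in \T_G} \prod_{e \in T} x_{e^\vee}^{-1} = \sum_{T \in \T_G} x^{E^\vee \setminus T^\vee} = \sum_{T \in \T_G} x^{(E \setminus T)^\vee},
\]
which (under the identification $\nu\colon E \to E^\vee$) is exactly the first Symanzik polynomial $\psi_G^\perp$ from Definition~\ref{76}. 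Since $W_G^\perp = H_1 \subseteq \KK^E$ realizes $\M_G^\perp$ (noted right after Definition~\ref{43}), this is the configuration polynomial of the dual graph configuration.

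There is essentially no hard step here — the proposition is a bookkeeping corollary of earlier results, which is presumably why the excerpt marks it with \qed and defers the actual verification. The only thing requiring a word of care is matching conventions: the passage from $x^T$ to $x^{E \setminus T}$ in Definition~\ref{76} must be reconciled with the substitutional form of duality in Proposition~\ref{30}, and one must keep track of the well-definedness-up-to-square-scalar caveat from Remark~\ref{150} (which causes no trouble since all relevant coefficients are $1$). So the proof I would write is just: invoke Remark~\ref{149} and \eqref{127} for the first equality, then invoke Proposition~\ref{30} and expand for the second, citing the identification $\M_G^\perp = \M_{W_G^\perp}$ via $W_G^\perp = H_1$.
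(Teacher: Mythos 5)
Your proof is correct and follows essentially the same route as the paper, which deduces $\psi_G=\psi_{W_G}$ from \eqref{127} together with the total unimodularity of the graph configuration (so that every coefficient $c_{W_G,B}$ is the square of a minor in $\set{0,\pm1}$, hence equals $1$), and then obtains $\psi_G^\perp=\psi_{W_G^\perp}$ by applying the duality formula of Proposition~\ref{30}. Your explicit observation that the unit coefficients persist in every characteristic is a sensible refinement, since the paper's definition of total unimodularity is phrased only for $\ch\KK=0$ while the proposition is stated over an arbitrary field.
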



\begin{exa}[Graph polynomial of the prism]\label{128}
For the unique realization $W=W_G$ of the prism matroid (see Lemma~\ref{42}),
\begin{align*}
\psi_W=\psi_G&=x_1x_2(x_3+x_4)(x_5+x_6)\\
&+x_3x_4(x_1+x_2)(x_5+x_6)\\
&+x_5x_6(x_1+x_2)(x_3+x_4)
\end{align*}
is the Kirchhoff polynomial of the $(2,2,2)$-theta graph $G$ (see Figure~\ref{103}).
\end{exa}


Let $G=(E,V)$ be a graph.
A \emph{$2$-forest} in $G$ is an acyclic subgraph $T$ of $G$ with $\abs{V}-2$ edges.
Any such $T=\set{T_1,T_2}$ has $2$ connected components $T_1$ and $T_2$.
We denote by $\T^2_G$ the set of all $2$-forests in $G$.


\begin{dfn}[Second graph polynomials]\label{70}
The \emph{second Kirchhoff polynomial} of a graph $G$ over $\KK$ is the polynomial
\[
\psi_G(p):=\sum_{\set{T_1,T_2}\in\T^2_G}m_{T_1}(p)^2\cdot x^{T_1\sqcup T_2}\in\KK[x],\quad m_{T_i}(p):=\sum_{v\in T_i}p_v,
\]
depending on a \emph{momentum} $0\ne p\in\ker\sigma$ for $G$ over $\KK$ (see \eqref{41}).
Note that
\[
m_{T_1}(p)=\sum_{v\in T_1}p_v=-\sum_{v\in T_2}p_v=-m_{T_2}(p),
\]
and hence, the coefficient $m_{T_1}(p)^2\in\KK$ of $\psi_G(p)$ is well-defined.

Replacing the $2$-forests $T_1\sqcup T_2$ by \emph{cut sets} $E\setminus(T_1\sqcup T_2)$ defines the \emph{second Symanzik polynomial} $\psi_G^\perp(p)$ of a graph $G$ over $\KK$ (see \cite[Def.~3.6]{Pat10}).
We refer to $\psi_G(p)$ and $\psi_G^\perp(p)$ as \emph{second graph polynomials}.
\end{dfn}


The following reformulation of a result of Patterson realizes second graph polynomials as configuration polynomials of a (dual) elementary quotient (see \cite[Prop.~3.3]{Pat10} and Proposition~\ref{30}).
Patterson's proof makes the general formula in Proposition~\ref{28} explicit in case of graph configurations (see \cite[Lem.~3.4]{Pat10}).

\begin{prp}[Second graph polynomials as configuration polynomials]\label{78}
The second graph polynomials
\[
\psi_G(p)=\psi_{(W_G)_p},\quad\psi_G^\perp(p)=\psi_{((W_G)_p)^\perp},
\]
are the configuration polynomials of the quotient of the graph configuration by a momentum and of its dual (see Definitions~\ref{33}.\eqref{33a} and \eqref{33b} and \ref{43}).\qed
\end{prp}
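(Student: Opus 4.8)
The plan is to obtain the first equality from the explicit formula for configuration polynomials of elementary quotient configurations (Proposition~\ref{28}), and then to deduce the second equality from it by the duality formula of Proposition~\ref{30}.

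First I would make the identifications precise. Dualizing the isomorphism $\ol{\delta^\vee}\colon\ker(\sigma)^\vee\to W_G$ of Definition~\ref{43} identifies $W_G^\vee$ with $\ker\sigma$, so a momentum $0\ne p\in\ker\sigma$ (see \eqref{41}) corresponds to a functional $0\ne\varphi_p\in W_G^\vee$; by Definition~\ref{33}.\eqref{33b} this is exactly the datum that defines the elementary quotient $(W_G)_p=(W_G)_{\varphi_p}$. Chasing the identification, a lift $\tilde\varphi\in(\KK^{E^\vee})^\vee=\KK^E$ of $\varphi_p$ along the restriction surjection $(\KK^{E^\vee})^\vee\onto W_G^\vee$ is precisely a $1$-chain with $\delta(\tilde\varphi)=p$; such a chain exists by exactness of \eqref{41}, and the choice is immaterial because the right-hand side of Proposition~\ref{28} is independent of the lift.

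Next I would apply Proposition~\ref{28} with $W=W_G$ and $\varphi=\varphi_p$. Since $G$ is connected, $\rk\M_{W_G}=\rk\M_G=\abs{V}-1$ (see \eqref{127}), so the formula writes $\psi_{(W_G)_p}$ as a sum over subsets $S\subseteq E$ with $\abs{S}=\abs{V}-2$, the coefficient of $x^S$ being the square of $\sum_{e\notin S}\pm\tilde\varphi_e\cdot\det\alpha_{W_G,S\cup\set{e}}$. As $c_{W_G,T}=1$ for every spanning tree $T$ (Proposition~\ref{213}), a fixed choice of $c_{W_G}$ makes $\det\alpha_{W_G,S\cup\set{e}}\in\set{0,\pm1}$, nonzero exactly when $S\cup\set{e}\in\T_G$. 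Hence the inner sum vanishes unless $S$ can be completed to a spanning tree by adjoining a single edge, which forces $S$ to be a spanning $2$-forest $S=T_1\sqcup T_2\in\T^2_G$; the contributing edges $e\notin S$ are then exactly those joining $T_1$ to $T_2$. The main---and essentially the only nontrivial---step is the sign bookkeeping: one has to verify that for a fixed spanning $2$-forest $S=T_1\sqcup T_2$,
\[
\sum_{e\notin S}\pm\tilde\varphi_e\cdot\det\alpha_{W_G,S\cup\set{e}}=\pm m_{T_1}(p),
\]
which is the flow-conservation identity stating that the net flow of the $1$-chain $\tilde\varphi$ across the cut separating $T_1$ from $T_2$ equals $\sum_{v\in T_1}p_v=m_{T_1}(p)$, matched with the orientation signs produced by the Laplace expansion in Proposition~\ref{28}. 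This is precisely Patterson's computation (see \cite[Lem.~3.4]{Pat10}), and I expect this sign analysis to be the delicate point. Squaring then removes the ambiguity and makes the coefficient of $x^{T_1\sqcup T_2}$ equal to $m_{T_1}(p)^2=m_{T_2}(p)^2$, which is well defined as noted after Definition~\ref{70}; summing over $S$ gives $\psi_{(W_G)_p}=\psi_G(p)$, up to the harmless constant square factor of Remark~\ref{150}. Equivalently, this first equality is a restatement of \cite[Prop.~3.3]{Pat10} in the language of configurations.

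Finally, for the dual statement I would apply Proposition~\ref{30} to the realization $(W_G)_p$ of $\M_{(W_G)_p}$: it replaces each monomial $x^{T_1\sqcup T_2}$ of $\psi_{(W_G)_p}$ by the complementary monomial $x^{E\setminus(T_1\sqcup T_2)}$ while keeping the coefficient $m_{T_1}(p)^2$. By the definition of the second Symanzik polynomial via cut sets (Definition~\ref{70}), this is exactly $\psi_G^\perp(p)$, so $\psi_{((W_G)_p)^\perp}=\psi_G^\perp(p)$.
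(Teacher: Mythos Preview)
Your proposal is correct and follows exactly the approach the paper indicates: the paper gives no proof of its own (the statement is marked \qed), instead pointing out that Patterson's original argument (\cite[Prop.~3.3, Lem.~3.4]{Pat10}) is precisely the specialization of Proposition~\ref{28} to $W=W_G$, with Proposition~\ref{30} supplying the dual identity. Your write-up spells out this specialization in the intended way, including the identification of $W_G^\vee\cong\ker\sigma$ and the flow-across-the-cut sign computation that you correctly flag as the delicate point and attribute to \cite[Lem.~3.4]{Pat10}.
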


\subsection{Configuration forms}\label{59}

The configuration form yields an equivalent definition of the configuration polynomial as a determinant of a symmetric matrix with linear entries.
Its second degeneracy locus turns out to be the non-smooth locus of the hypersurface defined by the corresponding configuration polynomial.


\begin{dfn}[Configuration forms]\label{64}
Let $\mu_\KK$ denote the multiplication map of $\KK$.
Consider the generic diagonal bilinear form on $\KK^E$,
\[
Q_{\KK^E}:=\sum_{e\in E}x_e\cdot\mu_\KK\circ (e^\vee\times e^\vee)\colon \KK^E\times\KK^E\to\KK[x].
\]
Let $W\subseteq\KK^E$ be a configuration.
Then the \emph{configuration (bilinear) form} of $W$ is the restriction of $Q_{\KK^E}$ to $W$,
\[
Q_W:=Q_{\KK^E}\vert_{W\times W}\colon W\times W\to\KK[x].
\]
Alternatively, it can be seen as the composition of canonical maps
\begin{equation}\label{163}
\xymatrix{
Q_W\colon W[x]\ar[r] & \KK^E[x]\ar[r]^-{Q_{\KK^E}} & \KK^{E^\vee}[x]\ar[r] & W^\vee[x],
}
\end{equation}
where $-[x]$ means $-\otimes\KK[x]$.
For $k=0,\dots,r:=\dim W$, it defines a map 
\[
\bigwedge^{r-k}W\otimes\bigwedge^{r-k}W\otimes\KK[x]\to\KK[x].
\]
Its image is the $k$th Fitting ideal $\Fitt_k\coker Q_W$ (see \cite[\S 20.2]{Eis95}) and defines the \emph{$k-1$st degeneracy scheme} of $Q_W$.
We set 
\[
M_W:=\Fitt_1\coker Q_W\unlhd\KK[x].
\]
Note the different fonts used for $M_W$ and $\M_W$ (see Definition~\ref{60}).
\end{dfn}


\begin{rmk}[Configuration forms as matrices]\label{151}
With respect to a basis $w=(w^1,\dots,w^r)$ of $W$, $Q_W$ becomes a matrix of Hadamard products (see Remark~\ref{65})
\[
Q_w=(\ideal{x,w^i\star w^j})_{i,j}=\left(\sum_{e\in E}x_e\cdot w^i_e\cdot w^j_e\right)_{i,j}\in\KK^{r\times r},\quad w^i_e=e^\vee(w^i).
\]
Let $Q^{i,j}$ denote the submaximal minor of a square matrix $Q$ obtained by deleting row $i$ and column $j$.
Then
\[
M_W=\ideal{Q_W^{i,j}\xmid i,j\in\set{1,\dots,r}}.
\]
Any basis of $W$ can be written as $w'=Uw$ for some $U\in\Aut_\KK W$.
Then 
\[
Q_{w'}=UQ_wU^t.
\]
and the $Q_{w'}^{i,j}$ become $\KK$-linear combinations of the $Q_w^{i,j}$.
We often consider $Q_W$ as a matrix $Q_w$ determined up to conjugation.
\end{rmk}


\begin{rmk}[Configuration forms and basis scaling]\label{161}
Scaling $E$ results in scaling $x$ in $Q_W$ and in $M_W$ (see Remark~\ref{160}).
\end{rmk}


Bloch, Esnault and Kreimer defined $\psi_W$ in terms of $Q_W$ (see \cite[Lem.~1.3]{BEK06}).

\begin{lem}[Configuration polynomial from configuration form]\label{36}
For any configuration $W\subseteq\KK^E$, the configuration polynomial
\[
\psi_W=\det Q_W\in M_W
\]
is the determinant of the configuration form (see Remarks~\ref{150} and \ref{151}).\qed
\end{lem}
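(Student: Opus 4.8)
The plan is to verify both assertions—the determinant identity $\psi_W = \det Q_W$ (up to the customary square factor, see Remark~\ref{150}) and the membership $\det Q_W \in M_W$—by reducing everything to a basis computation via Remark~\ref{151}. First I would fix a basis $w = (w^1,\dots,w^r)$ of $W$ with coefficient matrix $A = (w^i_e)_{i \in \{1,\dots,r\},\, e \in E} \in \KK^{r \times n}$, so that $Q_w = A \cdot \Diag(x_E) \cdot A^t$, where $\Diag(x_E)$ is the diagonal matrix with entries $x_e$. This is just the matrix form of the composition \eqref{163}. The isomorphism $c_w$ attached to this basis (see \eqref{156}) is the one for which $\det\alpha_{W,S}$ is literally the $r \times r$ minor of $A$ on the columns $S$, so the claim is compatible with the normalization chosen in Definition~\ref{48}.

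The heart of the matter is the Cauchy–Binet formula applied to the product $A \cdot (\Diag(x_E) A^t)$. It yields
\[
\det Q_w \;=\; \sum_{\substack{S \subseteq E \\ \abs{S} = r}} \det\bigl(A_{\bullet,S}\bigr)\cdot \det\bigl((\Diag(x_E)A^t)_{S,\bullet}\bigr),
\]
and the second factor is $\bigl(\prod_{e \in S} x_e\bigr)\cdot \det(A_{\bullet,S})$, so that each term becomes $(\det A_{\bullet,S})^2 \cdot x^S = c_{w,S}\cdot x^S$. Summing over $S$ and invoking \eqref{205} (which says $c_{w,S} \ne 0$ exactly when $S \in \B_{\M_W}$) gives precisely $\psi_W$ in the sense of Definition~\ref{48}. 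I would then note that a change of basis $w' = Uw$ replaces $Q_w$ by $UQ_wU^t$ and $\det Q_w$ by $(\det U)^2 \det Q_w$, which is the square-factor ambiguity already built into $\psi_W$; this is why the statement is only an equality up to such a factor, consistent with Remark~\ref{150}.

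For the membership $\det Q_W \in M_W$: by definition $M_W = \Fitt_1 \coker Q_W = \ideal{Q_w^{i,j} \mid i,j}$ is generated by the submaximal minors, while the maximal minor $\det Q_w$ lies in the $0$th Fitting ideal. Expanding $\det Q_w$ along any row, $\det Q_w = \sum_j (-1)^{i+j}(Q_w)_{i,j}\,Q_w^{i,j}$, exhibits it as a $\KK[x]$-linear combination of the generators $Q_w^{i,j}$ of $M_W$, hence $\det Q_w \in M_W$. The only caveat is the degenerate case $r = \dim W = 0$, where $Q_W$ is the zero map on the zero space, $\psi_W = 1$ by convention (see Remark~\ref{6}), and $\Fitt_1 \coker Q_W = \KK[x]$, so the statement holds trivially; I would dispose of this at the outset. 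I do not anticipate a serious obstacle here—the proof is essentially Cauchy–Binet plus Laplace expansion—the only thing to be careful about is keeping the normalization of $c_W$ and the square-factor bookkeeping consistent with Definition~\ref{48} and Remark~\ref{150}, which is why the lemma statement already hedges with \enquote{see Remarks~\ref{150} and \ref{151}}.
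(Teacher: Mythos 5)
Your proof is correct and is precisely the standard argument behind this statement, which the paper does not prove itself but cites from Bloch--Esnault--Kreimer: writing $Q_w=A\,\Diag(x_E)\,A^t$ and applying Cauchy--Binet gives $\det Q_w=\sum_{\abs{S}=r}(\det A_{\bullet,S})^2x^S=\psi_W$, with the change-of-basis factor $(\det U)^2$ accounting for the square ambiguity of Remark~\ref{150}, and Laplace expansion gives $\det Q_w\in M_W$. Nothing further is needed; your handling of the degenerate rank-zero case is also fine.
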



\begin{exa}[Configuration form of the prism realization]\label{215}
Consider the realization $W$ of the prism matroid with basis given in Lemma~\ref{42}.
Then the corresponding matrix of $Q_W$ reads (see Remark~\ref{151})
\[
Q_W=
\begin{pmatrix}
x_1+x_2 & 0 & 0 & x_1 \\
0 & x_3+x_4 & 0 & x_3 \\
0 & 0 & x_5+x_6 & x_5 \\
x_1 & x_3 & x_5 & x_1+x_3+x_5
\end{pmatrix}.
\]
Lemma~\ref{36} recovers the polynomial $\det Q_W=\psi_W$ in Example~\ref{128}.
\end{exa}


The following result describes the behavior of Fitting ideals of configuration forms under duality.
We consider the torus
\[
\TT^E:=(\KK^*)^E\subset\KK^E,\quad\KK[\TT^E]=\KK[x^{\pm1}]=\KK[x]_{x^E}.
\]
The \emph{Cremona isomorphism} $\TT^E\cong\TT^{E^\vee}$ is defined by
\begin{equation}\label{190}
\zeta_E\colon\KK[\TT^E]\cong\KK[\TT^{E^\vee}],\quad x_e^{-1}\leftrightarrow x_{e^\vee},\quad e\in E.
\end{equation}


\begin{prp}[Duality and cokernels of configuration forms]\label{162}
Let $W\subseteq\KK^E$ be a configuration.
Then there is an isomorphism over $\zeta_E$, 
\[
\coker(Q_W)_{x^E}\cong\coker(Q_{W^\perp})_{x^{E^\vee}},
\]
where the indices denote localization (see \eqref{214}).
In particular, this induces an isomorphism
\[
(M_W)_{x^E}\cong(M_{W^\perp})_{x^{E^\vee}}.
\]
\end{prp}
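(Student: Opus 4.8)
The plan is to produce the isomorphism from an explicit comparison of the two bilinear forms after localizing at the relevant monomials, exploiting the short exact sequence $0\to W\to\KK^E\to W^\perp{}^\vee\to 0$ from Definition~\ref{33}.\eqref{33a}. Recall that $Q_{\KK^E}$ is the \emph{diagonal} form $\sum_{e\in E}x_e\cdot\mu_\KK\circ(e^\vee\times e^\vee)$; its associated map $\KK^E[x]\to\KK^{E^\vee}[x]$ is invertible after inverting $x^E$, with inverse the diagonal map scaling $e^\vee$ by $x_e^{-1}$. Under the Cremona isomorphism $\zeta_E$ this inverse is precisely the form $Q_{\KK^{E^\vee}}$ with its coordinates $x_{e^\vee}$, up to the identification $\nu\colon E\to E^\vee$. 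So after localization we have a square
\[
\xymatrix{
\KK^E[x]_{x^E}\ar[r]^-{Q_{\KK^E}}\ar[d]_-{\nu}^-\cong & \KK^{E^\vee}[x]_{x^E}\ar[d]^-{\zeta_E}_-\cong\\
\KK^{E^\vee}[x]_{x^{E^\vee}} & \KK^E[x]_{x^{E^\vee}}\ar[l]_-{Q_{\KK^{E^\vee}}}
}
\]
that commutes once one keeps track of the sign/scaling conventions in \eqref{152}; the point is just that inverting a diagonal form and then Cremona-transforming gives back a diagonal form.

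Next I would restrict this to the configurations. The form $Q_W$ is the composition $W[x]\to\KK^E[x]\xrightarrow{Q_{\KK^E}}\KK^{E^\vee}[x]\to W^\vee[x]$, and dually $Q_{W^\perp}$ factors through $W^\perp[x]\hookrightarrow\KK^{E^\vee}[x]$ and the quotient $\KK^E[x]\twoheadrightarrow (W^\perp)^\vee[x]$ — but by the defining short exact sequence the kernel of that quotient is exactly $W$, and $(W^\perp)^\vee=\KK^E/W$ while $W^\vee=\KK^{E^\vee}/W^\perp$. Thus $Q_W$ is the map induced by $Q_{\KK^E}$ on the subquotient $W\subseteq\KK^E$, $\KK^{E^\vee}\twoheadrightarrow W^\vee$, and $Q_{W^\perp}$ is the map induced by $Q_{\KK^{E^\vee}}$ on $W^\perp\subseteq\KK^{E^\vee}$, $\KK^E\twoheadrightarrow(W^\perp)^\vee$. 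After localizing at $x^E$ (resp.\ $x^{E^\vee}$) the ambient maps $Q_{\KK^E}$ and $Q_{\KK^{E^\vee}}$ are isomorphisms identified by the square above, and they carry the subspace $W$ to the subspace $W^\perp{}^\perp$ annihilator — more precisely $Q_{\KK^E}(W)$ is the annihilator of $W$ under the perfect pairing, which is $(\KK^E/W)^\vee=W^\perp$ sitting inside $\KK^{E^\vee}$. Hence the snake lemma (or just a diagram chase on cokernels of vertical inclusions) yields $\coker(Q_W)_{x^E}\cong\coker(Q_{W^\perp})_{x^{E^\vee}}$ as modules over $\zeta_E$, and the Fitting-ideal statement follows since $M_W=\Fitt_1\coker Q_W$ is functorial under base change and isomorphism.

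The main obstacle I anticipate is bookkeeping rather than conceptual: pinning down the compatibility of the sign conventions in \eqref{152} and the identification $\nu$ of \eqref{158} so that the displayed square genuinely commutes, and verifying that $Q_{\KK^E}$ really does send the subspace $W\subseteq\KK^E$ onto the subspace $W^\perp\subseteq\KK^{E^\vee}$ after localization (this is the statement that the localized diagonal form identifies $W$ with its own annihilator, which is where duality $W^\perp=(\KK^E/W)^\vee$ enters). Once that identification of subspaces is in hand, the isomorphism of cokernels is immediate from the five lemma applied to the map of short exact sequences $0\to W\to\KK^E\to(W^\perp)^\vee\to 0$ and $0\to W^\perp\to\KK^{E^\vee}\to W^\vee\to 0$ localized appropriately, with $Q_{\KK^E}$ (resp.\ its transpose) giving the vertical isomorphisms on the outer terms. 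The passage $\coker\rightsquigarrow\Fitt_1$ is then formal.
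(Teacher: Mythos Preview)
Your overall setup matches the paper's: both arguments use the pair of short exact sequences $0\to W\to\KK^E\to W^{\perp\vee}\to 0$ and $0\to W^\perp\to\KK^{E^\vee}\to W^\vee\to 0$, together with the fact that $Q_{\KK^E}$ and $Q_{\KK^{E^\vee}}$ become mutual inverses under $\zeta_E$ after localization. However, the step you correctly flag as ``the main obstacle'' is in fact false, and the argument cannot be repaired along the lines you suggest.

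The claim that $Q_{\KK^E}$ carries $W$ onto $W^\perp$ after localization is wrong. The map $Q_{\KK^E}$ sends $e\mapsto x_e\cdot e^\vee$, so $Q_{\KK^E}(W)\subseteq W^\perp$ would mean $\sum_e x_e\,w_e\,w'_e=0$ for all $w,w'\in W$, i.e.\ $Q_W\equiv 0$; this contradicts $\det Q_W=\psi_W\ne 0$. Concretely, for $E=\{1,2\}$ and $W=\langle e_1\rangle$ one has $W^\perp=\langle e_2^\vee\rangle$ but $Q_{\KK^E}(W)_{x^E}=\langle e_1^\vee\rangle$. Hence there is no map of short exact sequences with $Q_{\KK^E}$ in the middle and isomorphisms on the outer terms, so neither the five lemma nor the snake lemma applies in the form you describe.

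The paper's diagram chase avoids this: it does \emph{not} relate $W$ and $W^\perp$ by $Q_{\KK^E}$. Instead, it places $Q_W\colon W\to W^\vee$ and $Q_{W^\perp}\colon W^\perp\to W^{\perp\vee}$ on \emph{opposite} sides of the diagram, with the localized isomorphism $Q_{\KK^E}$ only in the middle. The chase then shows that both cokernels are naturally identified with the common quotient
\[
\KK^E[x^{\pm1}]\big/\bigl(W[x^{\pm1}]+Q_{\KK^E}^{-1}(W^\perp[x^{\pm1}])\bigr),
\]
one via $\KK^E\twoheadrightarrow W^{\perp\vee}\twoheadrightarrow\coker Q_{W^\perp}$, the other via $\KK^E\xrightarrow{\sim}\KK^{E^\vee}\twoheadrightarrow W^\vee\twoheadrightarrow\coker Q_W$. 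The point is that the sum $W+Q^{-1}(W^\perp)$ is what gets killed in both cases, not that $Q$ interchanges the two summands. Once you rewrite your argument to compute these two quotients rather than to exhibit a vertical isomorphism $W\to W^\perp$, the remainder of your outline (including the Fitting-ideal consequence) goes through.
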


\begin{proof}
Consider the short exact sequence
\begin{equation}\label{165}
\xymatrix{
0\ar[r] & W\ar[r] & \KK^E\ar[r] & \KK^E/W\ar[r] &0
}
\end{equation}
and its $\KK$-dual 
\begin{equation}\label{166}
\xymatrix{
0 & W^\vee\ar[l] & \KK^{E^\vee}\ar[l] & W^\perp\ar[l] & 0.\ar[l]
}
\end{equation}
We identify $\KK^E=\KK^{E^{\vee\vee}}$ and $\KK^E/W=W^{\perp\vee}$, and we abbreviate
\[
Q:=Q_{\KK^E},\quad Q^\vee:=Q_{\KK^{E^\vee}}.
\]
Then $Q_{x^E}$ and $Q^\vee_{x^{E^\vee}}$ are mutual inverses under $\zeta_E$.
Together with \eqref{165} and \eqref{166} tensored by $\KK[x^{\pm1}]$ and \eqref{163} for $W$ and $W^\perp$, they fit into a commutative diagram with exact rows connected vertically by morphisms over $\zeta_E$
\[
\xymatrix{
&&& 0\\
& 0\ar[d] && \coker (Q_{W^\perp})_{x^{E^\vee}}\ar[u]\\
0\ar[r] & W[x^{\pm1}]\ar[d]_-{(Q_W)_{x^E}}\ar[r] & \KK^E[x^{\pm1}]\ar@{-->}[ur]\ar@<-5pt>[d]_-{Q_{x^E}}\ar[r] & W^{\perp\vee}[x^{\pm1}]\ar[u]\ar[r] & 0\\
0 & W^\vee[x^{\pm1}]\ar[d]\ar[l] & \KK^{E^\vee}[x^{\pm1}]\ar@{-->}[dl]\ar@<-5pt>[u]_-{Q^\vee_{x^{E^\vee}}}\ar[l] & W^\perp[x^{\pm1}]\ar[u]_-{(Q_{W^\perp})_{x^{E^\vee}}}\ar[l] & 0\ar[l]\\
& \coker (Q_W)_{x^E}\ar[d] && 0\ar[u]\\
& 0,
}
\]
where $-[x^{\pm1}]$ means $-\otimes\KK[x^{\pm1}]$.
Exactness of the columns is due to $\det Q_W=\psi_W\ne0$ (see Lemma~\ref{36} and Remark~\ref{6}).
Composing the middle vertical isomorphism over $\zeta_E$ with (taking preimages along) the dashed compositions yields the claimed isomorphism by a diagram chase.
\end{proof}


The following result describes the behavior of submaximal minors of configuration forms under deletion--contraction.
It is the basis for our inductive approach to second degeneracy schemes.


\begin{lem}[Deletion--contraction for submaximal minors]\label{40}
Let $W\subseteq\KK^E$ be a realization of a matroid $\M$ of rank $r=\rk\M$, and let $e\in E$.
Then any basis of $W/e$ can be extended to bases of $W$ and $W\setminus e$ such that $Q_W^{i,j}=$
\[
\begin{cases}
Q_{W\setminus e}^{i,j}=Q_{W/e}^{i,j} & \text{if $e$ is a loop in $\M$,}\\
\psi_{W\setminus e}=\psi_{W/ e} & \text{if $e$ is a coloop in $\M$, $i=r=j$,}\\
x_e\cdot Q_{W\setminus e}^{i,j}=x_e\cdot Q_{W/ e}^{i,j} & \text{if $e$ is a coloop in $\M$, $i\ne r\ne j$,}\\
0 & \text{if $e$ is a coloop in $\M$, otherwise,}\\
\psi_{W/e}& \text{if $e$ is not a (co)loop in $\M$, $i=r=j$,}\\
Q_{W\setminus e}^{i,j}& \text{if $e$ is not a (co)loop in $\M$, $i=r$ or $j=r$,}\\
Q_{W\setminus e}^{i,j}+x_e\cdot Q_{W/e}^{i,j} & \text{if $e$ is not a (co)loop in $\M$, $i\ne r\ne j$,}
\end{cases}
\]
for all $i,j\in\set{1,\dots,r}$.
In particular, the $Q_W^{i,j}$ are linear combinations of square-free monomials for any basis of $W$.
\end{lem}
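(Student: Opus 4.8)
The plan is to write the configuration form in a basis adapted to $e$ and to read off the submaximal minors by elementary determinant manipulations, splitting according to whether $e$ is a loop, a coloop, or neither. First I would set up the bases. Since $W/e = W\cap\KK^{E\setminus\set{e}} = \ker(e^\vee\vert_W)$ (Definition~\ref{33}.\eqref{33e}), a basis $w^1,\dots,w^{r-1}$ of $W/e$ consists of vectors with vanishing $e$-coordinate, and $\dim(W/e) = r$ exactly when $e$ is a loop. Moreover $e$ is not a coloop precisely when $W\cap\KK^{\set{e}} = 0$, in which case $\pi_{E\setminus\set{e}}$ is injective on $W$ and $\dim(W\setminus e) = r$, whereas $W\setminus e = W/e$ when $e$ is a (co)loop (Remark~\ref{34}.\eqref{34a}). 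Accordingly: if $e$ is a loop, any basis $w^1,\dots,w^r$ of $W = W/e = W\setminus e$ works; if $e$ is a coloop, take $w^1,\dots,w^{r-1}$ a basis of $W\setminus e = W/e$ and let $w^r$ be the standard basis vector $e$, using $W = (W\setminus e)\oplus\KK^{\set{e}}$; if $e$ is neither, take any $w^r\in W$ with $w^r_e$ rescaled to $1$, so that the images $\bar w^1,\dots,\bar w^r$ under $\pi_{E\setminus\set{e}}$ form a basis of $W\setminus e$ extending $w^1,\dots,w^{r-1}$.

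Next I would compute the matrix $Q_w$ of the configuration form in this basis (Remark~\ref{151}). Because $w^1,\dots,w^{r-1}$ have $e$-coordinate $0$, the variable $x_e$ enters $Q_w$ only through $w^r$, and after the normalization $w^r_e\in\set{0,1}$ it enters \emph{only} the $(r,r)$-entry. Thus $Q_w = Q_{\bar w} + x_e\cdot E_{r,r}$ (with $E_{r,r}$ the matrix unit) when $e$ is neither a loop nor a coloop, $Q_w$ is block diagonal with diagonal blocks $Q_{W\setminus e}$ and $(x_e)$ when $e$ is a coloop, and $Q_w = Q_{W\setminus e} = Q_{W/e}$ when $e$ is a loop; in every case the upper-left $(r-1)\times(r-1)$ block of $Q_w$ is the matrix of $Q_{W/e}$ in the basis $w^1,\dots,w^{r-1}$.

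The submaximal minors then follow by inspection. Deleting row $r$ and column $r$ gives the determinant of that upper-left block, which is $\psi_{W/e}$ by Lemma~\ref{36} (and equals $\psi_{W\setminus e}$ in the coloop case, since $W\setminus e = W/e$). Deleting row $r$ together with a column $j<r$, or symmetrically, removes the unique $x_e$-carrying entry, so the minor equals the corresponding minor $Q_{W\setminus e}^{i,j}$ of $Q_{\bar w}$; in the coloop case the block structure leaves a zero row or column, so this minor vanishes. Deleting rows and columns $i,j<r$ keeps the full rank-one perturbation in place, and the sign-free cofactor identity $\det(A + x_e\,e_{r-1}e_{r-1}^{t}) = \det A + x_e\cdot A^{r-1,r-1}$ (a case of the matrix determinant lemma) turns the minor into $Q_{W\setminus e}^{i,j} + x_e\cdot Q_{W/e}^{i,j}$, which degenerates to $x_e\cdot Q_{W\setminus e}^{i,j}$ for a coloop and to $Q_{W\setminus e}^{i,j}$ for a loop. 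Matching these outcomes against the loop/coloop dichotomy reproduces all seven lines of the displayed formula.

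For the last assertion, Remark~\ref{151} reduces the claim to the distinguished basis, since any change of basis replaces the $Q_W^{i,j}$ by $\KK$-linear combinations of the ones computed above. I would then induct on $\abs{E}$ using the recursion just established: the base case $\rk\M\le 1$ is immediate, and in the inductive step each $\psi_{W\setminus e}$ and $\psi_{W/e}$ is a $\KK$-linear combination of square-free monomials by Definition~\ref{48}, each $Q_{W\setminus e}^{i,j}$ and $Q_{W/e}^{i,j}$ is one by the induction hypothesis (they involve only the variables $x_{E\setminus\set{e}}$), and multiplying such a monomial by $x_e$ preserves square-freeness. The only genuine computation in the argument is the cofactor identity for the $(r,r)$-perturbation; everything else is bookkeeping. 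The one point that needs care — and what I expect to be the main obstacle — is verifying that the three-way case split is exhaustive and that the chosen vectors simultaneously form bases of $W$, $W\setminus e$ and $W/e$, which is precisely where Remark~\ref{34}.\eqref{34a} and the dimension count $\dim(W\setminus e) = r$ for non-coloops enter.
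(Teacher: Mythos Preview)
Your proposal is correct and follows essentially the same approach as the paper: choose a basis of $W$ adapted to $e$ so that $x_e$ appears only in the $(r,r)$-entry of $Q_w$, giving the block decomposition \eqref{51} in the coloop case and the rank-one perturbation \eqref{52} in the non-(co)loop case, and then read off the submaximal minors. Your explicit invocation of the matrix determinant lemma and the induction on $\abs{E}$ for square-freeness spell out what the paper leaves implicit, but the argument is the same.
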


\begin{proof}
Pick a basis $w^1,\dots,w^r$ of $W\subseteq\KK^E$ and consider 
\[
Q_W=\left(\sum_{e\in E}x_e\cdot w^i_e\cdot w^j_e\right)_{i,j}\in\KK^{r\times r}
\]
as a matrix (see Remark~\ref{151}).
Recall that (see Definition~\ref{33}.\eqref{33d} and \eqref{33e}),
\[
W\backslash e=\pi_{E\setminus\set{e}}(W),\quad
W/e=W\cap\KK^{E\setminus\set{e}},
\]
and the description of (co)loops in Remark~\ref{34}.\eqref{34a}:
\begin{asparaitem}

\item If $e$ is a loop, then $w^i_e=0$ for all $i=1,\dots,r$ and hence $W\setminus e=W=W/e$.

\item If $e$ is not a loop, then we may adjust $w^1,\dots,w^r$ such that $w^i_e=\delta_{i,r}$ for all $i=1,\dots,r$ and then $w^1,\dots,w^{r-1}$ is a general basis of $W/e$.

\item If $e$ is a coloop, then we may adjust $w^r=e$ and $\pi_{E\setminus\set{e}}$ identifies $w^1,\dots,w^{r-1}$ with a basis of $W\setminus e=W/e$.
\end{asparaitem}
In the latter case,
\begin{equation}\label{51}
Q_W=\begin{pmatrix}
Q_{W\setminus e} & 0\\
0 & x_e
\end{pmatrix},
\end{equation}
and the claimed equalities follow (see Lemma~\ref{36}).

It remains to consider the case in which $e$ is not a (co)loop.
Then $\iota_{E\setminus\set{e}}$ and $\pi_{E\setminus\set{e}}$ (see \eqref{206}) identify $w^1,\dots,w^{r-1}$ and $w^1,\dots,w^r$ with bases of $W/e$ and $W\setminus e$, respectively.
Hence,
\begin{equation}\label{52}
Q_{W\setminus e}=
\begin{pmatrix}
Q_{W/e} & b\\
b^t & a
\end{pmatrix},\quad
Q_W=\begin{pmatrix}
Q_{W/e} & b\\
b^t & x_e+a
\end{pmatrix},
\end{equation}
where both the entry $a$ and column $b$ are independent of
$x_e$.
We consider two cases.
If $i=r$ or $j=r$, then clearly $Q_W^{i,j}=Q_{W\setminus e}^{i,j}$.
Otherwise,
\[
Q_W^{i,j}=Q_{W\setminus e}^{i,j}+x_e\cdot Q_{W/e}^{i,j}.
\] 
This proves the claimed equalities also in this case (see Lemma~\ref{36}) and the particular claim follows.
\end{proof}


As an application of Lemma~\ref{36}, we describe the behavior of configuration polynomials under $2$-separations.


\begin{prp}[Configuration polynomials and $2$-separations]\label{63}
Let $W\subseteq\KK^E$ be a realization of a connected matroid $\M$.
Suppose that $E=E_1\sqcup E_2$ is an (exact) $2$-separation of $\M$.
Then
\[
\psi_W=\psi_{W/E_1}\cdot\psi_{W\vert_{E_1}}+\psi_{W\vert_{E_2}}\cdot\psi_{W/E_2}.
\]
\end{prp}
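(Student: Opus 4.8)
The plan is to realize the $2$-separation as the same kind of block structure that appeared in the proof of Lemma~\ref{40} for a single non-(co)loop element, and then read off the determinant by a cofactor/Laplace argument. Since $\M$ is connected and $E=E_1\sqcup E_2$ is an exact $2$-separation, we have $\lambda_\M(E_1)=1$, i.e.\ $\rk_\M(E_1)+\rk_\M(E_2)=\rk\M+1$. Translating this into linear algebra (see Remark~\ref{65} and \eqref{202}): the subspaces $W\cap\KK^{E_1}$ and $W\cap\KK^{E_2}$ together with $W$ fit so that, after choosing coordinates well, $W$ has a basis consisting of a basis of $W/E_2=W\cap\KK^{E_1}$, a basis of $W/E_1=W\cap\KK^{E_2}$, and exactly one extra vector $w^0$ with nonzero components in both $E_1$ and $E_2$. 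Concretely, $\dim(W/E_1)+\dim(W/E_2)=\rk\M-1$, so one vector is missing; connectedness prevents $W$ from splitting as a direct sum (Proposition~\ref{4}), which is what forces $w^0$ to straddle both blocks.

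With such a basis ordered as (basis of $W/E_2$, then $w^0$, then basis of $W/E_1$), the matrix $Q_W$ of the configuration form acquires the block shape
\[
Q_W=\begin{pmatrix}
Q_{W/E_2} & b_1 & 0\\
b_1^t & q_0 & b_2^t\\
0 & b_2 & Q_{W/E_1}
\end{pmatrix},
\]
where the off-diagonal zero blocks reflect that $\KK^{E_1}$ and $\KK^{E_2}$ are orthogonal under the generic diagonal form $Q_{\KK^E}$, the column $b_1$ (resp.\ $b_2$) is supported on the variables $x_{E_1}$ (resp.\ $x_{E_2}$), and $q_0\in\KK[x_{E_1}]+\KK[x_{E_2}]$ is linear. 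I would then compute $\det Q_W$ by Laplace expansion along the middle row/column: the contribution of $q_0$ gives $q_0\cdot\det Q_{W/E_2}\cdot\det Q_{W/E_1}$, and the contributions of $b_1,b_2$ give a correction term. Using Lemma~\ref{36} this reads $\psi_W=q_0\,\psi_{W/E_2}\psi_{W/E_1}+(\text{correction})$. The point is that, after further normalization, $q_0$ splits as the sum of the ``$e_1$-part'' and the ``$e_2$-part'', and the two correction pieces combine with the two halves of $q_0$ into $\psi_{W\vert_{E_1}}\psi_{W/E_1}$ and $\psi_{W/E_2}\psi_{W\vert_{E_2}}$ respectively. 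Comparing with the deletion--contraction pattern of Corollary~\ref{18} (the handle case with $H=E_1$ when $E_1$ happens to be a handle) shows the answer must take the stated symmetric form.

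An alternative, possibly cleaner, route avoids picking the straddling vector explicitly: pick $e\in E_1$ that is not a (co)loop (it exists unless $E_1$ is trivial; the edge cases $\abs{E_i}\le 1$ contradict exactness or are handled directly), apply Proposition~\ref{3} to get $\psi_W=\psi_{W\setminus e}+x_e\,\psi_{W/e}$, and induct on $\abs{E_1}$, checking that $E_1\setminus\set{e}$ remains a $2$-separation (or a $1$-separation, i.e.\ a direct-sum situation covered by Proposition~\ref{4}) of both $\M\setminus e$ and $\M/e$, while $E_2$ is unchanged. The bookkeeping uses that $\psi_{W\vert_{E_1}}$ itself satisfies deletion--contraction in $e$ and that $\psi_{(W\setminus e)/E_2}=\psi_{(W/E_2)\setminus e}$ etc.\ by compatibility of these operations (Remark~\ref{157} and the functoriality in Definition~\ref{33}).

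The main obstacle I anticipate is the normalization that makes $q_0$ split cleanly as ``$E_1$-part $+$ $E_2$-part'' and makes the two correction terms assemble into $\psi_{W\vert_{E_i}}$: a priori $q_0=\ideal{x,w^0\star w^0}=\sum_{e\in E_1}x_e (w^0_e)^2+\sum_{e\in E_2}x_e(w^0_e)^2$ already splits, but one must verify that the $E_1$-summand together with $b_1$ reconstitutes $\psi_{W\vert_{E_1}}$ (which is the configuration polynomial of the projection $\pi_{E_1}(W)$, a $\rk_\M(E_1)=\dim(W/E_2)+1$-dimensional configuration on $E_1$), rather than some other minor. I expect this to come out of the determinant formula in Remark~\ref{56} applied to the short exact sequence $0\to W/E_2\to W\vert_{?}\to\cdots$, exactly as in the proof of Lemma~\ref{2}, but keeping the indices straight through the two-sided splitting is where the care is needed.
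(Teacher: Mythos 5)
Your proposal is correct and follows essentially the paper's own route: both arguments choose a basis of $W$ adapted to the $2$-separation (a basis of $W/E_2$, a basis of $W/E_1$, and one linking vector, which the paper produces via Truemper's normal form for a $2$-separation and you produce from the dimension count $\dim(W/E_1)+\dim(W/E_2)=\rk\M-1$), so that $Q_W$ takes your bordered block shape, and then split the middle column by multilinearity into its $x_{E_1}$- and $x_{E_2}$-supported parts to obtain the two block-triangular determinants $\psi_{W\vert_{E_1}}\cdot\psi_{W/E_1}$ and $\psi_{W/E_2}\cdot\psi_{W\vert_{E_2}}$. The normalization you flag as the main obstacle is harmless: the chosen basis of $W/E_2$ together with $\pi_{E_1}(w^0)$ is a basis of $W\vert_{E_1}$ (since $\rk(E_1)=\dim(W/E_2)+1$ and a linear relation would force $w^0\in (W/E_2)\oplus(W/E_1)$), so the bordered block is literally a matrix of $Q_{W\vert_{E_1}}$ and its determinant equals $\psi_{W\vert_{E_1}}$ up to the square factor allowed by Remark~\ref{150}, and likewise on the $E_2$ side.
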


\begin{proof}
We adopt the notation from \cite[\S8.2]{Tru92}.
Extend a basis $B_2\in\B_{\M\vert_{E_2}}$ to a basis $B\in\B_\M$.
Then $W$ is the row span of a matrix (see \cite[(8.1.1)]{Tru92} and Remark~\ref{65})
\[
A=\begin{pmatrix}
I & 0 & A_1 & 0\\
0 & I & D & A_2'
\end{pmatrix},
\]
where the block columns are indexed by $B\setminus B_2,B_2,E_1\setminus B,E_2\setminus B_2$, and $\rk D=1$.
After suitably ordering and scaling $B_2$, $E_1\setminus B$ the lower rows of $A$, we may assume that
\begin{align*}
D&=(1\ b)^ta_1,\\
a_1&=
\begin{pmatrix}
1 & \cdots & 1 & 0 & \cdots & 0
\end{pmatrix}\ne0,\\
b&=
\begin{pmatrix}
1 & \cdots & 1 & 0 & \cdots & 0
\end{pmatrix}.
\end{align*}
The size of $b$ and $a_1$ is determined by number of rows and columns of $D$, respectively.
While $b$ could be $0$, at least one entry of $a_1$ is a $1$.
After suitable row operations and adjusting signs of $B_2$, we can repartition 
\begin{equation}\label{130}
A=\begin{pmatrix}
I & 0 & 0 & A_1 & 0\\
0 & 1 & 0 & a_1 & a_2\\
0 & b^t & I & 0 & A_2
\end{pmatrix}.
\end{equation}
Denote by $e\in E$ the index of the column $(0\ 1\ b)^t$.
Let $X_1,x_e,X_2,X_1',X_2'$ be diagonal matrices of variables corresponding to the block columns of $A$.
Then the configuration form of $W$ becomes (see Remark~\ref{151})
\[
Q_W=
\begin{pmatrix}
X_1+A_1X_1'A_1^t & A_1X_1'a_1^t & 0 \\
a_1X_1'A_1^t & x_e+a_1X_1'a_1^t+a_2X_2'a_2^t & x_eb+a_2X_2'A_2^t \\
0 & b^tx_e+A_2X_2'a_2^t & b^tx_eb+X_2+A_2X_2'A_2^t
\end{pmatrix},
\]
which involves 
\begin{align*}
Q_{W\vert_{E_1}}&=
\begin{pmatrix}
Q_{W/E_2} & A_1X_1'a_1^t\\
a_1X_1'A_1^t & a_1X_1'a_1^t \\
\end{pmatrix},\\
Q_{W/E_2}&=X_1+A_1X_1'A_1^t,\\
Q_{W\vert_{E_2}}&=
\begin{pmatrix}
x_e+ a_2X_2'a_2^t & x_eb+a_2X_2'A_2^t \\
b^tx_e+A_2X_2'a_2^t & Q_{W/E_1}
\end{pmatrix},\\
Q_{W/E_1}&=b^tx_eb+X_2+A_2X_2'A_2^t.
\end{align*}
Laplace expansion of $\psi_W=\det Q_W$ (see Lemma~\ref{36}) along the $e$th column yields the claimed formula.
\end{proof}


\begin{rmk}[Configuration polynomials and handles]\label{67}
Let $W\subseteq\KK^E$ be a realization of a connected matroid $\M$, and let $H\in\H_\M$ be a separating handle.
By Lemma~\ref{16}.\eqref{16c}, $H$ is a $2$-separation of $\M$.
Proposition~\ref{63} applied to $E=(E\setminus H)\sqcup H$ thus yields the statement of Corollary~\ref{18} in this case.
\end{rmk}

\section{Configuration hypersurfaces}\label{91}

In this section, we establish our main results on Jacobian and second degeneracy schemes of realizations of connected matroids:
the second degeneracy scheme is Cohen--Macaulay, the Jacobian scheme equidimensional, of codimension $3$ (see Theorem~\ref{100}).
The second degeneracy scheme is reduced, the Jacobian scheme generically reduced if $\ch\KK\ne2$ (see Theorem~\ref{100}).

\subsection{Commutative ring basics}\label{108}

In this subsection, we review the relevant preliminaries on equidimensionality and graded Cohen--Macaulay\-ness using the books of Matsumura (see \cite{Mat89}) and Bruns and Herzog (see \cite{BH93}) as comprehensive references.
For the benefit of the non-experts we provide detailed proofs.
Further we relate generic reducedness for a ring and an associated graded ring (see Lemma~\ref{95}).

\subsubsection{Equidimensionality of rings}\label{79}

Let $R$ be a Noetherian ring.
We denote by $\Min\Spec R$ and $\Max\Spec R$ the sets of minimal and maximal elements of the set $\Spec R$ of prime ideals of $R$ with respect to inclusion.
The subset $\Ass R\subseteq\Spec R$ of \emph{associated primes} of $R$ is finite and $\Min\Spec R\subseteq\Ass R$ (see \cite[Thm.~6.5]{Mat89}).

One says that $R$ is \emph{catenary} if every saturated chain of prime ideals joining $\pp,\qq\in\Spec R$ with $\pp\subseteq\qq$ has (maximal) length $\height(\qq/\pp)$ (see \cite[31]{Mat89}).
We say that $R$ is \emph{equidimensional} if it is catenary and
\[
\forall\pp\in\Min\Spec R\colon\forall\mm\in\Max\Spec R\colon
\pp\subseteq\mm\implies\height(\mm/\pp)=\dim R.
\]
If $R$ is a finitely generated $\KK$-algebra, then these two conditions reduce to (see  \cite[Thm.~2.1.12]{BH93} and \cite[Thm.~5.6]{Mat89})
\[
\forall\pp\in\Min\Spec R\colon
\dim(R/\pp)=\dim R.
\]
We say that $R$ is \emph{pure-dimensional} if 
\[
\forall\pp\in\Ass R\colon
\dim(R/\pp)=\dim R,
\]
which implies in particular that $\Ass R=\Min\Spec R$.
It follows that pure-dimensional finitely generated $\KK$-algebras are equidimensional.


The following lemma applies to any equidimensional finitely generated $\KK$-algebra.

\begin{lem}[Height bound for adding elements]\label{109}
Let $R$ be a Noetherian ring such that $R_\mm$ is equidimensional for all $\mm\in\Max\Spec R$.
\begin{enumerate}[(a)]
\item\label{109a} All saturated chains of primes in $\pp\in\Spec R$ have length $\height\pp$.
\item\label{109b} For any $\pp\in\Spec R$, $x\in R$ and $\qq\in\Spec R$ minimal over $\pp+\ideal{x}$,
\[
\height\qq\le\height\pp+1.
\]
\end{enumerate}
\end{lem}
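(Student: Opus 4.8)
The plan is to reduce both statements to the local situation and invoke standard dimension theory. For part~\eqref{109a}, I would argue that it suffices to show that every saturated chain of primes descending from a fixed $\pp\in\Spec R$ down to a minimal prime has length exactly $\height\pp$, and similarly that every saturated chain ascending from $\pp$ to a maximal ideal has length $\dim R_\mm-\height\pp$; combining a maximal descending chain with any saturated ascending chain and using equidimensionality of $R_\mm$ pins down the length. Concretely, pick a maximal ideal $\mm\supseteq\pp$. Localizing at $\mm$ does not change heights of primes contained in $\mm$, nor does it change the property of a chain being saturated, so I may assume $R=R_\mm$ is local and equidimensional. Then catenarity gives $\height\pp+\dim(R/\pp)=\dim R$, and more to the point, for any saturated chain $\pp_0\subsetneq\pp_1\subsetneq\cdots\subsetneq\pp_s=\pp$ with $\pp_0$ minimal, equidimensionality forces $\dim(R/\pp_0)=\dim R$, and catenarity applied to the pair $\pp_0\subseteq\pp$ forces $s=\height(\pp/\pp_0)=\height\pp$. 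This is the heart of \eqref{109a}.

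For part~\eqref{109b}, I would localize at a maximal ideal $\mm$ containing $\qq$, so that again $R=R_\mm$ is local and equidimensional, and $\qq$ is still minimal over $\pp+\ideal{x}$. By Krull's principal ideal theorem (height of a minimal prime over a principal ideal modulo $\pp$ is at most $1$), $\height(\qq/\pp)\le 1$ in $R/\pp$. Now I invoke catenarity together with \eqref{109a}: picking a saturated chain of primes from a minimal prime up to $\pp$ (of length $\height\pp$) and appending a saturated chain from $\pp$ to $\qq$ (of length $\le 1$) produces a saturated chain from a minimal prime to $\qq$, which by \eqref{109a} has length $\height\qq$. Hence $\height\qq=\height\pp+\height(\qq/\pp)\le\height\pp+1$.

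The main obstacle, and the only place requiring care, is making sure that the hypothesis "$R_\mm$ is equidimensional for all $\mm$" really transfers correctly under localization and supports the chain-length bookkeeping: equidimensionality as defined here bundles catenarity with the statement that $\height(\mm/\pp)=\dim R$ for minimal $\pp\subseteq\mm$, and I want to be sure that after localizing at $\mm$, the minimal primes of $R_\mm$ are exactly the (localizations of) minimal primes of $R$ contained in $\mm$, and that $\dim R_\mm$ equals this common height. This is standard but should be stated explicitly. Once that is in place, \eqref{109a} is immediate from catenarity plus the uniform-height condition, and \eqref{109b} is Krull's principal ideal theorem glued on via \eqref{109a}. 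I do not expect any genuinely hard step; the value of the lemma is purely as a convenient packaging of these facts for the codimension arguments later in the paper.
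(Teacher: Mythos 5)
Your proposal is correct and takes essentially the same route as the paper: compare saturated chains by extending them through a maximal ideal and using equidimensionality of $R_\mm$ for part (a), then glue Krull's principal ideal theorem onto (a) for part (b). One small caution: the equality $\height(\pp/\pp_0)=\height\pp$ for \emph{every} minimal prime $\pp_0\subseteq\pp$ does not follow from catenarity of the pair $\pp_0\subseteq\pp$ alone, but from the extension-to-$\mm$ bookkeeping you sketch at the outset (namely $\height(\mm/\pp_0)=\height(\pp/\pp_0)+\height(\mm/\pp)=\dim R_\mm$ independently of $\pp_0$), so that step should be made explicit rather than folded into the phrase ``catenarity applied to the pair''.
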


\begin{proof}\
\begin{asparaenum}[(a)]

\item Take two such chains of length $n$ and $n'$ starting at minimal primes $\pp_0$ and $\pp_0'$, respectively.
Extend both by a saturated chain of primes of length $m$ containing $\pp$ and ending in a maximal ideal $\mm$.
Since $R_\mm$ is equidimensional by hypothesis, these extended chains have length $n+m=n'+m$.
Therefore, the two chains have length $n=n'$.

\item By Krull's principal ideal theorem, $\height(\qq/\pp)\le1$.
Take a chain of primes in $\pp$ of length $\height\pp$ and extend it by $\qq$ if $\pp\ne\qq$.
By \eqref{109a}, this extended chain has length $\height\qq$ and the claim follows.\qedhere

\end{asparaenum}
\end{proof}


\begin{lem}[Equidimensional finitely generated algebras and localization]\label{117}
Let $R$ be an equidimensional finitely generated $\KK$-algebra and $x\in R$.
If $R_x\ne0$, then $R_x$ is equidimensional of dimension $\dim R_x=\dim R$.
\end{lem}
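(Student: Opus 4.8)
The plan is to reduce everything to the criterion recalled above: a finitely generated $\KK$-algebra $S$ is equidimensional precisely when $\dim(S/\pp)=\dim S$ for every $\pp\in\Min\Spec S$. Since $R=\KK[a_1,\dots,a_n]$ forces $R_x=\KK[a_1,\dots,a_n,x^{-1}]$, the localization $R_x$ is again a finitely generated $\KK$-algebra, so it will suffice to compute $\dim R_x$ and to verify this condition at each $\qq\in\Min\Spec R_x$.

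First I would identify $\Min\Spec R_x$. Because $R_x\ne0$, the element $x$ is not nilpotent, hence $x\notin\pp$ for at least one $\pp\in\Min\Spec R$. The primes of $R_x$ correspond to the primes $\pp$ of $R$ with $x\notin\pp$; if such a $\pp$ is minimal with that property and $\pp_0\subsetneq\pp$ were prime, then $x\notin\pp_0$ as well, contradicting minimality, so $\pp\in\Min\Spec R$. Conversely, every $\pp\in\Min\Spec R$ with $x\notin\pp$ gives a minimal prime $\pp R_x$ of $R_x$. Thus
\[
\Min\Spec R_x=\set{\pp R_x\xmid\pp\in\Min\Spec R,\ x\notin\pp}\ne\emptyset.
\]

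Next, for such a $\pp$ I would use $R_x/\pp R_x\cong(R/\pp)_{\ol x}$, where $\ol x$ is the image of $x$ in the domain $R/\pp$; this image is nonzero exactly because $x\notin\pp$. By equidimensionality of $R$ one has $\dim(R/\pp)=\dim R$, and the key point is that inverting a nonzero element of a finitely generated $\KK$-domain leaves the Krull dimension unchanged: $(R/\pp)_{\ol x}$ is again a finitely generated $\KK$-domain with the same fraction field as $R/\pp$, so the transcendence-degree formula for the dimension of finitely generated domains (see \cite[Thm.~5.6]{Mat89}) gives
\[
\dim(R/\pp)_{\ol x}=\operatorname{trdeg}_\KK\operatorname{Frac}(R/\pp)=\dim(R/\pp)=\dim R.
\]
Hence $\dim R_x=\max\set{\dim(R_x/\qq)\xmid\qq\in\Min\Spec R_x}=\dim R$, this maximum being taken over a nonempty set each of whose members equals $\dim R$; and the same computation gives $\dim(R_x/\qq)=\dim R=\dim R_x$ for every $\qq\in\Min\Spec R_x$, which is precisely the equidimensionality of $R_x$. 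I do not expect a genuine obstacle here: the only slightly delicate points are the bookkeeping of minimal primes under localization and the dimension invariance of a finitely generated domain under inverting a nonzero element, and both are standard.
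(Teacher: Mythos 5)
Your proof is correct, and it takes a slightly different route than the paper at the key step. Both arguments begin identically, by identifying $\Min\Spec R_x$ with the set of $\pp R_x$ for $\pp\in\Min\Spec R$ with $x\notin\pp$ (nonempty since $x$ is not nilpotent), and by reducing to showing $\dim(R_x/\pp R_x)=\dim R$ for each such $\pp$. The paper then invokes the Hilbert Nullstellensatz in the form $\bigcap\Max V(\pp)=\pp$ to produce a maximal ideal $\mm\supseteq\pp$ with $x\notin\mm$, and computes $\dim(R_x/\pp_x)=\height(\mm_x/\pp_x)=\height(\mm/\pp)=\dim R$ using equidimensionality of $R$; this implicitly also uses that every maximal ideal of a finitely generated domain has height equal to its dimension. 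You instead observe that $R_x/\pp R_x\cong(R/\pp)_{\ol x}$ is a finitely generated $\KK$-domain with the same fraction field as $R/\pp$, so the transcendence-degree formula gives $\dim(R/\pp)_{\ol x}=\dim(R/\pp)=\dim R$ directly, and then conclude equidimensionality of $R_x$ from the criterion $\dim(R_x/\qq)=\dim R_x$ for all $\qq\in\Min\Spec R_x$. The two arguments draw on the same standard dimension theory for affine algebras, but yours bypasses the explicit Jacobson-type step of locating a maximal ideal avoiding $x$, at the price of invoking the trdeg characterization of dimension; either is perfectly adequate here.
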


\begin{proof}
Any minimal prime ideal of $R_x$ is of the form $\pp_x$ where $\pp\in\Min\Spec R$ with $x\not\in\pp$.
By the Hilbert Nullstellensatz (see \cite[Thm.~5.5]{Mat89}), 
\[
\bigcap\Max V(\pp)=\pp.
\]
This yields an $\mm\in\Max\Spec R$ such that $\pp\subseteq\mm\not\ni x$ and hence $\pp_x\subseteq\mm_x\in\Max\Spec R_x$.
Since $R$ and hence $R_x$ is a finitely generated $\KK$-algebra,
\[
\dim(R_x/\pp_x)=\height(\mm_x/\pp_x)=\height(\mm/\pp)=\dim R
\]
by equidimensionality of $R$.
The claim follows.
\end{proof}

\subsubsection{Generic reducedness}\label{81}

The following types of Artinian local rings coincide: field, regular ring, integral domain and reduced ring (see \cite[Thms.~2.2, 14.3]{Mat89}).
A Noetherian ring $R$ is \emph{generically reduced} if the Artinian local ring $R_\pp$ is reduced for all $\pp\in\Min\Spec R$ (see \cite[Exc.~5.2]{Mat89}).
This is equivalent to $R$ satisfying Serre's condition ($R_0$).
We use the same notions for the associated affine scheme $\Spec R$.

\begin{dfn}[Generic reducedness]\label{131}
We call a Noetherian scheme $X$ \emph{generically reduced along a subscheme $Y$} if $X$ is reduced at all generic points specializing to a point of $Y$.
If $X=\Spec R$ is an affine scheme, then we use the same notions for the Noetherian ring $R$.
\end{dfn}


\begin{lem}[Reducedness and purity]\label{176}
A Noetherian ring $R$ is reduced if it is generically reduced and pure-dimensional.
\end{lem}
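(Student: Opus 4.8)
The plan is to prove that the nilradical $N:=\sqrt{0}\unlhd R$ vanishes, by extracting from any hypothetical nonzero nilpotent a minimal prime at which $R$ would fail to be reduced.

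Suppose $N\ne0$. Since $R$ is Noetherian, the nonempty set of ideals $\set{\Ann_R(x)\xmid 0\ne x\in N}$ has a maximal element; fix $0\ne x\in N$ with $\pp:=\Ann_R(x)$ maximal in this set. A standard argument shows that $\pp$ is prime: if $ab\in\pp$ and $a\notin\pp$, then $ax\ne0$ lies in $N$ and $\Ann_R(ax)\supseteq\pp+\ideal{b}$, so $\Ann_R(ax)=\pp$ by maximality, forcing $b\in\pp$. Hence $\pp\in\Ass_R(R)$. Because $R$ is pure-dimensional, $\Ass R=\Min\Spec R$ (see \S\ref{79}), so $\pp$ is in fact a minimal prime of $R$.

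Now I would localize at $\pp$. On one hand, $x/1\ne0$ in $R_\pp$: if $sx=0$ for some $s\in R\setminus\pp$, then $s\in\Ann_R(x)=\pp$, a contradiction. On the other hand, $x^n=0$ for some $n\ge1$ since $x\in N$, so $(x/1)^n=0$ in $R_\pp$. Thus the local ring $R_\pp$ contains a nonzero nilpotent, contradicting generic reducedness of $R$, which by definition asserts that $R_\pp$ is reduced for every $\pp\in\Min\Spec R$ (see \S\ref{81}). Therefore $N=0$ and $R$ is reduced. This is a routine verification — essentially the implication that Serre's conditions $(R_0)$ and $(S_1)$ together force reducedness — so I do not expect a genuine obstacle; the only point worth flagging is that pure-dimensionality enters precisely to exclude embedded primes, which is what allows the associated prime $\pp$ witnessing a would-be nilpotent to be taken minimal, where generic reducedness then applies directly.
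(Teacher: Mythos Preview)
Your proof is correct and follows essentially the same approach as the paper. Both arguments hinge on the fact that pure-dimensionality forces $\Ass R=\Min\Spec R$; the paper then invokes the injection $R\hookrightarrow\bigoplus_{\pp\in\Ass R}R_\pp$ (citing \cite[Thm.~6.1.(i)]{Mat89}) to conclude, whereas you unpack that injection by hand, producing an explicit associated prime from a putative nilpotent and deriving the same contradiction at the corresponding localization.
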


\begin{proof}
Since $R$ is pure-dimensional, $\Ass R=\Min\Spec R$, and hence, $R$ becomes a subring of localizations (see \cite[Thm.~6.1.(i)]{Mat89})
\[
R\into\bigoplus_{\pp\in\Ass R}R_\pp=\bigoplus_{\Min\Spec R}R_\pp.
\]
The latter ring is reduced since $R$ is generically reduced, and the claim follows.
\end{proof}


\begin{lem}[Reducedness and reduction]\label{94}
Let $(R,\mm)$ be a local Noetherian ring.
Suppose that $R/tR$ is reduced for a system of parameters $t$.
Then $R$ is regular and, in particular, an integral domain and reduced.
\end{lem}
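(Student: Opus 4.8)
The plan is to show that the hypothesis forces $\mm=tR$, so that $R$ is regular, and then to invoke the standard fact that regular local rings are integral domains.

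First I would unwind the definition: if $t=(t_1,\dots,t_d)$ is a system of parameters, then $d=\dim R$ and $R/tR$ is a Noetherian local ring of Krull dimension $0$, hence Artinian; it is nonzero since $tR\subseteq\mm$. By hypothesis $R/tR$ is reduced. Now an Artinian local ring that is reduced is a field: its maximal ideal is the Jacobson radical and is therefore nilpotent, hence contained in the nilradical, which is zero. (Equivalently, this is the coincidence of the notions \emph{field}, \emph{regular ring}, \emph{integral domain} and \emph{reduced ring} for Artinian local rings recalled at the beginning of \S\ref{81}, see \cite[Thms.~2.2, 14.3]{Mat89}.) Thus $\mm/tR=0$, that is, $\mm=tR$.

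Consequently $\mm$ is generated by $d=\dim R$ elements, so that $R$ is a regular local ring. By \cite[Thm.~14.3]{Mat89}, a regular local ring is an integral domain, and in particular reduced, which gives all the assertions.

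The only real input beyond this bookkeeping is the observation that a system of parameters makes $R/tR$ Artinian, so that the reducedness of $R/tR$ is the very strong condition that $R/tR$ be a field; once this is in place there is no serious obstacle.
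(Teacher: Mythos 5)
Your argument is correct and follows essentially the same route as the paper: reducedness of the Artinian local ring $R/tR$ makes it a field, so $\mm=tR$ is generated by $\dim R$ elements, giving regularity, and hence integrality and reducedness by \cite[Thm.~14.3]{Mat89}. The extra detail you supply (nilpotence of the maximal ideal in the Artinian case) is just an unpacking of the fact the paper cites.
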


\begin{proof}
By hypothesis, $R/tR$ is local Artinian with maximal ideal $\mm/tR$.
Reducedness makes $R/tR$ a field, and hence, $\mm=tR$.
By definition, this means that $R$ is regular.
In particular, $R$ is an integral domain and reduced (see \cite[Thm.~14.3]{Mat89}).
\end{proof}


\begin{dfn}[Rees algebras]\label{173}
Let $R$ be a ring and $I\unlhd R$ an ideal.
The \emph{(extended) Rees algebra} is the $R[t]$-algebra (see \cite[Def.~5.1.1]{HS06})
\[
\Rees_IR:=R[t,It^{-1}]\subseteq R[t^{\pm1}].
\]
The \emph{associated graded algebra} is the $R/I$-algebra
\[
\gr_IR:=\bigoplus_{i=0}^\infty I^i/I^{i+1}.
\]
\end{dfn}


\begin{lem}[Generic reducedness from associated graded ring]\label{95}
Let $R$ be a Noetherian $d$-dimensional ring, $I\unlhd R$ an ideal, $S:=\Rees_IR$ and $\bar R:=\gr_IR$.

\begin{enumerate}[(a)]

\item\label{95a} Suppose $R$ is an equidimensional finitely generated $\KK$-algebra.
Then $S$ is a $(d+1)$-equidimensional finitely generated $\KK$-algebra.

\item\label{95b} If $S$ is $(d+1)$-equidimensional and $I\ne R$, then $\bar R$ is $d$-equidimensional.

\item\label{95c} If $S$ is equidimensional and $\bar R$ is generically reduced, then $R$ is generically reduced along $V(I)$.

\end{enumerate}
\end{lem}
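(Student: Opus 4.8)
The plan is to play the two faces of the extended Rees algebra $S=\Rees_IR$ against each other: the localization $S_t=R[t^{\pm1}]$, which recovers $R$, and the quotient $S/tS=\gr_IR=\bar R$, the normal cone. The standing preliminaries, all consequences of $S\subseteq R[t^{\pm1}]$ with $t$ a unit there, are that $t$ is a nonzerodivisor on $S$ (hence lies in no minimal prime), so that localizing at $t$ identifies $\Min\Spec S$ with $\Min\Spec R[t^{\pm1}]=\set{\pp R[t^{\pm1}]\xmid\pp\in\Min\Spec R}$; that for such $\pp$ with associated minimal prime $\mathfrak P$ of $S$ (namely $\mathfrak P=\pp R[t^{\pm1}]\cap S$) the image of $S$ in $(R/\pp)[t^{\pm1}]$ gives $S/\mathfrak P\cong\Rees_{I'}(R/\pp)$ with $I'$ the image of $I$, hence $(S/\mathfrak P)_t=(R/\pp)[t^{\pm1}]$ and $(S/\mathfrak P)/t(S/\mathfrak P)=\gr_{I'}(R/\pp)$; and that $S$, being a finitely generated $R$-algebra, is Noetherian, so $\bar R$ is too.

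For \eqref{95a} I would write $I=\ideal{a_1,\dots,a_m}$, so $S=R[t,a_1t^{-1},\dots,a_mt^{-1}]$ is a finitely generated $\KK$-algebra and in particular catenary; and for each minimal prime $\mathfrak P$ of $S$, $\dim S/\mathfrak P=\dim\Rees_{I'}(R/\pp)=\dim(R/\pp)+1=d+1$, using equidimensionality of $R$ and the dimension formula for extended Rees algebras (or, since $S/\mathfrak P$ is a finitely generated $\KK$-domain with fraction field $\operatorname{Frac}(R/\pp)(t)$, a transcendence-degree count). For \eqref{95b}, $\bar R=S/tS$ is catenary as a quotient of $S$ and nonzero because $I\ne R$. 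Any minimal prime $\qq$ over $tS$ has $\height\qq=1$ (at most $1$ by Krull's theorem, and not $0$ since $t$ is a nonzerodivisor). Picking a minimal prime $\mathfrak P\subseteq\qq$ of $S$ and a maximal ideal $\mm\supseteq\qq$, catenarity gives $\height(\mm/\mathfrak P)=\height(\qq/\mathfrak P)+\height(\mm/\qq)=1+\height(\mm/\qq)$, while equidimensionality of $S$ gives $\height(\mm/\mathfrak P)=d+1$; hence $\height(\mm/\qq)=d$ for every such $\mm$, so $\dim S/\qq=d$. As $\qq/tS$ ranges over the minimal primes of $\bar R$, this shows $\bar R$ is $d$-equidimensional.

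The substance is \eqref{95c}. Fix a minimal prime $\pp$ of $R$ specializing to a point of $V(I)$, i.e.\ with $\pp+I$ a proper ideal; I must show $R_\pp$ is reduced. Let $\mathfrak P$ be the corresponding minimal prime of $S$. Since $t\notin\mathfrak P$, $S_{\mathfrak P}\cong(R[t^{\pm1}])_{\pp R[t^{\pm1}]}$, and this is reduced if and only if $R_\pp$ is, by faithfully flat descent along $R_\pp\to(R[t^{\pm1}])_{\pp R[t^{\pm1}]}$ (equivalently because localizing $R_\pp[t^{\pm1}]$ cannot kill a nonzero nilpotent of the local ring $R_\pp$), so it suffices to prove $S_{\mathfrak P}$ reduced. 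Because $\pp+I$ is proper, the image $I'$ of $I$ in $R/\pp$ is proper, so $\gr_{I'}(R/\pp)=(S/\mathfrak P)/t(S/\mathfrak P)\ne0$, i.e.\ $\mathfrak P+tS$ is proper. Choose $\mathfrak Q$ minimal over $\mathfrak P+tS$. Modulo $\mathfrak P$ this is a minimal prime over the principal ideal generated by the nonzerodivisor $t$, so $\height(\mathfrak Q/\mathfrak P)=1$; and since $S$, hence each $S_\mm$, is equidimensional, Lemma~\ref{109}.\eqref{109a} applied to the saturated chain $\mathfrak P\subsetneq\mathfrak Q$ forces $\height\mathfrak Q=1$. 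Therefore $\mathfrak Q$ is minimal over $tS$, so $\mathfrak Q/tS$ is a minimal prime of $\bar R$, and generic reducedness of $\bar R$ makes $S_{\mathfrak Q}/tS_{\mathfrak Q}=\bar R_{\mathfrak Q/tS}$ reduced. As $\mathfrak Q$ is minimal over $tS$, $\set{t}$ is a system of parameters of the one-dimensional local ring $S_{\mathfrak Q}$, so Lemma~\ref{94} shows $S_{\mathfrak Q}$ is regular, hence a one-dimensional domain; since $\mathfrak P S_{\mathfrak Q}$ is a prime lying properly below its maximal ideal, $\mathfrak P S_{\mathfrak Q}=0$ and $S_{\mathfrak P}=(S_{\mathfrak Q})_{\mathfrak P S_{\mathfrak Q}}$ is the fraction field of $S_{\mathfrak Q}$, in particular reduced. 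Hence $R_\pp$ is reduced, and as $\pp$ was arbitrary, $R$ is generically reduced along $V(I)$.

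The main obstacle I anticipate is the use of equidimensionality in \eqref{95c}: without it, a prime minimal over $\mathfrak P+tS$ need not be minimal over $tS$, and generic reducedness of $\bar R$ would then say nothing at the relevant prime; it is precisely the height bookkeeping through Lemma~\ref{109}, together with the observation that global equidimensionality of $S$ passes to each localization $S_\mm$, that closes this gap. A secondary point needing care is the equivalence of reducedness for $R_\pp$ and for $S_{\mathfrak P}=(R[t^{\pm1}])_{\pp R[t^{\pm1}]}$; beyond these, the argument is routine height and localization bookkeeping plus the two cited lemmas.
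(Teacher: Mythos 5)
Your proof is correct and follows essentially the same route as the paper's: reduce everything to the two charts of the extended Rees algebra, identify minimal primes of $S$ with those of $R$ via $S_t=R[t^{\pm1}]$, show a minimal prime over $\tilde\pp+tS$ has height $1$ and is therefore minimal over $tS$ (you via Lemma~\ref{109}.\eqref{109a}, the paper via \eqref{109b} --- equivalent here), then apply Lemma~\ref{94} and pass the reducedness of $S_{\mathfrak Q}$ down to $S_{\mathfrak P}$ and finally to $R_\pp$ through the injection $R_\pp\into R[t^{\pm1}]_{\pp[t^{\pm1}]}$. The remaining differences (transcendence-degree count in \eqref{95a}, the explicit catenary bookkeeping in \eqref{95b}) are cosmetic.
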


\begin{proof}
There are ring homomorphisms
\[
R\to R[t]\to S\to S/tS\cong\bar R.
\]
Since $R$ is Noetherian, $I$ is finitely generated and $S$ finite type over $R$.

\begin{asparaenum}[(a)]

\item If $R$ is an integral domain, then so are $S\subseteq R[t^{\pm1}]$.
By definition, formation of the Rees ring commutes with base change.
After base change to $R/\pp$ for some $\pp\in\Min\Spec R$, we may assume that $R$ is a $d$-dimensional integral domain.
Then $S$ is a $(d+1)$-dimensional integral domain (see \cite[Thm.~5.1.4]{HS06}).
Since $S$ is a finitely generated $\KK$-algebra (as $R$ is one), $S$ is equidimensional.

\item Multiplication by $t$ is injective on $R[t^{\pm1}]$ and hence on $S$.
If $I\ne R$, then $S/tS\cong\bar R\ne0$ and $t$ is an $S$-sequence.
Since $S$ is $(d+1)$-equidimensional, $\bar R$ is $d$-equidimensional by Krull's principal ideal theorem.

\item Let $\pp\in\Min\Spec R$ and consider the extension $\pp[t^{\pm1}]\in\Spec R[t^{\pm1}]$.
Then (see \cite[p.~96]{HS06})
\[
t\not\in\tilde\pp:=\pp[t^{\pm1}]\cap S\in\Min\Spec S
\]
and hence
\begin{equation}\label{110}
S_{\tilde\pp}=(S_t)_{\tilde\pp_t}=R[t^{\pm1}]_{\pp[t^{\pm1}]}.
\end{equation}
Since $\pp[t^{\pm1}]\cap R=\pp$, the map $R\to R[t^{\pm1}]$ induces an injection
\begin{equation}\label{111}
R_\pp\into R[t^{\pm1}]_{\pp[t^{\pm1}]}.
\end{equation}
To check injectivity, consider $R_\pp\ni x/1\mapsto0\in R[t^{\pm1}]_{\pp[t^{\pm1}]}$.
Then $0=xy\in R[t^{\pm1}]$ for some $y=\sum_iy_it^i\in R[t^{\pm1}]\setminus\pp[t^{\pm1}]$.
Then $0=xy_i\in R$ for all $i$ and $y_j\in R\setminus\pp$ for some $j$.
It follows that $0=x/1\in R_\pp$.
Combining \eqref{110} and \eqref{111} reducedness of $R_\pp$ follows from reducedness of $S_{\tilde\pp}$.

Suppose now that $V(\pp)\cap V(I)\ne\emptyset$ and hence (the subscript denoting graded parts)
\[
R\ne\pp+I=\tilde\pp_0+(tS)_0=(\tilde\pp+tS)_0
\]
implies that $\tilde\pp+tS\ne S$.
Let $\qq\in\Spec S$ be a minimal prime ideal over $\tilde\pp+tS$.
No minimal prime ideal of $S$ contains the $S$-sequence $t\in\qq$.
By Lemma~\ref{109}.\eqref{109b}, $\height\qq=1$ and $\qq$ is minimal over $t$.
This makes $t$ a parameter of the localization $S_\qq$.
Under $S/tS\cong\bar R$, the minimal prime ideal $\qq/tS\in\Spec(S/tS)$ corresponds to a minimal prime ideal $\bar\qq\in\Spec\bar R$.
Suppose that $\bar R$ is generically reduced.
Then 
\[
S_\qq/tS_\qq=(S/tS)_{\qq/tS}\cong\bar R_{\bar\qq}
\]
is reduced.
By Lemma~\ref{94}, $S_\qq$ and hence its localization $(S_\qq)_{\tilde\pp_\qq}=S_{\tilde\pp}$ is reduced.
Then also $R_\pp$ is reduced, as shown before.\qedhere

\end{asparaenum}
\end{proof}

\subsubsection{Graded Cohen--Macaulay rings}\label{80}

Let $(R,\mm)$ be a Noetherian $^*$local ring (see \cite[Def.~1.5.13]{BH93}).
By definition, this means that $R$ is a graded ring with unique maximal graded ideal $\mm$.
For any $\pp\in\Spec R$, denote by $\pp^*\in\Spec R$ the maximal graded ideal contained in $\pp$ (see \cite[Lem.~1.5.6.(a)]{BH93}).
For any $\pp\in\Spec R$, there is a chain of maximal length of graded prime ideals strictly contained in $\pp$ (see \cite[Lem.~1.5.8]{BH93}).
If $\mm\not\in\Max\Spec R$, then such a chain for $\nn\in\Max\Spec R$ ends with $\mm\subsetneq\nn$.
It follows that
\begin{equation}\label{8}
\dim R=
\begin{cases}
\dim R_\mm & \text{if }\mm\in\Max\Spec R,\\
\dim R_\mm+1 & \text{if }\mm\not\in\Max\Spec R.
\end{cases}
\end{equation}
For any proper graded ideal $I\lhd R$ also $(R/I,\mm/I)$ is $^*$local and
\begin{equation}\label{23}
\mm\in\Max\Spec R\iff\mm/I\in\Max\Spec(R/I).
\end{equation}
Any associated prime $\pp\in\Ass R$ is graded (see \cite[Lem.~1.5.6.(b).(ii)]{BH93}) and hence $\pp\subseteq\mm$.
This yields a bijection (see \cite[Thm.~6.2]{Mat89})
\begin{equation}\label{26}
\Ass R\to\Ass R_\mm,\quad\pp\mapsto\pp_\mm.
\end{equation}
If $I\unlhd R$ is a graded ideal and $\pp\in\Spec R$ minimal over $I$, then $\pp/I\in\Min\Spec(R/I)\subseteq\Ass(R/I)$, and hence, $\pp$ is graded.


The following lemma shows in particular that $^*$local Cohen--Macaulay rings are pure- and equidimensional.


\begin{lem}[Height and codimension]\label{27}
Let $(R,\mm)$ be a $^*$local Cohen--Macaulay ring and $I\unlhd R$ a graded ideal.
Then $R$ is pure-dimensional and
\begin{equation}\label{174}
\height I=\codim I.
\end{equation}
In particular, $R/I$ is equidimensional if and only if $\height\pp=\codim I$ for all minimal $\pp\in\Spec R$ over $I$.
\end{lem}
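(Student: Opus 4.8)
The plan is to derive all three assertions from the classical dimension formula for Cohen--Macaulay local rings, transported to the $^*$local setting via the bookkeeping already recorded in \eqref{8}, \eqref{23} and \eqref{26}. First I would collect the inputs. Being Cohen--Macaulay, $R$ is catenary (see \cite[Thm.~2.1.12]{BH93}), hence so is $R/I$. Being $^*$local and Cohen--Macaulay, $R_\mm$ is a Cohen--Macaulay local ring; in particular it is unmixed, so $\Ass R_\mm=\Min\Spec R_\mm$, and the bijection \eqref{26} gives $\Ass R=\Min\Spec R$. Moreover $\height\pp+\dim(R_\mm/\pp)=\dim R_\mm$ for every $\pp\in\Spec R_\mm$ (see \cite[\S2.1]{BH93}).

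The key step is the identity
\[
\height\pp+\dim(R/\pp)=\dim R
\]
for every graded prime $\pp\in\Spec R$. Since $\mm$ is the unique maximal graded ideal, $\pp\subseteq\mm$, so $R_\pp=(R_\mm)_{\pp_\mm}$ and hence $\height\pp=\height\pp_\mm$. The ring $R/\pp$ is again $^*$local with maximal graded ideal $\mm/\pp$ and $(R/\pp)_{\mm/\pp}=R_\mm/\pp_\mm$; thus by \eqref{8} (applied to $R$ and to $R/\pp$) and \eqref{23}, passing from $R$ to $R_\mm$ drops $\dim R$ and $\dim(R/\pp)$ by the same amount ($0$ or $1$, according as $\mm$ is maximal or not). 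The identity then follows from its counterpart in $R_\mm$.

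With this in hand the conclusions are quick. Every $\pp\in\Ass R=\Min\Spec R$ is graded (see \cite[Lem.~1.5.6.(b).(ii)]{BH93}) of height $0$, so $\dim(R/\pp)=\dim R$; thus $R$ is pure-dimensional. For \eqref{174}, $\dim(R/I)$ is attained at some prime minimal over $I$, and every prime minimal over the graded ideal $I$ is graded, so the identity gives $\dim(R/I)=\dim R-\height I$, that is $\height I=\dim R-\dim(R/I)=\codim I$. For the last statement, $R/I$ is $^*$local and catenary, so, by the same reasoning used for finitely generated $\KK$-algebras before Lemma~\ref{109} (again using \eqref{8}, \eqref{23} and the fact that the height of a graded prime is realized by a chain of graded primes, see \cite[Lem.~1.5.8]{BH93}), $R/I$ is equidimensional if and only if $\dim\big((R/I)/(\pp/I)\big)=\dim(R/I)$ for every prime $\pp\in\Spec R$ minimal over $I$; by the identity and \eqref{174} this reads $\dim R-\height\pp=\dim R-\codim I$, i.e.\ $\height\pp=\codim I$.

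The hard part will be the $^*$local bookkeeping: verifying that the reduction to $R_\mm$ changes $\dim R$, $\dim(R/\pp)$ and $\dim(R/I)$ by matching amounts, and establishing for the catenary $^*$local ring $R/I$ the equivalence of equidimensionality with all minimal primes having full coheight. Granting these, the statements of the lemma become direct substitutions into the Cohen--Macaulay dimension formula.
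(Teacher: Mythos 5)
Your proposal is correct and follows essentially the same route as the paper: reduce to the Cohen--Macaulay local ring $R_\mm$, use the local dimension formula (pure-dimensionality and $\height I_\mm=\codim I_\mm$ there), and transfer back via the $^*$local dictionary \eqref{8}, \eqref{23}, \eqref{26} together with the gradedness of associated/minimal primes over graded ideals. Packaging this as the single identity $\height\pp+\dim(R/\pp)=\dim R$ for graded primes is only a cosmetic reorganization of the paper's argument, and your treatment of the final equivalence (via \eqref{8}, \eqref{23} and the graded-chain fact) is at the same level of detail as the paper's.
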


\begin{proof}
The $^*$local ring $(R,\mm)$ is Cohen--Macaulay if and only if the localization $R_\mm$ is Cohen--Macaulay (see \cite[Exc.~2.1.27.(c)]{BH93}).
In particular, $R_\mm$ is pure-dimensional (see \cite[Prop.~1.2.13]{BH93}) and (see \cite[Cor.~2.1.4]{BH93})
\begin{equation}\label{175}
\height I_\mm=\codim I_\mm
 \end{equation}
Using \eqref{8}, \eqref{23} for $I=\pp$ and bijection~\eqref{26}, it follows that $R$ is pure-dimensional:
\begin{align*}
\forall\pp\in\Ass R\colon
\dim R&=
\begin{cases}
\dim R_\mm & \text{if }\mm\in\Max\Spec R,\\
\dim R_\mm+1 & \text{if }\mm\not\in\Max\Spec R,\\
\end{cases}\\
&=
\begin{cases}
\dim(R_\mm/\pp_\mm) & \text{if }\mm\in\Max\Spec R,\\
\dim(R_\mm/\pp_\mm)+1 & \text{if }\mm\not\in\Max\Spec R,\\
\end{cases}\\
&=
\begin{cases}
\dim(R/\pp)_{\mm/\pp} & \text{if }\mm\in\Max\Spec R,\\
\dim(R/\pp)_{\mm/\pp}+1 & \text{if }\mm\not\in\Max\Spec R,\\
\end{cases}\\
&=\dim(R/\pp).
\end{align*}
Using \eqref{8} and \eqref{23}, \eqref{174} follows from \eqref{175}:
\begin{align*}
\height I
&=\height I_\mm
=\codim I_\mm\\\nonumber
&=\dim R_\mm-\dim(R_\mm/I_\mm)\\\nonumber
&=\dim R_\mm-\dim(R/I)_{\mm/I}\\\nonumber
&=\dim R-\dim(R/I)
=\codim I.
\end{align*}
Since $R$ is Cohen--Macaulay, it is (universally) catenary (see \cite[Thm.~2.1.12]{BH93}).
By \eqref{23} and the preceding discussion of chains of prime ideals in $R/I$ and $R/\pp$, $I$ is equidimensional if and only if $\dim(R/I)=\dim(R/\pp)$ for all prime ideals $\pp\in\Spec R$ minimal over $I$.
The particular claim then follows by \eqref{174} for $I$ and $\pp$.
\end{proof}

\subsection{Jacobian and degeneracy schemes}\label{77}

In this subsection, we associate Jacobian and second degeneracy schemes to a configuration.
By results of Patterson and Kutz, their supports coincide and their codimension is at most $3$.


For a Noetherian ring $R$, we consider the associated affine (Noetherian) \emph{scheme} $\Spec R$, whose underlying set consists of all prime ideals of $R$.
We refer to elements of $\Min\Spec R$ as \emph{generic points}, of $\Ass R$ as  \emph{associated points}, and of $\Ass R\setminus\Min\Spec R$ as \emph{embedded points} of $\Spec R$.
An ideal $I\unlhd R$ defines a \emph{subscheme} $\Spec(R/I)\subseteq\Spec R$.

By abuse of notation we identify
\[
\KK^E=\Spec\KK[x].
\]
Due to Lemma~\ref{27},
\[
\codim_{\KK^E}\Spec(\KK[x]/I)=\height I
\]
for any graded ideal $I\unlhd\KK[x]$.


\begin{dfn}[Configuration schemes]\label{49}
Let $W\subseteq\KK^E$ be a configuration.
Then the subscheme
\[
X_W:=\Spec(\KK[x]/\ideal{\psi_W})\subseteq\KK^E
\]
is called the \emph{configuration hypersurface} of $W$.
In particular, $X_G:=X_{W_G}$ is the \emph{graph hypersurface} of $G$ (see Definition~\ref{43}).
The ideal
\[
J_W:=\ideal{\psi_W}+\ideal{\partial_e\psi_W\xmid e\in E}\unlhd\KK[x]
\]
is the \emph{Jacobian ideal} of $\psi_W$.
We call the subschemes (see Definition~\ref{64})
\[
\Sigma_W:=\Spec(\KK[x]/J_W)\subseteq\KK^E,\quad\Delta_W:=\Spec(\KK[x]/M_W)\subseteq\KK^E,
\]
the \emph{Jacobian scheme} of $X_W$ and the \emph{second degeneracy scheme} of $Q_W$.
\end{dfn}


\begin{rmk}[Degeneracy and non-smooth loci]\label{14}
If $\ch\KK\not\divides\rk\M=\deg\psi$ (see Remark~\ref{6}), then $\psi_W$ is a redundant generator of $J_W$ due to the Euler identity.
By Lemma~\ref{36}, $X_W^\red$ and $\Delta_W^\red$ are the first and second degeneracy loci of $Q_W$ (see Definition~\ref{64}), whereas $\Sigma_W^\red$ is the \emph{non-smooth locus} of $X_W$ over $\KK$ (see \cite[Thm.~30.3.(1)]{Mat89}).
If $\KK$ is perfect, then $\Sigma_W^\red$ is the \emph{singular locus} of $X_W$ (see \cite[\S28, Lem.~1]{Mat89}).
\end{rmk}


\begin{rmk}[Loops and line factors]\label{84}
Let $W\subseteq\KK^E$ be a realization of matroid $\M$.
Suppose that $e$ is a loop in $\M$, that is, $e^\vee\vert_W=0$.
Then $\psi_W$ and $Q_W$ are independent of $x_e$ (see Remark~\ref{6} and Definition~\ref{64})
\[
X_W=X_{W\setminus e}\times\AA^1,\quad
\Sigma_W=\Sigma_{W\setminus e}\times\AA^1,\quad
\Delta_W=\Delta_{W\setminus e}\times\AA^1.\qedhere
\]
\end{rmk}


\begin{lem}[Inclusions of schemes]\label{50}
For any configuration $W\subseteq\KK^E$, there are inclusions of schemes $\Delta_W\subseteq\Sigma_W\subseteq X_W\subseteq\KK^E$.
\end{lem}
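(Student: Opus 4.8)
The plan is to pass from the asserted inclusions of schemes to the reverse inclusions of the defining ideals in $\KK[x]$. By Definition~\ref{49} the chain $\Delta_W\subseteq\Sigma_W\subseteq X_W\subseteq\KK^E$ is equivalent to $M_W\supseteq J_W\supseteq\ideal{\psi_W}\supseteq 0$. The outermost inclusion $\ideal{\psi_W}\subseteq\KK[x]$ is vacuous, and $\ideal{\psi_W}\subseteq J_W$ holds because $\psi_W$ is by construction one of the generators of $J_W$. Hence the only substantive point is $J_W\subseteq M_W$, i.e.\ that $\psi_W$ and each $\partial_e\psi_W$ lie in the ideal $M_W$ of submaximal minors of $Q_W$.

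That $\psi_W\in M_W$ is exactly Lemma~\ref{36} (equivalently, $\ideal{\psi_W}=\Fitt_0\coker Q_W\subseteq\Fitt_1\coker Q_W=M_W$ by the nestedness of Fitting ideals). For the partial derivatives, fix a basis $w=(w^1,\dots,w^r)$ of $W$ and regard $Q_W$ as the matrix $Q_w=(q_{ij})_{i,j}$ with $q_{ij}=\sum_{e\in E}x_e\,w^i_e\,w^j_e$ as in Remark~\ref{151}, so each $q_{ij}$ is $\KK$-linear in $x$ with $\partial_e q_{ij}=w^i_e w^j_e\in\KK$. Differentiating $\psi_W=\det Q_w$ by Jacobi's formula (the derivative of a determinant is the trace of the adjugate against the derivative of the matrix), and using that the entries of the adjugate are the signed submaximal minors, gives
\[
\partial_e\psi_W=\sum_{i,j}(-1)^{i+j}Q_w^{i,j}\cdot w^i_e\,w^j_e,
\]
a $\KK$-linear combination of the submaximal minors $Q_w^{i,j}$, hence an element of $M_W$.

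Combining, $J_W=\ideal{\psi_W}+\ideal{\partial_e\psi_W\xmid e\in E}\subseteq M_W$, which is the inclusion $\Delta_W\subseteq\Sigma_W$; together with the two easy inclusions this proves the lemma. I do not expect a genuine obstacle: the remaining content is bookkeeping, the main thing to keep in mind being that $M_W$ is a Fitting ideal and so independent of the chosen basis of $W$ (so the computation with $Q_w$ is legitimate), and that $\ideal{\psi_W}$ is unaffected by the nonzero square-factor ambiguity in $\psi_W$ (Remark~\ref{150}).
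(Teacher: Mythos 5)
Your proof is correct and follows essentially the same route as the paper: $\psi_W\in J_W$ by definition gives $\Sigma_W\subseteq X_W$, and $J_W\subseteq M_W$ comes from $\psi_W=\det Q_W$ (Lemma~\ref{36}) together with the fact that differentiating a determinant with linear entries produces a $\KK$-linear combination of submaximal minors. The paper compresses your Jacobi-formula computation into a single ``hence,'' so your version merely makes that step explicit.
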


\begin{proof}
By definition, $\psi_W\in J_W$ and hence the second inclusion.
By Lemma~\ref{36}, $\psi_W=\det Q_W\in M_W$ and hence $\partial_e\psi_W\in M_W$ for all $e\in E$.
Thus, $J_W\subseteq M_W$ and the first inclusion follows.
\end{proof}


\begin{rmk}[Schemes for matroids of small rank]\label{20}
Let $W\subseteq\KK^E$ be a realization of a matroid $\M$.
\begin{asparaenum}[(a)]

\item\label{20a} 
If $\rk\M\le1$, then $\psi_W=1$ (see Remark~\ref{6}) or $\psi_W\ne0$ is a $\KK$-linear form.
In both cases, $\Sigma_W=\emptyset=\Delta_W$.
If $\rk\M\ge2$, then $\ideal{x}\in\Sigma_W\ne\emptyset\ne\Delta_W\ni\ideal{x}$.
\item\label{20b} If $\rk\M=2$, then $\Delta_W$ is a $\KK$-linear subspace of $\KK^E$ and hence an integral scheme.
If $\ch\KK\ne2$, the same holds for $\Sigma_W$ due to the Euler identity (see Remark~\ref{14}). 
Otherwise, the non-redundant quadratic generator $\psi_W$ of $J_W$ can make $\Sigma_W$ non-reduced (see Example~\ref{107}).\qedhere

\end{asparaenum}
\end{rmk}


\begin{exa}[Schemes for the triangle]\label{107}
Let $\M$ be a matroid on $E\in\C_\M$ with $\abs{E}=3$ and hence $\rk\M=\abs{E}-1=2$.
Up to scaling and ordering $E=\set{e_1,e_2,e_3}$, any realization $W\subseteq\KK^E$ of $\M$ has the basis
\[
w^1:=e_1+e_3,\quad w^2:=e_2+e_3.
\]
With respect to this basis, we compute
\begin{align*}
Q_W&=
\begin{pmatrix}
x_1+x_3 & x_3\\
x_3 & x_2+x_3
\end{pmatrix},\\
M_W&=\ideal{x_1+x_3,x_2+x_3,x_3}=\ideal{x_1,x_2,x_3}.
\end{align*}
It follows that $\Delta_W$ is a reduced point.

On the other hand,
\begin{align*}
\psi_W&=\det Q_W=x_1x_2+x_1x_3+x_2x_3,\\
J_W&=\ideal{\psi_W,x_1+x_2,x_1+x_3,x_2+x_3}.
\end{align*}
The matrix expressing the linear generators $x_1+x_2,x_1+x_3,x_2+x_3$ in terms of the variables $x_1,x_2,x_3$ has determinant $2$.
If $\ch\KK\ne2$, then $J_W=\ideal{x_1,x_2,x_3}$ and $\Sigma_W$ is a reduced point.
Otherwise,
\[
J_W=\ideal{\psi_W,x_1-x_3,x_2-x_3}=\ideal{x_1-x_3,x_2-x_3,x_3^2}
\]
and $\Sigma_W$ is a non-reduced point.
\end{exa}


\begin{lem}\label{125}
Consider two sets of variables $x=x_1,\dots,x_n$ and $y=y_1,\dots,y_m$.
Let $0\ne f\in I\unlhd\KK[x]$ and $0\ne g\in J\unlhd\KK[y]$.
Then
\[
f\cdot J[x]+I[y]\cdot g=\ideal{f,g}\cap I[y]\cap J[x]\unlhd\KK[x,y].
\]
\end{lem}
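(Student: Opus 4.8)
The plan is to prove the two inclusions separately. For the inclusion $\subseteq$, I would argue directly: since $f\in I\subseteq\KK[x]$, the ideal $f\cdot J[x]$ is contained in $J[x]$, in $\ideal{f}\subseteq\ideal{f,g}$, and in $I[y]$ (because $f\in I$); symmetrically, since $g\in J\subseteq\KK[y]$, the ideal $I[y]\cdot g$ is contained in $J[x]$, in $\ideal{g}\subseteq\ideal{f,g}$, and in $I[y]$. Hence the sum $f\cdot J[x]+I[y]\cdot g$ lies in the triple intersection $\ideal{f,g}\cap I[y]\cap J[x]$.

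For the inclusion $\supseteq$, take $h\in\ideal{f,g}\cap I[y]\cap J[x]$ and write $h=af+bg$ with $a,b\in\KK[x,y]$, using $h\in\ideal{f,g}$. The key observation is that, since $0\ne g\in\KK[y]$, at least one coefficient of $g$ lies in $\KK^*$; consequently the image $\bar g$ of $g$ in $\KK[x,y]/I[y]\cong(\KK[x]/I)[y]$ is a non-zero-divisor, because with respect to a monomial order on the $y$-variables its leading coefficient is a unit, so comparing leading terms kills any putative annihilator. Reducing the relation $h=af+bg$ modulo $I[y]$, and using $f\in I$ together with $h\in I[y]$, we get $\bar b\,\bar g=0$, hence $\bar b=0$, i.e.\ $b\in I[y]$. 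Then $af=h-bg$ lies in $J[x]$, since $h\in J[x]$ by hypothesis and $bg\in J[x]$ because $g\in J$. Applying the symmetric argument to $0\ne f\in\KK[x]$, whose image $\bar f$ in $\KK[x,y]/J[x]\cong(\KK[y]/J)[x]$ is likewise a non-zero-divisor, the relation $\bar a\,\bar f=0$ forces $a\in J[x]$. Therefore $h=af+bg$ with $a\in J[x]$ and $b\in I[y]$, so $h\in f\cdot J[x]+I[y]\cdot g$.

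The step needing the most care — and the only genuinely non-formal point — is the non-zero-divisor claim. The rings $\KK[x]/I$ and $\KK[y]/J$ need not be integral domains, so one cannot simply invoke that a polynomial ring over a domain is a domain; what rescues the argument is that $f$ and $g$ have their coefficients in the field $\KK$, so after extension the leading coefficient in a term order on the freshly adjoined variables is a unit, and the usual leading-term comparison shows $\bar f$ and $\bar g$ are non-zero-divisors in the respective polynomial rings. The degenerate cases $I=\KK[x]$ or $J=\KK[y]$, where one of the quotient rings is the zero ring, are harmless: the corresponding extended ideal is all of $\KK[x,y]$ and the relevant membership is automatic.
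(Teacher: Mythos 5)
Your proof is correct and follows essentially the same route as the paper: write $h=af+bg$, use $h,f\in I[y]$ and $h,g\in J[x]$ to get $bg\in I[y]$ and $af\in J[x]$, and then conclude $b\in I[y]$, $a\in J[x]$ because $f$ and $g$ are nonzero with coefficients in $\KK$ and live in the other set of variables. Your leading-coefficient argument simply spells out the step the paper compresses into "since $f\ne0$ and $J$ are in different variables," so there is nothing to add.
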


\begin{proof}
For the non-obvious inclusion, take $h=af+bg\in I[y]\cap J[x]$. 
Since $f\in I[y]$, $bg\in I[y]$ and similarly $af\in J[x]$. 
Since $f\ne0$ and $J$ are in different variables, it follows that $a\in J[x]$ and similarly $b\in I[y]$. 
\end{proof}


\begin{thm}[Decompositions of schemes]\label{112}
Let $W\subseteq\KK^E$ be a realization of a matroid $\M$ without loops.
Suppose that $\M=\bigoplus_{i=1}^n\M_i$ decomposes into connected components $\M_i$ on $E_i$.
Let $W=\bigoplus_{i=1}^nW_i$ be the induced decomposition into $W_i\subseteq\KK^{E_i}$ (see Lemma~\ref{72}).
Then $X_W$ is the reduced union of integral schemes $X_{W_i}\times\KK^{E\setminus E_i}$, and $\Sigma_W$ is the union of $\Sigma_{W_i}\times\KK^{E\setminus E_i}$ and integral schemes $X_{W_i}\times X_{W_j}\times\KK^{E\setminus(E_i\cup E_j)}$ for $i\ne j$.
The same holds for $\Sigma$ replaced by $\Delta$.
In particular, $X_W$ is generically smooth over $\KK$.
\end{thm}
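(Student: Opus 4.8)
The plan is to reduce everything to the case $n=2$ and then use Proposition~\ref{4} together with Lemma~\ref{125} to identify the defining ideals of $\Sigma_W$ and $\Delta_W$ explicitly as intersections. First I would recall from Proposition~\ref{4} (and Lemma~\ref{72}) that $\psi_W=\prod_{i=1}^n\psi_{W_i}$ with each $\psi_{W_i}$ irreducible in the disjoint sets of variables $x_{E_i}$ (here we use that $\M$, hence each $\M_i$, has no loops, so $\rk\M_i\ge1$). Since the $\psi_{W_i}$ are pairwise coprime irreducibles, $\ideal{\psi_W}=\bigcap_i\ideal{\psi_{W_i}}$ is a primary (indeed radical) decomposition, so $X_W=\bigcup_i V(\psi_{W_i})$ is the reduced union of the hypersurfaces $X_{W_i}\times\KK^{E\setminus E_i}$; each factor $X_{W_i}$ is integral because $\psi_{W_i}$ is irreducible, and a product of an integral scheme with affine space is integral. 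This also gives generic smoothness: away from the pairwise intersections $X_{W_i}\cap X_{W_j}$ (which have codimension $2$), the scheme $X_W$ is locally one of the $X_{W_i}$, and by the $n=1$ case of the Main Theorem (i.e. Proposition~\ref{4} plus the fact that a reduced hypersurface is generically smooth, using that $\psi_{W_i}\ne0$) each $X_{W_i}$ is generically smooth over $\KK$.

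For the Jacobian scheme, by induction it suffices to treat $n=2$, writing $E=E_1\sqcup E_2$, $x=x_{E_1}$, $y=x_{E_2}$, $f:=\psi_{W_1}\in\KK[x]$, $g:=\psi_{W_2}\in\KK[y]$, so $\psi_W=fg$. Then $\partial_e\psi_W=g\cdot\partial_ef$ for $e\in E_1$ and $\partial_e\psi_W=f\cdot\partial_eg$ for $e\in E_2$, whence
\[
J_W=\ideal{fg}+g\cdot J_{W_1}[y]+f\cdot J_{W_2}[x],
\]
where $J_{W_1}=\ideal{f,\partial_ef}\unlhd\KK[x]$ and similarly $J_{W_2}$. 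Since $f\in J_{W_1}$ and $g\in J_{W_2}$, we get $J_W=f\cdot J_{W_2}[x]+J_{W_1}[y]\cdot g$, and Lemma~\ref{125} applied with $I=J_{W_1}$, $J=J_{W_2}$ (valid since $f\ne0\ne g$) yields
\[
J_W=\ideal{f,g}\cap J_{W_1}[y]\cap J_{W_2}[x].
\]
Geometrically this says $\Sigma_W=(X_{W_1}\times X_{W_2})\cup(\Sigma_{W_1}\times\KK^{E_2})\cup(\KK^{E_1}\times\Sigma_{W_2})$, i.e. the asserted union; the component $X_{W_1}\times X_{W_2}$ is integral as a product of integral schemes. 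The induction step for general $n$ then follows by grouping $\M_1$ against $\M_2\oplus\cdots\oplus\M_n$, distributing the intersection, and observing that the cross-term $X_{W_1}\times\Sigma_{W_2\oplus\cdots}$ etc.\ expand into the stated pieces; one checks the bookkeeping that only the pairwise products $X_{W_i}\times X_{W_j}$ and the single $\Sigma_{W_i}$ survive (using $\Sigma_{W_k}\subseteq X_{W_k}$ from Lemma~\ref{50} to absorb lower terms). The identical argument with $M_W$ in place of $J_W$ handles $\Delta$: from Lemma~\ref{40} (or directly from block structure of $Q_W=Q_{W_1}\oplus Q_{W_2}$) the submaximal minors of $Q_W$ are $\{g\cdot(\text{minors of }Q_{W_1})\}\cup\{f\cdot(\text{minors of }Q_{W_2})\}$, so $M_W=g\cdot M_{W_1}[y]+f\cdot M_{W_2}[x]$, and since $f=\det Q_{W_1}\in M_{W_1}$, $g=\det Q_{W_2}\in M_{W_2}$, Lemma~\ref{125} again gives $M_W=\ideal{f,g}\cap M_{W_1}[y]\cap M_{W_2}[x]$.

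The main obstacle is purely organizational rather than deep: making the $n$-fold induction precise, i.e. checking that when one distributes $\ideal{f_1,f_2\cdots f_n}\cap J_{W_1}[\text{rest}]\cap J_{W_2\oplus\cdots}[x_{E_1}]$ over the inductively known decomposition of $J_{W_2\oplus\cdots}$, every term either matches one of the claimed pieces $X_{W_i}\times X_{W_j}\times\KK^{\cdots}$ or $\Sigma_{W_i}\times\KK^{\cdots}$, or is redundant because it is contained in one of these (this is where $\Delta_{W_k}\subseteq\Sigma_{W_k}\subseteq X_{W_k}$ and irreducibility of the $\psi_{W_i}$ are used repeatedly). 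The block-diagonal structure $Q_W=Q_{W_1}\oplus Q_{W_2}$ and the coprimality of the $\psi_{W_i}$ in disjoint variable sets make each individual step routine; I would present the $n=2$ computation in full and then state the induction with the combinatorial bookkeeping spelled out just enough to be convincing. Generic smoothness of $X_W$ follows at the end simply because $X_W$ is reduced of pure codimension $1$ (each component is a reduced hypersurface, hence generically smooth by the $n=1$ case), and the singular locus, contained in $\bigcup_{i\ne j}(X_{W_i}\cap X_{W_j})\cup\bigcup_i\Sigma_{W_i}$, is nowhere dense.
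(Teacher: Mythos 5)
Your treatment of the decompositions themselves is essentially the paper's own argument: the claim on $X_W$ via Proposition~\ref{4} (irreducible factors $\psi_{W_i}$ in disjoint variable sets, so $\ideal{\psi_W}=\bigcap_i\ideal{\psi_{W_i}}$ is radical), and for $\Sigma_W$, $\Delta_W$ the reduction to $n=2$ by grouping one component against the direct sum of the rest, the identities $\psi_W=\psi_{W_1}\psi_{W_2}$ and $Q_W=Q_{W_1}\oplus Q_{W_2}$, and Lemma~\ref{125} to rewrite $\psi_{W_1}\cdot J_{W_2}+J_{W_1}\cdot\psi_{W_2}$ (resp.\ the same with $M$) as $\ideal{\psi_{W_1},\psi_{W_2}}\cap J_{W_1}[x_{E_2}]\cap J_{W_2}[x_{E_1}]$. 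Your worry about redundant cross-terms in the induction is unfounded but harmless: distributing $X_{W_1}\times X_{W_{\mathrm{rest}}}$ and $\KK^{E_1}\times\Sigma_{W_{\mathrm{rest}}}$ over the inductive decomposition produces exactly the stated pieces, with nothing to absorb.

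The genuine gap is in the \enquote{in particular} clause. You justify generic smoothness of each $X_{W_i}$ by the general principle that a reduced (or integral) hypersurface is generically smooth, and alternatively by citing \enquote{the $n=1$ case of the Main Theorem}, which is circular, since the Main Theorem's generic smoothness statement is precisely what Theorem~\ref{112} is meant to supply. The general principle is false over an imperfect field: for example $y^2-tx^2$ over $\FF_2(t)$ is irreducible, yet both partial derivatives vanish identically, so the hypersurface is nowhere smooth over $\KK$; reducedness only gives generic regularity, and smoothness over $\KK$ is a stronger condition here (compare Remark~\ref{14}, which is why the paper distinguishes the non-smooth locus from the singular locus). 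What is actually needed is that for connected $\M_i$ some partial derivative of $\psi_{W_i}$ is nonzero modulo $\psi_{W_i}$. This is how the paper concludes: by Proposition~\ref{3}, $\partial_e\psi_{W_i}=\psi_{W_i/e}\ne0$ has degree strictly smaller than $\deg\psi_{W_i}$ (no loops, so no partial vanishes identically, $\psi_{W_i}$ being multilinear), hence $\psi_{W_i}\nmid\partial_e\psi_{W_i}$, so $\Sigma_{W_i}\subsetneq X_{W_i}$ and the non-smooth locus is nowhere dense in the integral scheme $X_{W_i}$. With this substitution your argument closes; without it, the final claim is unproved for imperfect $\KK$.
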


\begin{proof}
Proposition~\ref{4} yields the claim on $X_W$ (see Remark~\ref{6}).
For the claims on $\Sigma_W$ and $\Delta_W$, we may assume that $n=2$ with $\M_1$ possibly disconnected.
The general case then follows by induction on $n$.

By Proposition~\ref{4} and Definition~\ref{64}, $\psi_W=\psi_{W_1}\cdot\psi_{W_2}$ and $Q_W=Q_{W_1}\oplus Q_{W_2}$.
Then Lemma~\ref{125} yields
\begin{align*}
J_W&=\psi_{W_1}\cdot J_{W_2}[x_{E_1}]+J_{W_1}[x_{E_2}]\cdot\psi_{W_2}\\
&=\ideal{\psi_{W_1},\psi_{W_2}}\cap J_{W_1}[x_{E_2}]\cap J_{W_2}[x_{E_1}],
\end{align*}
and hence,
\[
\Sigma_W=(X_{W_1}\times X_{W_2})\cup(\Sigma_{W_1}\times\KK^{E_2})\cup(\KK^{E_1}\times\Sigma_{W_2}).
\]
The same holds for $J$ and $\Sigma$ replaced by $M$ and $\Delta$, respectively.

Suppose now that $\M$ is connected.
By Proposition~\ref{3}, $\psi_W\nmid\partial_e\psi_W$ for any $e\in E$ and hence $\Sigma_W\subsetneq X_W$.
The particular claim follows.
\end{proof}


Patterson proved the following result (see \cite[Thm.~4.1]{Pat10}).
While Patterson assumes $\ch\KK=0$ and excludes the generator $\psi_W\in J_W$, his proof works in general (see Remark~\ref{14}).
We give an alternative proof using Dodgson identities.

\begin{thm}[Non-smooth loci and second degeneracy schemes]\label{13}
Let $W\subseteq\KK^E$ be a configuration.
Then there is an equality of reduced loci
\[
\Sigma_W^\red=\Delta_W^\red.
\]
In particular, $\Sigma_W$ and $\Delta_W$ have the same generic points, that is,
\[
\Min\Sigma_W=\Min\Delta_W.
\]
\end{thm}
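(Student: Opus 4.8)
The plan is to prove the set-theoretic equality $\Sigma_W^\red=\Delta_W^\red$ by showing the two defining ideals $J_W$ and $M_W$ have the same radical, equivalently the same vanishing locus over the algebraic closure. Since $J_W\subseteq M_W$ by Lemma~\ref{50}, one inclusion $\Delta_W^\red\subseteq\Sigma_W^\red$ is immediate, and it remains to show that a point $p$ at which $\psi_W$ and all its partials vanish also kills every submaximal minor $Q_W^{i,j}$. I would work with a basis $w^1,\dots,w^r$ of $W$ and the symmetric matrix $Q=Q_w$ of Remark~\ref{151}, noting $\psi_W=\det Q$ and $\partial_e\psi_W$ relates to the entries of the adjugate.

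The key computational input is a \emph{Dodgson-type identity}: for a symmetric matrix $Q$, the diagonal submaximal minor $Q^{i,i}$ and the general submaximal minor $Q^{i,j}$ satisfy a relation of the form $(Q^{i,j})^2 = Q^{i,i}\,Q^{j,j} - \det(Q)\cdot Q^{i,i;j,j}$, where $Q^{i,i;j,j}$ denotes the complementary minor deleting rows and columns $i,j$. The precise statement I would use is the classical identity $\det(Q)\cdot Q^{\{i,j\},\{i,j\}} = Q^{i,i}Q^{j,j}-(Q^{i,j})^2$ (Desnanot--Jacobi / Dodgson condensation, specialized to the symmetric case so that $Q^{i,j}=Q^{j,i}$). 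Granting this, the strategy is: at a point $p$ of $\Sigma_W$, we have $\det Q(p)=0$, so the identity gives $(Q^{i,j}(p))^2 = Q^{i,i}(p)\,Q^{j,j}(p)$ for all $i,j$. Next I would observe that $\partial_e\psi_W$ is, by the chain rule applied to $\det Q$ with $Q$ linear in the $x_e$, a $\KK$-linear combination $\sum_{i,j} (\text{adj}\,Q)_{ji}\,\partial_e Q_{ij} = \sum_{i,j}\pm Q^{i,j}\,w^i_e w^j_e$ of the submaximal minors. Since every $\partial_e\psi_W$ vanishes at $p$ for all $e$, and since $W$ is a realization (so the vectors $(w^i_e w^j_e)_e$ span enough of $\KK^E$ — in fact one can isolate the diagonal terms $\partial_e$ evaluated on suitable coordinate directions, or use that the quadratic forms $e^\vee\vert_W$ restricted give a spanning family), one deduces that the diagonal minors $Q^{i,i}(p)$ all vanish: pick for each $i$ an element $e$ with $w^i_e\ne 0$ and $w^j_e=0$ for $j\ne i$ after a change of basis, or more robustly use that $\sum_e x_e\,w^i_e w^j_e = Q_{ij}$ and extract $Q^{i,i}$. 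Once $Q^{i,i}(p)=0$ for all $i$, the Dodgson identity forces $Q^{i,j}(p)^2=0$, hence $Q^{i,j}(p)=0$, i.e.\ $p\in\Delta_W$.

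The one subtle point — and the main obstacle — is correctly extracting the vanishing of the \emph{diagonal} submaximal minors $Q^{i,i}$ from the vanishing of the partials $\partial_e\psi_W$. The cleanest route is: after fixing $i$, change the basis of $W$ so that some coordinate $e\in E$ has $w^i_e\ne0$ while $w^j_e=0$ for $j\ne i$ — but this is not always possible simultaneously. Instead I would argue locally: the matrix $(w^i_e w^j_e)$, as $(i,j)$ ranges over pairs and $e$ over $E$, together with the linearity of $Q$ in $x$, shows that the $\KK$-span of $\{\partial_e\psi_W\}_e$ inside the span of $\{Q^{i,j}\}_{i,j}$ contains in particular each $Q^{i,i}$ up to the ambiguity of basis choice; alternatively, evaluate the Hessian-type argument. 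A more transparent approach, and the one I expect the authors take, is to avoid diagonal minors entirely: use the identity in the form $Q^{i,i}Q^{j,j}=(Q^{i,j})^2$ at $p$ together with the observation that some $Q^{i,i}(p)\ne 0$ would (via the partials, since $\partial_e\psi_W$ picks up $Q^{i,i}\cdot(w^i_e)^2$ plus cross terms) contradict $p\in\Sigma_W$ unless all $Q^{i,j}(p)=0$ anyway. I would organize the proof around the Dodgson identity and a careful bookkeeping of which linear combinations of minors the $\partial_e\psi_W$ realize, reducing everything to: if $\det Q(p)=0$ and the gradient of $\det Q$ vanishes at $p$, then $Q(p)$ has rank $\le r-2$. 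The final two assertions ($\Min\Sigma_W=\Min\Delta_W$) are then immediate, since minimal primes depend only on the radical and $\sqrt{J_W}=\sqrt{M_W}$.
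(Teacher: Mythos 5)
Your overall strategy is the right one, and the key identity you isolate is exactly the one the paper uses: the symmetric Desnanot--Jacobi/Dodgson identity $(Q_W^{i,j})^2=Q_W^{i,i}Q_W^{j,j}-\det Q_W\cdot Q_W^{\set{i,j},\set{i,j}}$, together with the trivial inclusion $\Delta_W\subseteq\Sigma_W$ from Lemma~\ref{50}. But the step you yourself flag as ``the one subtle point --- and the main obstacle'' is never actually closed, and that is a genuine gap: you need the \emph{diagonal} submaximal minors $Q_W^{i,i}$ to lie in (the radical of) $J_W$, and none of the routes you sketch achieves this. Your first route is dismissed with ``this is not always possible simultaneously,'' which is in fact false in the wrong direction: it \emph{is} always possible. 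Since $E^\vee\vert_W$ spans $W^\vee$, one can choose $e_1,\dots,e_r\in E$ forming a basis of the matroid $\M_W$ and then take the basis $w^1,\dots,w^r$ of $W$ dual to $e_1^\vee\vert_W,\dots,e_r^\vee\vert_W$, so that $w^i_{e_j}=\delta_{i,j}$ for all $i,j\le r$. Your second and third routes (``the span of the $\partial_e\psi_W$ contains each $Q^{i,i}$ up to the ambiguity of basis choice,'' ``avoid diagonal minors entirely'') are speculation rather than arguments; in particular, from the single relation $\partial_e\psi_W=\sum_{i,j}\pm Q^{i,j}w^i_ew^j_e$ one cannot in general isolate the diagonal cofactors without some normalization.

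The paper's proof closes exactly this gap by the normalization above: with $w^i_{e_j}=\delta_{i,j}$, the coefficient of $x_i$ in $Q_W$ is the matrix unit $E_{ii}$, so $\partial_i\psi_W=Q_W^{i,i}$ for $i=1,\dots,r$, and the Dodgson identity becomes
\[
(Q_W^{i,j})^2=\partial_i\psi_W\cdot\partial_j\psi_W-\psi_W\cdot Q_W^{\set{i,j},\set{i,j}}\in J_W .
\]
Hence every prime containing $J_W$ contains every $Q_W^{i,j}$, i.e.\ contains $M_W$ (which is independent of the basis of $W$ by Remark~\ref{151}), giving $\Sigma_W^\red\subseteq\Delta_W^\red$; the reverse inclusion is Lemma~\ref{50}. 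Note also that arguing with prime ideals over $J_W$, rather than with points $p$ and evaluations, avoids any appeal to an algebraically closed field. With the normalization inserted, your argument becomes essentially the paper's; without it, the central claim that $\Sigma_W^\red\subseteq\Delta_W^\red$ is not proved.
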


\begin{proof}
Order $E=\set{e_1,\dots,e_n}$ and pick a basis $w=(w^1,\dots,w^r)$ of $W$.
We may assume that its coefficients with respect to $e_1,\dots,e_r$ form an identity matrix, that is, $w^i_{e_j}=\delta_{i,j}$ for $i,j\in\set{1,\dots,r}$.
For $i,j\in\set{1,\dots,r}$ denote by $Q_W^{\set{i,j},\set{i,j}}$ the minor of $Q_W$ obtained by deleting rows and columns $i,j$.
Then there are Dodgson identities (see Remark~\ref{151}, Lemma~\ref{36} and \cite[Lem.~8.2]{BEK06})
\begin{align*}
(Q_W^{i,j})^2=Q_W^{i,j}\cdot Q_W^{j,i}
&=Q_W^{i,i}\cdot Q_W^{j,j}-\det Q_W\cdot Q_W^{\set{i,j},\set{i,j}}\\
&=\partial_i\psi_W\cdot\partial_j\psi_W-\psi_W\cdot Q_W^{\set{i,j},\set{i,j}}\in J_W
\end{align*}
for $i,j\in\set{1,\dots,r}$.
In particular, any prime ideal $\pp\in\Spec\KK[x]$ over $J_W$ contains $M_W$ and hence $\Sigma_W^\red\subseteq\Delta_W^\red$.
The opposite inclusion is due to Lemma~\ref{50}.
\end{proof}


\begin{cor}[Cremona isomorphism]\label{71}
Let $W\subseteq\KK^E$ be a configuration.
Then the Cremona isomorphism $\TT^E\cong\TT^{E^\vee}$ identifies
\begin{align*}
X_W\cap\TT^E&\cong X_{W^\perp}\cap\TT^{E^\vee},\\
\Sigma_W\cap\TT^E&\cong\Sigma_{W^\perp}\cap\TT^{E^\vee},\\
\Delta_W\cap\TT^E&\cong\Delta_{W^\perp}\cap\TT^{E^\vee}.
\end{align*}
In particular, $\Sigma_W$, $\Delta_W$, $\Sigma_{W^\perp}$ and $\Delta_{W^\perp}$ have the same generic points in $\TT^E\cong\TT^{E^\vee}$.
\end{cor}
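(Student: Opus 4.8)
The plan is to prove all three scheme isomorphisms simultaneously by showing that the Cremona isomorphism $\zeta_E$ of \eqref{190} carries each of the three defining ideals, after localizing at $x^E$, onto the corresponding ideal localized at $x^{E^\vee}$; the statement on generic points then follows formally from Theorem~\ref{13}.

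For the hypersurfaces I would invoke Proposition~\ref{30}, which gives $\psi_{W^\perp}=x^{E^\vee}\cdot\psi_W((x_{e^\vee}^{-1})_{e\in E})=x^{E^\vee}\cdot\zeta_E(\psi_W)$ as elements of $\KK[\TT^{E^\vee}]$. Since the monomial $x^{E^\vee}$ is a unit there, and the square-factor ambiguity of $\psi_W$ (see Remark~\ref{150}) is a unit as well, the principal ideals $\ideal{\psi_{W^\perp}}_{x^{E^\vee}}$ and $\zeta_E(\ideal{\psi_W}_{x^E})=\ideal{\zeta_E(\psi_W)}$ coincide, which yields $X_W\cap\TT^E\cong X_{W^\perp}\cap\TT^{E^\vee}$ over $\zeta_E$. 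For the second degeneracy schemes there is nothing new to do: the isomorphism $(M_W)_{x^E}\cong(M_{W^\perp})_{x^{E^\vee}}$ over $\zeta_E$ is exactly the last assertion of Proposition~\ref{162}, so $\Delta_W\cap\TT^E\cong\Delta_{W^\perp}\cap\TT^{E^\vee}$ is immediate.

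The one step requiring a short computation is $\Sigma_W$. Here I would differentiate the relation $\psi_{W^\perp}=u\cdot\zeta_E(\psi_W)$, writing $u:=x^{E^\vee}$, using $\partial_{e^\vee}u=u/x_{e^\vee}$ together with the chain rule $\partial_{e^\vee}\zeta_E(f)=-x_{e^\vee}^{-2}\zeta_E(\partial_ef)$ valid for any $f\in\KK[x]$. This produces
\[
\partial_{e^\vee}\psi_{W^\perp}=\frac{u}{x_{e^\vee}^2}\bigl(x_{e^\vee}\,\zeta_E(\psi_W)-\zeta_E(\partial_e\psi_W)\bigr).
\]
Since $u/x_{e^\vee}^2$ is a unit of $\KK[\TT^{E^\vee}]$ and $\zeta_E(\psi_W)$ already lies in the ideal generated by $\psi_{W^\perp}$, this forces $(J_{W^\perp})_{x^{E^\vee}}=\ideal{\zeta_E(\psi_W),\zeta_E(\partial_e\psi_W)\xmid e\in E}=\zeta_E\bigl((J_W)_{x^E}\bigr)$, hence $\Sigma_W\cap\TT^E\cong\Sigma_{W^\perp}\cap\TT^{E^\vee}$ over $\zeta_E$.

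Finally, the generic points of $\Sigma_W$ lying in $\TT^E$ are the minimal primes of $(J_W)_{x^E}$, which by Theorem~\ref{13} are precisely the minimal primes of $(M_W)_{x^E}$, i.e.\ the generic points of $\Delta_W$ in $\TT^E$; transporting along $\zeta_E$ and the three isomorphisms just obtained identifies the generic points of all four schemes in $\TT^E\cong\TT^{E^\vee}$. I expect the only real care needed is the bookkeeping of units — the monomial $x^{E^\vee}$ and the scaling ambiguity of $\psi_W$ — so that equalities of elements legitimately pass to equalities of localized ideals; beyond that the argument is entirely formal once Propositions~\ref{30} and~\ref{162} are in hand.
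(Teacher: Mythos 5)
Your proposal is correct and follows essentially the paper's own route: Propositions~\ref{30} and \ref{162} handle $X_W$ and $\Delta_W$, the Jacobian ideals are matched by a chain-rule computation that is just an unpacked form of the paper's observation that $\zeta_E$ identifies $x_e\partial_e=-x_{e^\vee}\partial_{e^\vee}$, and the statement on generic points follows from Theorem~\ref{13} exactly as in the paper.
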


\begin{proof}
Propositions~\ref{30} and \ref{162} yield the statements for $X_W$ and $\Delta_W$.
The statement for $\Sigma_W$ follows using that $\zeta_E$ (see \eqref{190}) identifies $x_e\partial_e=-x_{e^\vee}\partial_{e^\vee}$ for $e\in E$.
The particular claim follows with Theorem~\ref{13}.
\end{proof}


\begin{prp}[Codimension bound]\label{44}
Let $W\subseteq\KK^E$ be a configuration.
Then the codimensions of $\Sigma_W$ and $\Delta_W$ in $\KK^E$ are bounded by
\[
\codim_{\KK^E}\Sigma_W=\codim_{\KK^E}\Delta_W\le 3.
\]
In case of equality, $\Delta_W$ is Cohen--Macaulay (and hence pure-dimensional) and $\Sigma_W$ is equidimensional.
\end{prp}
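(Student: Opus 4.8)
The plan is to reduce the statement to a theorem of Kutz on ideals of submaximal minors of symmetric matrices. First I would recall that by Theorem~\ref{13} the schemes $\Sigma_W$ and $\Delta_W$ have the same underlying reduced scheme, so $\Min\Sigma_W=\Min\Delta_W$; consequently they have the same dimension and hence the same codimension in $\KK^E$, which gives the asserted equality $\codim_{\KK^E}\Sigma_W=\codim_{\KK^E}\Delta_W$. So it suffices to bound $\codim_{\KK^E}\Delta_W=\height M_W$ (the last equality by Lemma~\ref{27}, since $\KK[x]$ is $^*$local Cohen--Macaulay). Now $M_W=\Fitt_1\coker Q_W$ is generated by the submaximal minors $Q_W^{i,j}$ of the symmetric $r\times r$ matrix $Q_W$ with $r=\dim W$ (see Remark~\ref{151}), where by Lemma~\ref{36} the maximal minor is $\det Q_W=\psi_W\neq0$ (Remark~\ref{6}), so $M_W\neq\ideal{1}$ and the ideal of submaximal minors is proper.

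The key input is Kutz's result (cited in the outline, \cite{Kut74}): for a symmetric $r\times r$ matrix over a Noetherian ring whose ideal $I_{r-1}$ of $(r-1)\times(r-1)$ minors is proper, one has $\grade I_{r-1}\le3$, and if $\grade I_{r-1}=3$ then $I_{r-1}$ is a perfect ideal. Applying this to $Q_W$ over the Cohen--Macaulay ring $\KK[x]$, where grade equals height equals codimension (again Lemma~\ref{27}), yields $\codim_{\KK^E}\Delta_W=\height M_W\le3$. This proves the first assertion.

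For the equality case, suppose $\codim_{\KK^E}\Delta_W=3$. Then by Kutz's theorem $M_W$ is perfect of grade $3$, so $\KK[x]/M_W$ is Cohen--Macaulay; hence $\Delta_W$ is Cohen--Macaulay, and in particular pure-dimensional (Lemma~\ref{27}, or \cite[Prop.~1.2.13]{BH93}). It remains to see that $\Sigma_W$ is equidimensional. Since $\Sigma_W^\red=\Delta_W^\red$ and $\Delta_W$ is pure-dimensional of codimension $3$, every minimal prime of $J_W$ coincides with a minimal prime of $M_W$ and thus has height $3=\codim_{\KK^E}\Sigma_W$; as $\KK[x]$ is catenary, this is exactly equidimensionality of $\KK[x]/J_W$ by the criterion in Lemma~\ref{27} (applied with $I=J_W$). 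The main obstacle, and the only non-formal ingredient, is Kutz's bound and perfection statement; everything else is bookkeeping with the dictionary between grade, height and codimension over the Cohen--Macaulay polynomial ring, together with the already-established identity $\Sigma_W^\red=\Delta_W^\red$.
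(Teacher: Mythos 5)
Your proposal is correct and follows essentially the same route as the paper: Theorem~\ref{13} for the equality of codimensions and the equidimensionality of $\Sigma_W$, Kutz's theorem applied to the submaximal minors of the symmetric matrix $Q_W$ over the Cohen--Macaulay ring $\KK[x]$ for the bound and the perfection statement, and the grade--height--codimension dictionary plus ``Cohen--Macaulay implies pure-dimensional'' for the equality case. (Only your justification that $M_W$ is proper via $\psi_W\neq0$ is a non sequitur; properness holds simply because the submaximal minors have positive degree when $\dim W\ge2$, the case $\dim W\le1$ being the empty-scheme situation of Remark~\ref{20}.)
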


\begin{proof}
The equality of codimensions follows from Theorem~\ref{13}.
The scheme $\Delta_W$ is defined by the ideal $M_W$ of submaximal minors of the symmetric matrix $Q_W$ with entries in the Cohen--Macaulay ring $\KK[x]$ (see \cite[2.1.9]{BH93}).
In particular, $\codim_{\KK^E}\Sigma_W=\grade M_W$ (see \cite[2.1.2.(b)]{BH93}).
Kutz proved the claimed inequality and that $M_W$ is a perfect ideal in case of equality (see \cite[Thm.~1]{Kut74}).
In the latter case, $\KK[x]/M_W=\KK[\Delta_W]$ is a Cohen--Macaulay ring (see \cite[Thm.~2.1.5.(a)]{BH93}) and hence pure-dimensional (see Lemma~\ref{27}).
Then $\Sigma_W$ is equidimensional by Theorem~\ref{13}.
\end{proof}

\subsection{Generic points and codimension}\label{75}

In this subsection, we show that the Jacobian and second degeneracy schemes reach the codimension bound of $3$ in case of connected matroids.
The statements on codimension and Cohen--Macaulayness in our main result follow.
In the process, we obtain a description of the generic points in relation with any non-disconnective handle.


\begin{lem}[Primes over the Jacobian ideal and handles]\label{46}
Let $W\subseteq\KK^E$ be a realization of a connected matroid $\M$, and let $H\in\H_\M$ be a proper handle.
\begin{enumerate}[(a)]

\item\label{46a} For any $h\in H$, $x^{H\setminus\set{h}}\cdot\psi_{W\setminus H}\in J_W$.

\item\label{46b} For any $e,f\in H$ with $e\ne f$, $x^{H\setminus\set{e,f}}\cdot\psi_{W\setminus H}\in J_W+\ideal{x_e,x_f}$.

\item\label{46c} For any $d\in H$ and $e\in E\setminus H$, $x^{H\setminus\set{d}}\cdot\partial_e\psi_{W\setminus H}\in J_W+\ideal{x_d}$.

\item\label{46d} If $\pp\in\Spec\KK[x]$ with $J_W\subseteq\pp\not\ni\psi_{W\setminus H}$, then $\ideal{x_e,x_f,x_g}\subseteq\pp$ for some $e,f,g\in H$ with $e\ne f\ne g\ne e$.

\end{enumerate}
\end{lem}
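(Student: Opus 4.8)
The plan is to derive all four parts from Corollary~\ref{18} together with the deletion--contraction formula \eqref{18d}, namely
\[
\psi_W=\sum_{h\in H}x^{H\setminus\set{h}}\cdot\psi_{W\setminus H}+x^H\cdot\psi_{W/H},
\]
after suitably scaling $H$. First I would use this to compute the partial derivatives $\partial_e\psi_W$ for $e\in H$ and $e\in E\setminus H$ separately, keeping in mind that $\psi_{W\setminus H}$ and $\psi_{W/H}$ involve no variables $x_e$ with $e\in H$.

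For part \eqref{46a}: fix $h\in H$ and differentiate with respect to $x_h$. Every term $x^{H\setminus\set{h'}}$ with $h'\ne h$ still contains $x_h$, so its $x_h$-derivative is $x^{H\setminus\set{h,h'}}\cdot(\text{stuff})$, and $\partial_h(x^H)=x^{H\setminus\set{h}}$. The only term \emph{not} divisible by some $x_{h''}$ ($h''\in H\setminus\set{h}$) is $x^{H\setminus\set{h}}\cdot\psi_{W\setminus H}$; multiplying by the remaining variables would be circular, so instead I would argue directly: if $\abs H=1$ then $\partial_h\psi_W=\psi_{W\setminus H}+\psi_{W/H}$ and separately $\psi_W=\psi_{W\setminus H}+x_h\psi_{W/H}\in J_W$, giving $\psi_{W\setminus H}=\psi_W-x_h\psi_{W/H}=\partial_h\psi_W\cdot x_h-x_h\psi_{W/H}+\dots$; cleaner is to note $x_h\partial_h\psi_W=x_h\psi_{W/H}+x_h\psi_{W\setminus H}\cdot(\partial_h\text{ of the sum})$... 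Rather than fight this, the right move is: from \eqref{18d}, $\sum_{h\in H}x^{H\setminus\set h}\psi_{W\setminus H}=\psi_W-x^H\psi_{W/H}\in J_W+\ideal{x^H}$; but $x^H=x_h\cdot x^{H\setminus\set h}$ and one shows by a short induction on $\abs H$, peeling off one handle element at a time via Proposition~\ref{3}, that each individual summand $x^{H\setminus\set h}\psi_{W\setminus H}$ already lies in $J_W$ — differentiating \eqref{18d} in $x_h$ gives $\partial_h\psi_W=x^{H\setminus\set h}\psi_{W\setminus H}\cdot 0 + \sum_{h'\ne h}x^{H\setminus\set{h,h'}}(\dots)+x^{H\setminus\set h}\psi_{W/H}$, wait — more carefully, $\partial_h$ of $x^{H\setminus\set h}\psi_{W\setminus H}$ is $0$, and $\partial_h$ of $x^{H\setminus\set{h'}}\psi_{W\setminus H}$ for $h'\ne h$ is $x^{H\setminus\set{h,h'}}\psi_{W\setminus H}$, so inductively these lower-order terms are handled, and $\partial_h(x^H\psi_{W/H})=x^{H\setminus\set h}\psi_{W/H}$; hence $x^{H\setminus\set h}\psi_{W\setminus H}=\psi_W-x^H\psi_{W/H}-\sum_{h'\ne h}x^{H\setminus\set{h'}}\psi_{W\setminus H}$, and $x^H\psi_{W/H}=x_h\partial_h\psi_W-x_h\sum_{h'\ne h}x^{H\setminus\set{h,h'}}\psi_{W\setminus H}\in J_W+(\text{lower order terms}\cdot\KK[x])$, closing the induction. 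Parts \eqref{46b} and \eqref{46c} then follow by working modulo $\ideal{x_e,x_f}$, resp. $\ideal{x_d}$: killing those variables kills all summands of \eqref{18d} except the ones claimed, and then part \eqref{46a} applied to the quotient (where $H$ has shrunk) does the rest; for \eqref{46c} one additionally differentiates in $x_e$ with $e\notin H$, which commutes past all the $x^{H\setminus\set{\cdot}}$ factors and hits only $\psi_{W\setminus H}$ and $\psi_{W/H}$.

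For part \eqref{46d}, which I expect to be the main obstacle: suppose $\pp\supseteq J_W$ and $\psi_{W\setminus H}\notin\pp$. By part \eqref{46a}, $x^{H\setminus\set h}\psi_{W\setminus H}\in J_W\subseteq\pp$ for every $h\in H$, and since $\pp$ is prime and $\psi_{W\setminus H}\notin\pp$, we get $x^{H\setminus\set h}\in\pp$, i.e. $\pp$ contains all but one variable of $H$. If $\abs H\ge 3$ this already forces at least two, indeed all-but-one $\ge 2$... no: I need \emph{three} distinct variables in $H$ to be in $\pp$. If $\abs H\ge 4$ then $x^{H\setminus\set h}\in\pp$ gives $\ge 3$ variables of $H$ in $\pp$ for free. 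If $\abs H=3$, say $H=\set{e,f,g}$, then $x_ex_f,x_ex_g,x_fx_g\in\pp$; primeness gives two of $x_e,x_f,x_g$ in $\pp$, say $x_e,x_f\in\pp$; then using part \eqref{46b} with this pair, $\psi_{W\setminus H}\in J_W+\ideal{x_e,x_f}\subseteq\pp$ unless the monomial $x^{H\setminus\set{e,f}}=x_g$ is the culprit — that is, $x_g\psi_{W\setminus H}\in\pp$ forces $x_g\in\pp$, giving the third variable. The remaining cases $\abs H=1,2$ require a genuinely different argument, and this is where I'd spend the most care: when $\abs H\le 2$ the handle alone cannot supply three variables, so one must combine with the structure of $\M$. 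Here I would invoke that $\M$ is connected with $\rk\M\ge 2$ (otherwise $\Sigma_W=\emptyset$ by Remark~\ref{20}), pick a circuit $C\supsetneq H$ (Lemma~\ref{16}.\eqref{16d}), and analyze $\psi_W$ modulo $\pp$ using the monomial expansion $\psi_W=\sum_{B}c_{W,B}x^B$: the hypothesis $\psi_{W\setminus H}\notin\pp$ means some basis $B_0$ of $\M\setminus H$ has $x^{B_0}\notin\pp$; combining with \eqref{18d} and the fact that $H\subsetneq C$ lies in a circuit, one produces three variables forced into $\pp$. I anticipate needing to treat $\abs H=1$ by relating $\pp$ to a prime over $J_{W\setminus H}$ together with $x_h$, then bounding via the codimension-$3$ result of Proposition~\ref{44} applied to the deletion — concretely, $\pp\supseteq J_W+\ideal{x_h}$ would give $\pp/\ideal{x_h}\supseteq J_{W\setminus h}$ (by Lemma~\ref{40} and Proposition~\ref{3}), and a prime minimal over $J_{W\setminus h}$ has height $\le 3$, so $\pp$ itself has height $\le 4$, forcing a concrete description; but the cleanest route is probably to observe that parts \eqref{46a}--\eqref{46c} already show $\pp$ contains $\ideal{x_h}$ for the unique $h$ not yet in $\pp$ \emph{provided} $\psi_{W\setminus H}\notin\pp$ and $\abs H\ge 3$, and for $\abs H\le 2$ to pass to a handle decomposition (Proposition~\ref{12}) or the handle partition (Lemma~\ref{16}) to find a larger handle or a separating handle where the argument applies, then pull back. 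I would finalize \eqref{46d} by a careful case split on $\abs H\in\set{1,2}$ versus $\abs H\ge 3$, using in the small cases that a connected matroid of rank $\ge 2$ has a circuit of size $\ge 3$ (Lemma~\ref{66}) containing $H$, which supplies the extra variables needed.
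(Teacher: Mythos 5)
Parts \eqref{46a} and \eqref{46c} of your plan are essentially the paper's argument: your two displayed identities for \eqref{46a} cancel against each other exactly and collapse to the single identity $x^{H\setminus\set{h}}\cdot\psi_{W\setminus H}=\psi_W-x_h\cdot\partial_h\psi_W\in J_W$ (this is just $\psi_W\vert_{x_h=0}$, using that $\psi_W$ is a combination of square-free monomials), so no induction is needed. Your mechanism for \eqref{46b}, however, fails as stated: reducing \eqref{18d} modulo $\ideal{x_e,x_f}$ with $e,f\in H$ kills \emph{every} summand, and \enquote{part \eqref{46a} applied to the quotient where $H$ has shrunk} is not available, since after deleting $e$ the remaining elements of $H$ become coloops and the deletion is disconnected (Lemma~\ref{16}.\eqref{16a}), so \eqref{46a} does not apply there. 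The paper instead reduces the generator $\partial_e\psi_W\in J_W$ modulo $\ideal{x_e,x_f}$, where exactly the term $x^{H\setminus\set{e,f}}\cdot\psi_{W\setminus H}$ survives; alternatively, your idea can be repaired via Lemma~\ref{31}, because modulo $x_e$ one has $\psi_{W\setminus e}=x^{H\setminus\set{e}}\cdot\psi_{W\setminus H}$ and the claimed element is its $x_f$-derivative, a generator of $J_{W\setminus e}$.

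The genuine gap is in \eqref{46d}. First, the claim that $\abs{H}\ge4$ gives three variables of $H$ in $\pp$ \enquote{for free} is false: if only two variables $x_a,x_b$ with distinct $a,b\in H$ lie in $\pp$, then already $x^{H\setminus\set{h}}\in\pp$ for every $h\in H$, so \eqref{46a} alone never yields more than two; one needs \eqref{46b} to force the third variable, exactly as in your $\abs{H}=3$ case (and that argument works verbatim for every $\abs{H}\ge3$). Second, and more seriously, the cases $\abs{H}\le2$ are left unproved: you announce that a \enquote{genuinely different argument} is needed and sketch several uncompleted routes (circuits of size $\ge3$ containing $H$, codimension bounds via Proposition~\ref{44}, passing to handle decompositions or larger handles), none of which is carried out. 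No such machinery is needed, because for $\abs{H}\le2$ the hypotheses of \eqref{46d} are contradictory and the statement is vacuous: if $\abs{H}=1$, then \eqref{46a} gives $\psi_{W\setminus H}\in J_W\subseteq\pp$; if $H=\set{e,f}$, then \eqref{46a} gives $x_e,x_f\in\pp$ and \eqref{46b} gives $\psi_{W\setminus H}\in J_W+\ideal{x_e,x_f}\subseteq\pp$, contradicting $\psi_{W\setminus H}\notin\pp$. This is precisely how the paper's proof treats all cases uniformly: \eqref{46a} produces distinct $e,f\in H$ with $x_e,x_f\in\pp$, and \eqref{46b} then puts $x^{H\setminus\set{e,f}}$ in $\pp$, which supplies the third variable when $\abs{H}\ge3$ and an absurdity when $\abs{H}\le2$.
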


\begin{proof}
By Remark~\ref{160} and Corollary~\ref{18}, we may assume that 
\[
\psi_W=\sum_{h\in H}x^{H\setminus\set{h}}\cdot\psi_{W\setminus H}+x^H\cdot\psi_{W/H}
\]
has the form \eqref{18d}.

\begin{asparaenum}[(a)]

\item Using that $\psi_W$ is a linear combination of square-free monomials (see Definition~\ref{48}),
\[
x^{H\setminus\set{h}}\cdot\psi_{W\setminus H}=\psi_W\vert_{x_h=0}=\psi_W-x_h\cdot\partial_h\psi_W\in J_W.
\]

\item This follows from
\begin{align*}
J_W\ni\partial_e\psi_W&=\sum_{h\in H}x^{H\setminus\set{e,h}}\cdot\psi_{W\setminus H}+x^{H\setminus\set{e}}\cdot\psi_{W/H}\\
&\equiv x^{H\setminus\set{e,f}}\cdot\psi_{W\setminus H}\mod\ideal{x_e,x_f}.
\end{align*}

\item This follows from
\begin{align*}
J_W\ni\partial_e\psi_W&=\sum_{h\in H}x^{H\setminus\set{h}}\cdot\partial_e\psi_{W\setminus H}+x^H\cdot\partial_e\psi_{W/H}\\
&\equiv x^{H\setminus\set{d}}\cdot\partial_e\psi_{W\setminus H}\mod\ideal{x_d}.
\end{align*}

\item By \eqref{46a}, the hypotheses force $x^{H\setminus\set{h}}\in\pp$ for all $h\in H$ and hence $\ideal{x_e,x_f}\subseteq\pp$ for some $e,f\in H$ with $e\ne f$.
Then $x^{H\setminus\set{e,f}}\in\pp$ by \eqref{46b} and the claim follows.\qedhere

\end{asparaenum}
\end{proof}


\begin{rmk}[Primes over the Jacobian ideal and $2$-separations]\label{141}
Let $W\subseteq\KK^E$ be a realization of a connected matroid $\M$.
Suppose that $E=E_1\sqcup E_2$ is an (exact) $2$-separation of $\M$.
For $\set{i,j}=\set{1,2}$, note that
\[
d_i:=\deg\psi_{W\vert_{E_i}}=\deg\psi_{W/E_j}+1
\]
and hence by Proposition~\ref{63}
\begin{align*}
J_W\ni\psi_W&=\psi_{W/E_i}\cdot\psi_{W\vert_{E_i}}+\psi_{W\vert_{E_j}}\cdot\psi_{W/E_j},\\
J_W\ni\sum_{e\in E_i}x_e\partial_e\psi_W&=d_i\cdot\psi_{W/E_i}\cdot\psi_{W\vert_{E_i}}+(d_i-1)\cdot\psi_{W\vert_{E_j}}\cdot\psi_{W/E_j}.
\end{align*}
Subtracting $d_i\cdot\psi_W$ from the latter yields $\psi_{W\vert_{E_j}}\cdot\psi_{W/E_j}\in J_W$, for $j=1,2$.
It follows that, for every prime ideal $\pp\in\Spec\KK[x]$ over $J_W$ and every $2$-separation $F$ of $\M$, we have $\psi_{W\vert_F}\in\pp$ or $\psi_{W/F}\in\pp$.
\end{rmk}


\begin{lem}[Inductive codimension bound]\label{99}
Let $W\subseteq\KK^E$ be a realization of a connected matroid $\M$, and let $H\in\H_\M$ be a proper non-disconnective handle.
Suppose that $\codim_{\KK^{E\setminus H}}\Sigma_{W\setminus H}=3$.
Then $\Sigma_W$ is equidimensional of codimension 
\[
\codim_{\KK^E}\Sigma_W=3
\]
with generic points of the following types:
\begin{enumerate}[(a)]
\item\label{99a} $\pp=\ideal{x_e,x_f,x_g}=:\pp_{e,f,g}$ for some $e,f,g\in H$ with $e\ne f\ne g\ne e$,
\item\label{99b} $\pp=\ideal{\psi_{W\setminus H},x_d,x_h}=:\pp_{H,d,h}$ for some $d,h\in H$ with $d\ne h$,
\item\label{99c} $\psi_{W\setminus H},\psi_{W/H}\in\pp\not\ni x_h$ for all $h\in H$.
\end{enumerate}
\end{lem}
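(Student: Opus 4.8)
\textbf{The plan is to} combine the codimension bound of Proposition~\ref{44} with the structural information about primes over $J_W$ from Lemma~\ref{46}, and then push the inductive hypothesis $\codim_{\KK^{E\setminus H}}\Sigma_{W\setminus H}=3$ forward using the deletion--contraction formula~\eqref{18d} for $\psi_W$. First I would fix a prime $\pp\in\Min\Sigma_W$, i.e.\ minimal over $J_W$, and split into cases according to whether $\psi_{W\setminus H}\in\pp$. If $\psi_{W\setminus H}\notin\pp$, Lemma~\ref{46}.\eqref{46d} immediately gives $\pp\supseteq\ideal{x_e,x_f,x_g}$ for three distinct $e,f,g\in H$; since this is already a height-$3$ prime and $\codim_{\KK^E}\Sigma_W\le 3$ by Proposition~\ref{44}, minimality forces $\pp=\pp_{e,f,g}$, which is case~\eqref{99a}. (One must also check $\ideal{x_e,x_f,x_g}\supseteq J_W$, i.e.\ that $\pp_{e,f,g}$ really lies in $\Sigma_W$: from \eqref{18d} one sees $\psi_W$ and all $\partial_e\psi_W$ vanish modulo $\ideal{x_e,x_f,x_g}$, because every monomial of $\psi_W$ and of its partials is divisible by at least one of $x_e,x_f,x_g$ — here $\abs{H}\ge 3$ is what makes $x^H$ and each $x^{H\setminus\set h}$ lie in this ideal; when $\abs H<3$ the case is vacuous.)

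\textbf{Next I would treat} the case $\psi_{W\setminus H}\in\pp$. Here I want to compare $\pp$ with primes over $J_{W\setminus H}$. The idea: modulo $\psi_{W\setminus H}$ the formula \eqref{18d} reads $\psi_W\equiv x^H\psi_{W/H}$, and more usefully Lemma~\ref{46}.\eqref{46c} shows $x^{H\setminus\set d}\cdot\partial_e\psi_{W\setminus H}\in J_W+\ideal{x_d}$ for $d\in H$, $e\in E\setminus H$. So after adjoining $\psi_{W\setminus H}$ and, say, one variable $x_d$ with $d\in H$, the ideal $J_W$ begins to see the generators $\partial_e\psi_{W\setminus H}$ of $J_{W\setminus H}$. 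The cleanest route is a dimension count: $\Sigma_{W\setminus H}$ sits in $\KK^{E\setminus H}$ with codimension $3$ by hypothesis, hence has dimension $\abs{E\setminus H}-3=\abs E-\abs H-3$; the schemes appearing in cases \eqref{99a}, \eqref{99b}, \eqref{99c} should each have codimension exactly $3$ in $\KK^E$. For \eqref{99b}, $\ideal{\psi_{W\setminus H},x_d,x_h}$ with $\psi_{W\setminus H}$ a nonzero form not involving $x_H$ (and not a unit, since $\rk(\M\setminus H)\ge 1$ as $\M\setminus H$ is connected and $H$ proper — use Lemma~\ref{66}/Remark~\ref{6}) is a complete intersection of height $3$; one checks it contains $J_W$ using \eqref{46a} and \eqref{46b}, and that it is prime because $\psi_{W\setminus H}$ is irreducible by Proposition~\ref{4} (the matroid $\M\setminus H$ being connected and loopless). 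For \eqref{99c}, a generic point lying over both $\psi_{W\setminus H}$ and $\psi_{W/H}$ but over no $x_h$ should be traced back to a generic point of $\Sigma_{W\setminus H}$: contracting $H$ (or specializing the $x_H$ to generic units) and using that $\Sigma_{W\setminus H}=\Sigma_{W/H}^{\text{suitable}}$-type relations from the deletion--contraction picture places $\pp$ in correspondence with a height-$3$ prime over $J_{W\setminus H}$, via the localization at $x^H$ and the interplay between $\psi_{W\setminus H}$, $\psi_{W/H}$, $\partial_e\psi_{W\setminus H}$ visible in Lemma~\ref{46}.

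\textbf{Having shown} that every generic point of $\Sigma_W$ has one of the three stated forms and that each such prime has height exactly $3$, I conclude $\codim_{\KK^E}\Sigma_W=3$: the inequality $\le 3$ is Proposition~\ref{44}, and $\ge 3$ follows because $\Sigma_W\ne\emptyset$ (as $\rk\M\ge2$ by Remark~\ref{20}) so it has \emph{some} generic point, necessarily of height $3$. Equidimensionality of $\Sigma_W$ is then immediate from Proposition~\ref{44} (equality case), or alternatively from the fact that all its minimal primes have the same height. \textbf{The main obstacle} I anticipate is case~\eqref{99c}: unlike \eqref{99a} and \eqref{99b}, which are explicit complete-intersection primes one can verify by hand, the generic points in the torus $\set{x_h\ne 0\colon h\in H}$ are only characterized implicitly, and relating them precisely to $\Min\Sigma_{W\setminus H}$ requires carefully tracking how $J_W$ localizes at $x^H$ — essentially showing $(J_W)_{x^H}$ and $(J_{W\setminus H})_{\dots}$ cut out the same thing after the substitution coming from \eqref{18d}, and that the non-disconnectivity of $H$ (so that $\M\setminus H$ is connected, Proposition~\ref{4} applies, and $\psi_{W\setminus H}$ is irreducible) is exactly what keeps the count of generic points under control. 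This is presumably where one invokes, or re-proves in this local setting, the restriction--contraction behavior of the submaximal minors from Lemma~\ref{40}.
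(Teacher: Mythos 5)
Your handling of types \eqref{99a} and \eqref{99b} is essentially correct, and in fact a little more direct than the paper's: verifying $J_W\subseteq\ideal{x_e,x_f,x_g}$ and $J_W\subseteq\pp_{H,d,h}$ from \eqref{18d} and invoking minimality of $\pp$ over $J_W$ does force equality. (Your first justification for \eqref{99a} -- ``$\codim_{\KK^E}\Sigma_W\le3$ plus minimality'' -- is not valid on its own, since Proposition~\ref{44} bounds only the minimum of the heights of the minimal primes, not each of them; but your parenthetical containment check repairs this.)

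The genuine gap is case \eqref{99c}, which is the heart of the proof and the only place where the hypothesis $\codim_{\KK^{E\setminus H}}\Sigma_{W\setminus H}=3$ does real work: you must prove the lower bound $\height\pp\ge3$ for a minimal prime $\pp$ with $\psi_{W\setminus H},\psi_{W/H}\in\pp$ and $x_h\notin\pp$ for all $h\in H$. Your proposal to ``trace $\pp$ back to a generic point of $\Sigma_{W\setminus H}$ by localizing at $x^H$'' does not accomplish this: deletion of $H$ corresponds to intersecting with the coordinate hyperplanes $V(x_h)$ (Lemma~\ref{31}), not to inverting the $x_h$, no such correspondence is actually set up, and even a bijection of primes would not by itself yield a height bound in $\KK[x]$. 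The paper's argument is concrete: fix $d\in H$ and let $\qq$ be a minimal prime over $\pp+\ideal{x_d}$; by Lemma~\ref{46}.\eqref{46c}, $\qq$ must contain either $\ideal{\psi_{W\setminus H},\psi_{W/H},x_d,x_h}$ for some $h\in H\setminus\set{d}$ or $\ideal{J_{W\setminus H},x_d}$; the first has height at least $4$ because $\psi_{W/H}\notin\ideal{\psi_{W\setminus H},x_d,x_h}$ (irreducibility of $\psi_{W\setminus H}$ together with $\deg\psi_{W/H}<\deg\psi_{W\setminus H}$, from Lemma~\ref{16}.\eqref{16c}--\eqref{16d}), and the second has height exactly $4$ by the hypothesis on $\Sigma_{W\setminus H}$; hence $\height(\pp+\ideal{x_d})\ge4$ and $\height\pp\ge3$ by Krull's bound (Lemma~\ref{109}.\eqref{109b}). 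Relatedly, within the case $\psi_{W\setminus H}\in\pp$ you never address the subcase where $x_d\in\pp$ but $x^{H\setminus\set{d}}\notin\pp$, in which \eqref{46c} gives $\ideal{J_{W\setminus H},x_d}\subseteq\pp$ and hence $\height\pp\ge4$; this possibility is only excluded a posteriori: once every minimal prime has height $\ge3$, Proposition~\ref{44} gives $\codim_{\KK^E}\Sigma_W=3$, Cohen--Macaulayness of $\Delta_W$ and equidimensionality of $\Sigma_W$, so no minimal prime can have height $\ge4$ (Remark~\ref{98}). Without the height bound in case \eqref{99c} and this final clean-up step, neither the equidimensionality nor the classification of the generic points is established.
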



\begin{proof}
Since $H$ is non-disconnective, $\psi_{W\setminus H}\in\KK[x_{E\setminus H}]$ is irreducible by Proposition~\ref{4}.
Since $d,h\in H$ with $d\ne h$, $\pp_{H,d,h}\in\Spec\KK[x]$ with $\height\pp_{H,d,h}=3$.
The same holds for $\pp_{e,f,g}$.

By Lemma~\ref{27} and the dimension hypothesis, $J_{W\setminus H}\unlhd\KK[x_{E\setminus H}]$ has height $3$.
Thus, for any $d\in H$, 
\begin{equation}\label{193}
\height(\ideal{J_{W\setminus H},x_d})=\height J_{W\setminus H}+1=4.
\end{equation}
In particular, $\Sigma_{W\setminus H}\ne\emptyset$ and hence $\Sigma_W\ne\emptyset$ by Remark~\ref{20}.\eqref{20a}.

Let $\pp\in\Spec\KK[x]$ be any minimal prime ideal over $J_W$.
By Lemma~\ref{27} and Proposition~\ref{44}, it suffices to show for the equidimensionality that $\height\pp\ge3$.
This follows in particular if $\pp$ contains a prime ideal of type $\pp_{e,f,g}$ or $\pp_{H,d,h}$.
By Lemma~\ref{46}.\eqref{46d}, the former is the case if $\psi_{W\setminus H}\not\in\pp$.
We may thus assume that $\psi_{W\setminus H}\in\pp$.
By Lemma~\ref{46}.\eqref{46c}, 
\begin{equation}\label{192}
x^{H\setminus\set{d}}\cdot\partial_e\psi_{W\setminus H}\in\pp+\ideal{x_d}.
\end{equation}
for any $d\in H$ and $e\in E\setminus H$.

First suppose that $x_d\in\pp$ for some $d\in H$.
If $x^{H\setminus\set{d}}\in\pp$, then $\pp$ contains a prime ideal of type $\pp_{H,d,h}$ for some $h\in H\setminus\set{d}$.
Otherwise, $\ideal{J_{W\setminus H},x_d}\subseteq\pp$ by \eqref{192} and hence $\height\pp\ge4$ by \eqref{193} (see Remark~\ref{98}).

Now suppose that $x_h\not\in\pp$ for all $h\in H$ and hence $\psi_{W/H}\in\pp$ by \eqref{18a} and \eqref{18c} in Corollary~\ref{18}.
Let $\qq\in\Spec\KK[x]$ be any minimal prime ideal over $\pp+\ideal{x_d}$. 
By \eqref{192}, $\qq$ contains one of the ideals
\begin{equation}\label{194}
\ideal{\psi_{W\setminus H},\psi_{W/H},x_d,x_h}=\pp_{H,d,h}+\ideal{\psi_{W/H}},\quad\ideal{J_{W\setminus H},x_d},
\end{equation}
for some $h\in H\setminus\set{d}$.
By Lemma~\ref{16}.\eqref{16d} and \eqref{16c} (see Remark~\ref{6}),
\begin{align*}
\deg\psi_{W/H}&=\rk(\M/H)=\rk\M-\abs{H}\\
&=\rk\M-\rk(H)=\rk(\M\setminus H)-\lambda_\M(H)<\deg\psi_{W\setminus H}
\end{align*}
and hence $\psi_{W\setminus H}\not\divides\psi_{W/H}$ and $\psi_{W/H}\not\in\pp_{H,d,h}$.
Thus, both ideals in \eqref{194} have height at least $4$ (see \eqref{192}) and hence $\height\qq\ge4$.
It follows that $\height(\pp+\ideal{x_d})\ge4$ and then $\height\pp\ge3$ by Lemma~\ref{109}.\eqref{109b}.
\end{proof}


\begin{rmk}\label{98}
The case where $\height\pp\ge4$ in the proof of Lemma~\ref{99} does finally not occur due to the Cohen--Macaulayness of $\Delta_W$ achieved by the argument (see Proposition~\ref{4}).
\end{rmk}


\begin{lem}[Generic points for circuits]\label{105}
Let $W\subseteq\KK^E$ be a realization of a matroid $\M$ on $E\in\C_\M$ with $\abs{E}-1=\rk\M\ge2$.
Then $\Sigma_W^\red$ is the union of all codimension-$3$ coordinate subspaces of $\KK^E$.
\end{lem}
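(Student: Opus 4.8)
The plan is to compute $\psi_W$ and $M_W$ explicitly for a realization $W$ of the circuit matroid $\M=\U_{n-1,n}$ on $E$, where $n=\abs{E}=\rk\M+1$. By Example~\ref{122}.\eqref{122b}, after suitably scaling $E$ we may assume
\[
\psi_W=\sum_{e\in E}x^{E\setminus\set{e}},
\]
the elementary symmetric polynomial of degree $n-1$ in $n$ variables. First I would observe that $\partial_e\psi_W=x^{E\setminus\set{e}}\cdot(\text{something})$ — more precisely $\partial_{e}\psi_W=\sum_{f\neq e}x^{E\setminus\set{e,f}}=e_{n-2}(x_{E\setminus\set e})$, the elementary symmetric polynomial of degree $n-2$ in the $n-1$ variables $x_{E\setminus\set{e}}$. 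The key combinatorial fact is that the variety $V(\partial_e\psi_W)\subseteq\KK^E$, cut out by this symmetric polynomial in $n-1$ variables, together with the hypersurface $V(\psi_W)$, forces many coordinates to vanish. Concretely, at a point where at most one coordinate, say $x_{e_0}$, is nonzero, every monomial $x^{E\setminus\set{e}}$ with $e\neq e_0$ already vanishes (it contains $x_{e_0}$... wait, no: $x^{E\setminus\set{e}}$ omits $x_e$, so for $e\neq e_0$ it contains $x_{e_0}$ and also some other vanishing coordinate). I would make this precise: a point of $\KK^E$ lies in $\Sigma_W^\red$ iff at most $n-3$ of its coordinates are nonzero, i.e. iff at least three coordinates vanish.

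The cleaner route, which I expect to be the main line of the argument, is to invoke the inductive description already available. Since $\M$ is a circuit of rank $\ge 2$, it is connected, and by Proposition~\ref{12} it has a handle decomposition; any singleton $\set h\subseteq E$ is a proper handle, and since $\M\setminus h=\U_{n-1,n-1}$ is the free matroid — hence \emph{disconnected} into coloops when $n\ge 3$ — the handle $\set h$ is disconnective, so Lemma~\ref{99} does not apply directly. Instead I would argue by induction on $n=\abs{E}$, contracting rather than deleting. For $n=3$ the matroid is a triangle and Example~\ref{107} gives $\Sigma_W^\red=V(x_1,x_2,x_3)$, the unique codimension-$3$ coordinate subspace, establishing the base case. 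For the inductive step with $n\ge 4$: pick $e\in E$; since $e$ is not a loop or coloop in the circuit $\M$, Proposition~\ref{3} gives $\psi_W=\psi_{W\setminus e}+x_e\cdot\psi_{W/e}$, and here $\M/e=\U_{n-2,n-1}$ is again a circuit on $E\setminus\set e$ of rank $n-2\ge 2$ (for $n\ge 4$), while $\M\setminus e=\U_{n-1,n-1}$ is free with $\psi_{W\setminus e}=x^{E\setminus\set e}$. So $\psi_W=x^{E\setminus\set e}+x_e\,\psi_{W/e}$. Then $\partial_e\psi_W=\psi_{W/e}$ and, for $f\neq e$, $\partial_f\psi_W=x^{E\setminus\set{e,f}}+x_e\,\partial_f\psi_{W/e}$. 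A point lies in $\Sigma_W$ with $x_e\neq 0$ precisely when it lies in $\Sigma_{W/e}$ (with $x_e$ free and nonzero), which by induction means at least three of the coordinates $x_{E\setminus\set e}$ vanish; and a point with $x_e=0$ lies in $\Sigma_W$ iff $\psi_{W/e}=0$ and $x^{E\setminus\set{e,f}}=0$ for all $f\neq e$, i.e. iff at least two further coordinates vanish, again three in total. Combining both cases over all choices of $e$ shows $\Sigma_W^\red$ is exactly the set of points with at least three vanishing coordinates, which is the union of all codimension-$3$ coordinate subspaces $V(x_e,x_f,x_g)$.

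The bookkeeping step I expect to be the main obstacle is confirming that the two case analyses ($x_e\neq 0$ versus $x_e=0$), carried out for a \emph{single} choice of $e$, patch together correctly to give the \emph{symmetric} answer: one must check that a point with exactly two vanishing coordinates, say $x_g=x_{g'}=0$ with all others nonzero, is genuinely \emph{not} in $\Sigma_W^\red$ — one needs some partial derivative that does not vanish there, and the natural candidate is $\partial_g\psi_W$ or $\partial_{g'}\psi_W$, which reduces to checking that the elementary symmetric polynomial $e_{n-2}$ in $n-1$ variables does not vanish when exactly one of its arguments is zero and the rest are nonzero (true: it then equals a nonzero product of the $n-2$ nonzero ones). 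Once that non-containment is verified, together with the inductive containment of all codimension-$3$ coordinate subspaces (each such subspace $V(x_e,x_f,x_g)$ is checked to lie in $\Sigma_W$ directly from the monomial structure of $\psi_W$ and its derivatives, since every monomial of $\psi_W$ and of each $\partial_h\psi_W$ has degree $n-1$ resp. $n-2$ in $n$ resp. $n-1$ variables and hence involves at least one of $x_e,x_f,x_g$), the equality $\Sigma_W^\red=\bigcup_{\abs{\set{e,f,g}}=3}V(x_e,x_f,x_g)$ follows, and by Theorem~\ref{13} the same holds with $\Sigma_W$ replaced by $\Delta_W$. This also re-confirms $\codim_{\KK^E}\Sigma_W=3$, consistent with Proposition~\ref{44}.
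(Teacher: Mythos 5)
Your argument is correct in substance, but it takes a genuinely different route from the paper. The paper never computes with the elementary symmetric polynomial: it runs the same scheme-theoretic strategy as Lemma~\ref{99}, observing that in a circuit every proper nonempty subset $H\subsetneq E$ is a handle with $\psi_{W\setminus H}=x^{E\setminus H}$, so for a minimal prime $\pp\supseteq J_W$ either Lemma~\ref{46}.(d) produces $\ideal{x_e,x_f,x_g}\subseteq\pp$ for distinct $e,f,g\in H$, or $x^{E\setminus H}\in\pp$ for all proper $H$ (forcing $\abs{E}=3$ and $\pp\supseteq\ideal{x}$); the codimension bound of Proposition~\ref{44} together with Lemma~\ref{27} then pins $\pp$ down to a height-$3$ coordinate ideal, and symmetry of $\psi_W$ shows all triples occur. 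Your route — normalize $\psi_W=\sum_e x^{E\setminus\set{e}}$ via Example~\ref{122}.(b), induct on $\abs{E}$ with the triangle (Example~\ref{107}) as base case, and split into $x_e\ne0$ (contract, Proposition~\ref{3}) versus $x_e=0$ (direct monomial check) — is more elementary and computational; notably it does not use the Kutz bound at all and instead re-derives the codimension, whereas the paper's proof stays inside the handle framework that drives the rest of Section~4 and identifies the minimal primes without choosing coordinates. Since the claim concerns only the reduced locus, your pointwise reasoning is legitimate, though it should be phrased over the algebraic closure (or with prime ideals) to actually identify $\Sigma_W^\red$.

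One step you state too quickly is the biconditional \enquote{a point lies in $\Sigma_W$ with $x_e\ne0$ precisely when it lies in $\Sigma_{W/e}$}: the forward direction is the crux of your inductive step and is not immediate from $\partial_f\psi_W=x^{E\setminus\set{e,f}}+x_e\,\partial_f\psi_{W/e}$, since vanishing of the left side does not by itself kill $\partial_f\psi_{W/e}$. It does hold, but needs the extra observation that a point of $\Sigma_W$ with $x_e\ne0$ has at least \emph{two} vanishing coordinates in $E\setminus\set{e}$: from $\partial_e\psi_W=\psi_{W/e}$ and $\psi_W\in J_W$ one gets $x^{E\setminus\set{e}}=0$ there, and the exactly-one-vanishing case is excluded because then $\psi_W$ equals the single surviving monomial and is nonzero. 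With two coordinates of $E\setminus\set{e}$ vanishing, every $x^{E\setminus\set{e,f}}$ vanishes, so the point lies in $\Sigma_{W/e}$ and induction applies; your \enquote{two vanishing coordinates} check via $e_{n-2}$ then becomes a consequence rather than a separate obstacle. (This same observation also disposes of the all-coordinates-nonzero case, which your direct sketch leaves open, since termwise nonvanishing of $\partial_g\psi_W$ does not preclude cancellation of the sum in bad characteristic.) These are one-line repairs using exactly the monomial structure you already exploit, so the proposal goes through.
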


\begin{proof}
We apply the strategy of the proof of Lemma~\ref{99}.
By Remark~\ref{20}.\eqref{20}, the rank hypothesis implies that $\Sigma_W\ne\emptyset$.
Let $\pp\in\Spec\KK[x]$ be any minimal prime ideal over $J_W$.
If $\psi_{W\setminus H}\not\in\pp$ for some $E\ne H\in\H_\M$, then Lemma~\ref{46}.\eqref{46d} yields $e,f,g\in H$ with $e\ne f\ne g\ne e$ such that $\ideal{x_e,x_f,x_g}\subseteq\pp$.
Otherwise, $\pp$ contains $x^{E\setminus H}=\psi_{W\setminus H}\in\pp$ for all $E\ne H\in\H_\M$ and hence all $x_e$ where $e\in E$.
(This can only occur if $\abs{E}=3$.)
By Lemma~\ref{27} and Proposition~\ref{44}, it follows that $\pp=\ideal{x_e,x_f,x_g}$.
By symmetry, all such triples $e,f,g\in E$ occur (see Example~\ref{122}).
\end{proof}


\begin{thm}[Cohen--Macaulayness of degeneracy schemes]\label{100}
Let $W\subseteq\KK^E$ be a realization of a connected matroid $\M$ of rank $\rk\M\ge2$.
Then $\Delta_W$ is Cohen--Macaulay (and hence pure-dimensional) and $\Sigma_W$ is equidimensional, both of codimension $3$ in $\KK^E$.
\end{thm}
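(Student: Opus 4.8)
The plan is to deduce the whole statement from the single claim that $\codim_{\KK^E}\Sigma_W=3$. Indeed, Proposition~\ref{44} always gives $\codim_{\KK^E}\Sigma_W=\codim_{\KK^E}\Delta_W\le3$, and in the case of equality it asserts exactly that $\Delta_W$ is Cohen--Macaulay (hence pure-dimensional) and that $\Sigma_W$ is equidimensional, both of codimension $3$. So it suffices to prove $\codim_{\KK^E}\Sigma_W=3$, and I would do this by induction on $\abs{E}$.

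For the base case, suppose $\M$ is a circuit, that is, $E\in\C_\M$; then $\M=\U_{\abs{E}-1,\abs{E}}$ with $\abs{E}=\rk\M+1\ge3$ (see Example~\ref{118}). By Lemma~\ref{105}, $\Sigma_W^\red$ is the union of all codimension-$3$ coordinate subspaces of $\KK^E$, so $\codim_{\KK^E}\Sigma_W=3$.

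For the inductive step, suppose $\M$ is connected of rank $\rk\M\ge2$ and is not a circuit. By Lemma~\ref{66} there is a circuit $C_1\in\C_\M$ with $\abs{C_1}\ge3$, and by Proposition~\ref{12} there is a handle decomposition $C_1=F_1\subsetneq\cdots\subsetneq F_k=E$ of $\M$ starting at $F_1=C_1$. Since $\M$ is not a circuit we have $k\ge2$, so $H:=F_k\setminus F_{k-1}$ is a proper handle in $\M=\M\vert_{F_k}$, the deletion $\M\setminus H=\M\vert_{F_{k-1}}$ is connected (Definition~\ref{196}), and therefore $H$ is non-disconnective. Moreover, by monotonicity of the rank function and because $C_1$ is a circuit, $\rk(\M\setminus H)=\rk_\M(F_{k-1})\ge\rk_\M(F_1)=\abs{C_1}-1\ge2$. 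As $\abs{E\setminus H}<\abs{E}$, the induction hypothesis applies to the connected matroid $\M\setminus H$ of rank at least $2$ and gives $\codim_{\KK^{E\setminus H}}\Sigma_{W\setminus H}=3$. Lemma~\ref{99} then yields that $\Sigma_W$ is equidimensional of codimension $\codim_{\KK^E}\Sigma_W=3$, which closes the induction; Proposition~\ref{44} now supplies the remaining assertions about $\Delta_W$ and $\Sigma_W$.

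The one point that requires care is that the matroid fed into the induction hypothesis must be connected \emph{and} of rank at least $2$: deleting an arbitrary non-disconnective handle can drop the rank to $1$ (when the handle happens to be a basis), and then $\Sigma_{W\setminus H}=\emptyset$ and Lemma~\ref{99} is vacuous. Starting the handle decomposition from a circuit of size at least $3$ — which exists precisely because $\rk\M\ge2$, by Lemma~\ref{66} — forces every intermediate restriction $\M\vert_{F_i}$ to have rank at least $2$, so in particular the last deletion does, and this is what makes the induction go through. Everything else is provided by the quoted results: the codimension bound and its Cohen--Macaulay consequence (Proposition~\ref{44}, resting on Kutz' theorem), the coincidence of the reduced loci of $\Sigma_W$ and $\Delta_W$ (Theorem~\ref{13}), the circuit base case (Lemma~\ref{105}), the inductive step for $\Sigma_W$ (Lemma~\ref{99}), and the existence of a suitable handle decomposition (Lemma~\ref{66} and Proposition~\ref{12}).
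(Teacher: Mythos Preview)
Your proof is correct and follows essentially the same approach as the paper: reduce to $\codim_{\KK^E}\Sigma_W=3$ via Proposition~\ref{44}, establish the base case for circuits using Lemma~\ref{105}, and climb a handle decomposition starting from a circuit of size at least $3$ (Lemma~\ref{66}, Proposition~\ref{12}) using Lemma~\ref{99}. The paper phrases the induction on the length $k$ of the fixed handle decomposition rather than on $\abs{E}$, but this is only an organizational difference; your care point about ensuring $\rk(\M\setminus H)\ge2$ is exactly what the choice of $C_1$ with $\abs{C_1}\ge3$ guarantees.
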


\begin{proof}
By Proposition~\ref{44}, it suffices to show that $\codim_{\KK^E}\Sigma_W=3$.
Lemma~\ref{66} yields a circuit $C\in\C_\M$ of size $\abs{C}\ge3$ and $\codim_{\KK^C}\Sigma_{W\vert C}=3$ by Lemma~\ref{105}.
Proposition~\ref{12} yields a handle decomposition of $\M$ of length $k$ with $F_1=C$.
By Lemma~\ref{99} and induction on $k$, then also $\codim_{\KK^E}\Sigma_{W}=3$.
\end{proof}


\begin{cor}[Types of generic points]\label{104}
Let $W\subseteq\KK^E$ be a realization of a connected matroid $\M$ of rank $\rk\M\ge2$, and let $H\in\H_\M$ be a non-disconnective handle such that $\rk(\M\setminus H)\ge2$.
Then all generic points of $\Sigma_W$ and $\Delta_W$ are of the types listed in Lemma~\ref{99} with respect to $H$.
\end{cor}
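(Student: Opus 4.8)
The plan is to obtain the statement as a direct consequence of Lemma~\ref{99}, whose hypotheses I first need to check for the given non-disconnective handle $H$, and then to transfer the conclusion from $\Sigma_W$ to $\Delta_W$ via Theorem~\ref{13}.

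First I would verify that $H$ is a \emph{proper} handle, so that Lemma~\ref{99} is applicable. Being non-disconnective, $H$ is maximal by Lemma~\ref{16}.\eqref{16a}, and $\M\setminus H$ is connected. The assumption $\rk(\M\setminus H)\ge 2$ forces $E\setminus H\ne\emptyset$, hence $H\subsetneq E$ and $H$ is proper.

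Next I would supply the codimension hypothesis of Lemma~\ref{99}. The deletion configuration $W\setminus H=W\vert_{E\setminus H}\subseteq\KK^{E\setminus H}$ realizes the connected matroid $\M\setminus H$, which by hypothesis has rank at least $2$. Hence Theorem~\ref{100} applies to $W\setminus H$ and shows that $\Sigma_{W\setminus H}$ is equidimensional of codimension $3$ in $\KK^{E\setminus H}$; in particular $\codim_{\KK^{E\setminus H}}\Sigma_{W\setminus H}=3$. Now Lemma~\ref{99} applies to $W$ and $H$ and gives that every generic point of $\Sigma_W$ is of one of the three types \eqref{99a}, \eqref{99b}, \eqref{99c} listed there, relative to $H$.

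Finally, to carry the conclusion over to $\Delta_W$ I would invoke Theorem~\ref{13}, which yields $\Min\Sigma_W=\Min\Delta_W$; thus the generic points of $\Delta_W$ are exactly those of $\Sigma_W$ and the same classification applies. There is no substantial obstacle here: the corollary merely assembles Lemma~\ref{99}, Theorem~\ref{100} applied to the deletion, and Theorem~\ref{13}. The only points that genuinely require attention are that $H$ remains proper and that $\M\setminus H$ still has rank $\ge 2$, both of which are guaranteed by the hypothesis $\rk(\M\setminus H)\ge 2$, so that Theorem~\ref{100} is indeed available for the deletion.
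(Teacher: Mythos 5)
Your proposal is correct and matches the paper's own argument: apply Theorem~\ref{100} to the connected matroid $\M\setminus H$ (of rank $\ge2$) with realization $W\setminus H$ to obtain $\codim_{\KK^{E\setminus H}}\Sigma_{W\setminus H}=3$, then invoke Lemma~\ref{99} for $\Sigma_W$ and transfer to $\Delta_W$ via $\Min\Sigma_W=\Min\Delta_W$ from Theorem~\ref{13}. The preliminary checks (properness of $H$, connectedness of $\M\setminus H$ from non-disconnectivity) are fine, if slightly more detailed than the paper's one-line proof.
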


\begin{proof}
Applying Theorem~\ref{100} to the matroid $\M\setminus H$ with realization $W\setminus H$, the claim follows from Lemma~\ref{99} and Theorem~\ref{13}.
\end{proof}


\begin{cor}[Generic points for $3$-connected matroids]\label{106}
Let $W\subseteq\KK^E$ be a realization of a $3$-connected matroid $\M$ with $\abs{E}>3$ if rank $\rk\M\ge2$.
Then all generic points of $\Sigma_W$ and $\Delta_W$ lie in $\TT^E$, that is,
\[
\Min\Sigma_W=\Min\Delta_W\subseteq\TT^E.
\]
\end{cor}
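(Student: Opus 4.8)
The plan is to prove the sharper statement that no prime in $\Min\Sigma_W=\Min\Delta_W$ — these two sets coincide by Theorem~\ref{13} — contains a coordinate function $x_e$. First I would dispatch the degenerate case: if $\rk\M\le1$ then $\Sigma_W=\emptyset=\Delta_W$ by Remark~\ref{20}.\eqref{20a}, and there is nothing to show; so from now on $\rk\M\ge2$, which by hypothesis forces $\abs E>3$.

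The engine of the argument is Proposition~\ref{53}: a $3$-connected matroid on more than three elements has only non-disconnective $1$-handles. Consequently, for each $e\in E$ the singleton $H:=\set{e}$ is a non-disconnective handle, so $\M\setminus e$ is connected; moreover $\M$ is connected with at least two elements, hence has no coloops, so $\rk(\M\setminus e)=\rk\M\ge2$. These are exactly the hypotheses of Corollary~\ref{104} with this choice of $H$, so every generic point of $\Sigma_W$ (equivalently of $\Delta_W$) is a prime of one of the three types listed in Lemma~\ref{99} relative to $H=\set{e}$.

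Finally I would observe that, since $\abs H=1$, types~\eqref{99a} and~\eqref{99b} of Lemma~\ref{99} cannot occur, as they require $H$ to contain three, respectively two, distinct elements. Hence every such prime is of type~\eqref{99c}, which in particular asserts that $x_e$ does not lie in it. Letting $e$ range over all of $E$ yields that no prime in $\Min\Sigma_W=\Min\Delta_W$ contains any variable, that is, $\Min\Sigma_W=\Min\Delta_W\subseteq\TT^E$. I do not expect a genuine obstacle here: the corollary is a bookkeeping consequence of Proposition~\ref{53} together with the classification of generic points in Lemma~\ref{99} and Corollary~\ref{104}, and the only point meriting a second glance is the verification that $\M\setminus e$ is connected of rank at least $2$, which is immediate from $3$-connectedness and $\abs E>3$.
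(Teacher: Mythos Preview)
Your proof is correct and follows essentially the same route as the paper: reduce to $\rk\M\ge2$ via Remark~\ref{20}.\eqref{20a}, invoke Proposition~\ref{53} to see that each singleton $\set{e}$ is a non-disconnective handle, verify $\rk(\M\setminus e)=\rk\M\ge2$, and then apply Corollary~\ref{104} to force every generic point into type~\eqref{99c} of Lemma~\ref{99}, hence into $D(x_e)$ for every $e$. The only cosmetic difference is that the paper cites Lemma~\ref{16}.\eqref{16c} for the rank equality $\rk(\M\setminus e)=\rk\M$, whereas you observe directly that a connected matroid with $\abs{E}\ge2$ has no coloops; both are valid.
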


\begin{proof}
The equality is due to Theorem~\ref{13}.
We may assume that $\Sigma_W\ne\emptyset$ and hence $\rk\M\ge2$ by Remark~\ref{20}.\eqref{20a}.
Let $\pp\in\Min\Sigma_W$ be a generic point of $\Sigma_W$.
For any $e\in E$, consider the $1$-handle $H:=\set{e}\in\H_\M$.
By Proposition~\ref{53} and Lemma~\ref{16}.\eqref{16c}, $H$ is non-disconnective with $\rk(\M\setminus H)=\rk\M\ge2$.
Corollary~\ref{104} forces $\pp$ to be of type \eqref{99c} in Lemma~\ref{99}.
It follows that $\pp\in\bigcap_{e\in E}D(x_e)=\TT^E$.
\end{proof}

\subsection{Reducedness of degeneracy schemes}\label{82}

In this subsection, we prove the reducedness statement in our main result as outlined in \S\ref{191}.


\begin{lem}[Generic reducedness for the prism]\label{19}
Let $W\subseteq\KK^E$ be any realization of the prism matroid (see Definition~\ref{200}).
Then $\Delta_W\cap\TT^E$ is an integral scheme of codimension $3$, defined by $3$ linear binomials, each supported in a corresponding handle.
If $\ch\KK\ne2$, then also $\Sigma_W\cap\TT^E=\Delta_W\cap\TT^E$.
\end{lem}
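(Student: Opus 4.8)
By Lemma~\ref{42}, the prism matroid has, up to scaling $E$, a unique realization $W$, namely the graph configuration of the $(2,2,2)$-theta graph, with the explicit basis $w^1=e_1+e_2$, $w^2=e_3+e_4$, $w^3=e_5+e_6$, $w^4=e_1+e_3+e_5$. The plan is to work directly with the matrix $Q_W$ computed in Example~\ref{215},
\[
Q_W=
\begin{pmatrix}
x_1+x_2 & 0 & 0 & x_1 \\
0 & x_3+x_4 & 0 & x_3 \\
0 & 0 & x_5+x_6 & x_5 \\
x_1 & x_3 & x_5 & x_1+x_3+x_5
\end{pmatrix},
\]
and simply write out the ten submaximal ($3\times3$) minors generating $M_W$. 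Because of the sparse block structure (three $2\times2$ diagonal blocks glued along the last row/column), most of these minors are products of $x_i+x_{i+1}$-type factors times a single variable, so on the torus $\TT^E$ — where every $x_e$ is invertible — they reduce to a small number of relations. I expect the upshot to be that $M_W\cdot\KK[\TT^E]$ is generated, after dividing out units, by three linear binomials, one of the shape $x_{2i-1}+x_{2i}$ twisted by the torus (equivalently, each supported on one handle $H_i=\{e_{2i-1},e_{2i}\}$), cutting out a codimension-$3$ linear (hence integral) subvariety of the torus.

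Concretely, I would first record the three ``outer'' minors obtained by deleting row $4$ and column $j$ for $j=1,2,3$ (and their transposes): these are, up to sign, $x_j\cdot\prod_{k\neq i(j)}(x_{2k-1}+x_{2k})$ where $i(j)$ indexes the handle containing $e_j$; on $\TT^E$ these force the two ``complementary'' binomials to vanish. Running this over $j=1,2,3$ shows all three binomials $x_{2i-1}+x_{2i}=0$ — more precisely the correct torus-twisted versions thereof, since the realization is only unique up to scaling $E$ and the statement is up to that scaling — lie in $M_W\cdot\KK[\TT^E]$. Then I would check the remaining minors (deleting row $i$, column $j$ with $i,j\leq 3$, $i\neq j$, and the diagonal ones $Q_W^{i,i}$ for $i\leq 3$) vanish modulo these three binomials on the torus, so that $M_W\cdot\KK[\TT^E]$ is exactly the ideal generated by the three binomials. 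Since three independent linear binomials cut out an integral linear subscheme of $\TT^E$ of codimension $3$, this gives the first assertion; it also matches the codimension-$3$ bound of Proposition~\ref{44}, confirming $\Delta_W$ is there Cohen--Macaulay and reduced.

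For the $\Sigma_W$ statement, by Theorem~\ref{13} we already have $\Sigma_W^\red=\Delta_W^\red$, so it suffices to see $\Sigma_W\cap\TT^E$ is reduced when $\ch\KK\neq 2$, and since it is equidimensional of codimension $3$ (Theorem~\ref{100}) and $\Delta_W\cap\TT^E$ is a reduced complete intersection, I would show $J_W\cdot\KK[\TT^E]=M_W\cdot\KK[\TT^E]$. Using the Euler identity (valid since $\ch\KK\nmid 4=\rk\M$ when $\ch\KK\neq 2$), $\psi_W$ is a redundant generator of $J_W$, so it is enough to see the partials $\partial_e\psi_W$ generate the same torus ideal as the three binomials; here $\psi_W$ is the Kirchhoff polynomial of Example~\ref{128}, and $\partial_e\psi_W$ is an explicit product which on the torus, after dividing by the unit monomial factor, is again a sum of the complementary binomials — a short direct computation. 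The one place where $\ch\KK=2$ genuinely intervenes is exactly as in Example~\ref{107}: the linear system relating $\{\partial_e\psi_W\}$ to the binomials degenerates, and $\Sigma_W$ acquires nilpotents. The main obstacle is purely bookkeeping — organizing the ten minors and the six partials so the cancellations on the torus are transparent — rather than anything conceptual; there is no induction and no matroid input beyond the uniqueness already supplied by Lemma~\ref{42}.
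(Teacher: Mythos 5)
Your overall strategy---reduce to the explicit realization of Lemma~\ref{42} (using Remark~\ref{161} for the scaling ambiguity) and compute with the matrix of Example~\ref{215} to show that $M_W$, and for $\ch\KK\ne2$ also $J_W$, localize on the torus to the ideal $\pp=\ideal{x_1+x_2,x_3+x_4,x_5+x_6}$---is the same as the paper's. But your key step in the hard direction contains a genuine gap: the outer minor $Q_W^{4,1}=\pm\,x_1(x_3+x_4)(x_5+x_6)$ puts only the \emph{product} of the two complementary binomials into $M_W\cdot\KK[\TT^E]$, not each binomial separately; a product lying in the ideal does not ``force the two complementary binomials to vanish.'' Running this over $j=1,2,3$ you only obtain the ideal generated by the pairwise products $ab,ac,bc$ (with $a,b,c$ the three binomials), whose zero locus in $\TT^E$ is the union of three codimension-$2$ planes---not the asserted integral codimension-$3$ scheme, and not the containment $\pp\,\KK[\TT^E]\subseteq M_W\,\KK[\TT^E]$ you need. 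The missing idea is a nontrivial linear combination of minors: the paper uses $-Q_W^{2,3}+Q_W^{2,4}-Q_W^{3,4}+Q_W^{4,4}=(x_1+x_2)\,x_4x_6$, which by symmetry gives $x_2x_4x_6\cdot\pp\subseteq M_W$ and hence the desired equality after inverting $x_2x_4x_6$ (in particular on $\TT^E$). Your opposite containment ($M_W\subseteq\pp$, since all $3\times3$ minors vanish modulo $\pp$) is fine and is exactly what the paper checks.

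The same gap recurs in the $\Sigma_W$ part. The partials are not products (e.g.\ $\partial_1\psi_W=x_2(x_3+x_4)(x_5+x_6)+x_3x_4(x_5+x_6)+x_5x_6(x_3+x_4)$), and ``dividing by a unit monomial'' does not exhibit a single binomial; again one needs an explicit combination, such as the paper's identity $\bigl(x_2(x_2\partial_2-1)+x_4x_6(\partial_3+\partial_5)+(x_4+x_6)(1-x_4\partial_4-x_6\partial_6)\bigr)\psi_W=2(x_1+x_2)x_4^2x_6^2$, whose factor $2$ is precisely where the hypothesis $\ch\KK\ne2$ enters (so your intuition about characteristic $2$ is right, but it is not substantiated). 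Without such identities, or an equivalent explicit computation, the proposal establishes neither $\pp\,\KK[\TT^E]\subseteq M_W\,\KK[\TT^E]$ nor $\pp\,\KK[\TT^E]\subseteq J_W\,\KK[\TT^E]$, which are the substance of the lemma.
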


\begin{proof}
By Remark~\ref{161}, we may assume that $W$ is the realization from Lemma~\ref{42}.
A corresponding matrix of $Q_W$ is given in Example~\ref{215}.
Reducing its entries modulo $\pp:=\ideal{x_1+x_2,x_3+x_4,x_5+x_6}$ makes all its $3\times 3$-minors $0$.
Therefore, $J_W\subseteq M_W\subseteq\pp$ by Lemma~\ref{50}.
Using the minors
\begin{align*}
Q_W^{2,3}&=(x_1+x_2)\cdot (-x_3x_5),\\
Q_W^{2,4}&=(x_1+x_2)\cdot (-x_3)\cdot (x_5+x_6),\\
Q_W^{3,4}&=(x_1+x_2)\cdot (x_3+x_4)\cdot x_5,\\
Q_W^{4,4}&=(x_1+x_2)\cdot (x_3+x_4)\cdot (x_5+x_6),
\end{align*}
one computes that
\[
-Q_W^{2,3}+Q_W^{2,4}-Q_W^{3,4}+Q_W^{4,4}=(x_1+x_2)\cdot x_4x_6.
\]
By symmetry, it follows that $x_2x_4x_6\cdot\pp\subseteq M_W$ and hence
\[
\Delta_W\cap D(x_2x_4x_6)=V(\pp)\cap D(x_2x_4x_6).
\]
Using $\psi_W$ from Example~\ref{128}, one computes that
\begin{gather*}
(x_2\cdot(x_2\partial_2-1)+x_4x_6\cdot(\partial_3+\partial_5)+(x_4+x_6)\cdot(1-x_4\partial_4-x_6\partial_6))\psi_W\\
=2\cdot(x_1+x_2)\cdot x_4^2x_6^2.
\end{gather*}
By symmetry, it follows that $2\cdot x_2^2x_4^2x_6^2\cdot\pp\subseteq J_W$ and hence
\[
\Sigma_W\cap D(x_2x_4x_6)=V(\pp)\cap D(x_2x_4x_6).
\]
if $\ch\KK\ne2$.
\end{proof}

More details on the prism matroid can be found in Example~\ref{45}.


\begin{lem}[Reduction and deletion of non-(co)loops]\label{31}
Let $e\in E$ be a non-(co)loop in a matroid $\M$.
For any $I\unlhd\KK[x]$ set 
\[
\bar I:=(I+\ideal{x_e})/\ideal{x_e}\unlhd\KK[x]/\ideal{x_e}=\KK[x_{E\setminus\set{e}}].
\]
Then $J_{W\setminus e}\subseteq\bar J_W$ and $M_{W\setminus e}=\bar M_W$ for any realization $W\subseteq\KK^E$ of $\M$.
\end{lem}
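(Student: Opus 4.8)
The plan is to read both assertions directly off the deletion--contraction formulas already established: Proposition~\ref{3} for the Jacobian ideal and Lemma~\ref{40} for the ideal of submaximal minors. Since $e$ is neither a loop nor a coloop in $\M$, we land each time in the last listed case of those results; note also that $\psi_{W/e}$ and the minors $Q_{W/e}^{i,j}$ already lie in $\KK[x_{E\setminus\set{e}}]$, because $W/e\subseteq\KK^{E\setminus\set{e}}$.

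For the Jacobian ideal I would start from $\psi_W=\psi_{W\setminus e}+x_e\cdot\psi_{W/e}$ (Proposition~\ref{3}). Differentiating gives $\partial_f\psi_W=\partial_f\psi_{W\setminus e}+x_e\cdot\partial_f\psi_{W/e}$ for every $f\in E\setminus\set{e}$, while $\partial_e\psi_W=\psi_{W/e}$. Reducing modulo $x_e$ therefore yields $\psi_W\equiv\psi_{W\setminus e}$ and $\partial_f\psi_W\equiv\partial_f\psi_{W\setminus e}$ for all $f\ne e$, so every generator of $J_{W\setminus e}$ is the image of a generator of $J_W$; hence $J_{W\setminus e}\subseteq\bar J_W$. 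One sees along the way that $\bar J_W=J_{W\setminus e}+\ideal{\psi_{W/e}}$, which is typically strictly larger, so equality is not to be expected in the Jacobian case.

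For the Fitting ideal I would pick a basis of $W/e$ and extend it to the adapted bases of $W$ and of $W\setminus e$ produced in the proof of Lemma~\ref{40}; this is legitimate because $e$ is not a coloop, so $\rk(\M\setminus e)=\rk\M=:r$. For those bases, Lemma~\ref{40} (together with Lemma~\ref{36} for the corner entry) shows that $Q_W^{i,j}=Q_{W\setminus e}^{i,j}$ whenever $i=r$ or $j=r$, and $Q_W^{i,j}=Q_{W\setminus e}^{i,j}+x_e\cdot Q_{W/e}^{i,j}$ otherwise; in every case $Q_W^{i,j}\equiv Q_{W\setminus e}^{i,j}\bmod\ideal{x_e}$. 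As the $Q_W^{i,j}$ generate $M_W$ and the $Q_{W\setminus e}^{i,j}$ generate $M_{W\setminus e}$ — the Fitting ideal being independent of the chosen basis (see Remark~\ref{151}) — reduction modulo $x_e$ gives $\bar M_W=M_{W\setminus e}$. The only real point of care, and the closest thing to an obstacle, is the bookkeeping: one must check the minor comparison for \emph{all} index pairs, in particular the mixed ones with exactly one of $i,j$ equal to $r$, and keep straight that the extra generator $\psi_{W/e}$ appearing in $\partial_e\psi_W$ is precisely what obstructs equality for $J$.
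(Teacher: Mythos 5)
Your proposal is correct and follows exactly the paper's route: the paper's proof of this lemma is simply the one-line observation that it follows from Proposition~\ref{3} and Lemma~\ref{40}, and you have filled in precisely those details (reduction of $\psi_W=\psi_{W\setminus e}+x_e\psi_{W/e}$ and of the adapted-basis minor formulas modulo $x_e$, with basis-independence of $M_W$ handling the choice of basis). Your side remarks that $\bar J_W=J_{W\setminus e}+\ideal{\psi_{W/e}}$ and that the corner minor is handled via Lemma~\ref{36} are accurate and consistent with the paper's argument.
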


\begin{proof}
This follows from Proposition~\ref{3} and Lemma~\ref{40}.
\end{proof}


\begin{lem}[Generic reducedness and deletion of non-(co)loops]\label{88}
Let $W\subseteq\KK^E$ be a realization of a matroid $\M$, and let $e\in E$ be a non-(co)loop.
Then $\Sigma_{W\setminus e}=\emptyset$ implies $\Sigma_W=\emptyset$.
Suppose that $\Min\Sigma_W\subseteq D(x_e)$ and that $\Sigma_W$ and $\Sigma_{W\setminus e}$ are equidimensional of the same codimension.
If $\Sigma_{W\setminus e}$ is generically reduced, then $\Sigma_W$ is generically reduced.
In this case, each $\pp\in\Min\Sigma_W$ defines a non-empty subset $\gamma(\pp)\subseteq\Min\Sigma_{W\setminus e}$ such that
\begin{gather}
\label{168}V(\pp)\cap V(x_e)=\bigcup_{\qq\in\gamma(\pp)}V(\qq),\\
\label{169}\pp\ne\pp'\implies\gamma(\pp)\cap\gamma(\pp')=\emptyset.
\end{gather}
In particular, $\abs{\Min\Sigma_W}\le\abs{\Min\Sigma_{W\setminus e}}$.
The same statements hold for $\Sigma$ replaced by $\Delta$.
\end{lem}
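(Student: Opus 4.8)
The plan is to work throughout with the ring $R:=\KK[x]$ and the hyperplane section by $x_e$, exploiting the reduction map $I\mapsto\bar I$ from Lemma~\ref{31}. First I would dispose of the emptiness statement: if $\Sigma_{W\setminus e}=\emptyset$ then $\bar J_W=\KK[x_{E\setminus\set e}]$ by Lemma~\ref{31}, so $J_W+\ideal{x_e}=R$, whence $V(J_W)\subseteq V(x_e)$ has empty intersection with $V(x_e)$ and is therefore empty; the same computation with $M_W$ handles the $\Delta$ case. For the main statement I would fix a generic point $\pp\in\Min\Sigma_W$; by hypothesis $x_e\notin\pp$, so $\pp\in D(x_e)$. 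The key observation is that since $\Sigma_W$ is equidimensional, say of codimension $c$, and $x_e\notin\pp$, every minimal prime $\qq$ over $\pp+\ideal{x_e}$ has $\height\qq=c+1$ (Krull, plus equidimensionality so there is no drop), and these $\qq$ all contain $\bar J_W$-lifts, hence lie in $\Min V(J_W+\ideal{x_e})\subseteq$ the set of primes of height $c+1$ containing $J_{W\setminus e}$ (via Lemma~\ref{31}, $J_{W\setminus e}\subseteq\bar J_W$). Since $\Sigma_{W\setminus e}$ is equidimensional of codimension $c$, a height-$(c+1)$ prime containing $J_{W\setminus e}$ is exactly a minimal prime of $J_{W\setminus e}$ that has been extended by $x_e$... wait—one must be careful: $J_{W\setminus e}\subseteq\bar J_W$ need not be equality, so a priori $V(\bar J_W)$ could be smaller than $\Sigma_{W\setminus e}$. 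The clean way around this is to define $\gamma(\pp)$ to be the set of minimal primes over $\pp+\ideal{x_e}$, viewed in $\KK[x_{E\setminus\set e}]$; these are height-$(c+1)$ in $R$ hence height-$c$ in $R/\ideal{x_e}$, they contain $J_{W\setminus e}$, and by equidimensionality of $\Sigma_{W\setminus e}$ they are therefore minimal over $J_{W\setminus e}$, i.e. $\gamma(\pp)\subseteq\Min\Sigma_{W\setminus e}$. Nonemptiness of $\gamma(\pp)$ is clear since $\pp+\ideal{x_e}\neq R$ (as $x_e\notin\pp$ and $\pp$ is prime). This gives \eqref{168} as the primary decomposition of the reduced scheme $V(\pp)\cap V(x_e)$ once we know $V(\pp)\cap V(x_e)$ is reduced, which is where generic reducedness of $\Sigma_{W\setminus e}$ enters.

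Next I would establish generic reducedness of $\Sigma_W$ at $\pp$ together with reducedness of the section. The idea: localize at a minimal prime $\qq\in\gamma(\pp)$. On one hand $(\bar R)_\qq=(R/\ideal{x_e})_\qq$ is a localization of $\KK[x_{E\setminus\set e}]/J_{W\setminus e}$ at a minimal prime (since $\qq\supseteq J_{W\setminus e}$ and $\qq$ is minimal over it by the height count), hence reduced — in fact a field — by the generic reducedness hypothesis on $\Sigma_{W\setminus e}$, provided $\bar J_W$ and $J_{W\setminus e}$ localize to the same ideal at $\qq$; this last point follows because both have the same height-$c$ minimal primes through $\qq$ and $J_{W\setminus e}\subseteq\bar J_W$, so after localizing at the minimal prime $\qq$ (Artinian local) the inclusion of the nilpotent-free... more carefully: $(R/J_{W\setminus e})_\qq$ is an Artinian local ring which is reduced, hence a field, so the surjection $(R/J_{W\setminus e})_\qq\onto(R/\bar J_W)_\qq=(\bar R)_\qq$ from a field is an isomorphism (the target is nonzero as $\qq\supseteq\bar J_W$), and $(\bar R)_\qq$ is a field. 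Then $\bar R=R/(J_W+\ideal{x_e})$ is reduced at each $\qq\in\gamma(\pp)$, and since the $\qq$ run over all minimal primes over $\pp+\ideal{x_e}$ this shows $x_e$ is a nonzerodivisor-like element on the relevant local ring: apply Lemma~\ref{94} to $S_\qq$ where $S:=(R/J_W)$ localized appropriately. Concretely, for $\qq\in\gamma(\pp)$ we have $\height_R\qq=c+1=\height\pp+1$ and $x_e\in\qq\setminus\pp$, so in $S_\qq=(R/J_W)_\qq$ the element $x_e$ is a parameter (it cuts the $(c+1)$-codimensional $\qq$ down from the $c$-codimensional $\pp$... one checks $\dim S_\qq=1$ using equidimensionality of $\Sigma_W$ and Lemma~\ref{109}). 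Since $S_\qq/x_eS_\qq=(\bar R)_\qq$ is a field, Lemma~\ref{94} gives that $S_\qq$ is regular, in particular a domain; its minimal prime is $\pp S_\qq$, so $S_{\pp}=(S_\qq)_{\pp S_\qq}$ is a localization of a domain, hence reduced. That is reducedness of $\Sigma_W$ at $\pp$. Running over all $\pp\in\Min\Sigma_W$ gives generic reducedness of $\Sigma_W$.

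Finally, \eqref{168} and \eqref{169}: regularity of $S_\qq$ with parameter $x_e$ forces $\qq S_\qq=\pp S_\qq+x_eS_\qq$, so $\qq$ is the unique minimal prime over $\pp+\ideal{x_e}$ lying inside $\qq$ — but that is automatic; the content of \eqref{169} is that distinct $\pp,\pp'\in\Min\Sigma_W$ cannot share a $\qq$, which follows because from $\qq\in\gamma(\pp)$ the regular local ring $S_\qq$ has a \emph{unique} minimal prime, namely $\pp S_\qq$, so $\pp$ is recovered from $\qq$ as the contraction of that minimal prime; hence $\gamma(\pp)\cap\gamma(\pp')\neq\emptyset$ would force $\pp=\pp'$. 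Equality in \eqref{168} (that \emph{every} $\qq$ over $\pp+\ideal{x_e}$ is in $\gamma(\pp)$, and $V(\pp)\cap V(x_e)$ is exactly their union with reduced structure) is then just the primary decomposition of the now-radical ideal $(\pp+\ideal{x_e})$, radical because $S_\qq/x_eS_\qq$ is reduced at each $\qq$. Summing over $\Min\Sigma_W$, the sets $\gamma(\pp)$ are disjoint nonempty subsets of $\Min\Sigma_{W\setminus e}$, giving $\abs{\Min\Sigma_W}\le\abs{\Min\Sigma_{W\setminus e}}$. The argument is entirely formal in $J$ versus $M$: it uses only Lemma~\ref{31} (which asserts $J_{W\setminus e}\subseteq\bar J_W$ and $M_{W\setminus e}=\bar M_W$), so replacing $J$ by $M$ and $\Sigma$ by $\Delta$ the same proof goes through verbatim — in the $\Delta$ case one even has the cleaner equality $M_{W\setminus e}=\bar M_W$, so $(\bar R)_\qq$ is literally $(R/M_{W\setminus e})_\qq$ without the preliminary field-isomorphism step. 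I expect the main obstacle to be the bookkeeping around the strict inclusion $J_{W\setminus e}\subseteq\bar J_W$: one must argue that it becomes an equality after localizing at the minimal primes of $\Sigma_{W\setminus e}$ of the correct height, which is exactly where equidimensionality of both $\Sigma_W$ and $\Sigma_{W\setminus e}$ and the height hypothesis are used, and where Lemma~\ref{94} converts "reduced hyperplane section at a parameter" into "regular, hence reduced" upstairs.
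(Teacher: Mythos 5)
Your proposal follows essentially the same route as the paper's proof: define $\gamma(\pp)$ as the set of minimal primes over $\pp+\ideal{x_e}$, use Lemma~\ref{31} together with the height bookkeeping of Lemmas~\ref{109} and~\ref{27} and the two equidimensionality hypotheses to see that these primes land in $\Min\Sigma_{W\setminus e}$, upgrade the surjection from the field $(\KK[x_{E\setminus\set{e}}]/J_{W\setminus e})_{\bar\qq}$ onto $(\KK[x]/(J_W+\ideal{x_e}))_{\qq}$ to an isomorphism, apply Lemma~\ref{94} with the parameter $x_e$ to conclude that $(\KK[x]/J_W)_\qq$ is a domain, and read off reducedness at $\pp$ and the disjointness \eqref{169} from the uniqueness of the minimal prime of that domain; the transfer from $J,\Sigma$ to $M,\Delta$ via Lemma~\ref{31} is also exactly as in the paper.

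Two justifications as written are faulty, though easily repaired, and both repairs need the same missing ingredient: homogeneity. First, ``$\pp+\ideal{x_e}\ne R$ since $x_e\notin\pp$ and $\pp$ is prime'' is a non sequitur (compare $\ideal{x-1}+\ideal{x}$); what actually saves you is that $\pp$ is minimal over the graded ideal $J_W$, hence graded, hence contained in $\ideal{x}$, so $\pp+\ideal{x_e}\subseteq\ideal{x}\ne\KK[x]$. The same point fixes the garbled emptiness argument: the unexplained inclusion $V(J_W)\subseteq V(x_e)$ is false in general, but $J_W+\ideal{x_e}=\KK[x]$ forces $J_W=\KK[x]$ because $J_W$ is graded. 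Finally, your closing claim that $\pp+\ideal{x_e}$ is radical is not established — reducedness of the localizations at the minimal primes $\qq$ does not exclude embedded primes of $\KK[x]/(\pp+\ideal{x_e})$ — but it is also not needed: \eqref{168} is the set-theoretic identity $V(\pp)\cap V(x_e)=V(\pp+\ideal{x_e})=\bigcup_{\qq\in\gamma(\pp)}V(\qq)$, which holds by the very definition of $\gamma(\pp)$ independently of any reducedness.
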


\begin{proof}
The subscheme $\Sigma_W\cap V(x_e)\subseteq\KK^{E\setminus\set{e}}$ is defined by the ideal $\bar J_W$ (see Lemma~\ref{31}).
By Lemma~\ref{31} and since $J_W$ is graded,
\begin{align*}
\Sigma_{W\setminus e}=\emptyset
&\iff J_{W\setminus e}=\KK[x_{E\setminus\set{e}}]
\implies\bar J_W=\KK[x]/\ideal{x_e}\\
&\iff J_W+\ideal{x_e}=\KK[x]
\iff J_W=\KK[x]
\iff\Sigma_W=\emptyset
\end{align*}
which is the first claim.

Let $\pp\in\Min\Sigma_W$ be a generic point of $\Sigma_W$.
Considered as an element of $\Spec\KK[x]$ it is minimal over $J_W$.
Since $J_W$ and hence $\pp$ is graded, $\pp+\ideal{x_e}\ne\KK[x]$.
Let $\qq\in\Spec\KK[x]$ be minimal over $\pp+\ideal{x_e}$.
By Lemma~\ref{31},
\begin{equation}\label{170}
J_{W\setminus e}\subseteq\bar J_W\subseteq\bar\qq.
\end{equation}
Since $x_e\not\in\pp$ by hypothesis, Lemma~\ref{109} shows that
\begin{align*}
\height\qq&=\height\pp+1,\\
\nonumber\height\bar\qq&=\height\qq-\height\ideal{x_e}=\height\pp.
\end{align*}
By the dimension hypothesis, Lemma~\ref{27} and \eqref{170}, it follows that $\bar\qq$ is minimal over both $J_{W\setminus e}$ and $\bar J_W$.
The former means that $\bar\qq\in\Min\Sigma_{W\setminus e}$.
The set $\gamma(\pp)$ of all such $\bar\qq$ is non-empty and satisfies condition~\eqref{168}.

Denote by $t\in\KK[\Sigma_W]$ the image of $x_e$.
Then $\qq\not\in\Min\KK[\Sigma_W]$ by hypothesis and $\qq$ is minimal over $t$ since $\bar\qq$ is minimal over $\bar J_W$.
This makes $t$ is a parameter of the localization 
\[
R:=\KK[\Sigma_W]_\qq.
\]
The inclusion \eqref{170} gives rise to a surjection of local rings
\begin{equation}\label{171}
\KK[\Sigma_{W\setminus e}]_{\bar\qq}\onto\KK[\Sigma_W\cap V(x_e)]_{\bar\qq}=R/tR.
\end{equation}
Suppose now that $\Sigma_{W\setminus e}$ is generically reduced.
Then $\KK[\Sigma_{W\setminus e}]_{\bar\qq}$ is a field which makes \eqref{171} an isomorphism.
By Lemma~\ref{94}, $R$ is then an integral domain with unique minimal prime ideal $\pp_\qq$.
Thus, $\KK[\Sigma_W]_\pp=R_{\pp_\qq}$ is reduced and $\pp$ is uniquely determined by $\bar\qq$.
This uniqueness is condition~\eqref{169}.
The particular claim follows immediately.

The preceding arguments remain valid if $\Sigma$ and $J$ are replaced by $\Delta$ and $M$, respectively: 
Lemma \ref{31} applies in both cases.
\end{proof}


\begin{lem}[Initial forms and contraction of non-(co)loops]\label{32}
Let $W\subseteq\KK^E$ be a realization of a matroid $\M$.
Suppose $E=F\sqcup G$ is partitioned in such a way that $\M/G$ is
obtained from $\M$ by successively contracting non-(co)loops.
For any ideal $J\unlhd\KK[x]_{x^G}=\KK[x_F,x_G^{\pm1}]$, denote by $J^{\inf}$ the ideal generated by the lowest $x_F$-degree parts of the elements of $J$.
Then $J_{W/G}[x_G^{\pm1}]\subseteq(J_W^{\inf})_{x^G}$ and $M_{W/G}[x_G^{\pm1}]\subseteq(M_W^{\inf})_{x^G}$.
\end{lem}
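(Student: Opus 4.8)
The plan is to reduce the statement to an iterated version of the one-step case, where $G$ is replaced by a single non-(co)loop element. Write $G=\{g_1,\dots,g_m\}$ and set $G_i=\{g_1,\dots,g_i\}$, so that $\M/G_i$ is obtained from $\M/G_{i-1}$ by contracting $g_i$, which is a non-(co)loop of $\M/G_{i-1}$ by hypothesis. Each localization step commutes with taking initial forms, since inverting the $x_{g_j}$ commutes with the lowest-$x_F$-degree operation once the variables $x_{g_j}$ are treated as ``degree-zero'' coefficients; and $(J^{\inf})^{\inf}$-type compositions behave well under further refinement of the degree filtration. So it suffices to prove: if $g\in E$ is a non-(co)loop and $F'=E\setminus\{g\}$, then with $J\unlhd\KK[x]_{x_g}=\KK[x_{F'},x_g^{\pm1}]$ and $J^{\inf}$ the ideal of lowest $x_{F'}$-degree parts (here $x_{F'}$-degree means degree in all variables except $x_g$), one has $J_{W/g}[x_g^{\pm1}]\subseteq (J_W^{\inf})_{x_g}$ and likewise for $M$. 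The passage from this to the general claim is a routine induction on $m$, using that the composite filtration by $x_F$-degree refines each intermediate filtration by $x_{F_i}$-degree, so an initial form with respect to the coarser data is a limit of initial forms with respect to the finer data; I would spell this out only as much as needed.

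**The one-step case.** For the one-element case I would invoke the deletion--contraction formulas already established: Proposition~\ref{3} gives $\psi_W=\psi_{W\setminus g}+x_g\cdot\psi_{W/g}$ with $\psi_{W/g}\in\KK[x_{F'}]$, and Lemma~\ref{40} gives for each submaximal minor $Q_W^{i,j}=Q_{W\setminus g}^{i,j}+x_g\cdot Q_{W/g}^{i,j}$ (in the case $i\ne r\ne j$; for $i=r$ or $j=r$ the minor equals $Q_{W\setminus g}^{i,j}$, which is independent of $x_g$, and for $i=r=j$ it equals $\psi_{W/g}$). After inverting $x_g$, I would argue that the lowest-$x_{F'}$-degree part of $\psi_W$ is $x_g\cdot\psi_{W/g}$ up to a unit $x_g$: indeed, by Remark~\ref{6} and Lemma~\ref{16}.(c)-type degree bookkeeping, $\deg\psi_{W/g}=\rk(\M/g)<\rk(\M\setminus g)=\deg\psi_{W\setminus g}$, but the correct comparison is in $x_{F'}$-degree only, and $\psi_{W/g}$ and $\psi_{W\setminus g}$ are both polynomials purely in $x_{F'}$; one checks that the $x_{F'}$-minimal-degree part of $\psi_{W\setminus g}+x_g\psi_{W/g}$ picks up the $x_g\psi_{W/g}$ term because $\psi_{W/g}$ has strictly smaller degree. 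Similarly each partial derivative $\partial_e\psi_W$ for $e\in F'$ contributes $x_g\cdot\partial_e\psi_{W/g}$ as its $x_{F'}$-lowest part, and $\partial_g\psi_W=\psi_{W/g}$ is itself purely of low $x_{F'}$-degree. Dividing by the unit $x_g$, every generator of $J_{W/g}$, resp.\ $M_{W/g}$, appears as an initial form of an element of $J_W$, resp.\ $M_W$ — this is exactly the asserted inclusion. For $M_W$ the argument is the same, run over all $(i,j)$.

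**The main obstacle.** The genuinely delicate point is the degree comparison: I must be sure that the $x_g$-coefficient of the lowest $x_{F'}$-degree part of $\psi_W$ is really $\psi_{W/g}$ and not cancelled against a same-degree piece of $\psi_{W\setminus g}$. This is where I would use that $\psi_{W/g}$ is homogeneous of degree $\rk(\M/g)$ in $x_{F'}$, that $\psi_{W\setminus g}$ is homogeneous of the strictly larger degree $\rk(\M\setminus g)$ in $x_{F'}$ (since $g$ is not a coloop, contraction drops the rank; since $g$ is not a loop, $\M\setminus g$ has the same rank as $\M$), so the two summands live in different $x_{F'}$-graded pieces and no cancellation is possible. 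The analogous homogeneity for the minors $Q_W^{i,j}$ follows from Lemma~\ref{40} together with Lemma~\ref{36}. Once this bookkeeping is in hand, the inclusions are immediate, and the only remaining chore is the induction over the elements of $G$, organizing the nested filtrations so that ``initial form of an initial form is an initial form for the total order'' — a standard fact about filtered rings that I would state and use without belaboring.
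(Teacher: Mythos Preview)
Your overall plan---iterate Proposition~\ref{3} and Lemma~\ref{40} to relate $\psi_W$, its partials, and the $Q_W^{i,j}$ to their analogues for $W/G$---is the same as the paper's. Your one-step case is also correct: for a single non-(co)loop $g$, the decomposition $\psi_W=\psi_{W\setminus g}+x_g\psi_{W/g}$ with $\deg_{x_{F'}}\psi_{W\setminus g}=\rk\M>\rk\M-1=\deg_{x_{F'}}(x_g\psi_{W/g})$ does identify the lowest $x_{F'}$-degree part as $x_g\psi_{W/g}$, and likewise for partials and minors.

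The gap is in the induction. The ``standard fact about filtered rings'' you invoke---that an iterated initial form with respect to the successive gradings $\deg_{x_{F_1}},\deg_{x_{F_2}},\dots$ equals the initial form for the final grading $\deg_{x_F}$---is \emph{false} in general. For instance, with $F=\{a,b\}$, $G=\{g_1,g_2\}$ and $f=a+b\,x_{g_2}$, the lowest $x_{E\setminus\{g_1\}}$-degree part is $a$, but the lowest $x_F$-degree part is $f$ itself. Iterated initial forms compute an initial form for a \emph{lexicographic} refinement, not for the sum grading $\deg_{x_F}$ that the lemma requires.

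What makes the argument go through here is not a general filtration fact but the specific structure of $\psi_W$ and $Q_W^{i,j}$: they are \emph{homogeneous} linear combinations of \emph{square-free} monomials (Definition~\ref{48} and Lemma~\ref{40}). Hence every monomial $x^B$ satisfies $\deg_{x_F}x^B+\deg_{x_G}x^B=\rk\M$ (resp.\ $\rk\M-1$), and square-freeness bounds $\deg_{x_G}x^B\le|G|$, with equality exactly when $x^G\mid x^B$. So the terms divisible by $x^G$ are precisely the lowest $x_F$-degree terms, and these add up to $x^G\psi_{W/G}$ (resp.\ $x^G\partial_f\psi_{W/G}$, $x^G Q_{W/G}^{i,j}$) by the iterated deletion--contraction formulas. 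This is exactly how the paper proceeds: it writes down the full formula $\psi_W=x^G\psi_{W/G}+p$ with no term of $p$ divisible by $x^G$, and then invokes square-freeness and homogeneity once. Your induction can be repaired by inserting this observation at each step (to control the $x_F$-degree of the discarded ``higher'' pieces), but at that point the induction is no longer buying you anything over the direct argument.
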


\begin{proof}
We iterate Proposition~\ref{3} and Lemma~\ref{40}, respectively, to pass from $W$ to $W/G$ by successively contracting non-(co)loops $e\in G$.
This yields a basis of $W$ extending a basis $w^1,\dots,w^s$ of $W/G$ such that
\begin{align}\label{74}
\psi_W&=x^G\cdot\psi_{W/G}+p,\\
\partial_f\psi_W&=x^G\cdot\partial_f\psi_{W/G}+\partial_fp,\nonumber\\
Q_W^{i,j}&=x^G\cdot Q_{W/G}^{i,j}+q_{i,j},\nonumber
\end{align}
for all $f\in F$ and $i,j\in\set{1,\dots,s}$, where $p,q_{i,j}\in\KK[x]$ are polynomials with no term divisible by $x^G$.
Since $\psi_W$ and $Q_W^{i,j}$ are homogeneous linear combinations of square-free monomials (see Definition~\ref{48} and Lemma~\ref{40}), $x^G\cdot\psi_{W/G}$, $x^G\cdot\partial_f\psi_{W/G}$ and $x^G\cdot Q_{W/G}^{i,j}$ are the respective lowest $x_F$-degree parts in \eqref{74}.
The claimed inclusions follow.
\end{proof}


\begin{lem}[Generic reducedness and contraction of non-(co)loops]\label{87}
Let $W\subseteq\KK^E$ be a realization of a matroid $\M$.
Suppose $E=F\sqcup G$ is partitioned in such a way that $\M/G$ is obtained from $\M$ by successively contracting non-(co)loops.
Then $\Sigma_{W/G}=\emptyset$ implies $\Sigma_W\cap D(x^G)\cap V(x_F)=\emptyset$.
Suppose that $\Sigma_W\cap D(x^G)$ and $\Sigma_{W/G}$ are equidimensional of the same codimension.
If $\Sigma_{W/G}$ is generically reduced, then $\Sigma_W\cap D(x^G)$ is generically reduced along $V(x_F)$.
The same statements hold for $\Sigma$ replaced by $\Delta$.
\end{lem}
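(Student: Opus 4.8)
The plan is to apply the deformation-to-the-normal-cone machinery of Lemma~\ref{95} to the ideal $I:=\ideal{x_F}$ in $R:=\KK[x]_{x^G}$ modulo the defining ideal, using Lemma~\ref{32} to identify the associated graded ring with (a localization of) the coordinate ring of $\Delta_{W/G}$ (resp.\ $\Sigma_{W/G}$). First I would restrict to $\Delta$; the argument for $\Sigma$ is verbatim after replacing $M$ by $J$, since Lemma~\ref{32} covers both. Set $R:=\KK[\Delta_W\cap D(x^G)]=(\KK[x]/M_W)_{x^G}$ and $I:=\ideal{x_F}\cdot R$. The emptiness statement is immediate: if $\Sigma_{W/G}=\emptyset$ then $M_{W/G}=\KK[x_G^{\pm1}]$, so by Lemma~\ref{32} the initial ideal $M_W^{\inf}$ contains a unit after localizing, forcing $M_W$ itself to contain an element with unit lowest-degree part, whence $V(M_W)\cap D(x^G)\cap V(x_F)=\emptyset$ by the graded-in-$x_F$ structure.

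For the main assertion, the key point is that $\gr_I R$ surjects onto $\KK[\Delta_{W/G}]_{x^G}$, or rather that the latter surjects onto $\gr_I R$: indeed $\gr_{\ideal{x_F}}\KK[x]_{x^G}$ is a polynomial ring in the $x_F$ over $\KK[x_G^{\pm1}]$, and Lemma~\ref{32} gives $M_{W/G}[x_G^{\pm1}]\subseteq (M_W^{\inf})_{x^G}$, so $\gr_I R$ is a quotient of $\KK[\Delta_{W/G}]_{x^G}\otimes(\text{lower terms})$; under the equal-codimension hypothesis this quotient is by nilpotents only at the relevant minimal primes. More precisely I would argue as in Lemma~\ref{88}: let $\pp\in\Min(\Delta_W\cap D(x^G))$ with $\pp\supseteq I$, i.e.\ $\pp$ a generic point along $V(x_F)$. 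Choose $\qq$ minimal over $\pp+I$ in the Rees algebra side; the equidimensionality hypothesis together with Lemma~\ref{27} pins down $\height\qq$ and shows the corresponding graded prime is minimal over $M_{W/G}[x_G^{\pm1}]$, hence lies in $\Min\Delta_{W/G}$. Generic reducedness of $\Delta_{W/G}$ then makes the local ring of $\gr_I R$ there a field, and Lemma~\ref{94} applied inside $\Rees_I R$ (exactly the mechanism of Lemma~\ref{95}.\eqref{95c}) yields reducedness of $R_\pp$.

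To invoke Lemma~\ref{95}.\eqref{95c} cleanly I need $S:=\Rees_I R$ to be equidimensional. This is where the equidimensionality hypotheses on $\Delta_W\cap D(x^G)$ and $\Delta_{W/G}$ enter: by Lemma~\ref{117} a localization of an equidimensional finitely generated $\KK$-algebra is again equidimensional, so $R$ is equidimensional, and Lemma~\ref{95}.\eqref{95a} then gives $S$ equidimensional of dimension $\dim R+1$. The equal-codimension hypothesis guarantees that $\gr_I R=S/tS$ has the same dimension as $\KK[\Delta_{W/G}]_{x^G}$, so the surjection from the latter is an isomorphism at minimal primes lying over $V(x_F)$, which is exactly what is needed for the generic-reducedness transfer. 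The hard part will be matching up the local rings precisely — verifying that the minimal prime $\bar\qq$ of $\gr_I R$ produced by the deformation actually corresponds to a minimal prime of $\Delta_{W/G}$ rather than an embedded or lower-dimensional one — and this is forced exactly by the codimension bookkeeping via Lemma~\ref{27}, just as the analogous step in the proof of Lemma~\ref{88}. The passage from $\Delta$ to $\Sigma$ requires no new idea since Lemma~\ref{32} treats $J_W$ on the same footing.
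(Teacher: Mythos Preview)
Your approach is essentially the paper's: set $R=\KK[\Delta_W\cap D(x^G)]$, $I=\ideal{x_F}R$, identify $\gr_I R$ as a quotient of $\KK[\Delta_{W/G}\times\TT^G]$ via Lemma~\ref{32}, match minimal primes using the equal-codimension hypothesis together with Lemmas~\ref{117} and \ref{95}.\eqref{95a}--\eqref{95b}, and then invoke Lemma~\ref{95}.\eqref{95c}. Two points to clean up: first, ``along $V(x_F)$'' in Definition~\ref{131} means $V(\pp)\cap V(x_F)\ne\emptyset$, not $\pp\supseteq I$, so your description of which generic points are at stake is too restrictive (Lemma~\ref{95}.\eqref{95c} already handles the correct condition, so just drop the hand-rolled $\qq$-argument and cite it directly); second, $M_{W/G}$ lives in $\KK[x_F]$, not $\KK[x_G^{\pm1}]$, and the ring you want on the contraction side is $\KK[\Delta_{W/G}\times\TT^G]=(\KK[x_F]/M_{W/G})[x_G^{\pm1}]$ rather than a localization of $\KK[\Delta_{W/G}]$ at $x^G$.
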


\begin{proof}
Consider the ideal
\begin{align*}
I:=\ideal{x_F}&\unlhd\KK[\Sigma_W\cap D(x^G)]=:R\\
&=\KK[\Sigma_W]_{x^G}=(\KK[x]/J_W)_{x^G}=\KK[x_F,x_G^{\pm1}]/(J_W)_{x^G},
\end{align*}
$R$ being equidimensional by hypothesis.
With notation from Lemma~\ref{32}
\begin{align*}
\bar R=\gr_I R&=\gr_I((\KK[x]/J_W)_{x^G})
\cong(\gr_{\ideal{x_F}}(\KK[x]/J_W))_{x^G}\\
&\cong(\KK[x]/J_W^{\inf})_{x^G}
=\KK[x_F,x_G^{\pm1}]/(J_W^{\inf})_{x^G}.
\end{align*}
Lemma~\ref{32} then yields the first claim:
\begin{align*}
\Sigma_{W/G}=\emptyset&
\iff J_{W/G}=\KK[x_F]
\iff J_{W/G}[x_G^{\pm1}]=\KK[x_F,x_G^{\pm1}]\\&
\implies(J_W^{\inf})_{x^G}=\KK[x_F,x_G^{\pm1}]
\iff\bar R=0\iff I=R\\&
\iff\Sigma_W\cap D(x^G)\cap V(x_F)=\emptyset.
\end{align*}
The latter equality makes the second claim vacuous.

We may thus assume that $I\ne R$.
Lemma~\ref{32} yields a surjection
\begin{align*}
\pi\colon\KK[\Sigma_{W/G}\times\TT^G]&=(\KK[x_F]/J_{W/G})[x_G^{\pm1}]\\
&=\KK[x_F,x_G^{\pm1}]/(J_{W/G}[x_G^{\pm1}])\onto\bar R.
\end{align*}
By Lemmas~\ref{117} and \ref{95} and the dimension hypothesis, source and target are equidimensional of the same dimension and hence $\pi^{-1}$ induces
\[
\Min\Spec\bar R\subseteq\Min(\Sigma_{W/G}\times\TT^G).
\]
Suppose now that $\Sigma_{W/G}$ and hence $\Sigma_{W/G}\times\TT^G$ is generically reduced.
For any $\pp\in\Min\Spec\bar R$, this makes $\KK[\Sigma_{W/G}\times\TT^G]_\pp$ a field and due to 
\[
\pi_\pp\colon\KK[\Sigma_{W/G}\times\TT^G]_\pp\onto\bar R_\pp
\]
also $\bar R_\pp$ is a field.
It follows that $\bar R$ is generically reduced.
By Lemma~\ref{95}, $R$ is then generically reduced along $V(I)$.
This means that $\Sigma_W\cap D(x^G)$ is generically reduced along $V(x_F)$.

The preceding arguments remain valid if $\Sigma$ and $J$ are replaced by $\Delta$ and $M$, respectively: 
Lemma \ref{32} applies in both cases.
\end{proof}


\begin{lem}[Generic reducedness for circuits]\label{97}
Let $W\subseteq\KK^E$ be a realization of a matroid $\M$ on $E\in\C_\M$ of rank $\rk\M=\abs{E}-1\ge2$.
Then $\Delta_W$ is generically reduced.
If $\ch\KK\ne2$, then also $\Sigma_W$ is generically reduced.
\end{lem}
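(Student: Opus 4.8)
The plan is to induct on $n:=\abs{E}$, using Example~\ref{107} for the base case and a single handle contraction together with Lemma~\ref{87} for the inductive step. Since $E\in\C_\M$, we have $\M=\U_{n-1,n}$ (see Example~\ref{118}), which is connected of rank $n-1\ge2$; hence Theorem~\ref{100} applies, and Lemma~\ref{105} together with Theorem~\ref{13} identifies the generic points of both $\Sigma_W$ and $\Delta_W$ as the primes $\ideal{x_a,x_b,x_c}$ for the triples $\set{a,b,c}\subseteq E$, so it suffices to prove reducedness at each of these. The base case $n=3$ is exactly Example~\ref{107}: there $\M$ is a triangle, $\Delta_W$ is a reduced point, and $\Sigma_W$ is a reduced point provided $\ch\KK\ne2$; in particular both are generically reduced.

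For the inductive step, assume $n\ge4$ and that the assertion holds for circuits on fewer than $n$ elements. Fix $g\in E$ and set $G:=\set{g}$, $F:=E\setminus\set{g}$. As $n\ge3$, the element $g$ is neither a loop nor a coloop in $\M=\U_{n-1,n}$, so $\M/G=\M/g=\U_{n-2,n-1}$ is obtained from $\M$ by contracting one non-(co)loop; it is a circuit of rank $n-2\ge2$ on fewer elements. By the inductive hypothesis (with the case $n=4$, where $\M/g$ is a triangle, covered by Example~\ref{107}), $\Delta_{W/G}$ is generically reduced, and so is $\Sigma_{W/G}$ when $\ch\KK\ne2$. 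Both $\M$ and $\M/G$ being connected of rank $\ge2$, Theorem~\ref{100} gives that $\Delta_W$ and $\Delta_{W/G}$, and likewise $\Sigma_W$ and $\Sigma_{W/G}$, have codimension $3$ in their respective ambient affine spaces; since $n-1\ge3$ there is a triple inside $F$, so $\Delta_W\cap D(x^G)=\Delta_W\cap D(x_g)\ne\emptyset$, and Lemma~\ref{117} makes this localization equidimensional of codimension $3$ as well (and similarly for $\Sigma_W$). Thus the hypotheses of Lemma~\ref{87} are met, and it yields that $\Delta_W\cap D(x_g)$ is generically reduced along $V(x_F)$, and that $\Sigma_W\cap D(x_g)$ is generically reduced along $V(x_F)$ when $\ch\KK\ne2$.

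It remains to run $g$ over $E$. The generic points of $\Delta_W$ lying in $D(x_g)$ are precisely the primes $\ideal{x_a,x_b,x_c}$ with $g\notin\set{a,b,c}$; for such a prime $\set{a,b,c}\subseteq F$, so $V(x_a,x_b,x_c)\supseteq V(x_F)$, i.e., $\ideal{x_a,x_b,x_c}$ specializes to a point of $V(x_F)$. Hence, by the previous paragraph, $\Delta_W\cap D(x_g)$ and therefore $\Delta_W$ is reduced at $\ideal{x_a,x_b,x_c}$. Given an arbitrary triple $\set{a,b,c}\subseteq E$, the inequality $n\ge4$ permits the choice of some $g\in E\setminus\set{a,b,c}$, whence $\Delta_W$ is reduced at $\ideal{x_a,x_b,x_c}$. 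Since these primes exhaust $\Min\Delta_W$, the scheme $\Delta_W$ is generically reduced; replacing $\Delta$ by $\Sigma$ throughout gives the corresponding statement for $\Sigma_W$ when $\ch\KK\ne2$.

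All the genuine content is packaged in the cited results (the deformation-to-the-normal-cone argument of Lemma~\ref{87} built on Lemma~\ref{95}, the codimension bound of Theorem~\ref{100}, and the triangle computation of Example~\ref{107}); the only things to verify are that contracting a single element keeps $\M$ a circuit of rank $\ge2$ — so that the induction closes and Theorem~\ref{100} keeps applying — that the torus-localized schemes $\Delta_W\cap D(x_g)$ and $\Sigma_W\cap D(x_g)$ satisfy the equidimensionality hypothesis of Lemma~\ref{87}, and that as $g$ varies every generic prime $\ideal{x_a,x_b,x_c}$ becomes \enquote{along $V(x_F)$} for at least one choice of $g$. The last point, which is the main thing to get right, is exactly where the hypothesis $\abs{E}\ge4$ enters the inductive step.
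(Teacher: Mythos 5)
Your proposal is correct and follows essentially the same route as the paper's proof: induction on $\abs{E}$ with Example~\ref{107} as the base case, identification of the generic points via Lemma~\ref{105}, and the inductive step via contraction of a single non-(co)loop using Lemmas~\ref{117} and \ref{87} together with the codimension statement of Theorem~\ref{100}. The only difference is cosmetic — the paper fixes a generic point $\ideal{x_e,x_f,x_g}$ first and then contracts an element $d\notin\set{e,f,g}$, whereas you contract an arbitrary $g$ and let it vary over $E$ to cover all generic points — which amounts to the same argument.
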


\begin{proof}
We proceed by induction on $\abs{E}$.
The case $\abs{E}=3$ is covered by Example~\ref{107}; here we use $\ch\KK\ne2$.

Suppose now that $\abs{E}>3$.
Let $\pp\in\Min\Sigma_W$ be a generic point of $\Sigma_W$.
By Lemma~\ref{105}, $\pp=\ideal{x_e,x_f,x_g}$ for some $e,f,g\in H$ with $e\ne f\ne g\ne e$.
Pick $d\in E\setminus\set{e,f,g}$.
Then $E\setminus\set{d}\in\C_{\M/d}$ and hence $\Sigma_{W/d}$ is generically reduced by induction.
By Lemmas~\ref{117} and \ref{87}, $\Sigma_W\cap D(x_d)$ is then along $V(x_{E\setminus\set{d}})$.
By choice of $d$, $\ideal{x_{E\setminus\set{d}}}\in V(\pp)\cap D(x_d)$.
In other words, $\pp\in\Min(\Sigma_W\cap D(x_d))$ specializes to a point in $V(x_{E\setminus\set{d}})\cap D(x_d)$.
Thus, $\Sigma_W$ is reduced at $\pp$.
It follows that $\Sigma_W$ is generically reduced.

By Theorem~\ref{13}, $\Delta_W$ has the same generic points as $\Sigma_W$.
Therefore, the preceding arguments remain valid if $\Sigma$ is replaced by $\Delta$. 
\end{proof}


\begin{lem}[Generic reducedness and contraction of non-maximal handles]\label{86}
Let $W\subseteq\KK^E$ be a realization of a connected matroid $\M$ of rank $\rk\M\ge2$.
Assume that $\abs{\Max\H_\M}\ge2$ and set
\[
\hbar:=\abs{E}-\abs{\Max\H_\M}\ge0.
\]
Suppose that $\Sigma_{W'}$ is generically reduced for every realization $W'\subseteq\KK^{E'}$ of every connected matroid $\M'$ of rank $\rk\M'\ge2$ with $\abs{E'}<\abs{E}$.
\begin{enumerate}[(a)]

\item\label{86a} If $\hbar>3$, then $\Sigma_W$ is generically reduced.

\item\label{86b} If $\hbar>2$ and $e\in E$, then $\Sigma_W$ is reduced at all $\pp\in\Min\Sigma_W\cap V(x_e)$.

\end{enumerate}
The same statements hold for $\Sigma$ replaced by $\Delta$.
\end{lem}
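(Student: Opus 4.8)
The plan is to reduce both parts to the induction hypothesis by means of the contraction result Lemma~\ref{87}, after using Corollary~\ref{104} to locate the generic points of $\Sigma_W$ relative to a handle; since $\Sigma_W$ and $\Delta_W$ have the same generic points (Theorem~\ref{13}) and Lemmas~\ref{87} and~\ref{88} hold verbatim with $\Sigma$ replaced by $\Delta$, it is enough to argue for $\Sigma_W$. First I would dispose of the case in which $\M$ has a non-disconnective $1$-handle $\set{h}$: here $\rk(\M\setminus h)=\rk\M\ge 2$, so Corollary~\ref{104} applies with $H=\set{h}$, and since types~\eqref{99a} and~\eqref{99b} require two or three distinct elements of $H$, every generic point of $\Sigma_W$ is of type~\eqref{99c}; thus $\Min\Sigma_W\subseteq D(x_h)$. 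Now $\M$ and $\M\setminus h$ are connected of rank $\ge 2$, so $\Sigma_W$ and $\Sigma_{W\setminus h}$ are equidimensional of codimension $3$ (Theorem~\ref{100}) and $\Sigma_{W\setminus h}$ is generically reduced by hypothesis (as $\abs{E\setminus\set{h}}<\abs{E}$), whence Lemma~\ref{88} gives that $\Sigma_W$ is generically reduced and both~\eqref{86a} and~\eqref{86b} follow. So I may assume that $\M$ has no non-disconnective $1$-handle; then $\M$ is not a circuit (it has $\ge 2$ maximal handles), hence admits a handle decomposition of length $k\ge 2$ and, by Proposition~\ref{15}, at least $N\ge k+1\ge 3$ pairwise disjoint non-disconnective handles, each of size $\ge 2$.

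The next step is to note that every non-disconnective handle $H$ is \emph{admissible}, meaning $2\le\abs{H}\le\rk\M-1$. Indeed $H$ is a proper handle of a connected matroid (it cannot be $E$, as $\M$ is not a circuit and $\rk\M\ge 2$), hence independent with $\abs{H}=\rk_\M(H)\le\rk\M$, while $\abs{H}\ge 2$ by assumption; and if $\abs{H}=\rk\M$ then $\lambda_\M(H)=1$ (Lemma~\ref{16}.\eqref{16c}) would force $\rk_\M(E\setminus H)=1$, so every other non-disconnective handle — being maximal and therefore disjoint from $H$, hence contained in $E\setminus H$ — would be a $1$-handle, contradicting $N\ge 2$. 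In particular $\rk\M\ge 3$. For an admissible $H$ and any $h\in H$, the set $G:=H\setminus\set{h}$ is a nonempty independent subset, so $\M/G$ is obtained from $\M$ by successively contracting non-(co)loops; moreover $G\subsetneq H$ is not a maximal handle, so $\M/G$ is connected (Lemma~\ref{16}.\eqref{16b}) of rank $\rk\M-\abs{H}+1\ge 2$ with $\abs{E\setminus G}<\abs{E}$. Hence $\Sigma_{W/G}$ is generically reduced by hypothesis, and Theorem~\ref{100} together with Lemma~\ref{117} shows that $\Sigma_W\cap D(x^G)$ and $\Sigma_{W/G}$ are equidimensional of codimension $3$, so Lemma~\ref{87} yields that $\Sigma_W\cap D(x^G)$ is generically reduced along $V(x_F)$ with $F:=E\setminus G$.

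Finally I would match the generic points of $\Sigma_W$ to such contractions. Because the $N\ge 3$ non-disconnective handles are pairwise disjoint, a given generic point $\pp$ can be of type~\eqref{99a} or~\eqref{99b} with respect to at most one of them, and is therefore of type~\eqref{99c} with respect to at least $N-1\ge 2$ of them; choosing an admissible $H$ for which $\pp$ is of type~\eqref{99c} we get $x_h\notin\pp$ for all $h\in H$, hence $x^{G}\notin\pp$, so $\pp$ is a generic point of $\Sigma_W\cap D(x^G)$, and by the previous paragraph $\Sigma_W$ is reduced at $\pp$ as soon as $\pp$ specializes to a point of $V(x_F)$ inside $D(x^G)$. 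The remaining — and genuinely hard — point is to choose $G$ (for~\eqref{86a}, a single subset obtained by deleting one element from each of suitably many admissible non-disconnective handles; for~\eqref{86b}, where only the finitely many generic points $\pp$ with $x_e\in\pp$ must be treated, an admissible handle $H$ with $e\notin H$, relative to which such a $\pp$ is automatically of type~\eqref{99c}) so that \emph{all} the relevant generic points simultaneously avoid $x^G$ and specialize into $V(x_F)\cap D(x^G)$; this is exactly where the strict inequality $\hbar>3$ (respectively $\hbar>2$) is used, through a codimension count bounding how many coordinate directions a height-$3$ prime over $J_W$ can fail to reach within the handle partition. Granting this, $\Sigma_W$ is reduced at every generic point (respectively at every generic point on $V(x_e)$), and since $\Sigma_W$ and $\Sigma_W\cap D(x^G)$ have the same local rings there, parts~\eqref{86a} and~\eqref{86b} follow; the statements with $\Sigma$ replaced by $\Delta$ follow identically, using Theorem~\ref{13} and the $\Delta$-versions of Lemmas~\ref{87} and~\ref{88}.
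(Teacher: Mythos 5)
Your reduction of the $1$-handle case (via Corollary~\ref{104}, which forces all generic points into $D(x_h)$, and then Lemma~\ref{88} with the induction hypothesis) is correct, though it is really the argument the paper runs in Lemma~\ref{115} rather than inside this lemma. The problem is the second half: for the case where every non-disconnective handle has size $\ge 2$ you never prove the decisive step, and you say so yourself (``Granting this''). Concretely, after fixing $G=H\setminus\set{h}$ for an admissible non-disconnective handle $H$ with $\pp$ of type~\eqref{99c}, Lemma~\ref{87} only gives reducedness of $\Sigma_W\cap D(x^G)$ \emph{along} $V(x_F)$; to conclude reducedness at $\pp$ you must exhibit a prime $\qq$ with $\pp+\ideal{x_F}\subseteq\qq$ and $x_g\notin\qq$ for all $g\in G$, i.e.\ $V(\pp)\cap V(x_F)\cap D(x^G)\ne\emptyset$. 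That existence statement is exactly where $\hbar>3$ (resp.\ $\hbar>2$) must enter, and it is the entire content of the lemma; leaving it as a granted codimension count is a genuine gap, not a routine verification.

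Moreover, the count cannot work for the partition you propose. With $G=H\setminus\set{h}$ one has $\abs{F}=\abs{E}-\abs{H}+1$, so the only available bound, $\height(\pp+\ideal{x_F})\le\height\pp+\abs{F}$ (Lemma~\ref{109}.\eqref{109b}), is $3+\abs{E}-\abs{H}+1$, which is $<\abs{E}=\height\ideal{x}$ only when $\abs{H}\ge5$; and even when the bound holds, a minimal prime over $\pp+\ideal{x_F}$ other than $\ideal{x}$ could still contain some $x_g$ with $g\in G$. The paper avoids both problems by building the partition \emph{a posteriori}: take $F$ first to be a transversal of the full maximal-handle partition (one element from each $H'\in\Max\H_\M$, preferring an element with $x_e\in\pp$ when available), so that $\height(\pp+\ideal{x_F})\le3+\abs{\Max\H_\M}<\abs{E}$ precisely because $\hbar>3$ (resp.\ $2+\abs{\Max\H_\M}<\abs{E}$ when $x_e\in\pp$ and $\hbar>2$, as in \eqref{120}); choose $\qq$ with $\pp+\ideal{x_F}\subseteq\qq\subsetneq\ideal{x}$; then enlarge $F$ by all $f$ with $x_f\in\qq$, so that $\qq\in V(\pp)\cap V(x_F)\cap D(x^G)$ holds by construction, while $F$ still meets every maximal handle and hence $\M/G$ stays connected by Lemma~\ref{16}.\eqref{16b} (with the degenerate possibility $\rk(\M/G)=1$ handled by showing $\pp\notin\Sigma_W$ via Lemma~\ref{87}). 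This construction — not the classification of generic points from Corollary~\ref{104}, which the paper's proof does not need here — is the missing idea; without it, your argument establishes the lemma only in the $1$-handle case.
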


\begin{proof}
Let $\pp\in\Spec\KK[x]$ with $\height\pp=3$.
Pick a subset $F\subseteq E$ such that $\abs{F\cap H'}=1$ for all $H'\in\Max\H_M$.
If possible, pick $F\cap H'=\set{e}$ such that $x_e\in\pp$.
If $\hbar>3$, then by Lemma~\ref{109}.\eqref{109b}
\begin{equation}\label{120}
\height(\pp+\ideal{x_F})
\le 3+\abs{F}
=3+\abs{\Max\H_\M}
<\abs{E}=\height\ideal{x}.
\end{equation}
If $\hbar>2$ and $\pp\in V(x_e)$, then \eqref{120} holds with $3$ replaced by $2$.
In either case pick $\qq\in\Spec\KK[x]$ such that
\begin{equation}\label{121}
\pp+\ideal{x_F}\subseteq\qq\subsetneq\ideal{x}.
\end{equation}
Add to $F$ all $f\in E$ with $x_f\in\qq$.
This does not affect \eqref{121}.
Then $x_g\not\in\qq$ and hence $x_g\not\in\pp$ for all $g\in G:=E\setminus F\ne\emptyset$.
In other words, 
\begin{equation}\label{123}
\pp\in D(x^G),\quad \qq\in V(\pp)\cap D(x^G)\cap V(x_F)\ne\emptyset.
\end{equation}
By the initial choice of $F$, $G\cap H'\subsetneq H'$ for each $H'\in\Max\H_\M$.
By Lemma~\ref{16}.\eqref{16b}, successively contracting all elements of $G$ does, up to bijection, not affect circuits and maximal handles.
In particular, $\M/G$ is a connected matroid on the set $F$, obtained from $\M$ by successively contracting non-(co)loops.

Since $\abs{F}\ge\abs{\Max\H_\M}\ge2$, connectedness implies that $\rk(\M/G)\ge1$.
If $\rk(\M/G)=1$, then $\Sigma_{W/G}=\emptyset$ by Remark~\ref{20}.\eqref{20a}.
Then $\Sigma_W\cap D(x^G)\cap V(x_F)=\emptyset$ by Lemma~\ref{87} and hence $\pp\not\in\Sigma_W$ by \eqref{123}.

Suppose now that $\pp\in\Sigma_W$ and hence $\rk(\M/G)\ge2$.
Then $\Sigma_{W/G}$ is generically reduced by hypothesis, and $\pp\in\Sigma_W\cap D(x^G)$ specializes to a point in $V(x_F)\cap D(x^G)$ by \eqref{123}.
By Theorem~\ref{100} and Lemma~\ref{117}, $\Sigma_W$, $\Sigma_W\cap D(x^G)$ and $\Sigma_{W/G}$ are equidimensional of codimension $3$.
By Lemma~\ref{27}, $\height\pp=3$ means that $\pp\in\Min\Sigma_W$.
By Lemma~\ref{87}, $\Sigma_W$ is thus reduced at $\pp$.
The claims follow.

The preceding arguments remain valid if $\Sigma$ is replaced by $\Delta$.
\end{proof}


\begin{lem}[Reducedness for connected matroids]\label{115}
Let $W\subseteq\KK^E$ be a realization of a connected matroid $\M$ of rank $\rk\M\ge2$.
Then $\Delta_W$ is reduced.
If $\ch\KK\ne2$, then $\Sigma_W$ is generically reduced.
\end{lem}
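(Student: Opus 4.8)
The plan is to reduce the statement to generic reducedness and then induct on $\abs{E}$. By Theorem~\ref{100}, $\Delta_W$ is Cohen--Macaulay, hence pure-dimensional, of codimension $3$ in $\KK^E$; so by Lemma~\ref{176} it suffices to show that $\Delta_W$ is generically reduced, and the assertion for $\Sigma_W$ is a generic-reducedness statement from the outset. Since $\Sigma_W$ and $\Delta_W$ have the same generic points (Theorem~\ref{13}), I can argue for both simultaneously at each $\pp\in\Min\Delta_W=\Min\Sigma_W$, the only asymmetry being that the inputs from Lemmas~\ref{97} and \ref{19} yield reducedness of $\Sigma$ only when $\ch\KK\ne2$. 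I will also use that a connected matroid of rank $\ge2$ has neither loops nor coloops.

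The induction on $\abs{E}$ splits according to the handle structure of $\M$. If $\M$ is a circuit, then $\rk\M=\abs{E}-1\ge2$ and the claim is Lemma~\ref{97}. Otherwise Proposition~\ref{12} gives a handle decomposition of $\M$ of length $k\ge2$ (length $1$ would make $\M$ a circuit), and Proposition~\ref{15} produces at least $k+1\ge3$ pairwise disjoint non-disconnective handles, each maximal by Lemma~\ref{16}.\eqref{16a}. Suppose first that $\M$ has a non-disconnective $1$-handle $H=\set{h}$. Then $h$ is a non-(co)loop, and since $H$ is proper and independent (Lemma~\ref{16}.\eqref{16d}) with $\lambda_\M(H)=1$ (Lemma~\ref{16}.\eqref{16c}), the deletion $\M\setminus h$ is connected of rank $\rk\M\ge2$. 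By Corollary~\ref{104}, every generic point of $\Delta_W$ has one of the types of Lemma~\ref{99} relative to $H$; types \eqref{99a} and \eqref{99b} are vacuous for $\abs{H}=1$, so $\Min\Delta_W\subseteq D(x_h)$. Now Theorem~\ref{100}, applied to $\M$ and to $\M\setminus h$, makes $\Delta_W$ and $\Delta_{W\setminus h}$ equidimensional of codimension $3$, and the induction hypothesis gives generic reducedness of $\Delta_{W\setminus h}$; so Lemma~\ref{88} upgrades this to generic reducedness of $\Delta_W$, and the same argument works for $\Sigma_W$ when $\ch\KK\ne2$.

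It remains to treat the case where $\M$ is not a circuit and has no non-disconnective $1$-handle, so each of the $\ge k+1\ge3$ disjoint non-disconnective handles has size $\ge2$. Setting $\hbar:=\abs{E}-\abs{\Max\H_\M}=\sum_{H'\in\Max\H_\M}(\abs{H'}-1)$, every such handle contributes at least $1$, whence $\hbar\ge3$. If $\hbar>3$, then Lemma~\ref{86}.\eqref{86a}, whose hypothesis is precisely the induction hypothesis, gives generic reducedness of $\Delta_W$. If $\hbar=3$, I would argue $\M$ must be the prism matroid: $k\ge3$ would force $\ge4$ non-disconnective handles and hence $\hbar\ge4$, so $k=2$; then Lemma~\ref{17} shows the (necessarily exactly three) non-disconnective handles are the entire handle partition, and $\hbar=3$ forces each to have size exactly $2$, so $\abs{E}=6$ and Lemma~\ref{42} identifies $\M$ with the prism. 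For the prism, Lemma~\ref{19} handles the generic points in the torus $\TT^E$, while Lemma~\ref{86}.\eqref{86b}, applicable since $\hbar=3>2$, handles those in $V(x_e)$ for every $e\in E$; together these exhaust $\Min\Delta_W$. The same reasoning applies to $\Sigma_W$ when $\ch\KK\ne2$. This completes the induction, and Lemma~\ref{176} converts generic reducedness of the pure-dimensional scheme $\Delta_W$ into full reducedness.

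The main obstacle here is organizational rather than analytic: one must check that the case split is exhaustive (every connected $\M$ of rank $\ge2$ that is not a circuit falls under one of the two sub-cases) and, in the delicate $\hbar=3$ sub-case, correctly chain Proposition~\ref{15}, Lemma~\ref{17} and Lemma~\ref{42} to force the prism. The substantial ingredients --- the deformation-to-the-normal-cone argument underlying Lemmas~\ref{86} and \ref{87} and the explicit minor computation of Lemma~\ref{19} --- are already established, so no new hard estimates arise in this proof.
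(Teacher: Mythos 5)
Your proof is correct and follows essentially the same route as the paper: reduce to generic reducedness via Theorem~\ref{100} and Lemma~\ref{176}, induct on $\abs{E}$ with the same case split (circuit via Lemma~\ref{97}, non-disconnective $1$-handle via Corollary~\ref{104} and Lemma~\ref{88}, $\hbar>3$ via Lemma~\ref{86}, and the prism case via Lemmas~\ref{17}, \ref{42}, \ref{19} and \ref{86}.\eqref{86b}). The only cosmetic differences are that you treat $\Sigma_W$ and $\Delta_W$ in parallel rather than doing $\Sigma$ first and transferring, and you justify $\rk(\M\setminus h)=\rk\M$ directly instead of through the emptiness argument; both are fine.
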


\begin{proof}
By Theorem~\ref{100}, $\Delta_W$ is pure-dimensional.
By Lemma~\ref{176}, $\Delta_W$ is thus reduced if it is generically reduced.
By Lemma \ref{50} and Theorem~\ref{13}, the first claim follows if $\Sigma_W$ is generically reduced.

Assume that $\ch\KK\ne2$.
We proceed by induction on $\abs{E}$.
By Lemma~\ref{97}, $\Sigma_W$ is generically reduced if $E\in\C_\M$; the base case where $\abs{E}=3$ needs $\ch\KK\ne2$.
Otherwise, by Proposition~\ref{12}, $\M$ has a handle decomposition of length $k\ge2$.
By Proposition~\ref{15}, $\M$ has $k+1$ (disjoint) non-disconnective handles $H=H_1,\dots,H_\ell\in\H_\M$ with
\begin{equation}\label{116}
\ell\ge k+1\ge3.
\end{equation}
Note that $H_1,\dots,H_\ell\in\Max\H_\M\cap\I_\M$ by Lemma~\ref{16}.\eqref{16a} and \eqref{16d}.
In particular, $\rk(\M\setminus H)\ne0$.

Suppose first that $H=\set{h}$.
Then $\rk(\M\setminus h)\ge2$ by Remark~\ref{20}.\eqref{20a} and Lemma~\ref{88}, and $\Min\Sigma_W\subseteq D(x_h)$ by Corollary~\ref{104}.
By Theorem~\ref{100}, both $\Sigma_W$ and $\Sigma_{W\setminus h}$ are equidimensional of codimension $3$.
Thus, $\Sigma_W$ is generically reduced by Lemma~\ref{88} and the induction hypothesis.

Suppose now that $\abs{H_i}\ge2$ for all $i=1,\dots,\ell$, and set (see Lemma~\ref{86})
\[
m:=\abs{\Max\H_\M},\quad\hbar:=\abs{E}-m.
\]
If $\hbar>3$, then $\Sigma_W$ is generically reduced by Lemma~\ref{86}.\eqref{86a} and the induction hypothesis.
Otherwise, 
\[
2\ell+(m-\ell)\le\sum_{i=1}^\ell\abs{H_i}+(m-\ell)\le\abs{E}=\hbar+m\le3+m
\]
and hence $2\ell\le\sum_{i=1}^\ell\abs{H_i}\le3+\ell$.
Comparing with \eqref{116} yields $\ell=3$, $k=2$ and $\abs{H_i}=2$ for $i=1,2,3$.
By Lemma~\ref{17}, $E=H_1\sqcup H_2\sqcup H_3$ is then the handle partition of $\M$.
In particular, $\hbar=6-3=3>2$.
By Lemma~\ref{42}, $\M$ must be the prism matroid.

Let now $\pp\in\Min\Sigma_W$ be a generic point of $\Sigma_W$, with $\M$ the prism matroid.
If $\pp\in\TT^E$, then $\Sigma_W$ is reduced at $\pp$ by Lemma~\ref{19}; here we use $\ch\KK\ne2$ again.
Otherwise, $\pp\in V(x_e)$ for some $e\in E$.
Then $\Sigma_W$ is reduced at $\pp$ by Lemma~\ref{86}.\eqref{86b} and the induction hypothesis.

The preceding arguments remain valid for arbitrary $\ch\KK$ if $\Sigma$ is replaced by $\Delta$.
\end{proof}


\begin{thm}[Reducedness]\label{101}
Let $W\subseteq\KK^E$ be a realization of a matroid $\M$.
Then 
\[
\Delta_W=\Sigma_W^\red
\]
is reduced.
If $\ch\KK\ne2$, then $\Sigma_W$ is generically reduced.
\end{thm}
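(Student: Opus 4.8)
The plan is to assemble the global statement from the structural results already established, reducing everything to the connected case and then invoking the Cohen--Macaulayness and reducedness theorems just proved. First I would observe that by Remark~\ref{84} we may delete all loops of $\M$ without affecting $\Sigma_W$, $\Delta_W$ or the equality in question (each scheme is merely crossed with an affine space), so we may assume $\M$ has no loops. Then write $\M=\bigoplus_{i=1}^n\M_i$ as a direct sum of connected components, with the induced decomposition $W=\bigoplus_{i=1}^nW_i$ from Lemma~\ref{72}. By Theorem~\ref{112}, $\Sigma_W$ is the union of the pieces $\Sigma_{W_i}\times\KK^{E\setminus E_i}$ together with the integral pieces $X_{W_i}\times X_{W_j}\times\KK^{E\setminus(E_i\cup E_j)}$ for $i\neq j$, and the identical description holds with $\Sigma$ replaced by $\Delta$.

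Next I would handle the trivial ranks: if $\rk\M_i\le 1$ then $\Sigma_{W_i}=\emptyset=\Delta_{W_i}$ by Remark~\ref{20}.\eqref{20a}, so such a component contributes no piece of the first type; in particular if \emph{every} component has rank $\le 1$ then both schemes are the reduced union of the integral $X$-type pieces (using that each $X_{W_i}$ is integral by Proposition~\ref{4}, being the vanishing of an irreducible linear form), and there is nothing more to prove. For a component $\M_i$ with $\rk\M_i\ge 2$, Theorem~\ref{100} gives that $\Delta_{W_i}$ is Cohen--Macaulay and Lemma~\ref{115} gives that $\Delta_{W_i}$ is reduced and equals $\Sigma_{W_i}^\red$, with $\Sigma_{W_i}$ generically reduced when $\ch\KK\neq 2$. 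Crossing with an affine space preserves reducedness, so each piece $\Delta_{W_i}\times\KK^{E\setminus E_i}$ is reduced, and equals $(\Sigma_{W_i}\times\KK^{E\setminus E_i})^\red$.

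Finally I would combine the pieces. Since $\Sigma_W^\red=\Delta_W^\red$ by Theorem~\ref{13}, and since $\Delta_W$ is the union of the reduced schemes $\Delta_{W_i}\times\KK^{E\setminus E_i}$ (for components of rank $\ge 2$) and the reduced integral schemes $X_{W_i}\times X_{W_j}\times\KK^{E\setminus(E_i\cup E_j)}$, the scheme $\Delta_W$ is reduced, being a finite union of reduced closed subschemes of $\KK^E$; hence $\Delta_W=\Delta_W^\red=\Sigma_W^\red$. For the generic reducedness of $\Sigma_W$ when $\ch\KK\neq 2$: a generic point of $\Sigma_W$ lies on exactly one of the irreducible components coming from the decomposition above. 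On an integral $X$-type component $\Sigma_W$ agrees with $\Delta_W$ near that point, which is reduced there; on a component inside some $\Sigma_{W_i}\times\KK^{E\setminus E_i}$, generic reducedness of $\Sigma_{W_i}$ (Lemma~\ref{115}) transfers after crossing with affine space. The only genuine subtlety is checking that a generic point of one piece is not swallowed by another piece of higher dimension with a nonreduced structure along it; but Theorem~\ref{112} exhibits the pieces explicitly and Proposition~\ref{44} pins the codimension of the $\Sigma_{W_i}$-pieces at $3$ while the $X$-type pieces have codimension $2$, so the generic points are sorted cleanly by which piece they lie on. I expect the main obstacle to be bookkeeping this last point carefully—ensuring that at each generic point of $\Sigma_W$ the local ring really is the local ring of the single relevant piece—rather than any new mathematical input, since all the hard analytic content is already contained in Theorems~\ref{13}, \ref{100}, \ref{112} and Lemma~\ref{115}.
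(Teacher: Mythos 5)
Your proposal is correct and follows essentially the same route as the paper, which also deduces the theorem from Theorem~\ref{112} and Lemma~\ref{115} (with Remarks~\ref{84} and \ref{20}.\eqref{20a} for loops and small rank) and then obtains the equality $\Delta_W=\Sigma_W^\red$ from Theorem~\ref{13}; your extra bookkeeping about generic points being sorted by codimension is fine and is implicitly what makes the decomposition argument work. The only small slip is attributing the exact codimension $3$ of the $\Sigma_{W_i}$-pieces to Proposition~\ref{44}, which gives only the bound $\le 3$; the equality is Theorem~\ref{100}, which you already invoke.
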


\begin{proof}
By Theorem~\ref{112} and Lemma~\ref{115} (see Remarks~\ref{84} and \ref{20}.\eqref{20a}), $\Delta_W$ is reduced and $\Sigma_W$ is generically reduced if $\ch\KK\ne2$.
The claimed equality is then due to Theorem~\ref{13}.
\end{proof}

\subsection{Integrality of degeneracy schemes}\label{83}

In this subsection, we prove the following companion result to Proposition~\ref{4} as outlined in \S\ref{191}.


\begin{thm}[Integrality for $3$-connected matroids]\label{133}
Let $W\subseteq\KK^E$ be a realization of a $3$-connected matroid $\M$ of rank $\rk\M\ge2$.
Then $\Delta_W$ is integral and hence $\Sigma_W$ is irreducible.
\end{thm}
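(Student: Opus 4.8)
The plan is to reduce the integrality of $\Delta_W$ to the case of wheels and whirls, as already sketched in \S\ref{191}. First I would establish that all generic points of $\Delta_W$ lie in the torus $\TT^E$: by Corollary~\ref{106} (using $\abs{E}>3$, which holds since a $3$-connected matroid of rank $\ge2$ with $\abs{E}\le3$ would be a triangle, and triangles are $\U_{2,3}$ with $\lambda=2$, not $3$-connected unless we treat that boundary case separately). Then, since $\Delta_W$ is Cohen--Macaulay of pure codimension $3$ by Theorem~\ref{100}, irreducibility is equivalent to $\abs{\Min\Delta_W}=1$. By Lemma~\ref{88}, for every $e\in E$ the deletion $\M\setminus e$ (which by Proposition~\ref{53} is obtained by deleting a non-disconnective $1$-handle, hence stays connected and, by Lemma~\ref{16}.\eqref{16c} plus $\rk\M\ge2$, has rank $\ge2$, except in degenerate small cases) satisfies $\abs{\Min\Delta_W}\le\abs{\Min\Delta_{W\setminus e}}$; dually, Corollary~\ref{71} gives $\abs{\Min\Delta_W}\le\abs{\Min\Delta_{W/e}}$ for every $e$.

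Next I would invoke Tutte's wheels-and-whirls theorem: every $3$-connected matroid $\M$ that is not a wheel or a whirl has an element $e$ such that $\M\setminus e$ or $\M/e$ is again $3$-connected. Combined with the two inequalities above and an induction on $\abs{E}$, this reduces the whole problem to showing $\abs{\Min\Delta_W}=1$ when $\M\in\set{\W_n,\W^n}$ for $n\ge3$. Here I would use Lemma~\ref{134}: every realization of $\W_n$ or $\W^n$ is, up to scaling $E$, the explicit one with basis \eqref{136}. Since scaling $E$ only scales the variables $x$ (Remarks~\ref{160}, \ref{161}), the schemes $X_W$, $\Sigma_W$, $\Delta_W$ are independent of the realization for fixed $n$ and fixed type $\W_n$ vs.\ $\W^n$ --- this is Proposition~\ref{146} as referenced. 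Moreover I would try to relate the whirl case to the wheel case: both arise from the matrix in \eqref{136} with a parameter $t$, so one can set up a single family over $\Spec\KK[t]$ and argue that the special fibers at $t=1$ (wheel) and $t\in\KK\setminus\set{0,1}$ (whirl) have the same number of minimal primes, or handle them uniformly.

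The computational heart is then an induction on $n$ for the wheel/whirl family, with explicit base cases $n\le4$. For the inductive step I would use deletion (or contraction) of a single spoke or rim edge: $\W_n\setminus r_i$ and $\W_n/s_i$ are again built from a wheel of smaller size plus a handle, and by the inequality $\abs{\Min\Delta_{\W_n}}\le\abs{\Min\Delta_{\W_{n-1}}}$ (suitably interpreted, using that deleting the right edge realizes a non-disconnective $1$-handle) one propagates $\abs{\Min\Delta}=1$ upward once the base cases are settled. The base cases $\W_3=\M(K_4)$, $\W^3$, $\W_4$, $\W^4$ require direct computation: write down $Q_W$ from \eqref{136}, compute the ideal $M_W$ of submaximal minors, and verify via a primary decomposition (or a Gröbner basis / symbolic computation) that $M_W$ has a unique minimal prime; combined with Cohen--Macaulayness this gives integrality.

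The main obstacle I anticipate is precisely these base-case computations together with the bookkeeping in the reduction: verifying that the element $e$ supplied by Tutte's theorem can be chosen so that the relevant deletion/contraction both preserves $3$-connectedness and keeps rank $\ge2$ (so the inductive hypothesis applies), and that the inequalities from Lemma~\ref{88} and Corollary~\ref{71} genuinely force $\abs{\Min\Delta_W}=1$ rather than merely bounding it --- this needs that for at least one choice the smaller scheme is already known irreducible, which is where the wheels-and-whirls structure and the induction on $n$ must interlock cleanly. The whirl case is slightly more delicate than the wheel case because $\W^n$ is not regular (not even binary), so one cannot fall back on the graph-polynomial picture; the uniform treatment via the parameter $t$ in \eqref{136} is what makes this tractable. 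Once $\Delta_W$ is shown integral, irreducibility of $\Sigma_W$ follows immediately from $\Sigma_W^\red=\Delta_W^\red$ (Theorem~\ref{13}).
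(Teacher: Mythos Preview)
Your overall strategy---reduce via Tutte's wheels-and-whirls theorem to $\M\in\{\W_n,\W^n\}$, then induct on $n$ with computational base cases, and conclude irreducibility of $\Sigma_W$ from $\Sigma_W^\red=\Delta_W^\red$---matches the paper's approach (Lemma~\ref{132}, Lemma~\ref{137}, Corollary~\ref{138}, Theorem~\ref{13}). The reduction to wheels and whirls is essentially correct, including the use of duality to trade contraction for deletion.

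The genuine gap is in your inductive step for wheels and whirls. Deleting or contracting a \emph{single} element of $\W_n$ (or $\W^n$) does not yield $\W_{n-1}$ (or $\W^{n-1}$): by Lemma~\ref{135}.\eqref{135d} one needs both operations, $\M\setminus s_n/r_n=\M_{n-1}$. The intermediate matroid $\M\setminus s_n$ is connected but only $2$-connected, with a $2$-handle $\{r_{n-1},r_n\}$ (Lemma~\ref{135}.\eqref{135b}); Corollary~\ref{104} then permits $\Delta_{W\setminus s_n}$ to have up to \emph{two} generic points, one in $\TT^{E\setminus\{s_n\}}$ and one in $V(y_{n-1},y_n)$. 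So Lemma~\ref{88} gives only $\abs{\Min\Delta_W}\le 2$, and your ``suitably interpreted'' inequality $\abs{\Min\Delta_{\W_n}}\le\abs{\Min\Delta_{\W_{n-1}}}$ does not follow.

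The paper closes this gap in Lemma~\ref{137} by a three-step argument. First, it proves $\Delta_{W'}$ is integral for any realization $W'$ of $\M/r_n$: since $\M/r_n$ has only non-disconnective $1$-handles (Lemma~\ref{135}.\eqref{135c}), Corollary~\ref{104} forces $\Min\Delta_{W'}\subseteq\TT^{E\setminus\{r_n\}}$, and Lemma~\ref{88} applied with $\M/r_n\setminus s_n=\M_{n-1}$ then yields $\abs{\Min\Delta_{W'}}\le 1$ by induction. Second, the self-duality $\M^\perp=\M$ (Lemma~\ref{135}.\eqref{135a}) identifies $(\M\setminus s_n)^\perp=\M/r_n$, so via Corollary~\ref{71} the scheme $\Delta_{W\setminus s_n}$ has a \emph{unique} generic point $\qq$ in the torus, plus at most one further point $\qq'$ outside. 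Third, assuming $\Min\Delta_W=\{\pp,\pp'\}$ with $\pp\ne\pp'$, the paper invokes the cyclic $\ZZ_n$-action (Lemma~\ref{204}) for wheels, respectively a pigeonhole argument over the $n\ge 5$ spokes for whirls, to force one of $\pp,\pp'$ into a variety of height strictly greater than $3$, contradicting Theorem~\ref{100}. This exclusion argument is entirely absent from your sketch, and without it the induction does not close. (As a minor point, the paper's base cases are $\W_3$ and $\W^n$ for $n\le 4$; the case $\W_4$ is handled by the induction, not directly.)
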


\begin{proof}
The claim on $\Delta_W$ follows from Remark~\ref{20}.\eqref{20a} and Lemmas~\ref{132} and \ref{137} and Corollary~\ref{138}.
Theorem~\ref{13} yields the claim on $\Sigma_W$.
\end{proof}


In the following, we use notation from Example~\ref{145}.


\begin{lem}[Reduction to wheels and whirls]\label{132}
It suffices to verify Theorem~\ref{133} for $\M\in\set{\W_n,\W^n}$ with $n\ge3$.
\end{lem}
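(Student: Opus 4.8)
The plan is to prove Theorem~\ref{133} by strong induction on $\abs E$, with the cases $\M\in\set{\W_n,\W^n}$, $n\ge3$, supplied by hypothesis. Since $\Delta_W$ is reduced (Theorem~\ref{101}) and Cohen--Macaulay, hence pure-dimensional (Theorem~\ref{100}), and nonempty because $\rk\M\ge2$ (Remark~\ref{20}.\eqref{20a}), it is integral if and only if $\abs{\Min\Delta_W}\le1$; so the whole proof reduces to bounding the number of generic points of $\Delta_W$ by $1$. If $\rk\M=2$ this is immediate, since $\Delta_W$ is then a linear subspace (Remark~\ref{20}.\eqref{20b}), and if $\M$ is a wheel $\W_n$ or a whirl $\W^n$ with $n\ge3$ it is the assumed case of Theorem~\ref{133}. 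As the whirl $\W^2$ has rank $2$, we are left with $\rk\M\ge3$ and $\M$ not isomorphic to any wheel or whirl. A $3$-connected matroid of rank $\ge3$ has corank $\rk\M^\perp\ge2$ as well (a $3$-connected matroid of rank $\le1$ would admit a $1$- or $2$-separation), so $\abs E=\rk\M+\rk\M^\perp\ge5$; hence Corollary~\ref{106} applies both to $\M$ and to its dual $\M^\perp$, which is again $3$-connected by \eqref{207}, and all generic points of $\Delta_W$ (resp.\ of $\Delta_{W^\perp}$) lie in $\TT^E$ (resp.\ in $\TT^{E^\vee}$).

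Next I would invoke Tutte's wheels-and-whirls theorem (see \cite[Thm.~8.8.4]{Oxl11}): since $\M$ is $3$-connected of rank $\ge3$ and is neither a wheel nor a whirl, there is an element $e\in E$ for which $\M\setminus e$ or $\M/e$ is again $3$-connected. Because $\M$ is $3$-connected with $\abs E\ge5$ it has no loops or coloops (either would be a connected component, contradicting connectedness), so $e$ is a non-(co)loop; consequently $\rk(\M\setminus e)=\rk\M\ge2$, $\rk(\M/e)=\rk\M-1\ge2$, and both matroids live on $\abs E-1\ge4>3$ elements.

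Suppose first $\M\setminus e$ is $3$-connected. By the induction hypothesis $\Delta_{W\setminus e}$ is integral, in particular reduced; by Theorem~\ref{100} both $\Delta_W$ and $\Delta_{W\setminus e}$ are Cohen--Macaulay of codimension $3$, hence equidimensional of the same codimension; and $\Min\Delta_W\subseteq\TT^E\subseteq D(x_e)$ by the first paragraph. Lemma~\ref{88}, in its version for $\Delta$, then gives $\abs{\Min\Delta_W}\le\abs{\Min\Delta_{W\setminus e}}=1$. Suppose instead $\M/e$ is $3$-connected. Here I would dualize: duality interchanges deletion and contraction, so $W^\perp\setminus\nu(e)$ realizes $(\M/e)^\perp=\M^\perp\setminus\nu(e)$, which is $3$-connected, has rank $\rk\M^\perp\ge2$, and has $\abs E-1>3$ elements. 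Running the deletion argument just given for the configuration $W^\perp$ and the element $\nu(e)$ — its inductive input being $\Delta_{(W/e)^\perp}$, a configuration on fewer elements — yields $\abs{\Min\Delta_{W^\perp}}\le1$. Finally, by Corollary~\ref{71} the Cremona isomorphism $\TT^E\cong\TT^{E^\vee}$ matches the generic points of $\Delta_W$ lying in $\TT^E$ with those of $\Delta_{W^\perp}$ lying in $\TT^{E^\vee}$; since all generic points of both schemes lie in these tori, $\abs{\Min\Delta_W}=\abs{\Min\Delta_{W^\perp}}\le1$. In either case $\Delta_W$ is integral, which closes the induction.

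The one genuinely new ingredient is the matroid-theoretic Tutte reduction; everything else — the passage between $\Delta_W$ and $\Delta_{W\setminus e}$ through Lemma~\ref{88}, and between $\Delta_W$ and $\Delta_{W^\perp}$ through the Cremona isomorphism of Corollary~\ref{71} — is already available. So I expect the main obstacle to be purely organizational: one must check, at every step of the recursion, that the smaller matroids obtained by deletion, contraction and dualization still satisfy all the hypotheses needed to apply Theorems~\ref{100} and~\ref{101}, Corollary~\ref{106} and Lemma~\ref{88} (connectedness, rank $\ge2$, more than three elements, matching equidimensional codimension, and confinement of the generic points to the torus), and that the only matroids never reached by the Tutte recursion are the rank-$2$ ones, dispatched by Remark~\ref{20}.\eqref{20b}, and the wheels and whirls, dispatched by hypothesis.
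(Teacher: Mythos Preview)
Your proposal is correct and follows essentially the same route as the paper: induction on $\abs E$, Tutte's wheels-and-whirls theorem to obtain a $3$-connected single-element minor, Lemma~\ref{88} to bound $\abs{\Min\Delta_W}$ in the deletion case, and the Cremona identification of Corollary~\ref{71} to reduce contraction to deletion via duality. The only cosmetic differences are that the paper handles small $\abs E$ by citing the classification of $3$-connected matroids on at most four elements (all of rank~$2$), whereas you derive $\abs E\ge5$ from $\rk\M+\rk\M^\perp\ge5$, and that the paper phrases the duality step as ``replace $W$ by $W^\perp$'' up front rather than spelling out the contraction branch separately.
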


\begin{proof}
Let $\M$ and $W$ be as in Theorem~\ref{133}.
By Remark~\ref{20}.\eqref{20b} and Theorem~\ref{13}, the claim holds if $\rk\M=2$.
If $\abs{E}\le4$, then $\M=\U_{2,n}$ where $n\in\set{3,4}$ (see \cite[Tab.~8.1]{Oxl11}) and hence $\rk\M=2$.
We may thus assume that $\rk\M\ge3$ and $\abs{E}\ge5$.

The $3$-connectedness hypothesis on $\M$ holds equivalently for $\M^\perp$ (see \ref{207}).
By Corollaries~\ref{71} and \ref{106}, the Cremona isomorphism thus identifies
\begin{equation}\label{179}
\TT^E\supseteq\Min\Delta_W=\Min\Delta_{W^\perp}\subseteq\TT^{E^\vee}.
\end{equation}
It follows that integrality is equivalent for $\Delta_W$ and $\Delta_{W^\perp}$.
In particular, we may also assume that $\rk\M^\perp\ge3$.

We proceed by induction on $\abs{E}$.
Suppose that $\M$ is not a wheel or a whirl.
Since $\rk\M\ge3$, Tutte's wheels-and-whirls theorem (see \cite[Thm.~8.8.4]{Oxl11}) yields an $e\in E$ such that $\M\setminus e$ or $\M/e$ is again $3$-connected.
In the latter case, we replace $W$ by $W^\perp$ and use \eqref{144}.
We may thus assume that $\M\setminus e$ is $3$-connected.
Then $\Delta_{W\setminus e}$ is integral by induction hypothesis.
Note that $\Min\Delta_W\subseteq D(x_e)$ by \eqref{179}.
By Theorem~\ref{100}, $\Delta_W$ and $\Delta_{W\setminus e}$ are equidimensional of codimension $3$.
By Remark~\ref{20}.\eqref{20a} and Lemma~\ref{88}, $\Delta_W\ne\emptyset$ and $\abs{\Min\Delta_W}\le\abs{\Min\Delta_{W\setminus e}}=1$.
It follows that $\Delta_W$ is integral.
\end{proof}


\begin{lem}[Turning wheels]\label{204}
Let $W\subseteq\KK^E$ be the realization of $\W_n$ from Lemma~\ref{134}.
Then the cyclic group $\ZZ_n$ acts on $X_W$, $\Sigma_W$ and $\Delta_W$ by \enquote{turning the wheel}, induced by the generator $1\in\ZZ_n$ mapping
\begin{equation}\label{182}
s_i\mapsto s_{i+1},\quad r_i\mapsto r_{i+1},\quad w^i\mapsto w^{i+1}.
\end{equation}
\end{lem}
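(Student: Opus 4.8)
The statement asserts that for the realization $W$ of $\W_n$ from Lemma~\ref{134}, the cyclic group $\ZZ_n$ acts on the configuration hypersurface $X_W$, its Jacobian scheme $\Sigma_W$, and the second degeneracy scheme $\Delta_W$, via the ``rotation'' of generators $s_i\mapsto s_{i+1}$, $r_i\mapsto r_{i+1}$, $w^i\mapsto w^{i+1}$. The plan is to exhibit this as an honest automorphism of the based vector space $\KK^E$ (permuting coordinates cyclically) that sends the configuration $W$ to itself, and then to invoke the fact that all the schemes in question are defined intrinsically from the pair $(W\subseteq\KK^E, x_E)$, so they are preserved by any coordinate automorphism fixing $W$.

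First I would set up the coordinate automorphism. Let $\rho\colon\KK^E\to\KK^E$ be the linear map defined on the basis $E=S\sqcup R$ by $\rho(s_i)=s_{i+1}$ and $\rho(r_i)=r_{i+1}$, indices taken in $\ZZ_n$; this is a permutation of the basis $E$, hence an automorphism of the based vector space $\KK^E$, with $\rho^n=\id$. The key computation is that $\rho$ carries the basis \eqref{136} of $W$ to itself: applying $\rho$ to $w^i=s_i+r_i-r_{i-1}$ for $i=2,\dots,n$ gives $s_{i+1}+r_{i+1}-r_i=w^{i+1}$ (reading $w^{n+1}:=w^1$), and applying $\rho$ to $w^n=s_n+r_n-r_{n-1}$ gives $s_1+r_1-r_n$, which equals $w^1$ precisely because $t=1$ for $\M=\W_n$ (this is where the hypothesis that $W$ realizes the \emph{wheel} rather than a whirl is used). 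Hence $\rho(W)=W$, so $\rho$ restricts to an automorphism of $W$ sending $w^i$ to $w^{i+1}$ cyclically, which is exactly \eqref{182}.

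Next I would transport this to the schemes. Since $\rho$ permutes the coordinates $x_E$ cyclically by $x_{s_i}\mapsto x_{s_{i+1}}$, $x_{r_i}\mapsto x_{r_{i+1}}$, it induces a $\KK$-algebra automorphism of $\KK[x]$, hence an automorphism of $\KK^E=\Spec\KK[x]$. Because $\rho$ preserves $W$ as a subspace of the based space $\KK^E$, it preserves the configuration form $Q_W$ up to the induced change of basis of $W$ (Remark~\ref{151}): concretely, $Q_{\rho w}=P Q_w P^t$ for the permutation matrix $P$ of the cyclic shift on $\{1,\dots,n\}$, with the variables simultaneously permuted. Consequently $\rho$ fixes the ideals $\ideal{\psi_W}$, $J_W$ and $M_W$ of $\KK[x]$ — for $\psi_W=\det Q_W$ this is immediate, and for $J_W$ and $M_W=\Fitt_1\coker Q_W$ it follows since Fitting ideals and Jacobian ideals are functorial under the base change and the variable permutation, so they are sent to themselves. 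Therefore $\rho$ descends to automorphisms of $X_W=\Spec(\KK[x]/\ideal{\psi_W})$, of $\Sigma_W=\Spec(\KK[x]/J_W)$, and of $\Delta_W=\Spec(\KK[x]/M_W)$. Since $\rho^n=\id$, the assignment $1\mapsto\rho$ extends to an action of $\ZZ_n$ on each of these schemes, compatible with the inclusions $\Delta_W\subseteq\Sigma_W\subseteq X_W\subseteq\KK^E$ of Lemma~\ref{50}. This action realizes \eqref{182} by construction, completing the proof.

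The only subtle point — really the sole place the argument could go wrong — is the verification that the cyclic shift closes up on the distinguished basis \eqref{136}, i.e.\ that $\rho(w^n)=w^1$; this is exactly the identity $s_1+r_1-r_n = s_1+r_1-t\cdot r_n$, valid iff $t=1$, which is why the lemma is stated for $\W_n$ and not for $\W^n$ (for the whirl, $\rho$ sends $w^n$ to $s_1+r_1-r_n$, which is not in $\{c\cdot w^i\}$ since $w^1=s_1+r_1-t r_n$ with $t\neq 1$, so the rotation is \emph{not} a symmetry of a whirl realization). Everything else is formal: permuting coordinates and the matching change of basis is visibly compatible with taking determinants, partial derivatives, and Fitting ideals.
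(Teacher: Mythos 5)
Your proof is correct and follows essentially the same route as the paper: observe that for the wheel ($t=1$) the basis of Lemma~\ref{134} is cyclically symmetric, so the shift stabilizes $W$, hence fixes $\psi_W$, $Q_W$ up to conjugation by a permutation matrix, and therefore the ideals $\ideal{\psi_W}$, $J_W$, $M_W$, inducing the $\ZZ_n$-action on $X_W$, $\Sigma_W$, $\Delta_W$. Your explicit check of the wrap-around identity $\rho(w^n)=w^1$ (and the remark that this fails for whirls) just spells out what the paper leaves implicit.
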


\begin{proof}
By Lemma~\ref{134}, $W$ has a basis $w=(w_1,\dots,w_n)$ where $w^i=s_i+r_i-r_{i-1}$ for all $i\in\ZZ_n$.
The assignment \eqref{182} stabilizes $W\subseteq\KK^E$.
The resulting $\ZZ_n$-action stabilizes $\psi_W$ and $Q_W$, and hence $J_W$ and $M_W$.
As a consequence, it induces an action on $X_W$, $\Sigma_W$ and $\Delta_W$.
\end{proof}


The graph hypersurface of the $n$-wheel was described by Bloch, Esnault and Kreimer (see \cite[(11.5)]{BEK06}).
We show that it is also the unique configuration hypersurface of the $n$-whirl.

 
\begin{prp}[Schemes for wheels and whirls]\label{146}
Let $W\subseteq\KK^E$ be any realization of $\M\in\set{\W_n,\W^n}$ where $E=S\sqcup R$.
Then there are coordinates $z'_1,\dots,z'_n,y_1,\dots,y_n$ on $\KK^E$ such that 
\[
\psi_W=\det Q_n,\quad M_W=I_{n-1}(Q_n),
\]
where
\[
Q_n:=
\begin{pmatrix}
z'_1 & y_1 & 0 & \cdots & \cdots & 0 & y_n\\
y_1 & z'_2 & y_2 & 0 & \cdots & \cdots & 0 \\
0 & y_2 & z'_3 & y_3 & 0 & \cdots & 0\\
\vdots & \ddots & \ddots & \ddots & \ddots & \ddots & \vdots\\
0 & \cdots & 0 & y_{n-3} & z'_{n-2} & y_{n-2} & 0\\
0 & \cdots & \cdots & 0 & y_{n-2} & z'_{n-1} & y_{n-1} \\
y_n & 0 & \cdots & \cdots & 0 & y_{n-1} & z'_n
\end{pmatrix}.
\]
In particular, $X_W$, $\Sigma_W$ and $\Delta_W$ depend only on $n$ up to isomorphism.
\end{prp}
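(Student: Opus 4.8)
The plan is to compute the configuration form $Q_W$ explicitly with respect to the distinguished basis of $W$ supplied by Lemma~\ref{134}, and then to exhibit an invertible linear change of coordinates on $\KK^E$ carrying $Q_W$ onto $Q_n$. Throughout assume $n\ge3$, which is the only case in which the displayed matrix $Q_n$ is meaningful and the only case we need (see Lemma~\ref{132}). By Lemma~\ref{134} together with Remarks~\ref{160} and \ref{161}, scaling $E=S\sqcup R$ merely rescales coordinates, so we may assume $W$ has the basis $w=(w^1,\dots,w^n)$ of \eqref{136}, with $t=1$ if $\M=\W_n$ and $t\in\KK\setminus\set{0,1}$ if $\M=\W^n$; in both cases $t\ne0$.

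First I would compute $Q_w$. Writing $E=\set{s_1,\dots,s_n,r_1,\dots,r_n}$, the vector $w^1$ is supported on $\set{s_1,r_1,r_n}$ and $w^i$ on $\set{s_i,r_i,r_{i-1}}$ for $2\le i\le n$. Forming the Hadamard products $w^i\star w^j$ as in Remark~\ref{151}, these supports are pairwise disjoint except for consecutive pairs (sharing one $r_j$) and the pair $\{1,n\}$ (sharing $r_n$); hence $Q_w$ vanishes off the tridiagonal apart from the corners $(1,n)$ and $(n,1)$. Explicitly, its diagonal is
\[
(x_{s_1}+x_{r_1}+t^2x_{r_n},\ x_{s_2}+x_{r_2}+x_{r_1},\ \dots,\ x_{s_n}+x_{r_n}+x_{r_{n-1}}),
\]
its super- and subdiagonal entries are $-x_{r_1},\dots,-x_{r_{n-1}}$, and its $(1,n)$ and $(n,1)$ entries both equal $-t\,x_{r_n}$. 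I would then set
\[
y_i:=-x_{r_i}\ (1\le i\le n-1),\qquad y_n:=-t\,x_{r_n},
\]
\[
z'_1:=x_{s_1}+x_{r_1}+t^2x_{r_n},\qquad z'_i:=x_{s_i}+x_{r_i}+x_{r_{i-1}}\ (2\le i\le n).
\]
Since $t\ne0$, the $y_i$ recover the $x_{r_i}$ invertibly, and each $z'_i$ is $x_{s_i}$ plus a linear form in the $x_{r_j}$; thus the linear substitution $x\mapsto(z',y)$ is block triangular with invertible diagonal blocks, so $(z'_1,\dots,z'_n,y_1,\dots,y_n)$ is a new system of coordinates on $\KK^E$, identifying $\KK[x]$ with a polynomial ring in these $2n$ variables. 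In these coordinates the matrix $Q_w$ computed above is exactly $Q_n$; in particular the parameter $t$ has disappeared.

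It remains to read off the consequences. By Lemma~\ref{36}, $\psi_W=\det Q_w=\det Q_n$, and by Definition~\ref{64} and Remark~\ref{151}, $M_W=\ideal{Q_w^{i,j}\xmid 1\le i,j\le n}=I_{n-1}(Q_n)$. Hence $X_W=\Spec(\KK[z',y]/\ideal{\det Q_n})$ and $\Delta_W=\Spec(\KK[z',y]/I_{n-1}(Q_n))$ depend only on $n$. Finally, the Jacobian ideal is unchanged by the linear change of variables from $x$ to $(z',y)$ (by the chain rule, using invertibility of the change of coordinates), so $J_W=\ideal{\det Q_n}+\ideal{\partial_{z'_i}\det Q_n,\partial_{y_i}\det Q_n\xmid 1\le i\le n}$ depends only on $n$, and therefore so does $\Sigma_W$.

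I do not anticipate a real obstacle: the argument is a direct computation. The hard part will be the bookkeeping in evaluating $Q_w$ — especially the entries at the seam involving $w^1$ and $w^n$, where the scalar $t$ and the index $r_n$ enter — together with checking that the substitution $x\mapsto(z',y)$ is invertible and eliminates $t$, and invoking the (standard) invariance of the Jacobian ideal under an invertible linear substitution for the claim about $\Sigma_W$.
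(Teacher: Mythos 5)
Your proposal is correct and follows essentially the same route as the paper: take the basis from Lemma~\ref{134}, compute $Q_W$ as a tridiagonal-plus-corner matrix, and perform an invertible linear change of coordinates (the paper replaces $z_i$ by $z'_i$ and then rescales the $y_i$ to absorb the signs and the factor $t$, which you build into your definition of the $y_i$ directly), concluding via Lemma~\ref{36} and Remark~\ref{151}. The only cosmetic difference is that you make the invertibility of the substitution and the invariance of $J_W$ under it explicit, which the paper leaves implicit.
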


\begin{proof}
We may assume that $W$ is the realization from Lemma~\ref{134}.
Denote the coordinates on $\KK^E=\KK^{S\sqcup R}$ by
\begin{equation}\label{181}
z_1,\dots,z_n,y_1,\dots,y_n:=s_1^\vee,\dots,s_n^\vee,r_1^\vee,\dots,r_n^\vee,
\end{equation}
and consider the $\KK$-linear automorphism defined by
\[
z'_1:=z_1+y_1+t^2\cdot y_n,\quad z'_i:=z_i+y_i+y_{i-1},\quad i=2,\dots,n.
\]
Then $Q_W$ is represented by the matrix
\[
\begin{pmatrix}
z'_1 & -y_1 & 0 & \cdots & \cdots & 0 & -t\cdot y_n\\
-y_1 & z'_2 & -y_2 & 0 & \cdots & \cdots & 0 \\
0 & -y_2 & z'_3 & -y_3 & 0 & \cdots & 0\\
\vdots & \ddots & \ddots & \ddots & \ddots & \ddots & \vdots\\
0 & \cdots & 0 & -y_{n-3} & z'_{n-2} & -y_{n-2} & 0\\
0 & \cdots & \cdots & 0 & -y_{n-2} & z'_{n-1} & -y_{n-1} \\
-t\cdot y_n & 0 & \cdots & \cdots & 0 & -y_{n-1} & z'_n
\end{pmatrix}.
\]
Suitable scaling of $y_1,\dots,y_n$ turns this matrix into $Q_n$.
The particular claim follows with Lemma~\ref{36}.
\end{proof}


\begin{cor}[Small wheels and whirls]\label{138}
Theorem~\ref{133} holds for the matroids $\M=\W_3$ and $\M=\W^n$ for $n\le 4$.
\end{cor}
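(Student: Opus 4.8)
The plan is to use Proposition~\ref{146} to replace each matroid in the statement by the explicit symmetric matrix $Q_n$, and then to combine two classical facts about symmetric determinantal varieties with the reducedness and equidimensionality already established in Theorems~\ref{100} and~\ref{101}. By Proposition~\ref{146}, for any realization of $\W_n$ or $\W^n$ (with $n\ge3$) the scheme $\Delta_W$ is isomorphic to $\Delta_n:=V(I_{n-1}(Q_n))\subseteq\KK^{2n}$, and this depends only on $n$; so it suffices to prove that $\Delta_n$ is integral for $n=3$ (covering $\W_3$ and $\W^3$) and for $n=4$ (covering $\W^4$, and also $\W_4$). The remaining matroid $\W^2$ has rank~$2$ and is covered by Remark~\ref{20}.\eqref{20b}. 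In both remaining cases $\Delta_n$ is reduced by Theorem~\ref{101} and, being connected of rank $n\ge2$, pure-dimensional of codimension~$3$ by Theorem~\ref{100}; hence only irreducibility is left to check.

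For $n=3$ the six entries of $Q_3$ are independent linear coordinates on $\KK^6$, so $\Delta_3$ is the variety of symmetric $3\times3$ matrices of rank $\le1$; over an algebraic closure this is the image of the morphism $v\mapsto vv^t$, hence irreducible, and being reduced it is integral. (For $\W_3$, the cycle matroid of $K_4$, this recovers the description of the $K_4$ graph hypersurface in \cite[(11.5)]{BEK06}.)

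For $n=4$ the eight independent entries of $Q_4$ occupy the diagonal and the four \enquote{cyclic} off-diagonal positions, while the positions $(1,3)$ and $(2,4)$ are zero; writing $m_{ij}$ for the entries of a symmetric $4\times4$ matrix, this identifies $\Delta_4$, as a set, with $Y\cap L$, where $Y\subseteq\KK^{10}$ is the generic symmetric $4\times4$ determinantal variety of rank $\le2$ and $L:=\{m_{13}=m_{24}=0\}\cong\KK^8$. Since $\Delta_4$ has codimension~$3$, this is a proper linear section, and I would establish irreducibility by pulling back along the morphism $\varrho\colon\KK^4\times\KK^4\to\KK^{10}$, $\varrho(u,v)=uu^t+vv^t$, whose image lies in $Y$ and contains every rank-$\le2$ symmetric matrix with a nonzero diagonal entry. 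The preimage $\varrho^{-1}(L)$ is defined by $u_1u_3+v_1v_3=0$ and $u_2u_4+v_2v_4=0$; these are nondegenerate quadratic forms in disjoint sets of four variables, so $\varrho^{-1}(L)$ is a product of two geometrically irreducible threefolds, hence irreducible of dimension~$6$. A fibre count for $\varrho$ (the generic fibres being one-dimensional) then shows that $Z:=\overline{\varrho(\varrho^{-1}(L))}$ is an irreducible closed subset of $\Delta_4$ of dimension~$5$. As $\Delta_4$ is pure-dimensional of dimension~$5$, it follows that $\Delta_4=Z$, and hence that $\Delta_4$ is irreducible, once one checks $\dim((Y\cap L)\setminus Z)<5$: any such point lies outside the image of $\varrho$, hence --- over an algebraic closure --- among the alternating $4\times4$ matrices, and the locus of alternating rank-$\le2$ matrices inside $L$ is contained in the Pfaffian hypersurface of the $6$-dimensional space of alternating matrices cut by the two linear equations of $L$, which has dimension $\le3$. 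Together with reducedness from Theorem~\ref{101}, this gives that $\Delta_4$ is integral.

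The step I expect to require the most care is the characteristic-$2$ case of $n=4$: there $\varrho$ is no longer surjective onto $Y$ --- its image omits the alternating (hyperbolic) rank-$2$ matrices --- so one genuinely needs the dimension estimate on $Y\cap L$ inside the alternating locus rather than outright surjectivity, whereas in characteristic $\ne2$ the map $\varrho$ surjects onto $Y$ and that estimate is vacuous. The remaining ingredients --- the reduction through Proposition~\ref{146}, the Veronese identification for $n=3$, and the irreducibility of the complete intersection $\varrho^{-1}(L)$ for $n=4$ --- are routine.
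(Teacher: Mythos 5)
Your proposal is correct and follows the same core route as the paper: reduce via Proposition~\ref{146} to the explicit matrix $Q_n$, treat $n=3$ through the rank-$\le 1$ parametrization $v\mapsto vv^t$, and treat $n=4$ by pulling the linear section $\{m_{13}=m_{24}=0\}$ of the rank-$\le 2$ symmetric determinantal variety back along $(u,v)\mapsto uu^t+vv^t$ (this is exactly the paper's map $B\mapsto BB^t$, $B\in\KK^{4\times 2}$), whose preimage is a product of two irreducible quadrics in disjoint variables. The genuine difference is the final step. The paper asserts outright that $B\mapsto BB^t$ surjects onto the rank-$\le k$ symmetric matrices over any algebraically closed field, by diagonalizing $CAC^t$ by congruence; in characteristic $2$ that diagonalization fails for alternating matrices, and indeed $e_1e_2^t+e_2e_1^t$ is a rank-$2$ point of the section that is not of the form $BB^t$, so the surjectivity claim as stated does not hold there. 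Your substitute --- noting that the unparametrized locus inside the section consists of rank-$2$ alternating matrices, hence sits in a Pfaffian hypersurface of dimension at most $3$, and then combining the fibre count with purity of $\Delta_4$ in codimension $3$ (Theorem~\ref{100}) to force $\Delta_4$ to equal the closure of the image --- is precisely what is needed to make the argument watertight in characteristic $2$; in characteristic $\ne 2$ the two arguments coincide, and the only price you pay is the extra dimension and fibre estimates (which you correctly flag as the step needing care, and which do go through, e.g.\ the fibre over a rank-$2$ point is a torsor under a one-dimensional orthogonal group in every characteristic). Your side remarks are also fine: $\W^2$ has rank $2$ and is covered by Remark~\ref{20}.\eqref{20b} (the paper's \enquote{$Z$ itself for $n\le 3$} branch absorbs it implicitly), and your $n=4$ argument incidentally covers $\W_4$ as well, which is harmless.
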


\begin{proof}
Let $W$ be any realization of $\M$.
By Theorem~\ref{101}, $\Delta_W$ is reduced and it suffices to check
irreducibility, replacing $\KK$ by its algebraic closure.
By Proposition~\ref{146}, we may assume that $\Delta_W=V(I_{k+1}(Q_n))$ for $k=n-2$.

Consider the morphism of algebraic varieties of matrices
\[
Y:=\KK^{n\times k}\to\set{A\in\KK^{n\times n}\mid A=A^t,\ \rk A\le k}=:Z,\quad B\mapsto BB^t.
\]
Let $y_{i,j}$ and $z_{i,j}$ be the coordinates on $Y$ and $Z$, respectively.
Then $\Delta_W$ identifies with $V(z_{1,3},z_{2,4})\subseteq Z$ for $n=4$ and with $Z$ itself for $n\le3$.
Both the preimage $Y$ of $Z$ and for $n=4$ the preimage
\[
V(y_{1,1}y_{1,3}+y_{1,2}y_{2,3},y_{2,1}y_{1,4}+y_{2,2}y_{2,4})
\]
of $V(z_{1,3},z_{2,4})$ are irreducible.
It thus suffices to show that $Y$ surjects onto $Z$, which holds for all $k\le n$.

Let $A\in Z$ and $I\subseteq\set{1,\dots,n}$ with $\abs{I}=\rk A=k$ and rows $i\in I$ of $A$ linearly independent.
Apply row operations $C$ to make the rows $i\not\in I$ of $CA$ zero.
Then $CAC^t$ is nonzero only in rows and columns $i\in I$.
Modifying $C$ to include further row operations turns $CAC^t$ into a diagonal matrix.
As $\KK$ is algebraically closed, $CAC^t=D^2$ where $D$ has exactly $k$ nonzero diagonal entries.
Then $A=BB^t$ where $B:=C^{-1}D$, considered as an element of $Y$ by dropping zero columns.
\end{proof}


\begin{lem}[Operations on wheels and whirls]\label{135}
Let $\M\in\set{\W_n,\W^n}$.
\begin{enumerate}[(a)]
\item\label{135a} The bijection
\[
E=S\sqcup R\to E^\vee,\quad s_i\mapsto r_i^\vee,\quad r_i\mapsto s_i^\vee, 
\]
identifies $\M=\M^\perp$.
\end{enumerate}
Suppose now that $n$ is not minimal for $\M$ to be defined, that is, $n>3$ if $\M=\W_n$ and $n>2$ if $\M=\W^n$.
\begin{enumerate}[(a)]\setcounter{enumi}{1}

\item\label{135b} The matroid $\M\setminus s_n$ is connected of rank $\rk(\M\setminus s_n)\ge2$.
Its handle partition consists of non-disconnective handles, the $2$-handle $\set{r_{n-1},r_n}$ and $1$-handles.

\item\label{135c} The matroid $\M/r_n$ is connected of rank $\rk(\M/r_n)\ge2$.
Its handle partition consists of non-disconnective $1$-handles.

\item\label{135d} We can identify $\W_n\setminus s_n/r_n=\W_{n-1}$ and $\W^n\setminus s_n/r_n=\W^{n-1}$.

\end{enumerate}
\end{lem}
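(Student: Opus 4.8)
The plan is to verify each item by direct matroid computation using the circuit structure of $\W_n$ and $\W^n$ from Example~\ref{145}, together with the duality and handle machinery from \S\ref{58} and \S\ref{54}. For part~\eqref{135a}, I would check that the proposed bijection $E=S\sqcup R\to E^\vee$ sends bases to bases. Under the dual, a basis of $\M^\perp$ is the complement of a basis of $\M$; since the spoke set $S$ is a basis of both $\W_n$ and $\W^n$ and the map swaps $S\leftrightarrow R$, it suffices to observe that the self-complementary symmetry of the wheel/whirl circuit system is preserved. Concretely, the triangles $\set{s_i,r_i,s_{i+1}}$ and the triads $\set{r_i,r_{i+1},s_{i+1}}$ listed at the end of Example~\ref{145} are interchanged by the bijection, and by \cite[(6.1)]{Sey80} this triangle/triad pattern characterizes $\set{\W_n,\W^n}$ among connected matroids on $E$; matching the rim circuit $R$ (for $\W^n$) with the cocircuit $S$ then pins down $\M=\M^\perp$ in each case.

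For parts~\eqref{135b} and~\eqref{135c}, by \eqref{144} and part~\eqref{135a} the two operations are dual: $\M/r_n=(\M^\perp\setminus s_n)^\perp$ under the identification in~\eqref{135a}. So I would prove~\eqref{135b} and deduce~\eqref{135c} by duality, using \eqref{207} that connectivity is self-dual and the fact that handle partitions, non-disconnectiveness, and handle sizes transfer along $\nu$ (the dual of a $1$-handle is a $1$-handle, the dual of the $2$-handle $\set{r_{n-1},r_n}$ is a $2$-handle). For~\eqref{135b} itself: deleting the spoke $s_n$ from the wheel graph $W_n$ merges the two rim edges $r_{n-1},r_n$ into a ``long edge'' of a smaller wheel-like graph; in matroid terms, $\set{r_{n-1},r_n}$ becomes a handle because every circuit of $\M\setminus s_n$ that meets one of them meets both (the only circuits through $r_{n-1}$ or $r_n$ not involving $s_n$ are $\set{r_{n-1},r_n,s_n}$-relatives, now forced to contain the pair). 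I would read off $\C_{\M\setminus s_n}$ from \eqref{142}, confirm connectedness via the circuit equivalence relation, check $\rk(\M\setminus s_n)=n-1\ge2$ from~\eqref{153}, and identify the handle partition as $\set{r_{n-1},r_n}$ together with the singletons $\set{s_1},\dots,\set{s_{n-1}},\set{r_1},\dots,\set{r_{n-2}}$. Non-disconnectiveness of each of these handles follows because $\M\setminus s_n$ minus any one still has all its circuits through the remaining elements connecting everything; alternatively, since $\M\setminus s_n$ is not a circuit and all these handles are maximal, one can invoke Lemma~\ref{16}.\eqref{16a} together with a short check that deletion keeps connectivity.

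For part~\eqref{135d}, I would contract $r_n$ in $\M\setminus s_n$: by \eqref{143} the circuits of $(\M\setminus s_n)/r_n$ are the minimal nonempty sets $C\setminus\set{r_n}$ for circuits $C$ of $\M\setminus s_n$, and this collapses the $2$-handle $\set{r_{n-1},r_n}$ down to the single element $r_{n-1}$, which now plays the role of the rim edge joining the vertices that were linked by $r_{n-1}$ and $r_n$ in the $(n-1)$-wheel; matching the circuit lists (triangles $\set{s_i,r_i,s_{i+1}}$ for $i<n-1$ and $\set{s_{n-1},r_{n-1},s_1}$, plus the rim circuit in the whirl case) with those of $\W_{n-1}$ respectively $\W^{n-1}$ gives the identification. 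The relaxation that distinguishes whirl from wheel is preserved under this delete--contract because $R\setminus\set{r_n}$ is exactly the rim of the smaller whirl. The main obstacle I anticipate is purely bookkeeping: being careful with the cyclic indexing $\ZZ_n$ versus $\ZZ_{n-1}$ when $r_{n-1}$ and $r_n$ merge, and making sure the edge cases (smallest admissible $n$, which the hypothesis $n>3$ resp.\ $n>2$ excludes) are correctly handled so that the resulting $\W_{n-1}$ or $\W^{n-1}$ is still a legitimate wheel or whirl. None of the steps is deep; the content is a routine but somewhat delicate translation between the graphic picture and the circuit axioms.
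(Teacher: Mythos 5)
Your treatments of parts \eqref{135a}, \eqref{135b} and \eqref{135d} are sound and essentially follow the paper's route (the paper settles \eqref{135a} by citing \cite[Prop.~8.4.4]{Oxl11} and handles \eqref{135b}, \eqref{135d} by reading off circuits via \eqref{142}, \eqref{143} and the triangle/triad characterization recalled in Example~\ref{145}). The genuine gap is your derivation of part \eqref{135c} from \eqref{135b} \enquote{by duality}. The claim that handle partitions, handle sizes and non-disconnectiveness \enquote{transfer along $\nu$} is false: a handle of size at least $2$ is a series class (every circuit meeting it contains it), and under duality the series classes of $\M\setminus s_n$ correspond to \emph{parallel classes} of $(\M\setminus s_n)^\perp\cong\M/r_n$, not to its handles. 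In fact, if handles did transfer, the statement you are proving would contradict itself: by \eqref{144} and part \eqref{135a}, $\M\setminus s_n$ and $\M/r_n$ are dual to one another, yet \eqref{135b} exhibits the $2$-handle $\set{r_{n-1},r_n}$ while \eqref{135c} asserts that all maximal handles of $\M/r_n$ are singletons. What the $2$-handle of $\M\setminus s_n$ actually corresponds to in $\M/r_n$ is the $2$-circuit of parallel spokes created by contracting $r_n$ (the image of the triangle $\set{s_n,r_n,s_1}$ under \eqref{143}), and this $2$-circuit is precisely what forces the handle partition of $\M/r_n$ to consist of $1$-handles.

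The repair is cheap and is what the paper does: prove \eqref{135c} directly from the circuit list of $\M/r_n$ obtained from \eqref{143} and Example~\ref{145}. The maximal handles are the minimal non-empty intersections of circuits (see the discussion before Lemma~\ref{16}), and from the surviving triangles, the hub cycles and the new $2$-circuit of spokes one checks that these intersections are all singletons; the same circuits witness connectedness of $\M/r_n$ and of its one-element deletions (non-disconnectiveness), and $\rk(\M/r_n)=n-1\ge2$ under the hypothesis on $n$. Connectedness and the rank bound alone you could indeed obtain by duality, since $\lambda$ is dual-invariant by \eqref{207}; it is only the handle data that does not dualize.
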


\begin{proof}\
\begin{asparaenum}[(a)]

\item The self-duality claim is obvious (see \cite[Prop.~8.4.4]{Oxl11}).

\item This follows from the description of connectedness in terms of circuits (see \eqref{142} and Example~\ref{145}).

\item This follows from the description of connectedness in terms of circuits (see \eqref{143} and Example~\ref{145}).

\item The operation $\M\mapsto\M\setminus s_n/r_n$ deletes the triangle $\set{s_{n-1},r_{n-1},s_n}$ and maps the triangle $\set{s_{n},r_{n},s_1}$ to $\set{s_{n-1},r_{n-1},s_1}$ (see \eqref{142} and \eqref{143}).
By duality, it acts on triads in the same way (see \eqref{135a} and \eqref{144}).
Moreover, $R\in\C_{\M\setminus s_n/r_n}$ is equivalent to $R\in\C_\M$ and hence $\M=\W_n$ (see \eqref{142}, \eqref{143} and Example~\ref{145}).
The claim then follows using the characterization of wheels and whirl in terms of triangles and triads (see Example~\ref{145}).\qedhere

\end{asparaenum}
\end{proof}


\begin{lem}[Induction on wheels and whirls]\label{137}
Theorem~\ref{133} for $\M=\W_n$ and $\M=\W^n$ follows from the cases $n=3$ and $n\le4$, respectively.
\end{lem}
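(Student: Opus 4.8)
The plan is to prove Theorem~\ref{133} for all wheels $\W_n$ ($n\ge3$) and whirls $\W^n$ ($n\ge2$) by induction on $n$, using the base cases $n=3$ for wheels and $n\le4$ for whirls supplied by Corollary~\ref{138}. So fix $n$ strictly bigger than the minimal value (i.e.\ $n>3$ for $\W_n$, $n>2$ for $\W^n$), let $\M\in\set{\W_n,\W^n}$, and let $W\subseteq\KK^E$ be a realization. By Proposition~\ref{146} the scheme $\Delta_W$ depends only on $n$ and on whether $\M$ is a wheel or a whirl, so it is enough to establish integrality for one convenient realization $W$. By Theorem~\ref{101}, $\Delta_W$ is reduced; hence it suffices to show $\Delta_W$ is irreducible, and for irreducibility we may pass to the algebraic closure of $\KK$.

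The key step is to exploit the deletion--contraction of the pair $(s_n,r_n)$. By Lemma~\ref{135}.\eqref{135d} we can identify $\M\setminus s_n/r_n$ with $\W_{n-1}$ if $\M=\W_n$ and with $\W^{n-1}$ if $\M=\W^n$; by the induction hypothesis $\Delta_{W\setminus s_n/r_n}$ is integral. First I would climb from the contraction back to $\M\setminus s_n$: by Lemma~\ref{135}.\eqref{135c} the matroid $\M\setminus s_n$ is connected of rank $\ge2$ and $\set{r_n}$ is a non-disconnective $1$-handle in it, so by Theorem~\ref{100} both $\Delta_{W\setminus s_n}$ and $\Delta_{W\setminus s_n/r_n}$ are equidimensional of codimension $3$, and by Corollary~\ref{104} all generic points of $\Delta_{W\setminus s_n}$ with respect to the handle $\set{r_n}$ are of type~\eqref{99c}, in particular they lie in $D(x_{r_n})$. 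Lemma~\ref{88} (applied to $\Delta$, with the non-(co)loop $r_n$) then gives $\Delta_{W\setminus s_n}\ne\emptyset$ and $\abs{\Min\Delta_{W\setminus s_n}}\le\abs{\Min\Delta_{W\setminus s_n/r_n}}=1$; combined with reducedness from Theorem~\ref{101}, $\Delta_{W\setminus s_n}$ is integral.

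Next I would climb from $\M\setminus s_n$ to $\M$. By Lemma~\ref{135}.\eqref{135b} the matroid $\M\setminus s_n$ is connected of rank $\ge2$, so it has a handle decomposition (Proposition~\ref{12}); but we need a non-disconnective handle of $\M$ itself that deletes to $\M\setminus s_n$ — here $\set{s_n}$ — and the point is that $\set{s_n}\in\H_\M$ is a non-disconnective $1$-handle. To see this: $\M$ is $3$-connected with $\abs{E}=2n>3$, so by Proposition~\ref{53} every handle of $\M$ is a non-disconnective $1$-handle; in particular $\set{s_n}$ is. Then $\rk(\M\setminus s_n)\ge2$ by Lemma~\ref{135}.\eqref{135b}, and by Corollary~\ref{104} (or Corollary~\ref{106}, since $\M$ is $3$-connected with $\abs{E}>3$) all generic points of $\Delta_W$ lie in $\TT^E$, hence in $D(x_{s_n})$. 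By Theorem~\ref{100}, $\Delta_W$ and $\Delta_{W\setminus s_n}$ are both equidimensional of codimension $3$; so Lemma~\ref{88} again yields $\Delta_W\ne\emptyset$ and $\abs{\Min\Delta_W}\le\abs{\Min\Delta_{W\setminus s_n}}=1$. With Theorem~\ref{101}, $\Delta_W$ is integral. The irreducibility of $\Sigma_W$ then follows from Theorem~\ref{13}, completing the induction and hence the proof of Lemma~\ref{137}.

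The main obstacle is bookkeeping rather than a deep difficulty: one must be careful that at each stage the relevant element ($r_n$ for the contraction step, $s_n$ for the deletion step) really is a non-(co)loop and that the matroid in question is connected of rank $\ge2$, so that Theorem~\ref{100}, Corollary~\ref{104} and Lemma~\ref{88} all apply; these facts are exactly what Lemma~\ref{135} and Proposition~\ref{53} are set up to provide. A secondary point to check is that the identification in Lemma~\ref{135}.\eqref{135d} is compatible with the realizations, i.e.\ that $W\setminus s_n/r_n$ realizes $\W_{n-1}$ or $\W^{n-1}$ — but this is automatic from Definition~\ref{33}, and the precise choice of realization is irrelevant by Proposition~\ref{146}. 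Thus the argument is a clean two-step induction, with all the hard commutative-algebra input (Cohen--Macaulayness, equidimensionality, the generic-point analysis) already packaged in the earlier results.
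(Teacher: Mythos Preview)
There is a genuine gap in your first climb step. You claim that $\{r_n\}$ is a non-disconnective $1$-handle in $\M\setminus s_n$, citing Lemma~\ref{135}.\eqref{135c}, but that part concerns $\M/r_n$. The relevant statement is Lemma~\ref{135}.\eqref{135b}: the handle partition of $\M\setminus s_n$ contains the $2$-handle $\{r_{n-1},r_n\}$. By Lemma~\ref{16}.\eqref{16a} the subhandle $\{r_n\}$ is therefore \emph{disconnective} (deleting it makes $r_{n-1}$ a coloop). Consequently Corollary~\ref{104}, applied with $H=\{r_{n-1},r_n\}$, does not force all generic points of $\Delta_{W\setminus s_n}$ into $D(x_{r_n})$: a generic point of type~\eqref{99b} in Lemma~\ref{99}, namely $\ideal{\psi_{(W\setminus s_n)\setminus\{r_{n-1},r_n\}},x_{r_{n-1}},x_{r_n}}$, may occur. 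There is also a more basic mismatch: Lemma~\ref{88} compares $\Delta_W$ with $\Delta_{W\setminus e}$ (deletion), not with $\Delta_{W/e}$, so it cannot bound $\abs{\Min\Delta_{W\setminus s_n}}$ by $\abs{\Min\Delta_{W\setminus s_n/r_n}}$ even if the hypothesis on generic points held.

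The paper's proof handles this obstruction by passing through duality. Since $(\M\setminus s_n)^\perp=\M/r_n$ via Lemma~\ref{135}.\eqref{135a}, one first shows (by your deletion argument, which \emph{does} work for $\M/r_n$ thanks to Lemma~\ref{135}.\eqref{135c}) that $\Delta_{W'}$ is integral for any realization $W'$ of $\M/r_n$; the Cremona isomorphism (Corollary~\ref{71}) then yields that $\Delta_{W\setminus s_n}$ has a unique generic point in $\TT^{E\setminus\{s_n\}}$. But the possible extra generic point $\qq'\in V(x_{r_{n-1}},x_{r_n})$ remains, so one only gets $\abs{\Min\Delta_{W\setminus s_n}}\le 2$. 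A further argument---the rotational $\ZZ_n$-symmetry for $\W_n$ (Lemma~\ref{204}) and a pigeonhole over spokes for $\W^n$---is then required to exclude $\abs{\Min\Delta_W}=2$. Your two-step induction misses precisely this difficulty.
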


\begin{proof}
Suppose that $n$ is not minimal for $\M\in\set{\W_n,\W^n}$ to be defined.
Let $W'$ be any realization of $\M/r_n$.
Then $W'\setminus s_n$ is a realization of
\[
\M/r_n\setminus s_n=\M\setminus s_n/r_n=\M_{n-1}
\]
by Lemma~\ref{135}.\eqref{135d}.
By induction hypothesis and Corollary~\ref{106}, $\Delta_{W'\setminus s_n}$ is integral with generic point in $\TT^{E\setminus\set{s_n,r_n}}$.
By Lemma~\ref{135}.\eqref{135c} and Corollary~\ref{104}, $\Min\Delta_{W'}\subseteq\TT^{E\setminus\set{r_n}}\subseteq D(s_n)$.
By Lemma~\ref{135}.\eqref{135c} and Theorems~\ref{100}, $\Delta_{W'}$ and $\Delta_{W'\setminus s_n}$ are equidimensional of codimension $3$.
By Remark~\ref{20}.\eqref{20a} and Lemma~\ref{88}, $\Delta_{W'}$ is then integral.

Let $W$ be any realization of $\M$ and use the coordinates from \eqref{181}.
By Lemma~\ref{135}.\eqref{135b} and Corollary~\ref{104}, $\Delta_{W\setminus s_n}$ has at most one generic point $\qq'$ in $V(y_{n-1},y_n)$ while all the others lie in $\TT^{E\setminus\set{s_n}}$.
By Corollary~\ref{71}, the Cremona isomorphism identifies the latter with generic points of $\Delta_{(W\setminus s_n)^\perp}$ in $\TT^{E^\vee\setminus\set{s_n^\vee}}$.
Use \eqref{144} and Lemma~\ref{135}.\eqref{135a} to identify
\[
(\M\setminus s_n)^\perp=\M^\perp/s_n^\vee=\M/r_n,\quad E^\vee\setminus\set{s_n^\vee}=E\setminus\set{r_n},
\]
and consider $(W\setminus s_n)^\perp$ as a realization $W'$ of $\M/r_n$.
By the above, $\Delta_{W'}$ is integral with generic point in $\TT^{E\setminus\set{r_n}}$.
Thus, $\Delta_{W\setminus s_n}$ has a unique generic point $\qq$ in $\TT^{E\setminus\set{s_n}}$.
To summarize,
\begin{equation}\label{183}
\Min\Delta_{W\setminus s_n}=\set{\qq,\qq'},\quad
\qq\in\TT^{E\setminus\set{s_n}},\quad
\qq'\in V(y_{n-1},y_n).
\end{equation}

By Lemma~\ref{135}.\eqref{135b} and Theorems~\ref{100} and \ref{101}, $\Delta_W$ and $\Delta_{W\setminus s_n}$ are equidimensional of codimension $3$ and reduced.
It suffices to show that $\Delta_W$ is irreducible.
By way of contradiction, suppose that $\pp\ne\pp'$ for some $\pp,\pp'\in\Min\Delta_W$.
By Corollary~\ref{106}, $\Min\Delta_W\subseteq\TT^E\subseteq D(s_n)$.
By Lemma~\ref{88} and \eqref{183}, it follows that 
\[
\Delta_W=\set{\pp,\pp'}.
\]
By \eqref{168} in Lemma~\ref{88}, we may assume that $\sqrt{\bar\pp}=\qq$ and $\sqrt{\bar\pp'}=\qq'$ where $\bar I:=(I+\ideal{z_n})/\ideal{z_n}$.

Consider first the case where $\M=\W_n$ with $n\ge4$.
By Remark~\ref{161}, we may assume that $W$ is the realization from Lemma~\ref{134}.
By Lemma~\ref{204}, the cyclic group $\ZZ_n$ acts on $\set{\pp,\pp'}$ by \enquote{turning the wheel}.
If it acts identically, then $\sqrt{\pp'+\ideal{z_i}}\supseteq\ideal{y_{i-1},y_i}$ for all $i=1,\dots,n$ and hence
\[
\sqrt{\pp'+\ideal{z_1,\dots,z_n}}=\ideal{z_1,\dots,z_n,y_1,\dots,y_n}.
\]
Then $\height(\pp'+\ideal{z_1,\dots,z_n})=2n$ which implies $\height\pp'\ge n>3$ by Lemma~\ref{109}.\eqref{109b}, contradicting Theorem~\ref{100} (see Lemma~\ref{27}).
Otherwise, the generator $1\in\ZZ_n$ switches the assignment $\pp\mapsto\qq$ and $\pp\mapsto\qq'$ and $n=2m$ must be even.
Then $\sqrt{\pp+\ideal{z_{2i}}}\supseteq\ideal{y_{2i-1},y_{2i}}$ for all $i=1,\dots,m$ and hence
\[
\sqrt{\pp+\ideal{z_2,z_4,z_6,\dots,z_n}}\supseteq\ideal{z_2,z_4,z_6,\dots,z_n,y_1,\dots,y_n}.
\]
This leads to a contradiction as before.

Consider now the case where $\M=\W^n$ with $n\ge5$.
For $i=1,\dots,n$, denote by $\qq_i$ and $\qq'_i$ the generic points of $\Delta_{W\setminus s_i}$ as in \eqref{183}.
By the pigeonhole principle, one of $\pp$ and $\pp'$, say $\pp$, is assigned to $\qq'_i$ for $3$ spokes $s_i$.
In particular, $\pp$ is assigned to $\qq'_i$ and $\qq'_j$ for two non-adjacent spokes $s_i$ and $s_j$.
Then
\[
\sqrt{\pp+\ideal{z_i,z_j}}\supseteq\ideal{z_i,z_j,y_{i-1},y_i,y_{j-1},y_j}.
\]
This leads to a contradiction as before.
The claim follows.
\end{proof}


Theorem~\ref{133} proves the \enquote{only if} part of the following conjecture.

\begin{cnj}[Irreducibility and $3$-connectedness]
Let $\M$ be a matroid of rank $\rk\M\ge2$ on $E$.
Then $\M$ is $3$-connected if and only if, for some/any realization $W\subseteq\KK^E$ of $\M$, both $\Delta_W$ and $\Delta_{W^\perp}$ are integral.
\end{cnj}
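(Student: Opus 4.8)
The \enquote{only if} direction is precisely Theorem~\ref{133}, so the task is the converse; I would prove it by contraposition. Since $\Delta_W$ is always reduced by Theorem~\ref{101}, \enquote{$\Delta_W$ not integral} means \enquote{$\Delta_W$ reducible or empty}, and I would assume $\lambda(\M)\le2$ and exhibit such a $\Delta_W$ or $\Delta_{W^\perp}$. Before doing so, note that the conjecture as stated needs the mild additional hypothesis $\rk\M\ge3$ and $\rk\M^\perp\ge3$: by Remark~\ref{20}.\eqref{20b} a matroid of rank $2$ (or, dually, of corank $2$) has $\Delta_W$ a linear subspace and hence integral regardless of connectivity, and a triangle with one element doubled is connected with $\lambda(\M)=2$ while both $\Delta_W$ and $\Delta_{W^\perp}$ are integral. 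Assuming $\rk\M\ge3$ and $\rk\M^\perp\ge3$, I would distinguish the disconnected and the connected cases.

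In the disconnected case I would first delete all loops, using Remark~\ref{84} and the fact that the resulting $\AA^1$-factors do not affect integrality; write $\M=\bigoplus_{i=1}^n\M_i$ with $n\ge2$ loopless connected components. Theorem~\ref{112} then lists the irreducible components of $\Delta_W$: the pieces $X_{W_i}\times X_{W_j}\times\KK^{E\setminus(E_i\cup E_j)}$ for $i\ne j$, of codimension $2$ and integral by Proposition~\ref{4}, together with the pieces coming from the $\Delta_{W_i}$, which vanish when $\rk\M_i\le1$ and have codimension $3$ (by Theorem~\ref{100}) when $\rk\M_i\ge2$. If some component has rank $\ge2$ one gets irreducible components of $\Delta_W$ of both codimensions $2$ and $3$; if every $\M_i$ has rank $1$ then $n=\rk\M\ge3$ forces $\binom{n}{2}\ge3$ pairwise distinct codimension-$2$ pieces. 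In either case $\Delta_W$ is reducible, which settles the disconnected case.

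In the connected case, $\M$ has an exact $2$-separation $E=E_1\sqcup E_2$. By Proposition~\ref{63}, $\psi_W=\psi_{W/E_1}\cdot\psi_{W\vert_{E_1}}+\psi_{W\vert_{E_2}}\cdot\psi_{W/E_2}$, and by Remark~\ref{141}, $\psi_{W\vert_{E_1}}\cdot\psi_{W/E_1}\in J_W$, so every generic point $\pp$ of $\Delta_W=\Sigma_W^\red$ contains $\psi_{W\vert_{E_1}}$ or $\psi_{W/E_1}$. These two polynomials lie in $\KK[x_{E_1}]$ and have different degrees (as in Remark~\ref{141}), so their vanishing loci are distinct; hence if generic points of both types occur, $\Delta_W$ is reducible. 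The delicate case is that all generic points of $\Delta_W$ lie, say, in $V(\psi_{W\vert_{E_1}})$. Here I would dualize: by \eqref{207} the partition $E_1^\perp\sqcup E_2^\perp$ is an exact $2$-separation of $\M^\perp$, Corollary~\ref{71} matches the generic points of $\Delta_W$ in $\TT^E$ with those of $\Delta_{W^\perp}$ in $\TT^{E^\vee}$ under the Cremona isomorphism $\zeta_E$, and Propositions~\ref{30} and \ref{162} express $\psi_{W\vert_{E_1}}$ through a minor polynomial of $W^\perp$ after the substitution $\zeta_E$. The plan is to show that one-sided concentration for $\Delta_W$ forces one-sided concentration for $\Delta_{W^\perp}$ on the \emph{opposite} side of the dual $2$-separation, and that the two cannot be reconciled with integrality of both $\Delta_W$ and $\Delta_{W^\perp}$.

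The main obstacle is exactly this last step: tracking precisely how the $2$-separation and the polynomials $\psi_{W\vert_{E_1}}$, $\psi_{W/E_1}$ transform under $\zeta_E$, and excluding a \enquote{mutually one-sided} configuration. An alternative, and perhaps cleaner, route to the concentration case would be to establish a Mayer--Vietoris-type decomposition of $\Delta_W$ from a $2$-sum decomposition $\M=\M_1\oplus_2\M_2$ realizing the $2$-separation, parallel to Theorem~\ref{112} for direct sums; but this first requires developing $2$-sums of configurations, which is not carried out in the present paper, and I expect that to be where the real work lies.
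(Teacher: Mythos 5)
This statement is a \emph{conjecture}: the paper proves only the \enquote{only if} half (Theorem~\ref{133}) and explicitly leaves the converse open, so there is no proof of the full equivalence in the paper to compare your argument with. You cite Theorem~\ref{133} correctly for the \enquote{only if} direction, but for the converse what you give is a programme rather than a proof, and the missing step you flag is exactly the substance of the conjecture. Concretely: in the connected case with an exact $2$-separation $E=E_1\sqcup E_2$, Remark~\ref{141} only tells you that every generic point of $\Delta_W=\Sigma_W^\red$ contains $\psi_{W\vert_{E_1}}$ or $\psi_{W/E_1}$; if $\Delta_W$ has a single generic point it may simply contain both, so \enquote{both types occur} is not yet in contradiction with irreducibility, and in the concentration case your dualization route has a concrete obstruction: Corollary~\ref{71} matches generic points of $\Delta_W$ and $\Delta_{W^\perp}$ only inside the torus, and without $3$-connectedness there is no analogue of Corollary~\ref{106} -- for the prism matroid, for instance, three of the four generic points of $\Delta_W$ lie on coordinate hyperplanes (Example~\ref{45}), so one-sided concentration for $\Delta_W$ does not transfer to $\Delta_{W^\perp}$ by a Cremona argument. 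The alternative via a $2$-sum analogue of Theorem~\ref{112} is, as you say yourself, not developed. So the proposal does not establish the statement.

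Two smaller points. Your disconnected case is essentially sound via Theorem~\ref{112}, Proposition~\ref{4} and Theorem~\ref{100}, but the reduction \enquote{delete all loops and assume $n\ge2$ loopless components} is not quite right: deleting loops can leave a connected (even $3$-connected) matroid, in which case $\Delta_W$ itself may well be integral and you must instead pass to the dual, where the loops become coloops, i.e.\ rank-one summands, and Theorem~\ref{112} produces components of codimensions $2$ and $3$ in $\Delta_{W^\perp}$; this is fixable but should be said. Your observation about low rank is apt and worth recording: for a triangle with one element doubled (rank and corank both $2$, connected but not $3$-connected), Remark~\ref{20}.\eqref{20b} makes both $\Delta_W$ and $\Delta_{W^\perp}$ linear subspaces, hence integral, so the \enquote{if} direction as literally stated requires an additional hypothesis such as $\rk\M\ge3$ and $\rk\M^\perp\ge3$. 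That is a genuine comment on the formulation of the conjecture, but it is commentary, not progress towards proving it.
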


\section{Examples}\label{90}

In this section, we illustrate our results with examples of prism, whirl and uniform matroids.


\begin{exa}[Prism matroid]\label{45}
Consider the prism matroid $\M$ (see Definition~\ref{200}) with its unique realization $W$ (see Lemma~\ref{42}).
Then
\[
\psi_W=x_1x_2(x_3+x_4)(x_5+x_6)+x_3x_4(x_1+x_2)(x_5+x_6)+x_5x_6(x_1+x_2)(x_3+x_4)
\]
by Example~\ref{128}.
By Lemma~\ref{19}, $\Delta_W$ has the unique generic point
\[
\ideal{x_1+x_2,x_3+x_4,x_5+x_6}
\]
in $\TT^6$.
By Corollary~\ref{104}, there can be at most $3$ more generic points symmetric to
\[
\ideal{x_1,x_2,\psi_{W\setminus\set{1,2}}}=\ideal{x_1,x_2,x_3x_4x_5+x_3x_4x_6+x_3x_5x_6+x_4x_5x_6}.
\]
Over $\KK=\FF_2$, their presence is confirmed by a computation in \texttt{Singular} (see \cite{DGPS18}).
It reveals a total of $7$ embedded points in $\Sigma_W$.
There is $\ideal{x_1,\dots,x_6}$, and $3$ symmetric to each of
\[
\ideal{x_3,x_4,x_5,x_6}\quad\text{and}\quad\ideal{x_1,x_2,x_3+x_4,x_5+x_6}.
\]
Moreover, $\Sigma_W$ is not reduced at any generic point.
Since the above associated primes are geometrically prime, the conclusions remain valid over any field $\KK$ with $\ch\KK=2$.

A \texttt{Singular} computation over $\QQ$ shows that $\Sigma_W$ has exactly the above associated points for any field $\KK$ with $\ch\KK=0$ or $\ch\KK\gg0$.
We expect that this holds in fact for $\ch\KK\ne2$.

To verify at least the presence of these associated points in $\Sigma_W$ for $\ch\KK\ne2$, we claim that
\begin{align*}
\ideal{x_1,x_2,\psi_{W\setminus\set{1,2}}}&=J_W\colon 2((x_3+x_4)x_5^2-(x_3+x_4)x_6^2),\\
\ideal{x_3,x_4,x_5,x_6}&=J_W\colon 2(x_1+x_2)^2x_4x_6,\\
\ideal{x_1,x_2,x_3+x_4,x_5+x_6}&=J_W\colon 2x_2(x_3+x_4)x_6^2,\\
\ideal{x_1,\dots,x_6}&=J_W\colon 2(x_1+x_2)(x_3+x_4)x_6.
\end{align*}
The colon ideals on the right hand side can be read off from a suitable Gr\"obner basis (see \cite[Lems.~1.8.3, 1.8.10 and 1.8.12]{GP08}).
Using \texttt{Singular} we compute such a Gr\"obner basis over $\ZZ$ which confirms our claim.
There are no odd prime numbers dividing its leading coefficients.
It is therefore a Gr\"obner basis over any field $\KK$ with $\ch\KK\ne2$ and the argument remains valid.
\end{exa}


\begin{exa}[Whirl matroid]\label{55}
Consider the whirl matroid $\M:=\W^3$ (see Example~\ref{145}).
It is realized by $6$ points in $\PP^2$ with the collinearities shown in Figure~\ref{102}.
\begin{figure}[h]
\caption{Points in $\PP^2$ defining the whirl matroid $\W^3$.}\label{102}
\begin{tikzpicture}[scale=1,baseline=(current bounding box.center)]
\tikzstyle{root}=[circle,draw,inner sep=1.2pt,fill=black]
\node (1) at (210:1) [root,label=below:$s_1$] {};
\node (2) at (-30:1) [root,label=below:$s_2$] {};
\node (3) at (90:1) [root,label=above:$s_3$] {};
\node (4) at ($(1)!0.5!(2)$) [root,label=below:$r_1$] {};
\node (5) at ($(2)!0.5!(3)$) [root,label=right:$r_2$] {};
\node (6) at ($(1)!0.5!(3)$) [root,label=left:$r_3$] {};
\draw (1) -- (4) -- (2) -- (5) -- (3) -- (6) -- (1);
\end{tikzpicture}
\end{figure}
Since $\M$ contracts to the uniform matroid $\U_{2,4}$, $\M$ is not regular (see \cite[Thm.~6.6.6]{Oxl11}).
The configuration polynomial reflects this fact.
Using the realization $W$ of $\M$ from Lemma~\ref{134} with $t=-1$, $\set{s_1,s_2,s_3}=\set{1,2,3}$ and $\set{r_1,r_2,r_3}=\set{4,5,6}$, we find
\begin{align*}
\psi_W&=x_1x_2x_3+x_1x_3x_4+x_2x_3x_4+x_1x_2x_5+x_1x_3x_5+x_1x_4x_5\\
&+x_2x_4x_5+x_3x_4x_5+x_1x_2x_6+x_2x_3x_6+x_1x_4x_6+x_2x_4x_6\\
&+x_3x_4x_6+x_1x_5x_6+x_2x_5x_6+x_3x_5x_6+4x_4x_5x_6.
\end{align*}
Replacing in $\psi_W$ the coefficient $4$ of $x_4x_5x_6$ by a $1$ yields the matroid polynomial $\psi_\M$ (see Remark~\ref{149}).

By Theorem~\ref{100}, the configuration hypersurface $X_W$ defined by $\psi_W$ has $3$-codimensional non-smooth locus $\Sigma_W^\red$.
Using \texttt{Singular} (see \cite{DGPS18}) we compute a Gr\"obner basis over $\ZZ$ of the ideal of partial derivatives of $\psi_\M$.
The only prime numbers dividing leading coefficients are $2$, $3$ and $5$.
For $\ch\KK\ne2,3,5$, it is therefore a Gr\"obner basis over $\KK$.
From its leading exponents we calculate that the non-smooth locus of the hypersurface defined by $\psi_\M$ has codimension $4$ (see \cite[Cor.~5.3.14]{GP08}).
By further \texttt{Singular} computations, this codimension is $4$ for $\ch\KK=2,5$, and $3$ for $\ch\KK=3$.
\end{exa}


\begin{exa}[Uniform rank-$3$ matroid]\label{167}
Suppose that $\ch\KK\ne2,3$.
Then the configuration $W=\ideal{w^1,w^2,w^3}\subseteq\KK^3$ defined by
\[
(w^i_j)_{i,j}=
\begin{pmatrix}
1 & 0 & 0 & 1 & 2 & 3 \\
0 & 1 & 0 & 2 & 3 & 4 \\
0 & 0 & 1 & 2 & 6 & 12 
\end{pmatrix}
\]
realizes the uniform matroid $\U_{3,6}$ (see Example~\ref{119}).
The entries of $Q_w=(q_{i,j})_{i,j}$ satisfy the linear dependence relation (see Remark~\ref{151})
\[
q_{1,2}+q_{1,3}=q_{2,3}.
\]
By Lemma~\ref{36}, $\psi_W$ thus depends on fewer than $6$ variables.
More precisely, a \texttt{Singular} computation shows that $\Sigma_W$ has Betti numbers $(1,5,10,10,5,1)$, is not reduced and hence not Cohen--Macaulay.

Now, take $W'$ to be a generic realization of $\U_{3,6}$.
Then the entries of $Q_{W'}$ with indices $(i,j)$ where $i\le j$ are linearly independent (see \cite[Prop.~6.4]{BCK16}), and $\Sigma_{W'}$ is reduced Cohen--Macaulay with Betti numbers $(1,6,8,3)$.
So basic geometric properties of the configuration hypersurface $X_W$ are not determined by the matroid $\M$, but depend on the realization $W$.
\end{exa}


\begin{exa}[Uniform rank-$2$ matroid]\label{148}
Suppose that $\ch\KK\ne2$ and consider the uniform matroid $\U_{2,n}$ for $n\geq 3$ (see Examples~\ref{118} and \ref{122}.\eqref{122c}).
A realization $W$ of $\U_{2,n}$ is spanned by two vectors $w^1,w^2\in\KK^n$ for which (see Example~\ref{119})
\[
c_{W,\set{i,j}}=\det\begin{pmatrix}w^1_i & w^1_j\\w^2_i & w^2_j\end{pmatrix}^2\ne0,
\]
for $1\leq i<j\leq n$.
Then
\[
\psi_W=\sum_{1\leq i<j\leq n}c_{W,\set{i,j}}\cdot x_i\cdot x_j,
\]
and the ideal $J_W$ is generated by $n$ linear forms.
These forms may be written as the rows of the Hessian matrix
\[
H_W:=H_{\psi_W}=(c_{W,\set{i,j}})_{i,j},
\]
where by convention $c_{W,\set{i,i}}=0$.
Since uniform matroids are connected, Theorem~\ref{100} implies that $H_W$
has rank exactly $3$.

For $n\geq 4$, this amounts to a classical-looking linear algebra fact:
suppose that $A=(a_{i,j}^2)_{i,j}\in\KK^{n\times n}$ is a matrix with squared entries. 
Then its $4\times 4$ minors are zero provided that the numbers $a_{i,j}$ satisfy the Pl\"ucker relations defining the Grassmannian $\Gr_{2,n}$. 
An elementary direct proof was shown to us by Darij Grinberg (see \cite{Gri18}).
\end{exa}

\printbibliography
\end{document}